\newtheorem{Def}{Definition}[section]
\newtheorem{Eg}{Example}[section]
\newtheorem{Prop}[Def]{Proposition}
\newtheorem{Lem}[Def]{Lemma}
\newtheorem{Thm}[Def]{Theorem}
\newtheorem{Cor}[Def]{Corollary}
\newtheorem{Ass}[Def]{Assumption}
\theoremstyle{definition}
\newtheorem{Rem}[Def]{Remark}
\newcommand{\p}{\mathbb{P}}
\newcommand{\e}{\mathbb{E}}
\newcommand{\real}{\mathbb{R}}
\newcommand{\n}{\mathbb{N}}
\newcommand{\sign}{\text{sign}}
\newcommand{\1}{{\bf 1}}
\newcommand{\mf}{\mathcal{F}}
\newcommand{\rd}{\mathrm{d}}
\begin{document}

\title{
	Probability density function of SDEs with unbounded and path--dependent drift coefficient
}
\author{
	Dai Taguchi
	\footnote{
		Research Institute for Interdisciplinary Science,
		Okayama University
		3-1-1
		Tsushima-naka,
		Kita-ku
		Okayama
		700-8530,
		Japan,
		email:\texttt{ dai.taguchi.dai@gmail.com}
	}
	\quad and \quad
	Akihiro Tanaka
	\footnote{
		Graduate School of Engineering Science,
		Osaka University,
		1-3, Machikaneyama-cho, Toyonaka,
		Osaka, 560-8531, Japan
		/
		Sumitomo Mitsui Banking Corporation,
		1-1-2, Marunouchi, Chiyoda-ku, 
		Tokyo, 100-0005, Japan,
		Email: \texttt{tnkaki2000@gmail.com}
	}
}
\maketitle
\begin{abstract}
	In this paper, we first prove that the existence of a solution of SDEs under the assumptions that the drift coefficient is of linear growth and path--dependent, and diffusion coefficient is bounded, uniformly elliptic and H\"older continuous.
	We apply Gaussian upper bound for a probability density function of a solution of SDE without drift coefficient and local Novikov condition, in order to use Maruyama--Girsanov transformation.
	The aim of this paper is to prove the existence with explicit representations (under linear/super--linear growth condition), Gaussian two--sided bound and H\"older continuity (under sub--linear growth condition) of a probability density function of a solution of SDEs with path--dependent drift coefficient.
	As an application of explicit representation, we provide the rate of convergence for an Euler--Maruyama (type) approximation, and an unbiased simulation scheme.\\
	\textbf{2010 Mathematics Subject Classification}: 65C30; 62G07; 35K08; 60H35\\
	\textbf{Keywords}:
	Probability density function;
	Maruyama--Girsanov theorem;
	Gaussian two--sided bound;
	Parametrix method;
	Euler--Maruyama scheme;
	Unbiased simulation;
	H\"older continuity
\end{abstract}

\tableofcontents

\section{Introduction}
Let $X^{x}=(X_t^{x})_{t \geq 0}$ be a solution of path--dependent $d$-dimensional stochastic differential equations (SDEs)
\begin{align}\label{SDE_1}
	\rd X_t^{x}
	=b(t,X^{x})\rd t
	+\sigma(t,X_t^{x})\rd W_t,
	~t \geq 0,~X_0^{x}=x\in \real^d,
\end{align}
where $W=(W_t)_{t \geq 0}$ is a $d$-dimensional standard Brownian motion on a probability space $(\Omega, \mathcal{F},\p)$ with a filtration $(\mathcal{F}_t)_{t\geq 0}$ satisfying the usual conditions, and the drift coefficient $b:[0,\infty) \times C([0,\infty);\real^d) \to \real^d$ and diffusion matrix $\sigma:[0,\infty) \times \real^d \to \real^{d \times d}$ are measurable.

The existence and regularity of a probability density function (pdf) of $X_t^x$ with respect to Lebesgue measure have been studied by many authors.
If the drift $b:[0,\infty) \times \real^d \to \real^d$ is bounded H\"older continuous and diffusion matrix $\sigma$ is bounded, uniformly elliptic and H\"older continuous, then
it is well-known that by using Levi's parametrix method (see, \cite{Fr64}), there exists the fundamental solution of parabolic type partial differential equations (Kolmogorov equation), and by Feynman-Kac formula, it is a pdf of a solution of associated SDEs (see also, \cite{KoMa,KoKoMe,LeMe,McSi67}).
Note that the parametrix method can be applied to the case of $L^p([0,T] \times \real^d)$-valued drift with $p \geq d+2$ \cite{Po}, H\"older continuous (unbounded) drift \cite{DeKr02}, Brownian motion with signed measure belonging to the Kato class \cite{KiSo} and Hyperbolic Brownian motion with drift \cite{IdIm}.
Moreover, Qian and Zheng \cite{QiZh02, QiZh03} provided a sharp two--sided bounds for a pdf of a Brownain motion with bounded drift coefficient by using pinned diffusion arguments (or regular conditional probability) and by choosing a parametrix as the pdf of bang--bang diffusion process.
On the other hand, the existence, Gaussian two--sided bound and H\"older continuity for the fundamental solution of the parabolic equations in divergence form were proved by Aronson  \cite{Ar67} and Nash \cite{Na58}.
It is worth noting that
Fabes and Kenig \cite{FaKe81} provided an example of diffusion coefficient $\sigma$ such that the law of $X_t=x+\int_{0}^{t} \sigma(s,X_s)\rd W_s$ is purely singular with respect to Lebesgue measure.

There are several methods to study a pdf as a probabilistic approach.
One of the most useful tool is the Malliavin calculus (see, \cite{IkWa, Nu06,Shi04}).
Kusuoka and Stroock \cite{KuSt85} proved 
if the coefficients are Fr\'echet differentiable and diffusion coefficient uniformly elliptic, then a solution of path--dependent SDEs admits a smooth pdf.
This results were extended by Takeuchi \cite{Take16} for the existence and smoothness of a joint pdf of finite dimensional distribution of SDEs.
By using the Malliavin calculus,
De~Marco \cite{DM11} studied a local existence of a pdf under local smoothness conditions on the coefficients, 
Kohatsu-Higa and Tanaka \cite{KoTa12} 
studied the existence of a pdf of additive functionals of SDEs with bounded measurable drift coefficient,
Hayashi, Kohatsu-Higa and Y\^uki \cite{HaKoYu13, HaKoYu14} studied the H\"older continuity,
and recently, Olivera and Tudor \cite{OlTu} proved that by using It\^o--Tanaka trick or Zvonkin transform (e.g. \cite{Ve,Zv}), a solution of SDE with H\"older continuous drift (unbounded) can be transformed by diffeomorphism to an equivalent equation with differentiable coefficients and a pdf.
On the other hand, by using the stochastic control method, Sheu \cite{Sh91} study the Gaussian two--sided bound for a pdf of a solution of time--homogeneous Markovian SDEs $\rd X_t=b(X_t)\rd t+\sigma(X_t) \rd W_t$.

The existence of a pdf of a solution of SDEs with path--dependents and non-smooth coefficients have been studied recently.
Fournier and Printems \cite{FoPr10} proved the existence of a pdf of a solution of one-dimensional SDEs with path--dependent coefficients, stochastic heat equations and L\'evy driven SDEs, by using ``one-step" Euler--Maruyama scheme and Fourier transform approach.
As an extension of the approach in \cite{FoPr10}, Bally and Caramellino \cite{BaCa} provided an interpolation method, and proved the existence of a pdf for a solution of multi-dimensional path--dependent SDEs.

On the other hand as a perturbation approach, the parametrix method and Maruyama--Girsanov theorem are also useful tool in order to prove the existence of a pdf of a solution of SDEs with path--dependents coefficients.
Frikha and  Li \cite{FrLi} studied the existence of a weak solution of SDEs with path--dependent coefficients and its pdf by using the parametrix method.
In the case of bounded and path--dependent drift coefficient, Makhlouf \cite{Ma15} and Kusuoka \cite{Ku17} studied the existence, explicit representation, Gaussian two--sided bound and H\"older continuity (see also \cite{BaKr16,BaKr17}).
In particular, Makhlouf (Theorem 3.1 in \cite{Ma15}) showed that the following representation holds for a pdf, denoted by $p_t(x,\cdot)$, of Brownian motion with random drift $\rd X_t=b_t \rd t + \rd W_t$, $X_0=x$:
\begin{align}\label{rep_density_0}
	p_t(x,y)
	=g_t(x,y)
	+\int_{0}^{t}
		\e\left[
			\langle 
				\nabla g_{t-s}(X_s,y),
				b_s
			\rangle
		\right]
	\rd s,
	~\text{a.e.},~y \in \real^d,
\end{align}
where $g_t(x,y):=\frac{\exp\left(-|y-x|^2/2t\right)}{(2 \pi t)^{d/2}}$, which is an analogue of the parametrix method.
On the other hand, Kusuoka  \cite{Ku17} showed that a pdf of a solution of path--dependent SDE \eqref{SDE_1} with bounded drift coefficient, denoted by $p_t(x,\cdot)$, has the following representation:
\begin{align}\label{rep_density_1}
	p_t(x,y)
	=q(0,x;t,y)
	\e
	\left[
	Z_t(1,Y^{0,x})
	~\middle|~Y_t^{0,x}=y
	\right],~\text{a.e.},~y \in \real^d,
\end{align}
where $q(0,x;t,\cdot)$ is the pdf of a solution of SDE without drift: $\rd Y_t^{0,x}=\sigma(t,Y_t^{0,x}) \rd W_t$, $Y^{0,x}_0=x$, $Z_t(1,Y^{0,x})$ is the Girsanov density (see, Theorem \ref{main_0} below), and $\e[~\cdot~|~Y_t^{0,x}=y ]$ is the expectation of a regular conditional probability given $Y_t^{0,x}=y$ for $y \in \real^d$.
This representation is an analogue of Maruyama's result on the proof of Girsanov's theorem (see, Theorem 1 in \cite{Mar54}).
Note that these representations were also shown by Qian and Zheng \cite{QiZh04} (see, Lemma 2.3 and Theorem 2.4 in \cite{QiZh04}) for time--homogeneous diffusion processes with drift coefficient $b:\real^d \to \real^d$ satisfying at most linear growth and Novikov condition.

The aim of this paper is to prove that the existence of a weak solution of SDEs under the assumptions that the drift coefficient is of linear growth and path--dependent, and diffusion coefficient is bounded, uniformly elliptic and H\"older continuous.
We apply the Gaussian upper bound for the pdf of a solution of SDE $\rd Y_t^{0,x}=\sigma(t,Y_t^{0,x}) \rd W_t$, $Y^{0,x}_0=x$ and ``local" Novikov condition, in order to use Maruyama--Girsanov transformation.
We will also show that the existence of a pdf and the representations \eqref{rep_density_0} and \eqref{rep_density_1} hold under linear growth condition on $b$.
By using these representations, we show that Gaussian two--sided bound and H\"older continuity hold under sub--linear growth condition on $b$.

As an application of the representation of a pdf, we consider the rate of convergence for a pdf of the Euler--Maruyama scheme, which is a standard discrete approximation for a solution of SDEs (see, \cite{KP,Ma55}).
It is worth noting that Maruyama's proof of Girsanov's theorem in \cite{Mar54} is based on this approximation.
For the Euler--Maruyama scheme $X^{(x,n)}$, many authors studies the strong error $\e[\sup_{0 \leq t <T} |X_t^{x}-X_t^{(x,n)}|^p]^{1/p}$, for some $p\geq 1$ and the weak error $|\e[f(X_T^{x})]-\e[f(X_T^{(x,n)})]|$ for some measurable function $f:\real^d \to \real$, (see, e.g. \cite{BT,BaSh18,KP,KLY,LeSz17b,Ma55,MeTa,MP,NT1,NT2,TT,Yan}, see also \cite{AnKo,BaKo15,DoOuWa17,HeTaTo17,IdIm,KoTaZh,KoYu} as an unbiased simulation scheme based on the Euler--Maruyama scheme with random grid and parametrix expansions/Malliavin weight).
On the other hand, recently, the convergence rate for a pdf of the Euler--Maruyama scheme studied by using the parametrix method and Malliavin calculus (see, e.g. \cite{G08,KoKoMe,Gu,NT2}).
In this paper, we provide the rate of convergence by using the representation of \eqref{rep_density_0} for pdfs of Brownian motion with path--dependent drift and its Euler--Maruyama approximation.

Recently, if the coefficients of SDE grow super--linearly, then the standard Euler--Maruyama approximation does not converge to a solution of the equation (see, Theorem 2.1 in \cite{HuJeKl11}).
In order to approximate a solution of Markov type SDE with super--linear growth coefficients, several tamed Euler--Maruyama approximations are proposed (see e.g., \cite{HuJeKl12,Sa13,Sa16}).
In this paper, inspired by \cite{HuJeKl12,Sa13,Sa16}, we use arguments of Fournier and Printems \cite{FoPr10} with ``one-step" tamed Euler--Maruyama approximation in order to prove absolute continuity of the law of $X_t^x$ with respect to the Lebesgue measure under super--linear growth, Khasminskii and one-sided Lipschitz condition on the coefficients.
It is worth noting that Romito \cite{Ro18} studied the existence and Besov regularity of the probability density function of a solution of SDEs with locally bounded drift, and locally H\"older continuous, elliptic diffusion coefficient, by using localization argument and one-step Euler--Maruyama scheme.
Therefore, the result proved in section \ref{Sec_super} is included in the result of Romito (see, Theorem 4.1 in \cite{Ro18}).
On the other hand, in our paper, we use directly one-step tamed Euler scheme, so the approach of proof is different.

This paper is structured as follows.
In section \ref{Sec_sde}, we prove the existence and weak uniqueness of a solution of SDE \eqref{SDE_1} with linear growth and path--dependent drift coefficient.
In section \ref{Sec_pdf}, we study a pdf of a solution of SDEs \eqref{SDE_1}.
We first provide two representations for a pdf in subsection \ref{Sub_exist}, and prove a Gaussian two--sided bound in subsection \ref{Sub_GB}.
In subsection \ref{Sub_para}, we provide third representation for a pdf of a solution of SDEs with path--dependent and bounded drift, and as an application of this representation, in subsection \ref{Sub_sharp}, we prove a sharp bounds for a pdf of Brownian motion with drift which is inspired by \cite{QiZh02,QiZh03} (see also, \cite{BaKr16}).
In subsection \ref{Sub_comp}, we consider a comparison property of pdfs, and in subsection \ref{Sub_EM}, as an application of a comparison property for pdfs of a solution of SDE and its Euler--Maruyama scheme, we provide its rate of convergence.
In subsection \ref{Sub_expa}, we consider the parametrix method for a pdf of a solution of Markovian SDEs with unbounded drift, and in subsection \ref{Sub_unbi} as an application of the parametrix method, we provide an unbiased simulation scheme introduced by Bally and Kohatsu-Higa \cite{BaKo15}.
In subsection \ref{Sub_Hol}, we study H\"older continuity of a pdf by using the explicit representation.
In section \ref{Sec_super}, we prove the existence of a pdf of a solution of one--dimensional SDEs under super--linear growth conditions on the coefficients.
In a short Appendix, we provide an explicit calculation for beta type integrals.


\subsection*{Notations}
We give some basic notations and definitions used throughout this paper.
We consider the elements of $\real^d$ are column vectors and for $x\in\real^d$, we denote $x=(x^1,\ldots,x^d)^{\top}$.
Let $T>0$ be fixed.
We denote by $C([0,\infty);\real^d)$ the space of continuous functions $w:[0,\infty) \to \real^d$ with metric $\rho$ defined by $\rho(w,w')=\sum_{k=1}^{\infty} 2^{-k} (\max_{0\leq t \leq k} |w_t-w_t'| \wedge 1).$
Let us denote by $\mathcal{B}(C([0,\infty);\real^d))$ the topological $\sigma$--field on $C([0,\infty);\real^d)$, and $\mathcal{B}_t(C([0,\infty);\real^d))$  the sub--$\sigma$--field defined by $\{\rho_t^{-1}(A)~|~A \in \mathcal{B}(C([0,\infty);\real^d))\}$, where $\rho_t(w)(s):=w(t \wedge s)$, (see, Chapter IV in \cite{IkWa}).
For $w \in C([0,\infty);\real^d)$, define $w_t^{*}:=\sup_{0 \leq s \leq t} |w_s|$.
Let $C_b^{\infty}(\real^d;\real^q)$ be the space of $\real^q$-valued functions such that all the derivatives are bounded.
For an invertible $d \times d$-matrix $A=(A_{i,j})_{1 \leq i,j\leq d}$, we denote $|A|^2:=\sum_{i,j=1}^{d} A_{i,j}^2$ and
$g_A(x,y)
=\frac{\exp(-\frac{1}{2} \langle A^{-1}(y-x),y-x \rangle)}
{(2\pi)^{d/2} \sqrt{\det A}},$
and $g_{c}(x,y)=g_{cI}(x,y)$, for $c \in \real \backslash \{0\}$ where the matrix $I$ is the identity matrix.
We denote the sign function by $\mbox{sgn}(x):=-\1_{(-\infty,0]}(x)+\1_{(0,\infty)}(x)$ for $x \in \real$, and the gamma function by $\Gamma(x):=\int_{0}^{\infty} t^{x-1}e^{-t} \rd t$ for $x \in (0,\infty)$.
We will use Hermite polynomials associated with the Gaussian density of order $1$ and $2$ denoted by $H^i$ and $H^{i,j}$, that is, for an invertible matrix $A$,
$
H_A^i(y):= -(A^{-1}y)^i \text{ and } H_A^{i,j}(y):= (A^{-1}y)^i (A^{-1}y)^j - (A^{-1})_{i,j}
$.

\section{SDEs with unbounded and path--dependent drift}\label{Sec_sde}

In this section, we will show that weak existence and uniqueness in law for a solution of SDE \eqref{SDE_1} on $[0,T]$.

It is known that if $\sigma$ is identity matrix $I_d$ and $b:[0,T] \times C([0,T];\real^d) \to \real^d$ is a progressively measurable functional on $C([0,T];\real^d)$ satisfying the linear growth condition $|b(t,w)|\leq K(1+w^*_t)$, for all $(t,w) \in [0,T] \times C([0,T];\real^d)$, then there exists a unique weak solution of SDE \eqref{SDE_1} (see, Proposition 5.3.6 and Remark 5.3.8 in \cite{KS}).
The idea of proof is based on Maruyama--Girsanov transform with ``local" Novikov condition (see, Corollary 3.5.14 and 3.5.16 in \cite{KS}).
In order to extend this result to non-constant diffusion matrix, we use the Gaussian upper bound for a pdf of a solution of SDE without drift coefficient (see, \eqref{SDE_2} below).

We need the following assumptions on the coefficients $b$ and $\sigma$.

\begin{Ass}\label{Ass_1}
	
	We suppose that the coefficients $b=(b^{(1)},\ldots,b^{(d)})^{\top}:[0,\infty) \times C([0,\infty);\real^d) \to \real^d$ and $\sigma=(\sigma_{i,j})_{1 \leq i,j \leq d}:[0,\infty)\times \real^d \to \real^{d\times d}$ satisfy the following conditions:
	\begin{itemize}
		\item[(i)]
		The drift coefficient $b$ is $\mathcal{B}([0,\infty)) \otimes \mathcal{B}(C([0,\infty);\real^d))/\mathcal{B}(\real^d)$-measurable and for each fixed $t>0$, the map $C([0,\infty);\real^d) \ni w\mapsto b(t,w) \in \real^d$ is $\mathcal{B}_t(C([0,\infty);\real^d))/\mathcal{B}(\real^d)$-measurable (see, Chapter IV, Definition 1.1 in \cite{IkWa}), and is of linear growth, that is, for each $T>0$, there exists $K(b,T)>0$ such that for any $(t,w) \in [0,T] \times C([0,T];\real^d)$,
		\begin{align*}
			|b(t,w)|
			\leq
			K(b,T)(1+w^{*}_t).
		\end{align*}
			
		\item[(ii)]
		$a:=\sigma \sigma^{\top}$ is $\alpha$-H\"older continuous in space and $\alpha/2$-H\"older continuous in time with $\alpha \in (0,1]$, that is,
		\begin{align*}
		\|a\|_{\alpha}:=
		\sup_{t \in [0,\infty), x \neq y}\frac{|a(t,x)-a(t,y)|}{|x-y|^{\alpha}}
		+\sup_{x\in \real^d, t \neq s}\frac{|a(t,x)-a(s,x)|}{|t-s|^{\alpha/2}}
		<\infty.
		\end{align*}
		
		\item[(iii)]
		The diffusion coefficient $\sigma$ is bounded and uniformly elliptic, that is, there exist $\underline{a}, \overline{a}>0$ such that for any $(t,x,\xi) \in [0,\infty) \times \real^d \times \real^d$,
		\begin{align*}
			\underline{a}|\xi|^2 \leq \langle a(t,x) \xi,\xi \rangle \leq \overline{a} |\xi|^2.
		\end{align*}
	\end{itemize}
\end{Ass}

\begin{Eg}\label{Eg_0}
	Suppose that $\nu:\mathbb{S}:=[0,\infty) \times \real^d \times [0,\infty) \times (\real^{d})^{\n} \times \real^{\ell} \to \real^d$ is measurable and satisfies that there exists $K>0$ and $\theta:=\{\theta_i\}_{i \in \n} \in [0,\infty)^{\n}$ such that $\|\theta\|_{\ell_1}:=\sum_{i \in \n} \theta_i<\infty$ and for any $\chi=(t,w,z,\{u_i\}_{i\in \n},v) \in \mathbb{S}$,
	\begin{align*}
		|\nu(\chi)|
		\leq K(1+\|\chi\|_{\theta}),
	\end{align*}
	where $\|\chi\|_{\theta}^2:=t^2+|w|^2+|z|^2+\sum_{i \in \n} \theta_i |u_i|^2+|v|^2$.
	We define $A_t : C([0,\infty);\real^d) \to \mathbb{S}$ by
	\begin{align}\label{Eg_1}
		A_t(w)
		=
		\left(
			t,
			w_t,
			\max_{0 \leq s \leq t} \zeta(s,w_s),
			\{w_{\tau_i(t)}\}_{i\in \n},
			\int_{0}^{t} c(s,w_s) \rd s,
		\right) \in \mathbb{S},
		~
		w \in C([0,\infty);\real^d),
	\end{align}
	where $\tau_i(t):=(t-\tau_i)\1_{(\tau_i,\infty)}(t)$, $\tau_i >0$, $i \in \n$,  $\zeta :[0,\infty) \times \real^{d} \to [0,\infty)$ and $c:[0,\infty) \times \real^d \to \real^{\ell}$ are measurable and of linear growth. 
	Then since $\zeta$ and $c$ are of linear growth, there exists $C>0$ such that for any $w \in C([0,\infty);\real^d)$, $\|A_t(\omega)\|_{\theta}\leq C(1+(1+\|\theta\|_{\ell_1})w^{*}_t)$, thus $b=\nu \circ A_{\cdot}$ satisfies Assumption \ref{Ass_1} (i).
\end{Eg}

Recall that we fixed $T>0$ arbitrarily.
Let us consider the following Markovian SDE without drift coefficient: for given $s \in [0,T)$,
\begin{align}\label{SDE_2}
	Y_t^{s,x}
	=x
	+\int_{s}^{t}
		\sigma(r,Y_r^{s,x})
	\rd W_r,~
	t \in [s,T].
\end{align}
If the diffusion coefficient $\sigma$ satisfies the Assumption \ref{Ass_1} (ii) and (iii), then a weak existence and uniqueness in law holds (see, e.g. Theorem 4.1 and 5.6 in \cite{StVa}).
Moreover, from Theorem 5.4 in \cite{Fr75}, $Y_t^{s,x}$ admits the pdf (fundamental solution) $q(s,x;t,\cdot)$ for any $t \in (s,T]$ and $q(s,x;t,y)$ is the solution of the Kolmogorov backward equation:
\begin{align}\label{pde_fund_0}
	(\partial_s + L_s)q(s,x;t,y)=0,
	~\lim_{s \uparrow t}\int_{\real^d} f(y) q(s,x;t,y) \rd y
	= f(x),~f \in C_b^{\infty}(\real^d;\real),
\end{align}
where $L_s$ is a differential operator defined by
\begin{align*}
	L_sf(x)
	:=\frac{1}{2}\sum_{i,j=1}^d a_{i,j}(s,x)\frac{\partial^2 f}{\partial x_i \partial x_j}(x),
\end{align*}
(see page 149 in \cite{Fr75}).
Moreover, for any $f \in C_b^{\infty}(\real^d;\real)$, the function $u(s,x;t):=\e\left[f\left( Y_{t}^{s,x} \right) \right]$
is a solution to the following partial differential equation:
\begin{align}
	\begin{split} \label{heat_eq_1}
		(\partial_s+L_s) u(s,x;t)
		&=0,~(s,x)\in [0,t) \times \real^d,\\
		u(t,x;t)
		&=f(x),~ x\in \real^d,
\end{split}
\end{align}
(see, Theorem 5.3 in \cite{Fr75}).

The following lemma shows that $q(s,x;t,\cdot)$ and its derivative with respect to $x$ satisfy the Gaussian bounded.

\begin{Lem} \label{bound_drev_1}
	Suppose that Assumption \ref{Ass_1}-(ii) and (iii) holds.
	Then for all $t \in (s,T]$, $Y_t^{s,x}$ admits the pdf $q(s,x;t,\cdot)$ with respect to Lebesgue measure, and there exist $\widehat{C}_{\pm}>0$ and $\widehat{c}_{\pm}>0$ such that for any $(t,x,y) \in (s,T] \times \real^d \times \real^d$,
	\begin{align}
	\widehat{C}_{-} g_{\widehat{c}_{-}(t-s)}(x,y)
	\leq 
	q(s,x;t,y)
	&\leq \widehat{C}_{+} g_{\widehat{c}_{+}(t-s)}(x,y),\label{bound_qt}\\
	|\partial_{x_i} q(s,x;t,y)|
	&\leq \frac{\widehat{C}_{+}}{(t-s)^{1/2}} g_{\widehat{c}_{+}(t-s)}(x,y).\label{bound_drev_2}
	\end{align}
\end{Lem}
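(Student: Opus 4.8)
The plan is to derive both \eqref{bound_qt} and \eqref{bound_drev_2} from Levi's parametrix construction of the fundamental solution $q$, as developed in \cite{Fr64,Fr75}; the existence of $q(s,x;t,\cdot)$ as the pdf of $Y_t^{s,x}$ and as the solution of \eqref{pde_fund_0} is already provided by Theorem 5.4 in \cite{Fr75}, so only the bounds remain. First I would recall the parametrix representation
\[
q(s,x;t,y)=Z(s,x;t,y)+\int_s^t\!\!\int_{\real^d}Z(s,x;r,z)\,\Phi(r,z;t,y)\,\rd z\,\rd r,
\]
where $Z(s,x;t,y):=g_{a(t,y)(t-s)}(x,y)$ is the Gaussian kernel with the diffusion matrix frozen at the pole $(t,y)$, the parametrix kernel is $K(s,x;t,y):=(\partial_s+L_s)Z(s,x;t,y)=\frac12\sum_{i,j=1}^d(a_{i,j}(s,x)-a_{i,j}(t,y))\partial_{x_i}\partial_{x_j}Z(s,x;t,y)$, and $\Phi=\sum_{k\ge1}K_k$ with $K_1:=K$ and $K_{k+1}$ the space--time convolution of $K$ with $K_k$.

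The first block of estimates is elementary. Assumption \ref{Ass_1}-(iii) gives $\underline a\,I\le a(t,y)\le\overline a\,I$, so comparing the quadratic forms in the exponents and absorbing polynomial prefactors into slightly larger Gaussians yields constants depending only on $d,\underline a,\overline a$ with $c_- g_{\gamma_-(t-s)}(x,y)\le Z(s,x;t,y)\le c_+ g_{\gamma_+(t-s)}(x,y)$, $|\partial_{x_i}Z|\le C(t-s)^{-1/2}g_{\gamma(t-s)}(x,y)$ and $|\partial_{x_i}\partial_{x_j}Z|\le C(t-s)^{-1}g_{\gamma(t-s)}(x,y)$. The crucial smoothing estimate for $K$ then uses Assumption \ref{Ass_1}-(ii): since $|a_{i,j}(s,x)-a_{i,j}(t,y)|\le\|a\|_{\alpha}(|x-y|^{\alpha}+|t-s|^{\alpha/2})$ and $|x-y|^{\alpha}g_{\gamma(t-s)}(x,y)\le C(t-s)^{\alpha/2}g_{\gamma'(t-s)}(x,y)$, one obtains
\[
|K(s,x;t,y)|\le C\,(t-s)^{-1+\alpha/2}\,g_{\gamma(t-s)}(x,y),
\]
an integrable singularity of order $-1+\alpha/2$ in time, which is what makes the series converge.

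Next I would iterate, using the Chapman--Kolmogorov identity $\int_{\real^d}g_{c_1\tau_1}(x,z)g_{c_2\tau_2}(z,y)\,\rd z=g_{c_1\tau_1+c_2\tau_2}(x,y)$ together with the beta--type integral $\int_s^t(r-s)^{\mu-1}(t-r)^{\nu-1}\,\rd r=(t-s)^{\mu+\nu-1}B(\mu,\nu)$ (computed in the Appendix). An induction gives $|K_k(s,x;t,y)|\le\frac{C^k\Gamma(\alpha/2)^k}{\Gamma(k\alpha/2)}(t-s)^{-1+k\alpha/2}g_{\gamma(t-s)}(x,y)$; since $\sum_{k\ge1}\frac{C^k\Gamma(\alpha/2)^k}{\Gamma(k\alpha/2)}<\infty$ and $t-s\le T$, the series converges and $|\Phi(s,x;t,y)|\le C(t-s)^{-1+\alpha/2}g_{\gamma(t-s)}(x,y)$. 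Plugging this into the representation, the double integral is controlled, by one more Chapman--Kolmogorov step and $\int_s^t(t-r)^{-1+\alpha/2}\,\rd r=\tfrac{2}{\alpha}(t-s)^{\alpha/2}$, by $C(t-s)^{\alpha/2}g_{\gamma(t-s)}(x,y)$; combined with the bound on $Z$ this gives the upper estimate in \eqref{bound_qt}. For \eqref{bound_drev_2} I would differentiate the representation in $x$ (differentiation under the integral sign being justified as in \cite{Fr64}), use $|\partial_{x_i}Z|\le C(t-s)^{-1/2}g_{\gamma(t-s)}$, and estimate $\int_s^t(r-s)^{-1/2}(t-r)^{-1+\alpha/2}\,\rd r=(t-s)^{-1/2+\alpha/2}B(1/2,\alpha/2)<\infty$, obtaining $|\partial_{x_i}q(s,x;t,y)|\le C(t-s)^{-1/2}g_{\gamma(t-s)}(x,y)$.

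It remains to prove the Gaussian lower bound in \eqref{bound_qt}. From the representation, the correction term is of order $(t-s)^{\alpha/2}$ smaller than $Z$ on the diagonal, so for some $\delta_0,\kappa,c>0$ one has the near--diagonal bound $q(s,x;t,y)\ge c\,g_{\gamma_-(t-s)}(x,y)$ whenever $0<t-s\le\delta_0$ and $|x-y|^2\le\kappa(t-s)$. To upgrade this to all $(s,t)$ with $0<t-s\le T$ and all $y\in\real^d$, I would use the Chapman--Kolmogorov identity for $q$ itself (valid since $q$ is a transition density): partition $[s,t]$ into $N\asymp 1+\lceil|x-y|^2/(t-s)\rceil$ equal subintervals, interpolate linearly between $x$ and $y$, apply the near--diagonal bound on each step, and multiply the $N$ resulting factors, which reproduces $g_{\gamma_-(t-s)}(x,y)$ up to constants. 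I expect the main bookkeeping obstacle to be the convergence of the parametrix series with the correct Gaussian and beta--function estimates, together with this chaining argument for the lower bound; both are classical, and the whole lemma could alternatively be quoted directly from \cite{Fr64,Fr75} (compare also Aronson \cite{Ar67} for the divergence--form analogue).
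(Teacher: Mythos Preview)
Your proposal is correct and matches the paper's approach: the paper does not give a self-contained proof but simply cites \cite{Fr64}, Theorem 9.4.2 for the upper bound \eqref{bound_qt} and \eqref{bound_drev_2}, and refers to section 4.2 in \cite{LeMe}, \cite{Sh91}, and Chapter 7, section 6 in \cite{Bass} for the Gaussian lower bound. Your sketch is precisely the parametrix construction and chaining argument carried out in those references, so you have effectively reproduced what the paper takes as known.
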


The upper bound \eqref{bound_qt} and \eqref{bound_drev_2} are shown in \cite{Fr64}, Theorem 9.4.2.
For the lower bound \eqref{bound_qt}, we refer section 4.2 in \cite{LeMe}, and \cite{Sh91} for time independent case, see also Chapter 7, section 6 in \cite{Bass}.

By using the Gaussian upper bound \eqref{bound_qt}, we prove that the existence and uniqueness in law holds for SDE \eqref{SDE_1} under linear growth condition on $b$.

\begin{Thm}\label{main_0}
	Suppose that Assumption \ref{Ass_1} holds.
	Then SDE \eqref{SDE_1} has a weak solution and uniqueness in law holds on $[0,T]$.
	In particular, for any measurable functional $f:C([0,T];\real^d) \to \real$ such that the expectation $\e[f(Y^{0,x}) Z_T(1,Y^{0,x})]$ exists, it holds that
	\begin{align}\label{Girsanov_0}
		\e[f(X^{x})]
		=\e[f(Y^{0,x}) Z_T(1,Y^{0,x})],
	\end{align}
	where for $q\in \real$, $Z(q,Y^{0,x})=(Z_t(q,Y^{0,x}))_{t \in [0,T]}$ is a martingale defined by
	\begin{align*}
		Z_t(q,Y^{0,x})
		&:=
		\exp\left(
			\sum_{j=1}^{d}
				\int_{0}^{t}
					q \mu^j(s,Y^{0,x})
				\rd W_s^{j}
			-\frac{1}{2}
				\int_{0}^{t}
					|q\mu(s,Y^{0,x})|^2
				\rd s
		\right),\\
		\mu(t,w)
		&:=
		\sigma(t,w_t)^{-1} b(t,w),~(t,w) \in [0,T] \times C([0,T];\real^d).
	\end{align*}
\end{Thm}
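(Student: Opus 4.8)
The plan is to obtain a weak solution of \eqref{SDE_1} from the driftless equation \eqref{SDE_2} with $s=0$ by a Maruyama--Girsanov change of measure, using that \eqref{SDE_2} is weakly well posed and that its transition density $q(0,x;t,\cdot)$ satisfies the Gaussian bounds of Lemma \ref{bound_drev_1}. Write $\mu(t,w)=\sigma(t,w_t)^{-1}b(t,w)$; since $\sigma^{-1}$ is bounded by Assumption \ref{Ass_1}(iii), Assumption \ref{Ass_1}(i) forces $\mu$ to be of linear growth, $|\mu(t,w)|\le \widetilde K(1+w^{*}_t)$ on $[0,T]\times C([0,T];\real^d)$ for some $\widetilde K=\widetilde K(b,\sigma,T)$. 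Hence $\int_0^t|\mu(s,Y^{0,x})|^2\,\rd s\le \widetilde K^2 t\bigl(1+((Y^{0,x})^{*}_t)^2\bigr)<\infty$ $\p$-a.s., so $M_t:=\sum_{j=1}^d\int_0^t\mu^j(s,Y^{0,x})\,\rd W^j_s$ is a well-defined continuous local martingale and $Z(1,Y^{0,x})$, being its stochastic exponential, is a nonnegative continuous local martingale, hence a supermartingale with $\e[Z_T(1,Y^{0,x})]\le 1$. The crux is to upgrade this to a true martingale.

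For that I would invoke the local Novikov criterion \cite[Corollary 3.5.14 and 3.5.16]{KS}: it suffices to exhibit a partition $0=t_0<\cdots<t_n=T$ with $\e\bigl[\exp\bigl(\tfrac12(\langle M\rangle_{t_i}-\langle M\rangle_{t_{i-1}})\bigr)\bigr]<\infty$ for every $i$. By the linear growth of $\mu$, $\langle M\rangle_{t_i}-\langle M\rangle_{t_{i-1}}=\int_{t_{i-1}}^{t_i}|\mu(s,Y^{0,x})|^2\,\rd s\le \widetilde K^2(t_i-t_{i-1})\bigl(1+((Y^{0,x})^{*}_{t_i})^2\bigr)$, so what is needed is an exponential–square integrability bound for the running maximum of the driftless diffusion on short intervals, uniformly over the endpoint in $[0,T]$. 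This is precisely where \eqref{bound_qt} enters: $e^{\lambda|Y^{0,x}_t-x|}$ is a submartingale (a convex function of the martingale $Y^{0,x}-x$), so Doob's maximal inequality together with $\e[e^{\lambda|Y^{0,x}_t-x|}]\le \widehat C_{+}\int_{\real^d}e^{\lambda|y-x|}g_{\widehat c_{+}t}(x,y)\,\rd y<\infty$ gives a sub-Gaussian tail $\p\bigl(\sup_{0\le s\le t}|Y^{0,x}_s-x|\ge r\bigr)\le C\,e^{-r^2/(Ct)}$, whence $\e\bigl[\exp\bigl(\lambda((Y^{0,x})^{*}_t)^2\bigr)\bigr]<\infty$ whenever $\lambda t$ is below an absolute constant $c_0=c_0(d,\widehat c_{+})$. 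Choosing a uniform partition of $[0,T]$ with mesh below $2c_0/(\widetilde K^2T)$ then makes every required exponential moment finite, so $Z(1,Y^{0,x})$ is a true $(\mathcal F_t,\p)$-martingale. Defining $\widetilde\p$ on $\mathcal F_T$ by $\rd\widetilde\p=Z_T(1,Y^{0,x})\,\rd\p$, the Maruyama--Girsanov theorem makes $\widetilde W_t:=W_t-\int_0^t\mu(s,Y^{0,x})\,\rd s$ a $d$-dimensional $(\mathcal F_t,\widetilde\p)$-Brownian motion, and substituting $\rd W=\rd\widetilde W+\mu(\,\cdot\,,Y^{0,x})\,\rd t$ into \eqref{SDE_2} and using $\sigma\sigma^{-1}b=b$ shows that $(Y^{0,x},\widetilde W)$ on $(\Omega,\mathcal F,(\mathcal F_t),\widetilde\p)$ is a weak solution of \eqref{SDE_1}; for measurable $f$ with the stated integrability, $\e[f(X^x)]=\e^{\widetilde\p}[f(Y^{0,x})]=\e[f(Y^{0,x})Z_T(1,Y^{0,x})]$, which is \eqref{Girsanov_0}.

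Uniqueness in law I would get by the reverse change of measure. Let $(\Omega',\mathcal F',(\mathcal F'_t),\p',W',X)$ be any weak solution of \eqref{SDE_1} on $[0,T]$ and set $\widehat Z_t:=\exp\bigl(-\int_0^t\langle\mu(s,X),\rd W'_s\rangle-\tfrac12\int_0^t|\mu(s,X)|^2\,\rd s\bigr)$. Localising by $\tau_R:=\inf\{t\ge0:|X_t|\ge R\}$, which increases to $\infty$ $\p'$-a.s. since $X$ has continuous paths, makes $\mu(\,\cdot\,,X)$ bounded on $[0,\tau_R]$, so $\widehat Z_{\cdot\wedge\tau_R}$ satisfies Novikov's condition and $\e^{\p'}[\widehat Z_{T\wedge\tau_R}]=1$; under $\rd\widehat\p_R=\widehat Z_{T\wedge\tau_R}\,\rd\p'$ the process $X$ solves \eqref{SDE_2} up to $\tau_R$, so a standard $L^2$ estimate using boundedness of $\sigma$ gives $\widehat\p_R(\tau_R\le T)\le C/R^2$ with $C$ independent of $R$. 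Since $\e^{\p'}[\widehat Z_{T\wedge\tau_R}\1_{\{\tau_R\le T\}}]=\widehat\p_R(\tau_R\le T)$ and $\e^{\p'}[\widehat Z_T\1_{\{\tau_R>T\}}]\uparrow\e^{\p'}[\widehat Z_T]$, letting $R\to\infty$ yields $\e^{\p'}[\widehat Z_T]=1$, so $\widehat Z$ is a $\p'$-martingale. With $\rd\widehat\p=\widehat Z_T\,\rd\p'$, Maruyama--Girsanov makes $B_t:=W'_t+\int_0^t\mu(s,X)\,\rd s$ an $(\mathcal F'_t,\widehat\p)$-Brownian motion and $X_t=x+\int_0^t\sigma(s,X_s)\,\rd B_s$ under $\widehat\p$, so by uniqueness in law for \eqref{SDE_2} the $\widehat\p$-law of $X$ is the law $Q_0$ of $Y^{0,x}$. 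Finally, since $\sigma$ is invertible, $B_t=\int_0^t\sigma(s,X_s)^{-1}\,\rd X_s$ and $\int_0^T\langle\mu(s,X),\rd B_s\rangle=\int_0^T\langle a(s,X_s)^{-1}b(s,X),\rd X_s\rangle$, so $\widehat Z_T^{-1}$ equals a fixed measurable functional $\Psi(X)$ of the path of $X$ (the same for every weak solution, since all have law $Q_0$ under $\widehat\p$); hence for bounded measurable $F$ on $C([0,T];\real^d)$, $\e^{\p'}[F(X)]=\e^{\widehat\p}[F(X)\widehat Z_T^{-1}]=\int F\,\Psi\,\rd Q_0$, which does not depend on the chosen weak solution. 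This is uniqueness in law, and it re-derives \eqref{Girsanov_0}. The main obstacle is the step flagged above — verifying the local Novikov condition, i.e. extracting the exponential–square bound on the running maximum of $Y^{0,x}$ from \eqref{bound_qt} and picking a partition fine enough relative to $T$ and $\widetilde K$; the martingale property of the reverse density $\widehat Z$ is comparatively cheap, handled by localisation together with the a priori moment bound for \eqref{SDE_2}.
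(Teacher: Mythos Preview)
Your proposal is correct and follows essentially the same strategy as the paper: local Novikov via the Gaussian upper bound \eqref{bound_qt} for the martingale property of $Z(1,Y^{0,x})$, Maruyama--Girsanov for weak existence, and a reverse change of measure for uniqueness in law.

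The only differences are in execution. For the local Novikov step, the paper applies Doob's $L^2$-inequality directly to the submartingale $\exp\bigl(c(t_n-t_{n-1})(1+|x|+|Y^{0,x}_t-x|)^2\bigr)$ and then integrates against the Gaussian bound \eqref{bound_qt}, whereas you first extract a sub-Gaussian tail for $\sup_{s\le t}|Y^{0,x}_s-x|$ from $e^{\lambda|Y^{0,x}_t-x|}$ and then integrate; both arrive at the same mesh constraint of order $1/(\widetilde K^2\widehat c_{+}T)$. For uniqueness, the paper localizes with $\tau_k=\inf\{t:\int_0^t|\mu(s,X)|^2\,\rd s=k\}$ and invokes \cite[Proposition 5.3.10]{KS} directly, while you localize with $\tau_R=\inf\{t:|X_t|\ge R\}$, upgrade the reverse density $\widehat Z$ to a true martingale via $\widehat\p_R(\tau_R\le T)\to 0$, and then express $\widehat Z_T^{-1}$ as a path functional of $X$. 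Your route is a bit more self-contained; the paper's is shorter since it outsources the bookkeeping to \cite{KS}. Both are valid.
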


Before proving Theorem \ref{main_0}, we prove the following lemma which shows that the random variable $X_T^{x,*}$ is $L^p$-integrable for all $p>0$.

\begin{Lem}\label{moment_0}
	Assume that $F:[0,\infty) \times C([0,\infty);\real^d) \to \real^d$ is $\mathcal{B}([0,\infty)) \otimes \mathcal{B}(C([0,\infty);\real^d))/\mathcal{B}(\real^d)$-measurable and for each fixed $t>0$, the map $C([0,\infty);\real^d) \ni w\mapsto F(t,w) \in \real^d$ is $\mathcal{B}_t(C([0,\infty);\real^d))/\mathcal{B}(\real^d)$-measurable, and is of linear growth, that is, for each $T>0$, there exists $K(F,T)>0$ such that for any $(t,w) \in [0,T] \times C([0,T];\real^d)$,
	\begin{align*}
	|F(t,w)|
	\leq
	K(F,T)(1+w^{*}_t).
	\end{align*}
	Suppose that Assumption \ref{Ass_1} holds and let $X^{x}$ be a solution of SDE \eqref{SDE_1}.
	Then for any $p>0$, there exists $C_{b,\sigma}(p,F,T)>0$ such that
	\begin{align*}
	\e\left[
	\sup_{t \in [0,T]}
	|F(t,X^{x})|^p
	\right]^{1/p}
	\leq C_{b,\sigma}(p,F,T) (1+|x|).
	\end{align*}
\end{Lem}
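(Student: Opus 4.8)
The plan is to prove the moment bound for $X^x$ first (which is the case $F = b$, or more directly $F(t,w) = w_t$ giving control of $X^{x,*}_T$), and then deduce the general statement from it via the linear growth of $F$. Before doing any of this, though, there is a subtle point: a priori we only know (from Theorem~\ref{main_0}) that $X^x$ is a \emph{weak} solution obtained via Maruyama--Girsanov, so $X^x$ has the same law as $Y^{0,x}$ under the change of measure with density $Z_T(1,Y^{0,x})$. Thus it is natural to work on the canonical space and write, for any nonnegative functional $\Phi$,
\begin{align*}
	\e[\Phi(X^x)] = \e[\Phi(Y^{0,x}) Z_T(1,Y^{0,x})].
\end{align*}
The difficulty is that $Z_T(1,Y^{0,x})$ is not bounded and $\mu(t,Y^{0,x}) = \sigma(t,Y^{0,x}_t)^{-1}b(t,Y^{0,x})$ grows linearly in $(Y^{0,x})^*_t$, so one cannot simply bound $Z_T$ by a constant. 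The way around this is the standard Cauchy--Schwarz splitting combined with the fact that moments of $Y^{0,x}$ are Gaussian-type (via Lemma~\ref{bound_drev_1}, or directly since $Y^{0,x}$ is a martingale with bounded quadratic variation): write $\e[\Phi(Y^{0,x})Z_T] \le \e[\Phi(Y^{0,x})^2]^{1/2}\e[Z_T^2]^{1/2}$ but $\e[Z_T^2]$ is \emph{not} finite in general either.

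So the genuinely correct route is the one already hinted at in the text: a Gronwall-type argument in the \emph{original} probability space. Let me instead set up the estimate directly for the solution $X^x$. From the SDE, for $p \ge 2$,
\begin{align*}
	|X^x_t|^p \le C_p\Big( |x|^p + \Big|\int_0^t b(s,X^x)\,\rd s\Big|^p + \Big|\int_0^t \sigma(s,X^x_s)\,\rd W_s\Big|^p\Big).
\end{align*}
Taking $\sup_{t\le u}$ and expectations, using Jensen/H\"older on the drift term ($\int_0^u |b(s,X^x)|^p\,\rd s \le K^p \int_0^u (1+(X^x)^*_s)^p\,\rd s$), the Burkholder--Davis--Gundy inequality together with boundedness of $\sigma$ on the martingale term, one obtains
\begin{align*}
	\e[((X^x)^*_u)^p] \le C\Big(|x|^p + 1 + \int_0^u \e[((X^x)^*_s)^p]\,\rd s\Big).
\end{align*}
The technical obstacle — and the main one — is that this Gronwall argument requires knowing a priori that $\e[((X^x)^*_u)^p] < \infty$ so that it is legitimate to move it to the left side. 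This is handled by the usual localization: introduce stopping times $\tau_N = \inf\{t : |X^x_t| \ge N\}$, run the estimate for $X^x_{t\wedge\tau_N}$ (where everything is bounded), apply Gronwall to get a bound uniform in $N$, and then let $N\to\infty$ by Fatou. One should double-check that the linear growth of $b$ is path-dependent, i.e. $|b(s,X^x)| \le K(1+ (X^x)^*_s)$ and not $K(1+|X^x_s|)$, but this only affects constants, not the structure of the argument, since $(X^x)^*_s$ is exactly what Gronwall is applied to.

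Finally, for general $F$ of linear growth: once $\e[((X^x)^*_T)^p]^{1/p} \le C(1+|x|)$ is established for every $p>0$, the linear growth bound $\sup_{t\le T}|F(t,X^x)| \le K(F,T)(1 + (X^x)^*_T)$ gives immediately
\begin{align*}
	\e\Big[\sup_{t\le T}|F(t,X^x)|^p\Big]^{1/p} \le K(F,T)\big(1 + \e[((X^x)^*_T)^p]^{1/p}\big) \le C_{b,\sigma}(p,F,T)(1+|x|),
\end{align*}
where for $p \in (0,2)$ one first gets the bound for some integer exponent $\ge 2$ and then uses $\|\cdot\|_{L^p} \le \|\cdot\|_{L^{p'}}$ for $p \le p'$ on the finite time horizon. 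I expect the Gronwall/localization step, and in particular the a priori finiteness needed to close it, to be the only real obstacle; everything else is bookkeeping with BDG and H\"older.
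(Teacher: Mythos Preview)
Your approach is correct and essentially identical to the paper's: the paper's proof is a three-line sketch that says ``apply Jensen and Burkholder--Davis--Gundy to obtain $\e[|X_t^{x,*}|^p] \le \widetilde C_1(1+|x|^p) + \widetilde C_2\int_0^t \e[|X_s^{x,*}|^p]\,\rd s$, then Gronwall,'' and reduces $p<2$ to $p\ge 2$. You supply more detail than the paper does --- in particular the localization via $\tau_N$ to justify finiteness before closing Gronwall, and the explicit reduction from general $F$ to $X^{x,*}$ --- but these are standard refinements of the same argument; your initial detour through the Girsanov representation is unnecessary (the lemma simply assumes a solution exists), and you correctly abandon it.
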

\begin{proof}
	It is suffices to show the statement for $p \geq 2$.
	Since $b$ is of linear growth and $\sigma$ is bounded, applying Jensen's inequality and Burkholder-Davis-Gundy's inequality, there exist $\widetilde{C}_1,\widetilde{C}_2>0$ such that for all $t \in [0,T]$,
	\begin{align*}
	\e\left[
	|X_t^{x,*}|^p
	\right]
	\leq
	\widetilde{C}_1(1+|x|^p)
	+\widetilde{C}_2\int_{0}^{t}
	\e\left[
	|X_s^{x,*}|^p
	\right]
	\rd s.
	\end{align*}
	Hence Gronwall's inequality implies the statement.
\end{proof}

\begin{proof}[Proof of Theorem \ref{main_0}]
	The proof is based on Corollary 3.5.16, Proposition 5.3.6, Remark 5.3.8 and Proposition 3.10 in \cite{KS}.
	
	We first prove that $Z(q,Y^{0,x})$ is a martingale for all $q \in \real$.
	Since usual Novikov condition may be fail in this setting, we apply a local Novikov condition as follows.
	From Corollary 3.5.14 in \cite{KS}, it suffices to prove that for any fixed $T>0$, there exist $n(T) \in \n$ and a sequence $\{t_0,\ldots,t_{n(T)}\}$ such that $0=t_0<t_1< \cdots <t_{n(T)}=T$ and 
	\begin{align*}
		\e\left[
			\exp
				\left(
					\frac{1}{2}
					\int_{t_{n-1}}^{t_n}
						|q\mu(s,Y^{0,x})|^2
					\rd s
				\right)
		\right]
		<\infty,~\text{for all } n=1,\ldots,n(T).
	\end{align*}
	Let $M_t^{x}:=Y_t^{0,x}-x$.
	Then $M^{x}=(M_t^{x})_{t \in [0,T]}$ is a martingale since $\sigma$ is bounded.
	By Assumption \ref{Ass_1} (i) and (iii), we have
	\begin{align*}
		\int_{t_{n-1}}^{t_n}
			|q\mu(s,Y^{0,x})|^2
		\rd s
		\leq
		(t_n-t_{n-1}) \underline{a} |qK(b,T)|^2(1+|x|+M^{x,*}_T)^2.
	\end{align*}
	Note that $U_t^{x}:=\exp(\frac{1}{4}(t_n-t_{n-1}) \underline{a} |qK(b,T)|^2(1+|x|+|M_t^{x}|)^2)$ is a sub-martingale, so by using Doob's inequality (see, Theorem 1.3.8 (iv) in \cite{KS}), we have
	\begin{align*}
		\e\left[
			\exp
				\left(
					\frac{1}{2}
					\int_{t_{n-1}}^{t_n}
						|q \mu(r,Y^{0,x})|^2
					\rd r
				\right)
		\right]
		&\leq
		\e\left[
			\exp\left(
				\frac{1}{2}(t_n-t_{n-1}) \underline{a} |qK(b,T)|^2(1+|x|+M^{x,*}_T)^2
			\right)
		\right]\\
		&=
		\e\left[
			|U_T^{x,*}|^2
		\right]
		\leq
		4\e[|U_T^{x}|^2].
	\end{align*}
	Using the Gaussian upper bound \eqref{bound_qt}, we have
	\begin{align*}
		\e[|U_T^{x}|^2]
		&=\int_{\real^d}
			\exp\left(
				\frac{1}{2}(t_n-t_{n-1}) \underline{a} |qK(b,T)|^2(1+|x|+|y-x|)^2
			\right)
			q(0,x;T,y)
		\rd y\\
		&\leq
		\widehat{C}_{+}
		\int_{\real^d}
			\exp\left(
			\frac{1}{2}(t_n-t_{n-1}) \{2 \underline{a} |qK(b,T)|^2\} \{(1+|x|)^2+|y-x|^2\}
			\right)
			g_{\widehat{c}_{+} T}(x,y)
		\rd y.
	\end{align*}
	We choose $n(T) \in \n$ and the sequence $\{t_0,\ldots,t_{n(T)}\}$ satisfying
	\begin{align*}
		t_n-t_{n-1}
		\leq \frac{1}{2 \underline{a} |qK(b,T)|^2 \widehat{c}_{+} T},
	\end{align*}
	which provides $\e[|U_T^{x}|^2]<\infty$.
	This concludes that $Z(q,Y^{0,x})$ is a martingale.
	
	We define the new measure $\mathbb{Q}$ on the measurable space $(\Omega, \mathcal{F}_T)$ as
	\begin{align*}
		\frac{\rd \mathbb{Q}}{\rd \p}
		=Z_T(1,Y^{0,x}).
	\end{align*}
	Since $Z(1,Y^{0,x})$ is a martingale, the measure $\mathbb{Q}$ is a probability measure on $(\Omega, \mathcal{F}_T)$.
	Moreover, from Maruyama--Girsanov theorem, $B=(B_t=(B_t^1,\ldots,B_t^d)^{\top})_{0\leq t \leq T}$, which is defined by for each $j=1,\ldots,d$,
	\begin{align*}
		B_t^j
		:=W_t^j
		-\left\langle
			W^j, \sum_{\ell=1}^{d} \int_{0}^{\cdot} \mu^{\ell} (s,Y^{0,x}) \rd W_s^{\ell}
		\right \rangle_t
		=W_t^j
		-\int_0^t
			\mu^j (s,Y^x)
		\rd s,
	\end{align*}
	is a $d$-dimensional standard Brownian motion on the probability space $(\Omega,\mathcal{F}_T,\mathbb{Q})$.
	Hence we have
	\begin{align*}
		\rd Y_t^{0,x}
		=
		\sigma(t,Y_t^{0,x}) \rd W_t
		=
		b(t,Y^{0,x}) \rd t
		+\sigma(t,Y_t^{0,x}) \rd B_t,
	\end{align*}
	thus, $Y^x$ is a solution of SDE \eqref{SDE_1} with $Y_0^{0,x}=x$ under the probability measure $\mathbb{Q}$.
	
	Next, we prove the uniqueness in law.
	The proof is based on Proposition 5.3.10 in \cite{KS}.
	Let $(X^{i},W^{i})$, $(\Omega^{i}, \mathcal{F}^{i}, \p^{i})$, $\{\mathcal{F}^{i}_{t}\}_{t \geq 0}$, $i=1,2$ be two solution of SDE \eqref{SDE_1}.
	For each $k \geq 1$, let
	\begin{align*}
		\tau^{i}_{k} := T \wedge 
		\inf
		\left\{
			t \in [0,T]~;~ \int_{0}^{t} |\mu (s,X^i)|^2 \rd s =k
		\right\} .
	\end{align*}
	From Lemma \ref{moment_0}, $\tau^{i}_{k} \to T$ as $k \to \infty$, almost surely.
	Then, for each $k \in \n$ and $i=1,2$
	$(Z_{t\wedge \tau_k^i}(-1,X^i))_{t \in [0,T]}$ is a martingale on $(\Omega^{i}, \mf_{T}^{i}, \p^{i} )$.
	For each $i=1,2$, we define new a measure on $(\Omega^i, \mf_T^i )$ as
	\begin{align*}
	\frac{\rd \mathbb{Q}_k^{i}}{\rd \p^{i}}
	=Z_{T \wedge \tau_k^i}(-1,X^{i}).
	\end{align*} 
	Then from Maruyama--Girsanov theorem,  for $i=1,2$, $(B_{t \wedge \tau_k^i}^i=(B_{t \wedge \tau_k^i}^{i,1},\ldots,B_{t \wedge \tau_k^i}^{i,d})^{\top})_{0\leq t \leq T}$, which is defined by for each $j=1,\ldots,d$,
	$
	B_t^{i,j}
	:=
	W_t^{i,j}
	-\int_0^t
	\mu^j (s,X^i)
	\rd s,
	$
	are $d$-dimensional standard Brownian motion on the probability space $(\Omega^{i}, \mathcal{F}^{i}_{T}, \mathbb{Q}_{k}^{i})$, and then, for  each $k\geq1$ and $i=1,2$, the process $(X_{t \wedge \tau_k^i}^i, B_{t \wedge \tau_k^i}^i)_{0 \leq t \leq T}$  is a solution of \eqref{SDE_2} under $\mathbb{Q}_k^{i}$. By the same way of Proposition 3.10 in \cite{KS} , the uniqueness in law for SDE \eqref{SDE_2} implies the uniqueness in law for SDE \eqref{SDE_1}.  
	
	
	Finally, we prove \eqref{Girsanov_0}.
	By the uniqueness in law of $X^{x}$, for any measurable functional $f:C([0,T];\real^d) \to \real$ such that the expectation $\e[f(Y^x) Z_T(1,Y^x)]$ exists, we have
	\begin{align*}
		\e[f(X^{x})]
		=
		\e_{\mathbb{Q}}[f(Y^{0,x})]
		=\e
		\left[
			f(Y^{0,x})
			\frac{\rd \mathbb{Q}}{\rd \mathbb{P}}
		\right]
		=\e
		\left[
			f(Y^{0,x})
			Z_T(1,Y^{0,x})
		\right],
	\end{align*}
	which concludes the proof.
\end{proof}

\section{PDF of a solution of SDEs with unbounded and path--dependent  drift}\label{Sec_pdf}

In this section, we show that the existence, representation, Gaussian two--sided bound and H\"older continuity for a pdf of a solution of SDEs with unbounded and  path--dependent drift coefficient.

\subsection{Existence and representations}\label{Sub_exist}

We obtain the existence and representations for a  pdf of a solution of SDE \eqref{SDE_1} under linear growth condition on $b$.

\begin{Thm}\label{main_1}
	Suppose Assumption \ref{Ass_1} holds.
	Then for any $(t,x) \in (0,T] \times \real^d $, $X_t^x$ admits a pdf, denoted by $p_t(x,\cdot)$, with respect to Lebesgue measure and it has the following representations
	\begin{align}
		p_t(x,y)
		&=q(0,x;t,y)
		+ \int_0^t
			\e\left[
				\langle
					\nabla_x q(s,X_s^x;t,y), b(s,X^x)
				\rangle
			\right]
		\rd s,
		~\text{a.e.},~y \in \real^d,\label{density_1}\\
		&=q(0,x;t,y)
		\e
		[
			Z_t(1,Y^{0,x})
			~|~Y_t^{0,x}=y
		],~\text{a.e.},~y \in \real^d\label{density_2},
	\end{align}
	where $\e[~\cdot~|~Y_t^{0,x}=y ]$ is the expectation of a regular conditional probability given $Y_t^{0,x}=y$ for $y \in \real^d$.
\end{Thm}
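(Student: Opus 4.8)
The plan is to prove the two representations separately: \eqref{density_2} comes directly out of the Girsanov identity \eqref{Girsanov_0}, while \eqref{density_1} follows from It\^o's formula applied to $q(s,X_s^x;t,y)$ together with the backward Kolmogorov equation \eqref{pde_fund_0}. For \eqref{density_2} I would apply \eqref{Girsanov_0} with the functional $f(w)=\phi(w_t)$, $\phi\in C_b^{\infty}(\real^d;\real)$. Since $\phi(Y_t^{0,x})$ is $\mathcal{F}_t$-measurable and $Z(1,Y^{0,x})$ is a martingale by Theorem \ref{main_0}, the tower property gives $\e[\phi(X_t^x)]=\e[\phi(Y_t^{0,x})Z_T(1,Y^{0,x})]=\e[\phi(Y_t^{0,x})Z_t(1,Y^{0,x})]$. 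Disintegrating the last expectation with respect to the law of $Y_t^{0,x}$, which by Lemma \ref{bound_drev_1} has density $q(0,x;t,\cdot)$, via the regular conditional probability given $Y_t^{0,x}=y$, yields
\begin{align*}
\e[\phi(X_t^x)]
=\int_{\real^d}\phi(y)\,q(0,x;t,y)\,\e\!\left[Z_t(1,Y^{0,x})\,\middle|\,Y_t^{0,x}=y\right]\rd y .
\end{align*}
As this holds for every such $\phi$, $X_t^x$ admits the density $p_t(x,y)=q(0,x;t,y)\,\e[Z_t(1,Y^{0,x})\mid Y_t^{0,x}=y]$, which is finite for a.e.\ $y$ since $\e[Z_t(1,Y^{0,x})]=1$.

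For \eqref{density_1}, fix $y\in\real^d$ and $\varepsilon\in(0,t)$. By the parametrix theory recalled before Lemma \ref{bound_drev_1}, $(s,z)\mapsto q(s,z;t,y)$ is of class $C^{1,2}$ on $[0,t)\times\real^d$ and satisfies $(\partial_s+L_s)q(\cdot,\cdot;t,y)=0$. Since $X^x$ is a weak solution of \eqref{SDE_1}, It\^o's formula applied to $q(s,X_s^x;t,y)$ on $[0,t-\varepsilon]$, combined with this equation, shows that the finite-variation part equals $\int_0^{t-\varepsilon}\langle\nabla_x q(s,X_s^x;t,y),b(s,X^x)\rangle\,\rd s$, while the local martingale part $\int_0^{t-\varepsilon}\langle\sigma(s,X_s^x)^{\top}\nabla_x q(s,X_s^x;t,y),\rd W_s\rangle$ is a true martingale: for $\varepsilon$ fixed, the boundedness of $\sigma$ and the bound \eqref{bound_drev_2} keep its quadratic variation bounded on $[0,t-\varepsilon]$. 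Taking expectations,
\begin{align*}
\e\!\left[q(t-\varepsilon,X_{t-\varepsilon}^x;t,y)\right]
=q(0,x;t,y)
+\e\!\left[\int_0^{t-\varepsilon}\langle\nabla_x q(s,X_s^x;t,y),b(s,X^x)\rangle\,\rd s\right].
\end{align*}

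It then remains to let $\varepsilon\downarrow0$ after multiplying by $\phi\in C_b^{\infty}(\real^d;\real)$ and integrating in $y$. Since $\int_{\real^d}\phi(y)q(s,z;t,y)\rd y=u(s,z;t):=\e[\phi(Y_t^{s,z})]$, the left-hand side becomes $\e[u(t-\varepsilon,X_{t-\varepsilon}^x;t)]$; as $u$ solves \eqref{heat_eq_1}, is bounded by $\|\phi\|_{\infty}$, is continuous up to $s=t$ with $u(t,\cdot;t)=\phi$, and $X^x$ has continuous paths, dominated convergence gives $\e[u(t-\varepsilon,X_{t-\varepsilon}^x;t)]\to\e[\phi(X_t^x)]$. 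For the right-hand side I would invoke Fubini's theorem together with the estimate, using $\int_{\real^d}g_{\widehat{c}_{+}(t-s)}(X_s^x,y)\rd y=1$, the bound \eqref{bound_drev_2}, the linear growth of $b$ and Lemma \ref{moment_0},
\begin{align*}
\int_{\real^d}\int_0^t\e\big[\,|\langle\nabla_x q(s,X_s^x;t,y),b(s,X^x)\rangle|\,\big]\rd s\,\rd y
\le\widehat{C}_{+}K(b,T)\int_0^t\frac{\e[1+X_s^{x,*}]}{(t-s)^{1/2}}\rd s<\infty ,
\end{align*}
which also shows that $s\mapsto\e[\langle\nabla_x q(s,X_s^x;t,y),b(s,X^x)\rangle]$ is integrable on $(0,t)$ for a.e.\ $y$; dominated convergence then upgrades $\int_0^{t-\varepsilon}$ to $\int_0^t$. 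As the resulting identity holds for every $\phi$, $X_t^x$ has a density equal to the right-hand side of \eqref{density_1}, which coincides a.e.\ with the one in \eqref{density_2}.

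The main obstacle is the singularity of $q(s,\cdot;t,y)$ and of $\nabla_x q(s,\cdot;t,y)$ as $s\uparrow t$, which prevents a direct application of It\^o's formula on $[0,t]$; the argument must be run on $[0,t-\varepsilon]$ and the two limits $\varepsilon\downarrow0$ — the weak convergence of the left-hand side against test functions, and the Fubini/integrability control of the right-hand side — carried out carefully by means of the Gaussian estimates of Lemma \ref{bound_drev_1} and the moment bound of Lemma \ref{moment_0}.
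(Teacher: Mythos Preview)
Your proposal is correct and follows essentially the same approach as the paper. The only cosmetic difference is the order of operations for \eqref{density_1}: the paper applies It\^o's formula directly to $u(s,X_s^x;t)=\int_{\real^d}f(y)q(s,X_s^x;t,y)\rd y$ on $[0,t-\varepsilon]$ and performs the Fubini step at the end, whereas you apply It\^o's formula to $q(s,X_s^x;t,y)$ for fixed $y$ first and then integrate against $\phi$; both routes rely on the same Gaussian bound \eqref{bound_drev_2}, the same martingale argument on $[0,t-\varepsilon]$, and the same integrability estimate via Lemma \ref{moment_0}.
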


\begin{proof}
	We first show the second representation \eqref{density_2}.
	From Theorem \ref{main_0}, $Z(1,Y^{0,x})$ is a martingale, thus for any $f \in C_b^{\infty}(\real^d;\real)$, it holds that
	\begin{align*}
	\e[f(X_t^x)]
	=\e[f(Y^{0,x}_t) Z_T(1,Y^{0,x})]
	=\e[f(Y^{0,x}_t) \e[ Z_T(1,Y^{0,x})~|~\mathcal{F}_t]]
	=\e[f(Y^{0,x}_t) Z_t(1,Y^{0,x})].
	\end{align*}
	On the other hand, from Theorem 1.3.3 in \cite{IkWa}, there exists a regular conditional probability given $Y_t^{0,x}=y$ for $y \in \real^d$, denoted by $\p(~\cdot~|~Y_t^{0,x}=y)$, such that
	\begin{align*}
	\e[f(Y_t^{0,x}) Z_t(1,Y^{0,x})]
	=\int_{\real^d}
		f(y)
		\e
		[
			Z_t(1,Y^{0,x})
			~|~Y_t^{0,x}=y
		]
	\p(Y_t^{0,x} \in \rd y),
	\end{align*}
	where $\e[~\cdot~|~Y_t^{0,x}=y ]$ is the expectation with respect to $\p(~\cdot~|~Y_t^{0,x}=y)$.
	Therefore, it holds that for each fixed $(t,x) \in (0,T] \times \real^d$,
	\begin{align*}
		p_t(x,y)
		=q(0,x;t,y)
		\e
		[
			Z_t(1,Y^{0,x})
			~|~Y_t^{0,x}=y
		]
		\in [0,\infty)
		,~\text{a.e.},~y \in \real^d,
	\end{align*}
	which is the second representation \eqref{density_2}.
	
	Now we show the first representation \eqref{density_1}.
	It suffices to prove that for any $f \in C_b^{\infty}(\real^d; \real)$,
	\begin{align}\label{Ito_6}
		\e[f(X_t^{x})]
		=
		\int_{\real^d} f(y)
			\left\{
				q(0,x;t,y)
				+
				\int_0^t
					\e\left[ \langle \nabla_x q(s,X_s^x;t,y), b(s,X) \rangle \right]
				\rd s
			\right\}
		\rd y.
	\end{align}
	By the definition of $u(s,x;t)$, we have
	\begin{align} \label{expectation_2}
		\e\left[f\left(Y_t^{0,x} \right) \right]
		&=u(0,x;t),
		\quad
		\e\left[f(X_t^x)\right]
		=\e\left[u(t,X_t^x;t)\right].
	\end{align}
	By using It\^o's formula, it holds that for any $\varepsilon \in (0,t)$,
	\begin{align*}
		u(t-\varepsilon,X_{t-\varepsilon}^x;t)
		=&u(0,x;t)
		+\int_0^{t-\varepsilon} (\partial_s+L_s) u(s,X_s^x;t) \rd s 
		+\int_0^{t-\varepsilon} \langle \nabla_x u(s,X_s^x;t),b(s,X^x) \rangle \rd s \notag \\
		&+ \sum_{i,j=1}^{d}\int_0^{t-\varepsilon} \sigma_{i,j}(s,X_s^x) \partial_{x_i} u(s,X_s^x;t) \rd W_s^j.
	\end{align*}
	Since $u(s,x;t)$ is a solution to the heat equation \eqref{heat_eq_1}, it holds that
	\begin{align*}
		u(t-\varepsilon,X_{t-\varepsilon}^x;t)
		=&u(0,x;t)
		+\int_0^{t-\varepsilon} \langle \nabla_x u(s,X_s^x;t),b(s,X) \rangle \rd s \notag \\
		&+ \sum_{i,j=1}^{d}\int_0^{t-\varepsilon} \sigma_{i,j}(s,X_s^x) \partial_{x_i} u(s,X_s^x;t) \rd W_s^j.
	\end{align*}
	Since for $i=1,\ldots,d$, by using \eqref{bound_drev_2} and dominated convergence theorem,
	\begin{align}\label{deriva_u}
		\partial_{x_i} u(s,x;t)
		&=\partial_{x_i} \e\left[f\left(Y_{t}^{s,x} \right) \right]
		=\int_{\real^d} f(y) \frac{\partial}{\partial x_i} q(s,x;t,y)\rd y,
	\end{align}
	and it holds that for any $s \in [0,t)$ and $x \in \real^d$,
	\begin{align*}
		|\partial_{x_i} u(s,x;t)|
		\leq \frac{C\|f\|_{\infty}}{(t-s)^{1/2}},
	\end{align*}
	for some $C>0$.
	Therefore, the martingale property implies that the expectation of $\int_0^{t-\varepsilon} \sigma_{i,j}(s,X_s^x) \partial_{i} u(s,X_s^x;t) \rd W_s^j$ equals to zero, and by using Schwarz's inequality and Lemma \ref{moment_0} with $F=b$ and $p=1$,
	\begin{align}\label{main_1_drift}
		\e\left[
			\int_0^{t}
				\left|
					\left\langle
						\nabla_x u(s,X_s^x;t), b(s,X^x)
					\right \rangle
				\right|
			\rd s
		\right]
		&\leq
		\sqrt{d}C\|f\|_{\infty}
		\e\left[
			\int_0^{t}
				\frac{|b(s,X^x)|}{(t-s)^{1/2}}
			\rd s
		\right]
		<\infty.
	\end{align}
	Hence, by taking the expectation and Fubini's theorem, we have from \eqref{expectation_2} and \eqref{main_1_drift}
	\begin{align}\label{Ito_4}
		\e[u(t-\varepsilon,X_{t-\varepsilon}^{x};t)] 
		&=
		\e\left[f\left(Y_t^{0,x}\right) \right]
		+\int_0^{t-\varepsilon} \e\left[ \langle \nabla_{x} u(s,X_s^x;t),b(s,X^x) \rangle \right]\rd s.
	\end{align}
	Since $\sup_{(s,x) \in [0,t] \times \real^d} |u(s,x;t)| \leq  \|f\|_{\infty}$, by using the dominated convergence theorem and \eqref{main_1_drift}, \eqref{Ito_4},
	\begin{align*}
		\e[f(X_t^{x})]
		&=
		\e\left[
			\lim_{\varepsilon \to 0+} u(t-\varepsilon,X_{t-\varepsilon}^{x};t)
		\right]
		=
		\lim_{\varepsilon \to 0+}
			\e\left[
				u(t-\varepsilon,X_{t-\varepsilon}^{x};t)
			\right]\\
		&=\e\left[f\left(Y_t^{0,x}\right) \right]
		+\int_0^{t} \e\left[ \langle \nabla_{x} u(s,X_s^x;t),b(s,X^x) \rangle \right]\rd s.
	\end{align*}
	Finally, from Lemma \ref{moment_0} with $F=b$ and $p=1$,
	\begin{align*}
		&\sum_{i=1}^{d}
		\int_{0}^{t}
		\int_{\real^d}
			\e\left[
				|f(y)|
				\left|
					\frac{\partial}{\partial x_i} q(s,X_s^x;t,y)
				\right|
				|b^i(s,X^x)|
			\right]
		\rd y
		\rd s\\
		&\leq
		\sqrt{d}
		\widehat{C}_{+}\|f\|_{\infty}
		\int_{0}^{t}
		\int_{\real^d}
			\frac{1}{(t-s)^{1/2}}
			\e\left[
				g_{\widehat{c}_{+}(t-s)}(X_s^x,y)
				|b(s,X^{x})|
			\right]
		\rd y
		\rd s\\
		&=
		\sqrt{d}
		\widehat{C}_{+}\|f\|_{\infty}
		\int_{0}^{t}
		\frac{1}{(t-s)^{1/2}}
			\e\left[
				|b(s,X^{x})|
			\right]
		\rd s
		<\infty.
	\end{align*}
	Therefore, from \eqref{deriva_u} and Fubini's theorem we obtain \eqref{Ito_6}, which is the first representation \eqref{density_1}.
\end{proof}

\subsection{Gaussian two--sided bound and continuity of pdf}\label{Sub_GB}
In this subsection, we prove the Gaussian two--sided bound and continuity for a pdf of a solution of SDE \eqref{SDE_1} under the following {\it sub--linear growth condition} on the drift coefficient $b$.

\begin{Ass}\label{Ass_2}
	We suppose that the drift coefficient $b$ satisfies the following condition :
	for any $\delta,t>0$, there exists $K_t(\delta)>0$ such that $K_t(\delta)$ is increasing with respect to $t$
	and for all $t>0$ and $w \in C([0,t];\real^d)$,
	\begin {align*}
		|b(t,w)|
		\leq
		\delta | w^{*}_t|+K_t(\delta).
\end{align*}

\end{Ass}

\begin{Rem}
	\begin{itemize}
		\item[(i)]
		Let $f:\real^d \to \real^d$ be a measurable function and of sub--linear growth, that is, $f$ is bounded on any compact subset of $\real^d$ and $|f(x)|=o(|x|)$ as $|x| \to \infty$, which is equivalent to the condition that for any $\delta>0$, there exists a constant $K(\delta)>0$ such that $|f(x)|\leq \delta |x|+K(\delta)$.
		Therefore, if $b$ satisfies Assumption \ref{Ass_2}, then we say that $b$ is of sub--linear growth.
		
		
		
		
		\item[(ii)]
		Suppose that $b:[0,\infty) \times C([0,\infty);\real^d) \to \real^d$ satisfies the following growing condition: there exists $K>0$ and $\beta \in (0,1)$ such that
		\begin{align*}
			|b(t,w)|
			\leq
			K (1+|w^{*}_t|^{\beta}),
			~\text{ for all } (t,w) \in [0,\infty) \times C([0,\infty);\real^d).
		\end{align*}
		Then $b$ satisfies Assumption \ref{Ass_2} with $K_{t}(\delta)=K\{1+(K/\delta)^{\beta/(1-\beta)}\}$ for all $t>0$.
		Indeed it holds that
		\begin{align*}
			K (1+|w^{*}_t|^{\beta})
			\leq
			\left\{ \begin{array}{ll}
			\displaystyle
			\delta |w^{*}_t|+K
			 &\text{ if } |w_t^{*}|>(K/\delta)^{1/(1-\beta)},  \\
			\displaystyle
			K\{1+(K/\delta)^{\beta/(1-\beta)}\}
			&\text{ if } |w_t^{*}| \leq (K/\delta)^{1/(1-\beta)}.
			\end{array}\right.
		\end{align*}
	\end{itemize}
\end{Rem}

Under the sub--linear growth condition on $b$, we prove a Gaussian two--sided bound and a continuity for a pdf of $X_t^x$.

\begin{Thm}\label{main_2}
	Suppose Assumption \ref{Ass_1} and \ref{Ass_2} hold.
	Let $p_1,p_2,p_3>1$ with $p_1 \in (1,\frac{d}{d-1})$ and $1/p_1+1/p_2+1/p_3=1$.

	\begin{itemize}
		\item[(i)]
		For each $(t,x) \in (0,T] \times \real^d$, the right hand side of \eqref{density_1} is continuous with respect to $y$, that is, $p_t(x,\cdot)$ has a continuous version.
		
		\item[(ii)]
		There exist $C_{\pm}\equiv C_{\pm}(p_1) >0$ such that for any $(t,x) \in (0,T] \times \real^d$ and a.e. $y \in \real^d$, it holds that
		\begin{align*}
			p_t(x,y)
			&\geq
			\frac
				{C_{-}g_{2^{-1} \widehat{c}_{-} t}(x,y)}
				{
					1
					+
					\sup_{0 \leq s \leq t}
					\e
					\left[
						Z_s(1,Y^{0,x})^{-p_2}
					\right]^{1/p_2}
					\max_{i=1,2}
					\e
					\left[
						|b(s,Y^{0,x})|^{i p_3}
					\right]^{1/p_3}
					},
		\end{align*}
		and
		\begin{align*}
			p_t(x,y)
			&\leq
			C_{+}
			\left(
				1
				+
				\sup_{0 \leq s \leq t}
				\e
					\left[
						Z_s(1,Y^{0,x})^{p_2}
					\right]^{1/p_2}
				\max_{i=1,2}
				\e
				\left[
					|b(s,Y^{0,x})|^{i p_3}
				\right]^{1/p_3}
			\right)
			g_{p_1 \widehat{c}_{+} t}(x,y).
		\end{align*}
		
		\item[(iii)]
		Let $p_t(x,\cdot)$ be a continuous version of a pdf of $X_t^x$ for $(t,x) \in (0,T] \times \real^d$.
		For $r \in \real$, we define $t_r$ by
		\begin{align}
			t_r
			&:=\min\left\{
			T, \frac{1}{2K(b,T) \sqrt{3\underline{a} (2r^2-r) \widehat{c}_{+}}}
			\right\} \label{def_tr}.
		\end{align}
		Then there exist $C_{\pm}>0$ and $c_{\pm}>0$ such that for any $(t,x,y) \in (0,T] \times \real^d \times \real^d$, it holds that if $t \in (0,t_{-p_{2}}]$, then
		\begin{align*}
			\frac
				{C_{-} g_{2^{-1} \widehat{c}_{-} t}(x,y)}
				{(1+|x|^2) \exp\left(c_{-}(1+|x|^2)t\right)}
				\leq 
				p_t(x,y)
				\leq
				C_{+} (1+|x|^2) 
				\exp\left(c_{+}(1+|x|^2)t\right)
				g_{p_1 \widehat{c}_{+} t}(x,y),
		\end{align*}
		and if $t \in (t_{-p_{2}},t_{p_{2}}]$, then
		\begin{align*}
		\frac
		{C_{-}g_{2^{-1} \widehat{c}_{-}t}(x,y)}
		{
			(1+|x|^2)
			\exp
			\left(
			\frac
			{|x|^2}
			{8p_2 \widehat{c}_{+}T}
			\right)
		}
		\leq 
		p_t(x,y)
		\leq
		C_{+} (1+|x|^2) 
		\exp\left(c_{+}(1+|x|^2)t\right)
		g_{p_1 \widehat{c}_{+} t}(x,y),
		\end{align*}
		and if $t \in (t_{p_{2}},T]$, then
		\begin{align*}
			\frac
				{C_{-}g_{2^{-1} \widehat{c}_{-}t}(x,y)}
				{
					(1+|x|^2)
					\exp
						\left(
							\frac
								{|x|^2}
								{8p_2 \widehat{c}_{+}T}
						\right)
				}
		\leq 
		p_t(x,y)
		\leq
		C_{+}(1+|x|^2)
		\exp
			\left(
				\frac
				{|x|^2}
				{8p_2\widehat{c}_{+}T}
			\right)
		g_{p_1 \widehat{c}_{+}t}(x,y).
		\end{align*}
		
	\end{itemize}
\end{Thm}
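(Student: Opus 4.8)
The plan is to work entirely from the two representations \eqref{density_1} and \eqref{density_2} already established in Theorem \ref{main_1}, combined with the Gaussian bounds for $q$ and $\nabla_x q$ in Lemma \ref{bound_drev_1} and the moment estimate of Lemma \ref{moment_0}. Part (i) is the easiest: on the right-hand side of \eqref{density_1} the leading term $q(0,x;t,y)$ is continuous in $y$ by the known regularity of the fundamental solution, and for the integral term I would show continuity in $y$ via dominated convergence, using \eqref{bound_drev_2} to dominate $|\nabla_x q(s,X_s^x;t,y)|\,|b(s,X^x)|$ by $(t-s)^{-1/2} g_{\widehat c_+(t-s)}(X_s^x,y)|b(s,X^x)|$, whose $\rd s$-integral of expectations is finite by Lemma \ref{moment_0}; continuity of $y\mapsto g_{\widehat c_+(t-s)}(X_s^x,y)$ together with a uniform (in $y$ on compacts) Gaussian majorant gives the claim.

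For part (ii) I would start from \eqref{density_2}: $p_t(x,y)=q(0,x;t,y)\,\e[Z_t(1,Y^{0,x})\mid Y_t^{0,x}=y]$. The upper bound uses $q(0,x;t,y)\le\widehat C_+ g_{\widehat c_+ t}(x,y)$ plus a Hölder-type estimate on the conditional expectation — the subtlety is that one cannot directly bound a conditional expectation by an unconditional moment, so instead I would go back to the unconditional identity $\e[f(Y_t^{0,x})Z_t]=\int f(y)\,\e[Z_t\mid Y_t^{0,x}=y]\,q(0,x;t,y)\rd y$ and estimate $\e[\mathbf{1}_A(Y_t^{0,x})Z_t]$ for Borel sets $A$, splitting via Hölder with exponents $p_1,p_2,p_3$: the factor $\e[Z_t^{p_2}]^{1/p_2}$ appears, the factor involving $b$ comes from controlling $Z_t$ or from the structure, and the factor $\e[\mathbf 1_A(Y_t^{0,x})^{p_1}]^{1/p_1}=\p(Y_t^{0,x}\in A)^{1/p_1}\le(\widehat C_+\int_A g_{\widehat c_+ t})^{1/p_1}$; then pushing the power $1/p_1$ inside the Gaussian (here the constraint $p_1<d/(d-1)$ is exactly what makes $g^{1/p_1}$ still integrable and reproduces a Gaussian $g_{p_1\widehat c_+ t}$ up to constant) yields the stated form after identifying the $\max_{i=1,2}\e[|b|^{ip_3}]^{1/p_3}$ factor — I expect the $i=1,2$ appears because bounding $Z_s$ or its increments produces both a linear and a quadratic term in $b$. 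For the lower bound I would instead use representation \eqref{density_1}, writing $p_t(x,y)\ge q(0,x;t,y)-\int_0^t\e[|\nabla_x q(s,X_s^x;t,y)||b(s,X^x)|]\rd s$, lower-bounding the first term by $\widehat C_- g_{\widehat c_- t}(x,y)$ and absorbing the Gaussian-in-$(t-s)$ loss into $g_{2^{-1}\widehat c_- t}$ while the correction term is controlled, after a change of measure back to $Y^{0,x}$ and Hölder, by the denominator expression; alternatively a cleaner route for the lower bound is again via \eqref{density_2} using Jensen on the conditional expectation of $Z_t$ and a reverse Hölder / $Z_t^{-p_2}$ moment, which matches the stated denominator exactly.

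For part (iii) the remaining work is purely to insert explicit estimates for $\sup_{0\le s\le t}\e[Z_s(1,Y^{0,x})^{\pm p_2}]^{1/p_2}$ and $\e[|b(s,Y^{0,x})|^{ip_3}]^{1/p_3}$ into the bounds of (ii). Using Assumption \ref{Ass_1}(i) ($|b|\le K(b,T)(1+w^*_t)$) and the Gaussian tail of $Y_s^{0,x}$ from \eqref{bound_qt}, the $b$-moments are $O((1+|x|)^2)$; the delicate term is $\e[Z_s^{\pm p_2}]$, which after writing $Z_s^{q}$ as a stochastic exponential expands (via the local-Novikov/Doob argument of Theorem \ref{main_0}) into $\e[\exp(c\,q^2\int_0^s|\mu|^2)]\le\e[\exp(c'q^2(t_n-t_{n-1})(1+|x|+M_s^{x,*})^2)]$, and evaluating the resulting Gaussian integral $\int_{\real^d}\exp(\lambda|y-x|^2)g_{\widehat c_+ s}(x,y)\rd y$ is finite precisely when $\lambda<1/(2\widehat c_+ s)$ — this is where the threshold times $t_r$ in \eqref{def_tr} come from, the three regimes $t\le t_{-p_2}$, $t_{-p_2}<t\le t_{p_2}$, $t>t_{p_2}$ corresponding to whether the negative-moment Gaussian integral, the positive-moment one, or both have "blown past" their radius of convergence so that one must replace $\exp(c_\pm(1+|x|^2)t)$ by the saturated bound $\exp(|x|^2/(8p_2\widehat c_+ T))$. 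The main obstacle throughout is the first one — controlling a conditional expectation of $Z_t$ by unconditional quantities without losing the Gaussian shape — and organizing the Hölder split with exponents $p_1,p_2,p_3$ so that the power $1/p_1<d/(d-1)$ exactly preserves Gaussianity; once that lemma is in place, parts (i) and (iii) are routine applications of Lemmas \ref{bound_drev_1}–\ref{moment_0} and the explicit beta/Gaussian integrals deferred to the Appendix.
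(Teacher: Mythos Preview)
Your outline for part~(i) is essentially what the paper does and is fine. Part~(iii) is also correctly identified as routine once~(ii) is in hand; the paper indeed just feeds the bounds of Lemmas~\ref{moment_0} and~\ref{bdd_Girsanov_1} into~(ii).

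The substantive gap is in part~(ii). First, your proposed lower bound via \eqref{density_1}, namely $p_t(x,y)\ge q(0,x;t,y)-\int_0^t\e[|\nabla_x q||b|]\rd s$, does not work: nothing prevents the integral term from exceeding $q$, and in fact for unbounded $b$ it typically will unless $t$ is small. Second, your H\"older split of $\e[\mathbf 1_A(Y_t^{0,x})Z_t]$ with three exponents $p_1,p_2,p_3$ is not well defined as written: the integrand is a product of \emph{two} factors, so H\"older gives only $\p(Y_t^{0,x}\in A)^{1/p_1}\e[Z_t^{p_1'}]^{1/p_1'}$; there is no natural third factor in which the $b$-moments $\max_{i=1,2}\e[|b|^{ip_3}]^{1/p_3}$ appear, and you do not explain where they come from.

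You correctly diagnose that the central difficulty is bounding the conditional expectation $\e[Z_t^{\pm 1}\mid Y_t^{0,x}=y]$ by unconditional quantities while retaining the Gaussian profile in~$y$, but you do not say how. The paper's mechanism is this: by the Markov property of $Y^{0,x}$ one has, for $s<t$ and any $r\in\real$,
\[
q(0,x;t,y)\,\e\!\left[Z_s(1,Y^{0,x})^{r}\,\middle|\,Y_t^{0,x}=y\right]
=\e\!\left[q(s,Y_s^{0,x};t,y)\,Z_s(1,Y^{0,x})^{r}\right],
\]
and the right-hand side is then expanded by applying It\^o's formula to the product $q(s,Y_s^{0,x};t,y)Z_s^{r}$ (this is Lemma~\ref{lower_0}). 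Because $q$ solves the backward equation the $L_s$-terms cancel, leaving $q(0,x;t,y)$ plus two integral terms: one linear in $b$ (from the cross-variation $\rd\langle q,Z^r\rangle$) and one quadratic in $|\mu|^2=|\sigma^{-1}b|^2$ (from the drift of $Z^r$). H\"older with $p_1,p_2,p_3$ applied to each of these, together with Lemma~\ref{key_1}(ii), yields precisely $g_{p_1\widehat c_+ t}(x,y)$ times the factor $\sup_s\e[Z_s^{rp_2}]^{1/p_2}\max_{i=1,2}\e[|b(s,Y^{0,x})|^{ip_3}]^{1/p_3}$; this is where the $i=1,2$ come from. Taking $r=1$ gives the upper bound directly, and $r=-1$ combined with Cauchy--Schwarz on the conditional expectation (your ``Jensen'' step, which is fine) gives the lower bound. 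Without this It\^o/Markov step you have no route from conditional to unconditional that keeps the $y$-dependence Gaussian.
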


\begin{Rem}
	Note that if $b$ is bounded, then from Theorem \ref{main_2} (ii) and \eqref{bdd_G_2_2} below, $p_t(x,y)$ satisfies the Gaussian two--sided bound uniformly with respect to $x \in \real^d$, that is, there exist $C_{\pm}>0$ and $c_{\pm}$ such that for any $x,y \in \real^d$ and $t \in (0,T]$,
	\begin{align}\label{unif_GB_0}
		C_{-} g_{c_{-}t}(x,y)
		\leq 
		p_t(x,y)
		&\leq
		C_{+} g_{c_{+}t}(x,y),
	\end{align}
	(see, also Theorem 2.5 in \cite{Ku17}).
	However, if $b$ is of sub--linear growth, then $C_{\pm}$ in \eqref{unif_GB_0} might be depend on the initial value $x\in \real^d$.
	Note that an Ornstein--Uhlenbeck process $\rd X_t=\kappa X_t \rd t+\rd W_t$, $X_0=x$, the law of $X_t$ admits a pdf.
	However, it does not satisfies the Gaussian two-sided bound uniformly in $x \in \real$, (see, section 6.2 in \cite{Ku17}).
\end{Rem}

Since $q(s,x;t,y)$ is continuous in $y \in \real^d$ and satisfies the Gaussian two--sided bound, in order to obtain continuity and two--sided bound of $p_t(x,y)$, we need to consider
\begin{align*}
	\int_0^t \e\left[ \langle \nabla_x q(s,X_s^x;t,y), b(s,X^x) \rangle \right] \rd s
	\quad\text{and}\quad
	\e[
		Z_t(1,Y^{0,x})
		~|~Y_t^{0,x}=y
	].
\end{align*}

We first introduce the following lemma which shows that the moment of the Maruyama--Girsanov density $Z_t(1,Y^{0,x})$ is finite.

\begin{Lem}\label{bdd_Girsanov_1}
	Suppose Assumption \ref{Ass_1} and \ref{Ass_2} hold.
	Recall that $t_r$ is defined by \eqref{def_tr} for $r \in \real$.
	For any $r  \in \real$, $(t,x) \in (0,T] \times \real^d$, it holds that
	\begin{align}\label{bdd_Girsanov_01}
		&\sup_{0 \leq s \leq t}
		\e[Z_s(1,Y^{0,x})^{r}] \notag\\
		&\leq
		\left\{ \begin{array}{ll}
		\displaystyle
		1, 
		&\text{ if } 2r^2-r \leq 0,~t \in (0,T], \\
		\displaystyle
			2^{1+d/4}
			\widehat{C}_{+}
			\exp\left(
			\frac{3}{2}
			K(b,T)^2
			\underline{a}
			(2r^2-r)
			t
			(1+|x|^2)
			\right),
		&\text{ if } 2r^2-r  > 0,~t \in (0,t_r],\\
		\displaystyle
			2^{1+d/4}
			\left(\frac{T}{t_r}\right)^{d/4}
			\widehat{C}_{+}^{1/2}
			\exp
			\left(
			\frac{3}{2}\underline{a} 
			(2r^2-r)
			|K_T(\delta_{r,T})|^2
			t
			\right)
			\exp\left(
			\frac{|x|^2}{8\widehat{c}_{+}T}
			\right),
		&\text{ if } 2r^2-r  > 0,~t \in (t_r,T],
			\end{array}\right.
	\end{align}
	where for $r \in \real$ and $t>0$,
	\begin{align*}
		\delta_{r,t}
		&:=
		\frac
		{1}
		{2t\sqrt{3\widehat{c}_{+} \underline{a}(2r^2-r)}}.
	\end{align*}
\end{Lem}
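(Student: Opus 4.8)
My plan is to reduce the whole statement to a single exponential-moment estimate and then treat the three regimes of \eqref{bdd_Girsanov_01} separately. The starting point is the multiplicative identity
\begin{align*}
	Z_s(1,Y^{0,x})^{r}
	=
	Z_s(2r,Y^{0,x})^{1/2}
	\exp\left(
		\frac{2r^2-r}{2}
		\int_0^s |\mu(u,Y^{0,x})|^2 \rd u
	\right),
	\qquad s\in[0,T],
\end{align*}
verified by writing $N_s:=\sum_{j=1}^{d}\int_0^s\mu^j(u,Y^{0,x})\rd W_u^j$, so that $Z_s(q,Y^{0,x})=\exp(qN_s-\tfrac{q^2}{2}\langle N\rangle_s)$ with $\langle N\rangle_s=\int_0^s|\mu(u,Y^{0,x})|^2\rd u$, and comparing exponents. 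Since $Z(2r,Y^{0,x})$ is a genuine martingale --- which is precisely what the local Novikov argument in the proof of Theorem \ref{main_0} yields when applied with $q=2r$ --- we have $\e[Z_s(2r,Y^{0,x})]=1$, and Cauchy--Schwarz gives
\begin{align*}
	\e[Z_s(1,Y^{0,x})^{r}]
	&\leq
	\e[Z_s(2r,Y^{0,x})]^{1/2}\,
	\e\left[
		\exp\left(
			(2r^2-r)\int_0^s |\mu(u,Y^{0,x})|^2 \rd u
		\right)
	\right]^{1/2}\\
	&=
	\e\left[
		\exp\left(
			(2r^2-r)\int_0^s |\mu(u,Y^{0,x})|^2 \rd u
		\right)
	\right]^{1/2}.
\end{align*}
Thus everything comes down to bounding this exponential moment.

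If $2r^2-r\leq 0$, the integrand is $\leq 1$ and the first line follows at once. Assume $2r^2-r>0$; consider first $t\in(0,t_r]$. Here I would estimate $|\mu(u,Y^{0,x})|^2$, using the linear growth of $b$ (Assumption \ref{Ass_1}(i)) and uniform ellipticity (Assumption \ref{Ass_1}(iii)), by $\underline{a}K(b,T)^2(1+|x|+M^{x,*}_u)^2$, where $M^x:=Y^{0,x}-x$ is a martingale since $\sigma$ is bounded, so that $\int_0^s|\mu(u,Y^{0,x})|^2\rd u\leq\underline{a}K(b,T)^2\,s\,(1+|x|+M^{x,*}_s)^2$. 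Since $v\mapsto\exp\!\big(\tfrac12 D(1+|x|+|M^x_v|)^2\big)$ is a submartingale for every $D>0$ --- the same fact used for the auxiliary process $U^x$ in the proof of Theorem \ref{main_0} --- Doob's $L^2$ maximal inequality replaces the running maximum $M^{x,*}_s$ by the terminal value $M^x_s$ up to a factor $4$; expanding $(1+|x|+|M^x_s|)^2\leq 3(1+|x|^2+|M^x_s|^2)$ to peel off the deterministic factor $\exp(c(1+|x|^2))$ with $c=3(2r^2-r)\underline{a}K(b,T)^2 s$, and invoking the Gaussian upper bound \eqref{bound_qt}, one is left with $\widehat{C}_{+}\int_{\real^d}\exp(c|y-x|^2)g_{\widehat{c}_{+}s}(x,y)\rd y=\widehat{C}_{+}(1-2c\widehat{c}_{+}s)^{-d/2}$. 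The definition \eqref{def_tr} of $t_r$ is exactly what keeps $2c\widehat{c}_{+}s\leq\tfrac12$ for all $s\leq t\leq t_r$, so this is $\leq\widehat{C}_{+}2^{d/2}$; taking the square root and collecting constants (enlarging $\widehat{C}_{+}$ so that $\widehat{C}_{+}\geq1$) gives the second line.

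The remaining regime $t\in(t_r,T]$ is the delicate one, and it is where Assumption \ref{Ass_2} enters. The coefficient $3(2r^2-r)\underline{a}K(b,T)^2 s$ of $|M^{x,*}_s|^2$ produced by linear growth is now too large: once $t>t_r$ the factor $2c\widehat{c}_{+}s$ passes $1$ and the Gaussian integral above diverges. I would instead apply the sub--linear bound with $\delta=\delta_{r,T}$, obtaining $|\mu(u,Y^{0,x})|^2\leq 3\underline{a}\big(\delta_{r,T}^2|x|^2+\delta_{r,T}^2|M^{x,*}_u|^2+K_T(\delta_{r,T})^2\big)$; the value $\delta_{r,T}$ is tuned precisely so that $3(2r^2-r)\underline{a}\delta_{r,T}^2 s$ stays small compared with $(\widehat{c}_{+}t)^{-1}$ for all $s\leq t\leq T$, hence after the same Doob-plus-Gaussian-bound step the Gaussian integral still converges, now producing only an $x$--independent overhead that after the square root is $\exp(|x|^2/(8\widehat{c}_{+}T))$, while the separated deterministic term becomes $\exp\!\big(\tfrac32(2r^2-r)\underline{a}K_T(\delta_{r,T})^2 t\big)$, which carries the genuinely non-linear part of the bound. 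The leftover dimensional prefactor $2^{1+d/4}(T/t_r)^{d/4}$ arises from the normalisation of the Gaussian density at times up to $t\leq T$ compared against the scale $t_r$, and one checks at the end that every bound is non-decreasing in $s$, so it also controls $\sup_{0\leq s\leq t}\e[Z_s(1,Y^{0,x})^{r}]$. I expect the main obstacle to be exactly this last case: the competition between the Gaussian decay rate $\widehat{c}_{+}^{-1}$ of $q(0,x;s,\cdot)$ and the exponential rate generated by the drift inside the Girsanov density --- under linear growth it cannot be won uniformly up to the fixed horizon $T$, and it is the sub--linear growth of Assumption \ref{Ass_2}, through the sharp threshold $\delta_{r,T}$, that restores integrability.
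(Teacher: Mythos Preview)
Your proposal is correct and follows essentially the same route as the paper's proof: the Cauchy--Schwarz factorisation $\e[Z_s^{r}]\leq\e[Z_s(2r,Y^{0,x})]^{1/2}\e[\exp((2r^2-r)\langle N\rangle_s)]^{1/2}$, linear growth plus Doob's $L^2$ maximal inequality and the Gaussian upper bound \eqref{bound_qt} for $t\leq t_r$, and the sub--linear bound of Assumption~\ref{Ass_2} with the specific choice $\delta=\delta_{r,T}$ for $t>t_r$. The only cosmetic differences are that the paper first takes $\sup_{s\leq t}$ by replacing $s$ with $t$ in the exponent before applying Doob (you instead work at each $s$ and invoke monotonicity at the end), and the paper bounds the Gaussian integral by splitting off a factor $2^{d/2}$ and dominating the residual exponential by $1$, whereas you compute $(1-2c\widehat{c}_{+}s)^{-d/2}$ directly; both give the same constants.
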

\begin{proof}
	For each $r \in \real$ and $s \in [0,t]$, by using Schwartz's inequality, we have
	\begin{align}\label{bdd_G_2_2}
		&
		\e[Z_s(1,Y^{0,x})^{r}] \notag\\
		&=
		\e
			\left[
				\exp
					\left(
						r \sum_{j=1}^{d}\int_0^s \mu^j(u,Y^{0,x}) \rd W_u^j
						- r^2 \int_0^s \left|\mu(u,Y^{0,x}) \right|^2 \rd u
						+(r^2-\frac{r}{2})\int_0^s \left|\mu(u,Y^{0,x})\right|^2 \rd u
					\right)
			\right] \notag\\
		&\leq
		\e\left[Z_s(2r,Y^{0,x})\right]^{1/2}
		\e
		\left[
		\exp\left( (2r^2-r)\int_0^s \left|\mu(u,Y^{0,x}) \right|^2 \rd u\right)
		\right]^{1/2}.
	\end{align}
	From Theorem \ref{main_0}, $Z(2r,Y^{0,x})$ is martingale, thus $\e\left[Z_s(2r,Y^{0,x})\right]=1$.
	If $2r^2-r \leq 0$, then \eqref{bdd_G_2_2} is bounded by $1$.
	
	Now we assume that $2r^2-r>0$.
	For $t \in (0,t_r]$, by using a linear growth condition on $b$, we have
	\begin{align*}
	\sup_{0 \leq s \leq t}
	\e[Z_s(1,Y^{0,x})^{r}]
	\leq
	\e
	\left[
	\exp
	\left(
	3K(b,T)^2\underline{a} 
	(2r^2-r)t
	(1+|x|^2+|M_{t}^{x,*}|^2)
	\right)
	\right]^{1/2}.
	\end{align*}
	Since the map $z \mapsto \exp(\frac{3}{2}K(b,T)^2\underline{a} (2r^2-r)t (1+|x|^2+|z|^2))$ is a convex,
	\begin{align*}
	U_t^x:=\exp\left(\frac{3}{2}K(b,T)^2\underline{a} (2r^2-r)t(1+|x|^2+|M_{t}^{x}|^2)\right)
	\end{align*}
	is a sub-martingale, and using Doob's maximal inequality, we have
	\begin{align*}
	\e
	\left[
	\exp
	\left(
	3K(b,T)^2\underline{a} 
	(2r^2-r)t
	(1+|x|^2+|M_{t}^{x,*}|^2)
	\right)
	\right]
	=\e[|U_t^{x,*}|^2]
	\leq 4\e[|U_t^x|^2].
	\end{align*}
	Hence it follows from the Gaussian upper bound \eqref{bound_qt} that
	\begin{align*}
	\e[|U_t^x|^2]
	&\leq
	\widehat{C}_{+}
	\exp\left(3K(b,T)^2\underline{a} (2r^2-r)t(1+|x|^2)\right)
	\int_{\real^d}
	\exp
	\left(
	3K(b,T)^2\underline{a} (2r^2-r)t|y-x|^2
	\right)
	g_{\widehat{c}_{+}t}(x,y)
	\rd y\\
	&=
	2^{d/2}\widehat{C}_{+}
	\exp\left(3K(b,T)^2\underline{a} (2r^2-r)t(1+|x|^2)\right)\\
	&\quad \times
	\int_{\real^d}
	\exp
	\left(
	\left\{
	3K(b,T)^2\underline{a} (2r^2-r)t
	-\frac
	{1}
	{4\widehat{c}_{+}t}
	\right\}
	|y-x|^2
	\right)
	g_{2\widehat{c}_{+}t}(x,y)
	\rd y.
	\end{align*}
	Therefore, since $t \leq t_r$ if and only if $3K(b,T)^2\underline{a} (2r^2-r)t-\frac{1}{4\widehat{c}_{+}t}\leq 0$, we obtain the statement for $t \in (0,t_r]$.
	
	For $t \in (t_r,T]$, using Assumption \ref{Ass_1} (iii), Assumption \ref{Ass_2} and \eqref{bdd_G_2_2}, we have
	\begin{align*}
		\sup_{0 \leq s \leq t}
		\e[Z_s(1,Y^{0,x})^{r}]
		&\leq
		\e
		\left[
		\exp\left( (2r^2-r)\int_0^t \left|\mu(s,Y^{0,x}) \right|^2 \rd s\right)
		\right]^{1/2}\\
		&\leq
		\e
		\left[
			\exp
			\left(
				3\underline{a} 
				(2r^2-r)
				\int_0^t
					\delta^2(|x|^2+|M_{s}^{x,*}|^2)
					+
					\left|K_T(\delta)\right|^2
				\rd s
			\right)
		\right]^{1/2}\\
		&\leq
		\exp\left(
			\frac{3}{2}\underline{a} 
			(2r^2-r)
			|K_T(\delta)|^2 t
		\right)
		\e
		\left[
			\exp
			\left(
				3\underline{a} 
				(2r^2-r)
				\delta^2
				t
				(|x|^2+|M_{t}^{x,*}|^2)
			\right)
		\right]^{1/2}.
	\end{align*}
	Note that the map
	$
		z
		\mapsto
		\exp\left(
			\frac{3}{2}\underline{a}(2r^2-r)\delta^{2} t( |x|^2+|z|^{2})
		\right)
	$
	is convex, thus 
	\begin{align*}
		V_t^x
		:=
		\exp\left(
		\frac{3}{2}\underline{a}(2r^2-r)\delta^{2} t( |x|^2+|M_t^{x}|^{2})
		\right)
	\end{align*}
	is a sub-martingale, and using Doob's maximal inequality, we have
	\begin{align*}
		\e
		\left[
		\exp\left(3\underline{a} 
		(2r^2-r) \delta^{2} t\left( \left|x\right|^2+\left| M_{t}^{x,*} \right|^2\right)
		\right)
		\right]
		=
		\e\left[
			|V^{x,*}_t|^2
		\right]
		\leq 4 \e\left[|V_t^x|^2\right].
	\end{align*}
	Recall that $M_t^{x}=Y_t^{0,x}-x$ and $Y_t^{0,x}$ has the pdf which satisfies the Gaussian upper bound \eqref{bound_qt}, we have
	\begin{align*}
		\e[|V_t^x|^2]
		&\leq
		\left(\frac{T}{t_r}\right)^{d/2}
		\widehat{C}_{+}
		\int_{\real^d}
			\exp\left(
				3\underline{a}(2r^2-r)\delta^{2} T(|x|^2+|y-x|^{2})
			\right)
			g_{\widehat{c}_{+}T}(x,y)
		\rd y\\
		&=
		\left(\frac{2T}{t_r}\right)^{d/2}
		\widehat{C}_{+}
		\exp\left(
			3\underline{a}(2r^2-r)\delta^{2} T |x|^2
		\right)\\
		&\quad \times
		\int_{\real^d}
		\exp\left(
			\left\{
			3\underline{a}(2r^2-r)\delta^{2} T
			-\frac{1}{4\widehat{c}_{+}T}
			\right\}
			|y-x|^{2}
		\right)
		g_{2\widehat{c}_{+}T}(x,y)
		\rd y.
	\end{align*}
	By choosing $\delta=\delta_{r,T}$, $3\underline{a}(2r^2-r)\delta^{2} T
	-\frac{1}{4\widehat{c}_{+}T} = 0$, thus we obtain the statement for $t \in (t_r,T]$.
\end{proof}

\begin{Rem}
	Note that the sub--linear growth condition is necessary in order to show the $q$-th moment of $Z_t(1,Y^{0,x})$, (see, e.g. Remark 3.3 in \cite{NT2}).
\end{Rem}

The following lemma is useful for proving a Gaussian two--sided bound.

\begin{Lem}\label{key_1}
	Let $t \in (0,T]$, $p>1$ and $p_1,p_2,p_2>1$ with $1/p_1+1/p_2+1/p_3=1$.
	Suppose that Assumption \ref{Ass_1} and \ref{Ass_2} hold and $F:[0,\infty) \times C([0,\infty);\real^d) \to \real^d$ satisfies assumptions in Lemma \ref{moment_0}.
	\begin{itemize}
		\item[(i)]
		For any $(s,x,y) \in [0,t) \times \real^d \times \real^d$ it holds that
		\begin{align}\label{Girsanov_3}
			&
			\e
				\left[
					\left|
						\nabla_x q(s,X_s^x;t,y)
					\right|
					\left|
						F(s,X^x)
					\right|
				\right] \notag\\
			&\leq
			\frac
				{
					\sqrt{d}
					\widehat{C}_{+}
					C_{0,\sigma}(p_3,F,T)
					(1+|x|)
					\e
					\left[
						Z_t(1,Y^{0,x})^{p_2}
					\right]^{1/p_2}
				}
				{\sqrt{t-s}}
			\e\left[
				\left|g_{\widehat{c}_+(t-s)}(Y_s^{0,x},y) \right|^{p_1}
			\right]^{1/{p_1}}.
		\end{align}
		In particular, if $F$ is bounded and $p_3 =\infty$,
		\begin{align*}
			\e
				\left[
					\left|
						\nabla_x q(s,X_s^x;t,y)
					\right|
					\left|
						F(s,X^x)
					\right|
				\right]
			\leq
			\frac{\sqrt{d}\widehat{C}_{+}\|F\|_{\infty}}{\sqrt{t-s}}
			\e
			\left[
			\left|g_{\widehat{c}_+(t-s)}(Y^{0,x}_s,y)\right|^{p_1}
			\right]^{1/p_1}
			\e
			\left[
			Z_t(1,Y^{0,x})^{p_2}
			\right]^{1/p_2}.
		\end{align*}
		
		\item[(ii)]
		For any $c>0$ and $(s,x,y) \in [0,t) \times \real^d \times \real^d$, it holds that
		\begin{align}\label{esti_1}
			\e\left[\left|g_{c(t-s)}(Y^{0,x}_s,y) \right|^{p}\right]^{1/p}
			\leq
			\frac{\widehat{C}_{+}^{\frac{1}{p}}}{c^{\frac{d(p-1)}{2p}}}
			\frac{\{p(c \vee \widehat{c}_{+})\}^{\frac{d}{2}}}{(c \wedge \widehat{c}_{+})^{\frac{d}{2p}}}
			\left(
				\frac
					{t}
					{t-s}
			\right)^{\frac{d(p-1)}{2p}}
			g_{p(c \vee \widehat{c}_+) t}(x,y).
		\end{align}
			
	\end{itemize}
	
\end{Lem}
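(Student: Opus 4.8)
For part (i), the plan is to remove the drift by the Maruyama--Girsanov change of measure and then split by Hölder's inequality. First I would use the Gaussian gradient estimate \eqref{bound_drev_2} of Lemma \ref{bound_drev_1} to bound $|\nabla_x q(s,z;t,y)| \le \sqrt{d}\,\widehat{C}_{+}(t-s)^{-1/2}\,g_{\widehat{c}_{+}(t-s)}(z,y)$ pointwise, which reduces the left-hand side of \eqref{Girsanov_3} to $\sqrt{d}\,\widehat{C}_{+}(t-s)^{-1/2}\,\e[g_{\widehat{c}_{+}(t-s)}(X_s^x,y)\,|F(s,X^x)|]$. The functional $w\mapsto g_{\widehat{c}_{+}(t-s)}(w_s,y)\,|F(s,w)|$ is nonnegative and $\mathcal{B}_t(C([0,\infty);\real^d))$-measurable since $s<t$, so \eqref{Girsanov_0} of Theorem \ref{main_0} (applied to truncations and passing to the limit when the integrand is not a priori integrable), together with the martingale property of $Z(1,Y^{0,x})$, yields $\e[g_{\widehat{c}_{+}(t-s)}(X_s^x,y)\,|F(s,X^x)|] = \e[g_{\widehat{c}_{+}(t-s)}(Y_s^{0,x},y)\,|F(s,Y^{0,x})|\,Z_t(1,Y^{0,x})]$. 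Hölder's inequality with exponents $(p_1,p_2,p_3)$ then separates this into $\e[|g_{\widehat{c}_{+}(t-s)}(Y_s^{0,x},y)|^{p_1}]^{1/p_1}\,\e[Z_t(1,Y^{0,x})^{p_2}]^{1/p_2}\,\e[|F(s,Y^{0,x})|^{p_3}]^{1/p_3}$, and the last factor is at most $C_{0,\sigma}(p_3,F,T)(1+|x|)$ by Lemma \ref{moment_0} applied to $Y^{0,x}$, which solves \eqref{SDE_2}, a special case of \eqref{SDE_1} with vanishing drift. Collecting constants gives \eqref{Girsanov_3}; the case $F$ bounded and $p_3=\infty$ is identical, with $\e[|F(s,Y^{0,x})|^{p_3}]^{1/p_3}$ replaced by $\|F\|_\infty$ and Hölder used only with the pair $(p_1,p_2)$.

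For part (ii), the plan is pure Gaussian bookkeeping. I would begin with the elementary identity, valid for any $\lambda>0$, that $g_\lambda(z,y)^p = p^{-d/2}(2\pi\lambda)^{-d(p-1)/2}\,g_{\lambda/p}(z,y)$; applying it with $\lambda=c(t-s)$ gives $\e[|g_{c(t-s)}(Y_s^{0,x},y)|^p] = p^{-d/2}(2\pi c(t-s))^{-d(p-1)/2}\,\e[g_{c(t-s)/p}(Y_s^{0,x},y)]$. Since the density of $Y_s^{0,x}$ is bounded by $\widehat{C}_{+}\,g_{\widehat{c}_{+}s}(x,\cdot)$ by \eqref{bound_qt} of Lemma \ref{bound_drev_1}, the Chapman--Kolmogorov identity $\int_{\real^d} g_a(z,y)g_b(x,z)\,\rd z = g_{a+b}(x,y)$ gives $\e[g_{c(t-s)/p}(Y_s^{0,x},y)] \le \widehat{C}_{+}\,g_{c(t-s)/p+\widehat{c}_{+}s}(x,y)$. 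Then I would invoke the comparison $g_{\lambda_1}(x,y) \le (\lambda_2/\lambda_1)^{d/2}\,g_{\lambda_2}(x,y)$ for $0<\lambda_1\le\lambda_2$ with $\lambda_1 = c(t-s)/p+\widehat{c}_{+}s$ and $\lambda_2 = (c\vee\widehat{c}_{+})t$: here $\lambda_1\le\lambda_2$ because $c/p\le c\vee\widehat{c}_{+}$ and $\widehat{c}_{+}\le c\vee\widehat{c}_{+}$, while $\lambda_1\ge((c/p)\wedge\widehat{c}_{+})t\ge p^{-1}(c\wedge\widehat{c}_{+})t$ yields $\lambda_2/\lambda_1\le p(c\vee\widehat{c}_{+})/(c\wedge\widehat{c}_{+})$. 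Finally, converting $g_{(c\vee\widehat{c}_{+})t}(x,y)$ back by the same identity as $p^{d/2}(2\pi p(c\vee\widehat{c}_{+})t)^{d(p-1)/2}\,g_{p(c\vee\widehat{c}_{+})t}(x,y)^p$ and taking $p$-th roots, all the powers of $p$, of $c\vee\widehat{c}_{+}$, of $c\wedge\widehat{c}_{+}$, and of $t/(t-s)$ combine precisely into the constant in \eqref{esti_1}.

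I do not anticipate a genuine obstacle: part (i) is a textbook Girsanov-plus-Hölder computation and part (ii) is a bounded Gaussian integral. The two points needing a little care are, for (i), the passage from $Z_T$ to $Z_t$ (martingale property and $\mathcal{F}_t$-measurability of the integrand) together with the legitimacy of \eqref{Girsanov_0} for a nonnegative, possibly non-integrable functional, handled by monotone truncation; and, for (ii), checking the two chained estimates $\lambda_1\le\lambda_2$ and $\lambda_2/\lambda_1\le p(c\vee\widehat{c}_{+})/(c\wedge\widehat{c}_{+})$ so that the final constant emerges exactly in the displayed form rather than merely up to a dimensional multiple.
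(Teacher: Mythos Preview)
Your proposal is correct and follows essentially the same approach as the paper: for (i) the paper likewise applies the Girsanov identity \eqref{Girsanov_0}, the gradient bound \eqref{bound_drev_2}, H\"older's inequality with exponents $(p_1,p_2,p_3)$, and Lemma \ref{moment_0} with $b\equiv 0$ (only the order of the first two steps is swapped); for (ii) the paper performs the same Gaussian bookkeeping, bounding both $g_{\widehat{c}_+s}$ and $g_{c(t-s)}^p$ by Gaussians with variance parameter $c\vee\widehat{c}_+$ before invoking Chapman--Kolmogorov, which is a trivial rearrangement of your computation and yields the identical constant.
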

\begin{proof}
	(i).
	For any $s \in [0,t)$, using Theorem \ref{main_0}, H\"older's inequality and \eqref{bound_drev_2}, we have,
	\begin{align}\label{Girsanov_2}
		&
		\e
			\left[
				\left|
					\nabla_x q(s,X_s^{x};t,y)
				\right|
				\left|
					F(s,X^{x})
				\right|
			\right]
		=
		\e\left[
			|\nabla_x q(s,Y_s^{0,x};t,y)|
			|F(s,Y^{0,x})|
			Z_t(1,Y^{0,x})
		\right] \notag\\
		&\leq
		\frac{\sqrt{d}\widehat{C}_{+}}{\sqrt{t-s}}
			\e\left[
				g_{\widehat{c}_+(t-s)}(Y^{0,x}_s,y)
				|F(s,Y^{0,x})|
				Z_t(1,Y^{0,x})
			\right]
		\notag\\
		&\leq 
			\frac{\sqrt{d}\widehat{C}_{+}}{\sqrt{t-s}}
			\e
			\left[
				\left|g_{\widehat{c}_+(t-s)}(Y^{0,x}_s,y)\right|^{p_1}
			\right]^{1/p_1}
			\e
			\left[
				Z_t(1,Y^{0,x})^{p_2}
			\right]^{1/p_2}
			\e[|F(s,Y^{0,x})|^{p_3}]^{1/p_3}.
	\end{align}
	By using Lemma \ref{moment_0} with $b\equiv 0$,
	we conclude the proof of (i).
	
	(ii)
	By using the upper bounds \eqref{bound_qt} and Chapman--Kolmogorov equation, it holds that
	\begin{align*}
		&\e\left[
			\left|
					g_{c(t-s)}(Y^{0,x}_s,y)
			\right|^p
		\right]
		=\int_{\real^d}
			q(0,x;s,z)
			\left|
				g_{c(t-s)}(z,y)
			\right|^p
		\rd z
		\leq
		\widehat{C}_+
		\int_{\real^d}
			g_{\widehat{c}_+ s}(x,z)
			\left|
				g_{c(t-s)}(z,y)
			\right|^p
		\rd z\\
		&\leq
			\left(
				\frac
					{c \vee \widehat{c}_{+}}
					{c \wedge \widehat{c}_{+}}
			\right)^{d/2}
			\frac
				{\widehat{C}_+}
				{\{2\pi c (t-s)\}^{\frac{d(p-1)}{2}}}
			\int_{\real^d}
				g_{(c \vee \widehat{c}_+) s}(x,z)
				g_{(c \vee \widehat{c}_+)(t-s)}(z,y)
			\rd z\\
		&=
		\left(
			\frac
				{c \vee \widehat{c}_{+}}
				{c \wedge \widehat{c}_{+}}
		\right)^{d/2}
		\frac
			{\widehat{C}_+}
			{\{2\pi c (t-s)\}^{\frac{d(p-1)}{2}}}
		g_{(c \vee \widehat{c}_+) t}(x,y),
	\end{align*}
	which concludes the statement.
\end{proof}

For the proof of the Gaussian two--sided bound, we need the following lemma, which is an analogy of Lemma 2.3 in \cite{Ku17}.

\begin{Lem}\label{lower_0}
	Let $r \in \real$ and $p_1,p_2,p_3>1$ with $p_1 \in (1,\frac{d}{d-1})$, $1/p_1+1/p_2+1/p_3=1$.
	Suppose Assumption \ref{Ass_1} and \ref{Ass_2} hold.
	Then there exists $C_{r,p_1}>0$ such that for all $(t,x,y) \in (0,T] \times \real^d \times \real^d$,
	\begin{align*}
		&\sup_{0 \leq s <t}
		\e
		\left[
			q(s;Y_s^{0,x};t,y)
			Z_s(1,Y^{0,x})^{r}
		\right] \notag\\
		&\leq
		\widehat{C}_{+} g_{\widehat{c}_{+}t}(x,y)
		+
		C_{r,p_1}
		\sup_{0 \leq s \leq t}
		\e
			\left[
				Z_s(1,Y^{0,x})^{rp_2}
			\right]^{1/p_2}
		\max_{i=1,2}
		\e
		\left[
			b(s,Y^{0,x})^{i p_3}
		\right]^{1/p_3}
		 g_{p_1 \widehat{c}_{+} t}(x,y).
		\end{align*}
\end{Lem}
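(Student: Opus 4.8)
The plan is to apply It\^o's formula to $q(s,Y_s^{0,x};t,y)$ regarded as a function of the time variable $s$ and the spatial argument, for $s$ running over $[0,s_0]$ with $s_0<t$ fixed, and then take the expectation after multiplying by the Girsanov density $Z_s(1,Y^{0,x})^{r}$. Since $q(\cdot,\cdot;t,y)$ solves the Kolmogorov backward equation \eqref{pde_fund_0}, i.e.\ $(\partial_s+L_s)q(s,x;t,y)=0$, the second--order terms coming from It\^o's formula applied along $Y^{0,x}$ cancel the $\partial_s q$ term, leaving only a stochastic integral (martingale) and, crucially, the contribution of $Z_s(1,Y^{0,x})^{r}$. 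Because $Z(r,Y^{0,x})$ satisfies the SDE $\rd Z_s(r,Y^{0,x})=r\,Z_s(r,Y^{0,x})\langle\mu(s,Y^{0,x}),\rd W_s\rangle$, applying It\^o's product rule to $q(s,Y_s^{0,x};t,y)Z_s(1,Y^{0,x})^{r}$ produces, besides local--martingale terms, the covariation term $r\,Z_s(1,Y^{0,x})^{r}\langle \nabla_x q(s,Y_s^{0,x};t,y),\,\sigma(s,Y_s^{0,x})\mu(s,Y^{0,x})\rangle\,\rd s = r\,Z_s(1,Y^{0,x})^{r}\langle \nabla_x q(s,Y_s^{0,x};t,y),\,b(s,Y^{0,x})\rangle\,\rd s$ (using $\sigma\mu=b$). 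Note $Z_s(1,Y^{0,x})^{r}$ differs from the true exponential martingale $Z_s(r,Y^{0,x})$ by the factor $\exp((r^2-r/2)\int_0^s|\mu|^2\,\rd u)\ge 1$ when $2r^2-r\ge 0$; one must be careful about which power appears, but the product rule computation is unaffected by this since $Z_s(1,Y^{0,x})^{r}$ is itself a nice semimartingale.

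The key steps in order: First, fix $s_0\in[0,t)$ and a localizing sequence of stopping times so that the local martingale terms are genuine martingales with zero expectation (one can use the stopping times $\tau_k$ from the proof of Theorem \ref{main_0}, together with the bound \eqref{bound_drev_2} which controls $\nabla_x q$ only up to a $(t-s)^{-1/2}$ singularity that stays integrable on $[0,s_0]$). Second, take expectations to obtain
\begin{align*}
\e\bigl[q(s_0,Y_{s_0}^{0,x};t,y)Z_{s_0}(1,Y^{0,x})^{r}\bigr]
= q(0,x;t,y) + r\int_0^{s_0}\e\bigl[Z_s(1,Y^{0,x})^{r}\langle\nabla_x q(s,Y_s^{0,x};t,y),b(s,Y^{0,x})\rangle\bigr]\,\rd s.
\end{align*}
Third, bound the first term by $\widehat C_+ g_{\widehat c_+ t}(x,y)$ via the Gaussian upper bound \eqref{bound_qt}. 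Fourth, estimate the time--integral term: insert $Z_s(1,Y^{0,x})^{r}=Z_s(1,Y^{0,x})^{r}$ and apply H\"older with exponents $p_1,p_2,p_3$ exactly as in Lemma \ref{key_1}(i) (with $F=b$, so the relevant moment is $\e[|b(s,Y^{0,x})|^{p_3}]^{1/p_3}$, bounded using Lemma \ref{moment_0}/sub--linear growth), using \eqref{bound_drev_2} for $|\nabla_x q|$ and then Lemma \ref{key_1}(ii) to pass from $\e[|g_{\widehat c_+(t-s)}(Y_s^{0,x},y)|^{p_1}]^{1/p_1}$ to a multiple of $g_{p_1\widehat c_+ t}(x,y)$ with a factor $(t/(t-s))^{d(p_1-1)/(2p_1)}$. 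Fifth, collect the $s$--dependent prefactors $(t-s)^{-1/2}\cdot(t/(t-s))^{d(p_1-1)/(2p_1)}$ and check they are integrable on $[0,t]$: this requires $\tfrac12 + \tfrac{d(p_1-1)}{2p_1}<1$, i.e.\ $p_1<\tfrac{d}{d-1}$, which is precisely the hypothesis $p_1\in(1,\tfrac{d}{d-1})$; the resulting integral is a Beta--type integral yielding a finite constant $C_{r,p_1}$ (cf.\ the Appendix). Finally, take the supremum over $s_0\in[0,t)$ on the left and absorb the pieces into the claimed bound.

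I expect the main obstacle to be step one: justifying the interchange of expectation and stochastic integral, i.e.\ showing the local--martingale terms in the It\^o expansion of $q(s,Y_s^{0,x};t,y)Z_s(1,Y^{0,x})^{r}$ are true martingales on $[0,s_0]$. The delicate point is that $Z_s(1,Y^{0,x})^{r}$ need not have all moments bounded uniformly in $r$, and $\nabla_x q$ has an integrable-but-unbounded singularity as $s\uparrow t$; one handles this by working on $[0,s_0]$ with $s_0<t$ strictly (so $\nabla_x q$ is bounded there), using the localizing stopping times to control $\int_0^{\cdot}|\mu|^2\,\rd s$, and invoking Lemma \ref{bdd_Girsanov_1} (finiteness of $\sup_{s\le t}\e[Z_s(1,Y^{0,x})^{q}]$ for the needed power $q$) to pass to the limit $k\to\infty$ by dominated convergence, and only at the very end letting $s_0\uparrow t$. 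A secondary technical point is ensuring the Fubini interchange in the time integral is licensed, which again follows from the finiteness established via Lemma \ref{moment_0} and Lemma \ref{bdd_Girsanov_1} together with the integrable singularity.
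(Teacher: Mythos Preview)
Your overall strategy---It\^o's formula on $q(s,Y_s^{0,x};t,y)$, the product rule with $Z_s(1,Y^{0,x})^{r}$, and then H\"older/Lemma \ref{key_1}(ii)---is exactly the paper's approach, but your displayed identity after taking expectations is missing a term. You correctly note that $Z_s(1,Y^{0,x})^{r}\neq Z_s(r,Y^{0,x})$, yet then assert that ``the product rule computation is unaffected.'' It is affected: by It\^o's formula applied to $x\mapsto x^{r}$,
\[
\rd Z_s(1,Y^{0,x})^{r}
= r\,Z_s(1,Y^{0,x})^{r}\,\rd M_s
\;+\;\tfrac{r(r-1)}{2}\,Z_s(1,Y^{0,x})^{r}\,|\mu(s,Y^{0,x})|^{2}\,\rd s,
\]
so $Z_s(1,Y^{0,x})^{r}$ has a genuine drift whenever $r\notin\{0,1\}$. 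In the product rule this drift produces an additional finite--variation contribution
\[
\frac{r(r-1)}{2}\int_0^{s_0}\e\!\left[q(u,Y_u^{0,x};t,y)\,|\mu(u,Y^{0,x})|^{2}\,Z_u(1,Y^{0,x})^{r}\right]\rd u,
\]
which is absent from your formula. This omission is not cosmetic: it is precisely the origin of the $\max_{i=1,2}\e[|b(s,Y^{0,x})|^{ip_3}]^{1/p_3}$ in the statement (the $i=1$ case comes from your covariation term, the $i=2$ case from this extra drift via $|\mu|^{2}\le \underline{a}^{-1}|b|^{2}$).

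The fix is straightforward once the term is restored: bound $q$ by \eqref{bound_qt} to get $\widehat C_{+}g_{\widehat c_{+}(t-u)}(Y_u^{0,x},y)$, apply H\"older with exponents $p_1,p_2,p_3$ exactly as you do for the gradient term, and use Lemma \ref{key_1}(ii) to extract $g_{p_1\widehat c_{+}t}(x,y)$ times $(t/(t-u))^{d(p_1-1)/(2p_1)}$; the $u$--integrand is now $(t-u)^{-d(p_1-1)/(2p_1)}$ (no extra $(t-u)^{-1/2}$ since there is no gradient), which is even more comfortably integrable under $p_1<d/(d-1)$. Your handling of the martingale terms via localization is fine; the paper instead verifies $\e[\langle M^{1}\rangle_{s_0}]$, $\e[\langle M^{2}\rangle_{s_0}]<\infty$ directly on $[0,s_0]$ using Lemma \ref{bdd_Girsanov_1} for $\e[Z_u(1,Y^{0,x})^{2r}]$ and $\e[Z_u(1,Y^{0,x})^{4r}]$, which avoids the stopping--time limit.
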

\begin{proof}
	Let $s \in [0,t)$.
	By It\^o's formula, $Z(1,Y^x)^{r}$ satisfies the following linear SDE
	\begin{align*}
		Z_s(1,Y^x)^{r}
		=1
		+\frac{r(r-1)}{2}
			\int_{0}^{s}
				|\mu(r,Y^{0,x})|^2
				Z_u(1,Y^{0,x})^{r}
			\rd u
		+r \sum_{j=1}^{d}
			\int_{0}^{s}
				\mu^j(u,Y^{0,x})
				Z_u(1,Y^{0,x})^{r}
			\rd W_u^j
	\end{align*}
	and by using \eqref{pde_fund_0}, we have
	\begin{align*}
		q(s,Y_s^{0,x};t,y)
		=q(0,x;t,y)
		+\sum_{i,j=1}^{d}
			\int_{0}^{s}
				\partial_{x_i} q (u,Y_u^{0,x};t,y)
				\sigma_{i,j}(u,Y_u^{0,x})
			\rd W_u^j.
	\end{align*}
	Hence by using integration by parts formula it holds that
	\begin{align*}
		q(s,Y_s^{0,x};t,y)Z_s(1,Y^{0,x})^{r}
		&=q(0,x;t,y)
		+
		r 
		\int_{0}^{s}
			\langle
				\nabla_x q(u,Y_u^{0,x};t,y), b(u,Y^{0,x})
			\rangle
			Z_u(1,Y^{0,x})^{r}
		\rd u\\
		&\quad
		+
		\frac{r(r-1)}{2}
		\int_{0}^{s}
			q(u,Y_u^{0,x};t,y)
			|\mu(u,Y^{0,x})|^2
			Z_u(1,Y^{0,x})^{r}
		\rd u\\
		&\quad
		+
		M^{1}_s
		+
		M^{2}_s,
	\end{align*}
	where
	\begin{align*}
		M^{1}_s
		&:=
		\sum_{i,j=1}^{d}
		\int_{0}^{s}
			Z_u(1,Y^{0,x})^{r}
			\partial_{x_i} q (u,Y_u^{0,x};t,y)
			\sigma_{i,j}(u,Y_u^{0,x})
		\rd W_u^j,\\
		M^{2}_s
		&:=
		r \sum_{j=1}^{d}
		\int_{0}^{s}
			q(u,Y_u^{0,x};t,y)
			\mu^j(u,Y^{0,x})
			Z_u(1,Y^{0,x})^{r}
		\rd W_u^j.
	\end{align*}
	By taking expectation, it holds that
	\begin{align*}
		\e[q(s,Y_s^{0,x};t,y)Z_s(1,Y^{0,x})^{r}]
		&=q(0,x;t,y)
		+r \e\left[
			\int_{0}^{s}
				\langle
					\nabla_x q(u,Y_u^{0,x};t,y), b(u,Y^{0,x})
				\rangle
				Z_u(1,Y^{0,x})^{r}
			\rd u
		\right]\\
		&\quad
			+\frac{r(r-1)}{2}
				\e\left[
					\int_{0}^{s}
						q(u,Y_u^{0,x};t,y)
						|\mu(u,Y^{0,x})|^2
						Z_u(1,Y^{0,x})^{r}
					\rd u
				\right],
	\end{align*}
	where we use the fact that the expectations of $M^{1}_{s}$ and $M^{2}_{s}$ are zero.
	Indeed, since $s \in [0,t)$, there exists $t_{0} \in [0,t)$ such that $s \leq t_{0}<t$.
	By using the moment estimate on $Z_u(1,Y^{0,x})^r$ (see, Lemma \ref{bdd_Girsanov_1}), and the upper bound for $\partial_{x_i} q(u,x;t,y)$ (see, \eqref{bound_drev_2}),
	\begin{align*}
		\e\left[
			\langle M^{1} \rangle_s
		\right]
		&\leq
		\frac
			{
				d
				\overline{a}
				\widehat{C}_{+}^{2}
			}
			{
				(2\pi \widehat{c}_{+})^{d}
			}
		\int_{0}^{t_{0}}
			\frac{\e\left[
				Z_u(1,Y^{0,x})^{2r}
				\right]}{(t-u)^{d+1}}
		\rd u
		\leq
		\frac
		{
			d
			\overline{a}
			\widehat{C}_{+}^{2}
		}
		{
			(2\pi \widehat{c}_{+})^{d}
		}
		\sup_{0 \leq u \leq t_{0}}
		\e\left[
			Z_u(1,Y^{0,x})^{2r}
		\right]
		\int_{0}^{t_{0}}
			\frac{1}{(t-u)^{d+1}}
		\rd u
		<\infty,
	\end{align*}
	and the moment estimate on $b(u,Y^{0,x})$ (see, Lemma \ref{moment_0}), the upper bound for $q(u,x;t,y)$ (see, \eqref{bound_qt}) and Schwarz's inequality
	\begin{align*}
		\e\left[
			\langle M^{2} \rangle_s
		\right]
		&\leq
		\frac{r^{2}\widehat{C}_{+}^{2}}{(2\pi \widehat{c}_{+})^{d}}
		\int_{0}^{t_{0}}
			\frac
				{
					\e\left[
						|\mu(u,Y^{0,x})|^{2}
						Z_u(1,Y^{0,x})^{2r}
					\right]
				}
				{
					(t-u)^{d}
				}
		\rd u\\
		&\leq
		\frac{r^{2}\widehat{C}_{+}^{2}}{(2\pi \widehat{c}_{+})^{d}}
		\sup_{0 \leq u \leq t_{0}}
		\e\left[
			|\mu(u,Y^{0,x})|^{4}
			\right]^{1/2}
		\sup_{0 \leq u \leq t_{0}}
			\e\left[
				Z_u(1,Y^{0,x})^{4r}
		\right]^{1/2}
		\int_{0}^{t_{0}}
			\frac{1}{(t-u)^{d}}
		\rd u
		<\infty.
	\end{align*}
	Hence $M^{1}=(M^{1}_s)_{s \in [0,t_{0}]}$ and $M^{2}=(M^{2}_s)_{s \in [0,t_{0}]}$ are martingale, and thus the expectations of $M^{1}_s$ and $M^{2}_s$ are zero.
	By using \eqref{bound_qt}, Schwarz's inequality and H\"older's inequality with $1/p_1+1/p_2+1/p_3=1$, we have
	\begin{align*}
		&\e[q(s,Y_s^{0,x};t,y)Z_s(1,Y^{0,x})^{r}]\\
		&\leq
		\widehat{C}_{+}g_{\widehat{c}_{+}t}(x,y)
		+\int_{0}^{t}
			\frac
				{r \sqrt{d}\widehat{C}_{+}}
				{\sqrt{t-u}}
			\e\left[
				g_{\widehat{c}_{+}(t-u)}(Y_u^{0,x},y)
				|b(u,Y^x)|
				Z_u(1,Y^x)^{r}
			\right]
		\rd u\\
		&\quad
		+\frac{r(r-1)\widehat{C}_{+} \underline{a}}{2}\int_{0}^{t}
				\e\left[
					g_{\widehat{c}_{+}(t-u)}(Y_u^{0,x},y)
					|b(u,Y^{0,x})|^2
				Z_u(1,Y^{0,x})^{r}
			\right]
		\rd u\\
		&\leq 
		\widehat{C}_{+}g_{\widehat{c}_{+}t}(x,y)
		+\int_{0}^{t}
			\frac
				{r \sqrt{d}\widehat{C}_{+}}
				{\sqrt{t-u}}
			\e\left[
				g_{\widehat{c}_{+}(t-u)}(Y_u^{0,x},y)^{p_1}
			\right]^{1/p_{1}}
			\e\left[
				Z_u(1,Y^{0,x})^{rp_2}
			\right]^{1/p_2}
			\e\left[
				|b(u,Y^{0,x})|^{p_3}
			\right]^{1/p_3}
		\rd u\\
		&\quad
		+\frac{r(r-1)\widehat{C}_{+} \underline{a}}{2}
		\int_{0}^{t}
			\e\left[
				g_{\widehat{c}_{+}(t-u)}(Y_u^{0,x},y)^{p_1}
			\right]^{1/p_1}
			\e\left[
				Z_u(1,Y^{0,x})^{rp_2}
			\right]^{1/p_2}
			\e\left[
				|b(u,Y^{0,x})|^{2p_3}
			\right]^{1/p_3}
		\rd u.
	\end{align*}
	Therefore, it follows from Lemma \ref{moment_0}, Lemma \ref{bdd_Girsanov_1} and \eqref{esti_1} that
	\begin{align*}
		&\e[q(s,Y_s^{0,x};t,y)Z_s(1,Y^{0,x})^{r}]\\
		&\leq
		\widehat{C}_{+}g_{\widehat{c}_{+}t}(x,y)
		+
		C
		\sup_{0 \leq s \leq t}
		\left(
			\e\left[
				Z_s(1,Y^{0,x})^{rp_2}
			\right]^{1/p_2}
			\max_{i=1,2}
			\e
			\left[
				b(s,Y^{0,x})^{i p_3}
			\right]^{1/p_3}
		\right)\\
		&\quad\quad\quad\quad\quad\quad\quad\quad
		\times
		\int_0^t
			\left\{
				\frac
				{1}
				{(t-u)^{\frac{d(p_1-1)}{2p_1}+\frac{1}{2}}}
				+
				\frac
				{1}
				{(t-u)^{\frac{d(p_1-1)}{2p_1}}}
			\right\}
		\rd u
		g_{p_1 \widehat{c}_{+}t}(x,y),
	\end{align*}
	for some $C>0$.
	Since $p_1 \in (1,\frac{d}{d-1})$ implies $\frac{d(p_1-1)+p_1}{2p_1}<1$, we conclude the statement.
\end{proof}

\begin{proof}[Proof of Theorem \ref{main_2}]
	(Continuity).
	By the construction of the fundamental solution of parabolic type PDE, $q(0,x;t,\cdot)$ is continuous (see \cite{Fr64}), thus it is suffices to prove that for a given $y_0 \in \real^d$,
	\begin{align}\label{cont_0}
		\int_0^t
			\e\left[
				\langle
					\nabla_x q(s,X_s^x;t,y_0), b(s,X^x)
				\rangle
			\right]
		\rd s
		=
		\lim_{y \to y_0}
		\int_0^t
			\e\left[
				\langle
					\nabla_x q(s,X_s^x;t,y), b(s,X^x)
				\rangle
			\right]
		\rd s.
	\end{align}
	
	We first show that for each $s \in [0,t)$,
	\begin{align}\label{cont_1}
		\e\left[
			\langle
				\nabla_x q(s,X_s^{x};t,y_0), b(s,X^{x})
			\rangle
		\right]
		=
		\lim_{y \to y_0}
		\e\left[
			\langle
				\nabla_x q(s,X_s^{x};t,y), b(s,X^{x})
			\rangle
		\right].
	\end{align}
	For any $(s,y) \in [0,t) \times \real^d$, by using \eqref{bound_drev_2}, we have
	\begin{align*}
		\left|\langle
			\nabla_x q(s,X_s^{x};t,y), b(s,X^{x})
		\rangle\right|
		\leq
		\frac{\sqrt{d}\widehat{C}_{+}|b(s,X^{x})|}{(2\pi \widehat{c}_{+})^{\frac{d}{2}}(t-s)^{\frac{d+1}{2}}}.
	\end{align*}
	Since $\nabla_{x} q(0,x;t,\cdot)$ is continuous (e.g., page 20 of \cite{Fr64}), from Lemma \ref{moment_0} and dominated convergence theorem, we obtain \eqref{cont_1}.
	
	Let $p_1,p_2,p_3>1$ with $p_1 \in (1,\frac{d}{d-1})$, $1/p_1+1/p_2+1/p_3=1$.
	By using \eqref{Girsanov_3} with $F\equiv b$ and \eqref{esti_1} in Lemma \ref{key_1}, we have
	\begin{align*}
		&\int_0^t
			\sup_{y \in \real^d}
			\e\left[
				|\langle
					\nabla_x q(s,X_s^{x};t,y), b(s,X^{x})
				\rangle|
			\right]
		\rd s \notag\\
		&\leq
		C_{p_3} (1+|x|)
		\sup_{0 \leq s \leq t}
		\e[Z_s(1,Y^{0,x})^{p_2}]^{1/p_{2}}
		\int_0^t
			\frac{1}
			{(t-s)^{\frac{d(p_1-1)}{2p_1}+\frac{1}{2}}}
		\rd s
		\sup_{y \in \real^d}
		g_{p_1\widehat{c}_{+}t}(x,y)
		<\infty,
	\end{align*}
	for some $C_{p_3}>0$.
	Thus again using dominated convergence theorem, we conclude \eqref{cont_0}.
	
	
	(Upper and lower bound).
	The proof of upper and lower bound are based on \cite{Ku17} and the second representation \eqref{density_2}.
	By using Schwarz's inequality, it holds that
	\begin{align*}
		1
		&=
		\e
		\left[
			Z_t(1,Y^{0,x})^{1/2}
			Z_t(1,Y^{0,x})^{-1/2}
		~\middle|~Y_t^{0,x}=y
		\right]^2\\
		&\leq
		\e
			\left[
				Z_t(1,Y^{0,x})
			~\middle|~Y_t^{0,x}=y
		\right]
		\e
		\left[
			Z_t(1,Y^{0,x})^{-1}
		~\middle|~Y_t^{0,x}=y
		\right]
		\leq \infty,
		~\text{a.e.},~y \in \real^d,
	\end{align*}
	which implies that
	\begin{align*}
		0
		\leq
		\frac{1}
		{
			\e
			\left[
			Z_t(1,Y^{0,x})^{-1}
			~\middle|~Y_t^{0,x}=y
			\right]
		}
		\leq
		\e
		\left[
			Z_t(1,Y^{0,x})
			~\middle|~Y_t^{0,x}=y
		\right],
		~\text{a.e.},~y \in \real^d.
	\end{align*}
	Therefore, from \eqref{density_2}, we have
	\begin{align}\label{lower_main_0}
		p_t(x,y)
		\geq 
		\frac
		{
			q(0,x;t,y)^2
		}
		{
			q(0,x;t,y)
			\e
			\left[
				Z_t(1,Y^{0,x})^{-1}
				~\middle|~Y_t^{0,x}=y
			\right]
		}
		\geq 0
		,
		~\text{a.e.},~y \in \real^d.
	\end{align}
	
	Now we show that for any $r \in \real$ and $s \in [0,t)$,
	\begin{align}\label{low_pr_1}
		q(0,x;t,y)
		\e
		\left[
		Z_s(1,Y^{0,x})^{r}
		~\middle|~Y_t^{0,x}=y
		\right]
		=
		\e
		\left[
		q(s,Y_s^{0,x};t,y)
		Z_s(1,Y^{0,x})^{r}
		\right],
		~\text{a.e.},~y \in \real^d.
	\end{align}
	It is sufficient to show that for any $A \in \mathcal{F}_s$, 
	\begin{align}\label{low_pr_2}
		q(0,x;t,y)
		\p
			\left(A
			~\middle|~Y_t^{0,x}=y
			\right)
		=
		\e
		\left[
			q(s,Y_s^{0,x};t,y)
		\1_{A}
		\right],
		~\text{a.e.},~y \in \real^d.
	\end{align}
	From Theorem 1.3.3 in \cite{IkWa}, we have for any $f \in C_b^{\infty}(\real^d;\real)$,
	\begin{align*}
		\int_{\real^d}
			f(y)
			\p
				\left(
					A~\middle|~Y_t^{0,x}=y
				\right)
			q(0,x;t,y)
		\rd y
		=\int_{\real^d}
			f(y)
			\p(A \cap \{Y_t^{0,x} \in \rd y\}).
	\end{align*}
	Using the Markov property of $Y^{0,x}$, for any $B \in \mathcal{B}(\real^d)$,
	\begin{align*}
		\p(A \cap \{Y_t^{0,x} \in B\})
		&=\e\left[
			\1_A
			\p\left(
				Y_t^{0,x} \in B
				~|~\mathcal{F}_s
			\right)
		\right]
		=
		\int_{B}
			\e\left[
				\1_A
				q(s,Y_s^{0,x};t,y)
			\right]
		\rd y.
	\end{align*}
	Hence, we have
	\begin{align*}
		\int_{\real^d}
			f(y)
			\p
			\left(
				A~\middle|~Y_t^{0,x}=y
			\right)
			q(0,x;t,y)
		\rd y
		=\int_{\real^d}
			f(y)
			\e\left[
				\1_A
				q(s;Y_s^{0,x};t,y)
			\right]
		\rd y.
	\end{align*}
	This concludes \eqref{low_pr_2}.
	
	Applying Lemma \ref{lower_0}, Fatou's lemma and \eqref{low_pr_1}, we have for any $r \in \real$,
	\begin{align}
		&q(0,x;t,y)
		\e
		\left[
			Z_{t}(1,Y^{0,x})^{r}
			~\middle|~Y_t^{0,x}=y
		\right]
		\leq
		q(0,x;t,y)
		\liminf_{s \to 0}
		\e
		\left[
			Z_{t-s}(1,Y^{0,x})^{r}
			~\middle|~Y_t^{0,x}=y
		\right] \notag\\
		&\leq
		q(0,x;t,y) \sup_{0 \leq s <t}
		\e
		\left[
			Z_s(1,Y^{0,x})^{r}
		~\middle|~Y_t^{0,x}=y
		\right]
		=
		\sup_{0 \leq s <t}
		\e
		\left[
			q(s,Y_s^{0,x};t,y)
			Z_s(1,Y^{0,x})^{r}
		\right] \notag\\
		&\leq
		\widehat{C}_{+} g_{\widehat{c}_{+}t}(x,y)
		+
		C_{r,p_1}
		\sup_{0 \leq s \leq t}
		\e
			\left[
				Z_s(1,Y^{0,x})^{rp_2}
			\right]^{1/p_2}
		\max_{i=1,2}
		\e
		\left[
			|b(s,Y^{0,x})|^{i p_3}
		\right]^{1/p_3}
		g_{p_1 \widehat{c}_{+} t}(x,y)
		\label{low_pr_3-}\\
		&\leq
		\frac
		{
			\widehat{C}_{+}
			+
			C_{r,p_1}
			\sup_{0 \leq s \leq t}
			\e
			\left[
			Z_s(1,Y^{0,x})^{rp_2}
			\right]^{1/p_2}
			\max_{i=1,2}
			\e
			\left[
				|b(s,Y^{0,x})|^{i p_3}
			\right]^{1/p_3}
		}
		{(2\widehat{c}_{+} t)^{d/2}}
		<\infty,\label{low_pr_3}
	\end{align}
	thus, $Z_t(1,Y^x)^{r}$ is $L^1$ integrable with respect to the expectation $\e
	[~\cdot~|~Y_t^{0,x}=y ]$ for any $r \in \real$.
	Hence \eqref{low_pr_3-} with $r=1$, we have
	\begin{align*}
		p_t(x,y)
		\leq
		\widehat{C}_{+} g_{\widehat{c}_{+}t}(x,y)
		+
		C_{1,p_1}
		\sup_{0 \leq s \leq t}
		\e
		\left[
		Z_s(1,Y^{0,x})^{p_2}
		\right]^{1/p_2}
		\max_{i=1,2}
		\e
		\left[
			|b(s,Y^{0,x})|^{i p_3}
		\right]^{1/p_3}
		g_{p_1 \widehat{c}_{+} t}(x,y),
	\end{align*}
	a.e., $y \in \real^d$.
	Moreover, the Gaussian lower bound \eqref{bound_qt} for $q(0,x;t,y)$, and estimations \eqref{lower_main_0}, \eqref{low_pr_3} with $r=-1$, we obtain
	\begin{align*}
		p_t(x,y)
		&\geq 
		\frac
		{(2\pi \widehat{c}_{+}t)^{d/2}}
		{
			\widehat{C}_{+}
			+
			C_{-1,p_1}
			\sup_{0 \leq s \leq t}
			\e
			\left[
			Z_s(1,Y^{0,x})^{-p_2}
			\right]^{1/p_2}
			\max_{i=1,2}
			\e
			\left[
				|b(s,Y^{0,x})|^{i p_3}
			\right]^{1/p_3}
		}
		\frac{\widehat{C}_{-}^2\exp\left(
				-\frac{|y-x|^2}{\widehat{c}_{-}t}
			\right)
		}{(2\pi \widehat{c}_{-}t)^{d}}\\
		&=
		\frac
		{\widehat{c}_{+}^{d/2} \widehat{c}_{-}^{-d/2}  \widehat{C}_{-}^2}
		{
			\widehat{C}_{+}
			+
			C_{-1,p_1}
			\sup_{0 \leq s \leq t}
			\e
			\left[
				Z_s(1,Y^{0,x})^{-p_2}
			\right]^{1/p_2}
			\max_{i=1,2}
			\e
			\left[
				|b(s,Y^{0,x})|^{i p_3}
			\right]^{1/p_3}
		}
		g_{2^{-1}\widehat{c}_{-}t} (x,y),
	\end{align*}
	a.e., $y \in \real^d$.
	Therefore, we conclude the statement (ii).
	The continuity of $p_t(x,\cdot)$, Lemma \ref{moment_0} and Lemma \ref{bdd_Girsanov_1} with $r=\pm p_{2}$ implies the statement (iii).
\end{proof}


\subsection{SDEs with bounded and path--dependent drift}\label{Sub_para}

The parametrix method is a useful tool for studying a fundamental solution of parabolic type partial differential equations.
In this subsection, we apply the parametrix method to provide another representation formula for a pdf of solution of SDE with bounded and path--dependent drift.

Let $\widetilde{X}^{s,x}=(\widetilde{X}_t^{s,x})_{t \in [s,T]}$ be a solution of the following Markovian SDE of the form
\begin{align*}
	\widetilde{X}_t^{s,x}
	=x
	+\int_{s}^{t} \widetilde{b}(r,\widetilde{X}^{s,x}_r)\rd r
+\int_{s}^{t}  \sigma(r,\widetilde{X}_r^{s,x})\rd W_r,
\end{align*}
where $\widetilde{b}:[0,T] \times \real^d \to \real^d$ is a bounded and measurable.
Under Assumption \ref{Ass_1} on $\sigma$, by using the same way as a proof of Theorem \ref{main_1}, a pdf of $\widetilde{X}_t^{s,x}$, denoted by $\widetilde{p}(s,x;t,\cdot)$ satisfies
\begin{align*}
	\widetilde{p}(s,x;t,y)
	&=q(s,x;t,y)
	+\int_{s}^{t}
	\e\left[
	\langle
	\nabla_x q(r,\widetilde{X}^{s,x}_r;t,y), \widetilde{b}(r,\widetilde{X}^{s,x}_r)
	\rangle
	\right]
	\rd r \notag\\
	&=q(s,x;t,y)
	+\int_s^t \rd r
	\int_{\real^d} \rd z
	\langle
	\nabla_x q(r,z;t,y), \widetilde{b}(r,z)
	\rangle
	\widetilde{p}(s,x;r,z),
\end{align*}
which is an analogue of the parametrix method (see, page 4, (2.8) in \cite{Fr64} or (4.4) in \cite{LeMe}).
Moreover, we have the following ``formal" expansion holds
\begin{align}\label{para_expansion_0}
	\widetilde{p}(s,x;t,y)
	&=
	\sum_{n=0}^{\infty}
	q \otimes \widetilde{H}^{\otimes n} (s,x;t,y),
\end{align}
where
$
\widetilde{H}(r,x;t,y):=
\langle
\nabla_x q(r,x;t,y), \widetilde{b}(r,x)
\rangle
$
and the space and time convolution operator $\otimes$ is defined by
\begin{align*}
f \otimes g (s,x;t,y):=& \int_s^t \rd r \int_{\real^d} \rd z f(s,x;r,z) g(r,z;t,y),
\end{align*}
and we denote $f^{\otimes 1}=f$, $f^{\otimes k}=f^{\otimes (k-1)} \otimes f$ and $f \otimes g^{\otimes 0}=f$.
\begin{Rem}
	Deck and Kruse \cite{DeKr02} shows that if the drift $b$ is of H\"older growing condition: $\sup_{0 \leq t <T} |b(t,x)|\leq K(1+|x|^{\beta})$ with $\beta <\alpha \leq 1$, a similar expansion converges absolutely and uniformly for $(t,x,y) \in (0,T] \times \real^d \times \real^d$.
\end{Rem}


In order to provide another representation for $p_t(x,\cdot)$, we first show that if $\widetilde{b}$ is bounded measurable, then the expansion \eqref{para_expansion_0} converges absolutely and uniformly in $x,y \in \real^d$, and $\partial_{x_i}\widetilde{p}(s,x;t,y)$ exists for all $i=1,\ldots,d$.

We denote $|\widetilde{H}|(s,z;t,y):=|\widetilde{H}(s,z;t,y)|$.

\begin{Lem}\label{main_2_lem_1}
	Suppose Assumption \ref{Ass_1} holds and the drift coefficient $\widetilde{b}$ is bounded and measurable.
	Then it holds that
	\begin{align}\label{BM_drift_3}
		|\widetilde{H}|^{\otimes n}(s,z;t,y)
		\leq
		\frac
		{(\sqrt{d} \|\widetilde{b}\|_{\infty} \widehat{C}_{+})^n (t-s)^{(n-2)/2} \Gamma(1/2)^n}
		{\Gamma(n/2)}
		g_{\widehat{c}_{+}(t-s)} (z,y),~n \in \n,
	\end{align}
	and there exist $C_+$ and $c_+$ such that for each $k\in \{0,1\}$ and $i=1\ldots,d$,
	\begin{align}
		\sum_{n=0}^{\infty}
		\sup_{x,y \in \real}
		|\partial_{x_i}^{k} q| \otimes |\widetilde{H}|^{\otimes n} (s,x;t,y)
		&\leq
		\frac{C_+}{(t-s)^{k/2}} g_{c_{+}(t-s)}(x,y),\label{para_expa_02}\\
		\partial_{x_i}^{k}
		\widetilde{p}(s,x;t,y)
		&=
		\sum_{n=0}^{\infty}
		(\partial_{i}^{k} q) \otimes \widetilde{H}^{\otimes n} (s,x;t,y),
		\label{para_expa_01}
	\end{align}
	for all $0\leq s<t \leq T$ and $x,y \in \real^d$.
\end{Lem}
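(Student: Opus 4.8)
The plan is to establish the three displays in turn, using the Gaussian bounds from Lemma \ref{bound_drev_1} together with the Chapman--Kolmogorov semigroup property for the Gaussian kernel $g_{c(t-s)}$ and a standard Beta-integral induction (the sort of computation referenced in the Appendix). First I would prove the iterated-kernel bound \eqref{BM_drift_3} by induction on $n$. The base case $n=1$ is immediate from \eqref{bound_drev_2} and the boundedness of $\widetilde b$: $|\widetilde H|(s,z;t,y) = |\langle \nabla_x q(s,z;t,y),\widetilde b(s,z)\rangle| \le \sqrt d\,\|\widetilde b\|_\infty\,\widehat C_+ (t-s)^{-1/2} g_{\widehat c_+(t-s)}(z,y)$. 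For the inductive step I would write $|\widetilde H|^{\otimes n} = |\widetilde H|^{\otimes(n-1)} \otimes |\widetilde H|$, insert the inductive hypothesis and the $n=1$ bound, perform the spatial integral $\int_{\real^d} g_{\widehat c_+(r-s)}(z,w) g_{\widehat c_+(t-r)}(w,y)\,\rd w = g_{\widehat c_+(t-s)}(z,y)$ via Chapman--Kolmogorov, and then evaluate the time integral $\int_s^t (r-s)^{(n-3)/2}(t-r)^{-1/2}\,\rd r = (t-s)^{(n-2)/2} B(\tfrac{n-1}{2},\tfrac12)$, where $B(\tfrac{n-1}{2},\tfrac12) = \Gamma(\tfrac{n-1}{2})\Gamma(\tfrac12)/\Gamma(\tfrac n2)$; combining the Gamma factors telescopes to the claimed bound. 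A small technical point I would flag: the kernels $q(r,z;t,y)$ need not be genuine transition densities with matching variances, so one should first dominate $q \le \widehat C_+ g_{\widehat c_+(\cdot)}$ and then use the exact semigroup identity for the Gaussians $g_{\widehat c_+(\cdot)}$, picking up only a constant $\widehat C_+$ at each convolution.

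Next I would derive \eqref{para_expa_02}. Fix $k\in\{0,1\}$ and $i$. Using \eqref{bound_qt} (for $k=0$) or \eqref{bound_drev_2} (for $k=1$), $|\partial_{x_i}^k q|(s,x;r,z) \le \widehat C_+ (r-s)^{-k/2} g_{\widehat c_+(r-s)}(x,z)$; convolving this with the bound \eqref{BM_drift_3} for $|\widetilde H|^{\otimes n}$, applying Chapman--Kolmogorov once more and evaluating the resulting Beta integral $\int_s^t (r-s)^{-k/2}(t-r)^{(n-2)/2}\,\rd r = (t-s)^{(n-k)/2\,+\,\cdots}\,B(1-\tfrac k2,\tfrac n2)$ yields a term of the form
\begin{align*}
  \sup_{x,y}|\partial_{x_i}^k q|\otimes|\widetilde H|^{\otimes n}(s,x;t,y)
  \le \frac{(\sqrt d\,\|\widetilde b\|_\infty\,\widehat C_+)^{n+1}\,\Gamma(1/2)^n\,\Gamma(1-k/2)}{\Gamma((n+2-k)/2)}\,(t-s)^{(n-k)/2}\,g_{\widehat c_+(t-s)}(x,y).
\end{align*}
Summing over $n\ge 0$: since $\Gamma((n+2-k)/2)$ grows super-exponentially in $n$ while the numerator grows only geometrically, the series converges for every $t-s \le T$, and by absorbing the growing power $(t-s)^{n/2}$ into the rapidly-growing denominator one gets a bound of the form $C_+ (t-s)^{-k/2} g_{c_+(t-s)}(x,y)$ with $c_+ = \widehat c_+$ (or any constant $\ge \widehat c_+$ after a harmless enlargement). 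This simultaneously shows the series defining $\widetilde p$ (the $k=0$ case) and the formally differentiated series (the $k=1$ case) converge absolutely and uniformly in $x,y$, locally uniformly in $0\le s<t\le T$.

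Finally, for the identity \eqref{para_expa_01} with $k=1$, I already know from the argument preceding the lemma that $\widetilde p(s,x;t,y) = \sum_{n=0}^\infty q\otimes\widetilde H^{\otimes n}(s,x;t,y)$ (the $k=0$ case, which is just \eqref{para_expansion_0} now justified by the $k=0$ part of \eqref{para_expa_02}). To pass $\partial_{x_i}$ inside the sum, I would use the uniform convergence of the termwise-differentiated series established in \eqref{para_expa_02} with $k=1$: a standard theorem on differentiating series of functions (each term $q\otimes\widetilde H^{\otimes n}(\cdot,x;t,y)$ is $C^1$ in $x$ by \eqref{deriva_u}-type dominated-convergence differentiation under the integral sign, using \eqref{bound_drev_2} and the continuity of $\nabla_x q(r,\cdot;t,z)$) gives $\partial_{x_i}\widetilde p = \sum_{n=0}^\infty \partial_{x_i}(q\otimes\widetilde H^{\otimes n})$, and differentiating the convolution in its first spatial slot only hits the leading factor $q$, giving $\partial_{x_i}(q\otimes\widetilde H^{\otimes n}) = (\partial_{x_i}q)\otimes\widetilde H^{\otimes n}$. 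The main obstacle in the whole argument is purely bookkeeping: getting the Gamma-function constants in \eqref{BM_drift_3} to telescope correctly through the induction and then verifying that the resulting series has a summable bound — i.e., keeping careful track of which convolution slot carries the singular time weight $(t-r)^{-1/2}$ versus $(r-s)^{-k/2}$, and confirming the exponents $(n-2)/2$ in the $n$-th term are what make $\sum_n$ converge. No single step is deep; the care is entirely in the constants and the ranges of the Beta integrals (which is presumably why the paper defers the explicit Beta-integral computation to the Appendix).
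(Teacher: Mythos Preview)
Your proposal is correct and follows essentially the same approach as the paper: induction on $n$ using \eqref{bound_drev_2}, Chapman--Kolmogorov for the Gaussian kernels $g_{\widehat c_+(\cdot)}$, and the Beta-integral identity (Lemma~\ref{pa_App_1} in the Appendix) to telescope the Gamma factors, then the analogous one-step convolution of $|\partial_{x_i}^k q|$ against \eqref{BM_drift_3} to obtain \eqref{para_expa_02}. Your discussion of \eqref{para_expa_01} for $k=1$ (differentiating term-by-term via uniform convergence of the differentiated series and the fact that $x$ enters only through the leading $q$) is in fact more explicit than the paper, which simply states that the finiteness of the series ``concludes the statement''; your parenthetical worry about picking up an extra $\widehat C_+$ at each convolution is unnecessary, since after the initial domination the convolutions are of genuine Gaussians and the semigroup identity is exact.
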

\begin{proof}
	We first show \eqref{BM_drift_3}.
	For $n=1$, from \eqref{bound_drev_2}, it holds that
	\begin{align*}
		|\widetilde{H}|(s,z;t,y)
		=
		|\widetilde{H}(s,z;t,y)|
		\leq
		\frac{\sqrt{d}\|\widetilde{b}\|_{\infty}\widehat{C}_+}{\sqrt{t-s}}
		g_{\widehat{c}_{+}(t-s)} (z,y).
	\end{align*}
	We assume that \eqref{BM_drift_3} holds for $n-1 \geq 1$, then from Chapman--Kolmogorov equation, change of variable $u=t_{1}-s$ and Lemma \ref{pa_App_1} with $m=1$, $t_{0}=t-s$, $a=1/2$ and $b=(n-3)/2>-1$, we have
	\begin{align*}
	|\widetilde{H}|^{\otimes n}(s,z;t,y)
	&=
	\int_{s}^{t} \rd t_1
	\int_{\real^d} \rd z_1
	|\widetilde{H}|^{\otimes (n-1)}(s,z;t_1,z_1)
	|\widetilde{H}|(t_1,z_1;t,y)
	\\
	&\leq
	\frac
	{(\sqrt{d} \|\widetilde{b}\|_{\infty}\widehat{C}_{+})^n \Gamma(1/2)^{n-1}}
	{\Gamma((n-1)/2)}
	\int_{s}^{t}\rd t_1
	\frac{(t_1-s)^{(n-1-2)/2}}{(t-t_1)^{1/2}}
	\int_{\real^{d}}
	\rd z_{1}
	g_{\widehat{c}_{+}(t_{1}-s)} (z_{1},z)
	g_{\widehat{c}_{+}(t-t_{1})} (y,z_{1})
	\\
	&=
	\frac
	{(\sqrt{d} \|\widetilde{b}\|_{\infty}\widehat{C}_{+})^n \Gamma(1/2)^{n-1}}
	{\Gamma((n-1)/2)}
	g_{\widehat{c}_{+}(t-s)} (z,y)
	\int_{0}^{t-s}
	\frac{u^{(n-3)/2}}{(t-s-u)^{1/2}}
	\rd u\\
	&=
	\frac
	{(\sqrt{d} \|\widetilde{b}\|_{\infty} \widehat{C}_{+})^n (t-s)^{(n-2)/2} \Gamma(1/2)^n}
	{\Gamma(n/2)}
	g_{\widehat{c}_{+}(t-s)} (z,y).
	\end{align*}
	Hence \eqref{BM_drift_3} holds for every $n \in \n$.
	
	Now we consider the expansion \eqref{para_expa_01} and upper bound \eqref{para_expa_02}.
	By using \eqref{bound_drev_2} and \eqref{BM_drift_3}, Chapman--Kolmogorov equation, change of variable $u=t-t_{1}$ and Lemma \ref{pa_App_1} with $m=1$, $t_{0}=t-s$, $a=k/2$ and $b=(n-2)/2>-1$, we have for each $n \in \n$, $k\in \{0,1\}$ and $i=1\ldots,d$,
	\begin{align}
		|\partial_{x_i}^{k} q| \otimes |\widetilde{H}|^{\otimes n} (s,x;t,y)
		&=\int_{s}^{t} \rd t_1 \int_{\real^d} \rd z_1
			\left|
			\partial_{x_i}^{k} q (s,x;t_1,z_1)
			\right|
			|\widetilde{H}|^{\otimes n} (t_1,z_1;t,y) \notag\\
		&\leq
		\frac
		{(\sqrt{d} \|\widetilde{b}\|_{\infty})^n \widehat{C}_{+}^{n+1} \Gamma(1/2)^{n}}
		{\Gamma(n/2)}
		\int_{s}^{t}
			\frac{(t-t_1)^{(n-2)/2}}{(t_1-s)^{k/2}}
		\rd t_1 
		\cdot
		g_{\widehat{c}_{+}(t-s)} (x,y) \notag\\
		&=
		\frac
		{(\sqrt{d} \|\widetilde{b}\|_{\infty})^n \widehat{C}_{+}^{n+1} \Gamma(1/2)^{n}}
		{\Gamma(n/2)}
		\int_{0}^{t-s}
		\frac{u^{(n-2)/2}}{(t-s-u)^{k/2}}
		\rd u
		\cdot
		g_{\widehat{c}_{+}(t-s)} (x,y) \notag\\
		&=
		\frac
		{(\sqrt{d} \|\widetilde{b}\|_{\infty})^n \widehat{C}_{+}^{n+1} (t-s)^{(n-k)/2} \Gamma(1/2)^{n} \Gamma(1-k/2)}
		{\Gamma((n+2-k)/2)}
		g_{\widehat{c}_{+}(t-s)} (x,y) \notag\\
		&\leq
		\frac
		{(\sqrt{d} \|\widetilde{b}\|_{\infty})^n \widehat{C}_{+}^{n+1} T^{n/2} \Gamma(1/2)^{n}\Gamma(1-k/2)}
		{\Gamma((n+2-k)/2)}
		\frac{g_{\widehat{c}_{+}(t-s)} (x,y)}{(t-s)^{k/2}}.
		\label{para_expa_03}
	\end{align}
	Hence we have
	\begin{align*}
		\sum_{n=0}^{\infty}
			\sup_{x,y \in \real}
			|\partial_{x_i}^k q| \otimes |\widetilde{H}|^{\otimes n} (s,x;t,y)
		<\infty,
	\end{align*}
	which concludes the statement.
\end{proof}

We obtain the following representation on $p_t(x,y)$.

\begin{Thm}\label{main_2_1}
	Suppose Assumption \ref{Ass_1} holds and $b,\widetilde{b}$ are bounded.
	Then for any $(t,x,y) \in (0,T] \times \real^d \times \real^d$, it holds that
	\begin{align}\label{density_3}
	p_t(x,y)
	=\widetilde{p}(0,x;t,y)
	+\int_0^t
	\e\left[
	\langle
	\nabla_x \widetilde{p}(s,X_s^x;t,y), b(s,X^x)-\widetilde{b}(s,X_s^x)
	\rangle
	\right]
	\rd s.
	\end{align}
\end{Thm}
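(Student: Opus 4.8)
The plan is to repeat the argument of the proof of Theorem~\ref{main_1}, but with the parametrix $\widetilde p$ in the role played there by the driftless kernel $q$ and with the \emph{residual} drift $b(s,X^x)-\widetilde b(s,X^x_s)$ in the role of $b(s,X^x)$. Fix $(t,x)\in(0,T]\times\real^d$. Since $b$ and $\widetilde b$ are bounded, $b$ is of sub--linear growth, so by Theorem~\ref{main_2}(i) the density $p_t(x,\cdot)$ has a continuous version; hence it suffices to prove, for every $f\in C_b^{\infty}(\real^d;\real)$, the integrated identity
\begin{align*}
\e[f(X_t^x)]
=\int_{\real^d} f(y)\left\{\widetilde p(0,x;t,y)+\int_0^t\e\big[\langle\nabla_x\widetilde p(s,X_s^x;t,y),\,b(s,X^x)-\widetilde b(s,X^x_s)\rangle\big]\,\rd s\right\}\rd y,
\end{align*}
and then upgrade the resulting a.e.\ equality to every $y$: the right--hand side is continuous in $y$ by the argument of Theorem~\ref{main_2}(i), now carried out with $\widetilde p$ and its Gaussian bounds from Lemma~\ref{main_2_lem_1} in place of $q$ (combine $|\partial_{x_i}\widetilde p(s,z;t,y)|\le C_+(t-s)^{-1/2}g_{c_+(t-s)}(z,y)$ with Girsanov via Theorem~\ref{main_0}, Hölder's inequality with exponents $p_1\in(1,d/(d-1)),p_2,p_3$ and estimate \eqref{esti_1}, exactly as in Lemma~\ref{key_1}).

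\textbf{The It\^o step.} Set $\widetilde u(s,x;t):=\e[f(\widetilde X_t^{s,x})]=\int_{\real^d}f(y)\widetilde p(s,x;t,y)\,\rd y$. From Lemma~\ref{main_2_lem_1} (differentiating the series term by term, legitimate by \eqref{para_expa_02}) we get $\partial_{x_i}\widetilde u(s,x;t)=\int f(y)\partial_{x_i}\widetilde p(s,x;t,y)\,\rd y$ together with $|\nabla_x\widetilde u(s,x;t)|\le C\|f\|_\infty (t-s)^{-1/2}$; moreover $\|\widetilde u(\cdot,\cdot;t)\|_\infty\le\|f\|_\infty$ and $\widetilde u$ extends continuously to $[0,t]\times\real^d$ with $\widetilde u(t,\cdot;t)=f$. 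Granting that $\widetilde u$ solves the backward Kolmogorov equation $(\partial_s+\widetilde L_s)\widetilde u=0$ on $[0,t)\times\real^d$, where $\widetilde L_s g:=L_sg+\langle\widetilde b(s,\cdot),\nabla g\rangle$, we apply It\^o's formula to $\widetilde u(s,X_s^x;t)$ along the semimartingale $X^x$ (whose drift is $b(s,X^x)$): using $(\partial_s+L_s)\widetilde u=(\partial_s+\widetilde L_s)\widetilde u-\langle\widetilde b(s,X_s^x),\nabla_x\widetilde u\rangle=-\langle\widetilde b(s,X^x_s),\nabla_x\widetilde u(s,X_s^x;t)\rangle$, we obtain for $\varepsilon\in(0,t)$
\begin{align*}
\widetilde u(t-\varepsilon,X_{t-\varepsilon}^x;t)
&=\widetilde u(0,x;t)+\int_0^{t-\varepsilon}\langle\nabla_x\widetilde u(s,X_s^x;t),\,b(s,X^x)-\widetilde b(s,X^x_s)\rangle\,\rd s\\
&\quad+\sum_{i,j=1}^d\int_0^{t-\varepsilon}\sigma_{i,j}(s,X_s^x)\partial_{x_i}\widetilde u(s,X_s^x;t)\,\rd W_s^j.
\end{align*}
On $[0,t-\varepsilon]$ the stochastic integrand is bounded (gradient bound, boundedness of $\sigma$), so the stochastic integral is a true martingale and has zero mean; the drift term is absolutely integrable on $[0,t]$ because $|b(s,X^x)-\widetilde b(s,X^x_s)|$ is bounded and $\int_0^t(t-s)^{-1/2}\rd s<\infty$ (with Lemma~\ref{moment_0} available for $b$ if needed). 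Taking expectations and letting $\varepsilon\downarrow 0$ — using $\|\widetilde u\|_\infty\le\|f\|_\infty$, path--continuity of $X^x$, continuity of $\widetilde u$ up to $s=t$, and dominated convergence on both the left side and the drift integral — yields $\e[f(X_t^x)]=\widetilde u(0,x;t)+\int_0^t\e[\langle\nabla_x\widetilde u(s,X_s^x;t),\,b(s,X^x)-\widetilde b(s,X^x_s)\rangle]\,\rd s$. Finally substitute $\widetilde u(0,x;t)=\int f\widetilde p(0,x;t,y)\rd y$ and $\partial_{x_i}\widetilde u=\int f\partial_{x_i}\widetilde p\,\rd y$, and exchange the $\rd y$, $\rd s$ integrals and the expectation; this is licit since $\int_{\real^d}|\partial_{x_i}\widetilde p(s,X_s^x;t,y)|\,\rd y\le C(t-s)^{-1/2}$, multiplied by the bounded $|b-\widetilde b|$ and $\|f\|_\infty$. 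This gives the integrated identity above, hence \eqref{density_3}.

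\textbf{The main obstacle} is the step ``$\widetilde u$ solves $(\partial_s+\widetilde L_s)\widetilde u=0$ and is regular enough for It\^o's formula''. Since $\widetilde b$ is merely bounded and measurable, the classical parabolic theory invoked for $u$ in Theorem~\ref{main_1} (Theorem~5.3 in \cite{Fr75}) does not apply, and $\widetilde p$ need not be twice differentiable in $x$; naively $\widetilde u$ is only $C^{1}$ in $x$. I see two ways to close this gap. First, one can read the equation off the convergent parametrix expansion \eqref{para_expansion_0}: with $\widetilde H(s,x;t,y)=\langle\nabla_x q(s,x;t,y),\widetilde b(s,x)\rangle$ one has $\widetilde H^{\otimes n}(s,x;t,y)=\langle\widetilde b(s,x),\nabla_x(q\otimes\widetilde H^{\otimes(n-1)})(s,x;t,y)\rangle$, so $\sum_{n\ge1}\widetilde H^{\otimes n}=\langle\widetilde b(s,x),\nabla_x\widetilde p\rangle$, while the Duhamel identity $(\partial_s+L_s)(q\otimes g)=-g$ (valid because $(\partial_s+L_s)q=0$ and $q(s,x;r,\cdot)\to\delta_x$ as $r\downarrow s$) gives $(\partial_s+L_s)\widetilde p=-\langle\widetilde b,\nabla_x\widetilde p\rangle$; one then applies It\^o to the finite partial sums (keeping the equation in Duhamel form) and passes to the limit using the uniform bounds of Lemma~\ref{main_2_lem_1}. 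Alternatively — and this is probably cleanest to write up — first prove \eqref{density_3} for Hölder continuous $\widetilde b$, where Theorem~5.3 in \cite{Fr75} applies verbatim, and then approximate a general bounded measurable $\widetilde b$ by $\widetilde b^{(m)}$: the Gaussian bounds of Lemma~\ref{main_2_lem_1} give $\widetilde p^{(m)}\to\widetilde p$ and $\nabla_x\widetilde p^{(m)}\to\nabla_x\widetilde p$ with a uniform integrable dominating function, and since $X_s^x$ admits a density (Theorem~\ref{main_1}) the a.e.\ convergence $\widetilde b^{(m)}\to\widetilde b$ upgrades to a.s.\ convergence of $\widetilde b^{(m)}(s,X_s^x)$, which lets one pass to the limit in both terms of \eqref{density_3}.
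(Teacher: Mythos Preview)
Your route differs substantially from the paper's. You try to replay the It\^o argument of Theorem~\ref{main_1} with $\widetilde p$ in place of $q$, which --- as you correctly diagnose --- requires $\widetilde u\in C^{1,2}$ and the backward equation $(\partial_s+\widetilde L_s)\widetilde u=0$, neither of which is available when $\widetilde b$ is only bounded measurable. The paper sidesteps this obstacle entirely: it never applies It\^o to $\widetilde u$. Instead it uses representation~\eqref{density_1} (with the driftless kernel $q$) for \emph{both} $p_t$ and $\widetilde p$, subtracts, and rearranges into the Volterra equation $\Delta=\overline\Delta+\Delta\otimes\widetilde H$ for $\Delta:=p_t-\widetilde p$, where $\overline\Delta(0,x;t,y)=\int_0^t\e[\langle\nabla_x q(s,X_s^x;t,y),\,b(s,X^x)-\widetilde b(s,X_s^x)\rangle]\,\rd s$. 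Iterating with remainder control from~\eqref{BM_drift_3} and the Gaussian bound~\eqref{unif_GB_0} gives $\Delta=\sum_{n\ge0}\overline\Delta\otimes\widetilde H^{\otimes n}$. Separately, substituting the series~\eqref{para_expa_01} for $\nabla_x\widetilde p$ into the claimed integral $\Lambda$ and applying Fubini (justified via~\eqref{para_expa_03}) yields the \emph{same} series, hence $\Delta=\Lambda$. Only first derivatives of $q$ ever appear, so the $C^2_x$ regularity of $\widetilde p$ is never needed.

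On your two proposed fixes: the approximation route (replace $\widetilde b$ by H\"older $\widetilde b^{(m)}$, use classical theory, then pass to the limit via the uniform-in-$m$ bounds of Lemma~\ref{main_2_lem_1} and dominated convergence) is viable, though you still owe a pointwise convergence argument for $\nabla_x\widetilde p^{(m)}\to\nabla_x\widetilde p$ through the term-by-term expansion. The first fix, as written, does not close: the partial sums $\sum_{n\le N}q\otimes\widetilde H^{\otimes n}$ are still not $C^2$ in $x$, since already for $n=1$ the formal second $x$-derivative of $q\otimes\widetilde H$ has integrand of order $(r-s)^{-1}(t-r)^{-1/2}$, non-integrable at $r=s$. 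Making ``keep the equation in Duhamel form'' precise would mean treating only $u=\int f\,q$ by It\^o and handling each $q\otimes\widetilde H^{\otimes n}$ as a source term via the Markov property of $Y$ plus Girsanov --- which, once unwound, is the paper's series identity.
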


\begin{proof}
	We define $\Delta(0,x;t,y):=p_t(x,y)-\widetilde{p}(0,x;t,y)$ and
	\begin{align*}
		\Lambda(0,x;t,y)
		:=\int_0^t
		\e\left[
		\langle
		\nabla_x \widetilde{p}(s,X_s^x;t,y), b(s,X^x)-\widetilde{b}(s,X^x_s)
		\rangle
		\right]
		\rd s.
	\end{align*}
	Then we show $\Delta(0,x;t,y)=\Lambda(0,x;t,y)$ for all $(t,x,y) \in (0,T] \times \real^d \times \real^d$.
	
	We first compute $\Delta(0,x;t,y)$.
	It follows from Theorem \ref{main_1} that
	\begin{align*}
		\widetilde{p}(0,x;t,y)
		&=q(0,x;t,y)
		+\int_0^t \rd s
		\int_{\real^d} \rd z
		\langle
		\nabla_x q(s,z;t,y), \widetilde{b}(s,z)
		\rangle
		\widetilde{p}(0,x;s,z).
	\end{align*}
	Hence $\Delta(0,x;t,y)$ satisfies the following linear equation,
	\begin{align}\label{BM_drift_1}
	\Delta(0,x;t,y)
	=\overline{\Delta}(0,x;t,y)
	+\Delta \otimes \widetilde{H}(0,x;t,y),
	\end{align}
	where the space and time convolution operator $\otimes$ is defined above and
	\begin{align*}
	\overline{\Delta}(0,x;t,y)
	&:=
	\int_0^t
		\e\left[
		\langle
		\nabla_x q(s,X_s^x;t,y), b(s,X^x)-\widetilde{b}(s,X^x_s)
		\rangle
		\right]
	\rd s.
	\end{align*}
	Since \eqref{BM_drift_1} is a linear equation, we have for any $N \in \n$,
	\begin{align*}
	\overline{\Delta}(0,x;t,y)
	&=\overline{\Delta}(0,x;t,y)
	+\overline{\Delta}\otimes \widetilde{H}(0,x;t,y)
	+\Delta\otimes \widetilde{H}^{\otimes 2}(0,x;t,y)\\
	&=
	\sum_{n=0}^{N-1}
	\overline{\Delta}\otimes \widetilde{H}^{\otimes n}(0,x;t,y)
	+\Delta\otimes \widetilde{H}^{\otimes N}(0,x;t,y).
	\end{align*}
	Now we estimate the upper bound of $|\Delta\otimes \widetilde{H}^{\otimes N}(0,x;t,y)|$.
	By using \eqref{BM_drift_3}, the Gaussian upper bound \eqref{unif_GB_0} for $p_t(x,y)$ and $\widetilde{p}(0,x;t,y)$ and Chapman--Kolmogorov equation, we have
	\begin{align*}
		|\Delta\otimes \widetilde{H}^{\otimes N}(0,x;t,y)|
		&\leq 
		\int_{0}^{t} \rd s
		\int_{\real^d} \rd z
			\{p_t(x,Y)+\widehat{p}(0,x;t,y)\}
			|\widetilde{H}|^{\otimes N}(s,z;t,y)\\
		&\leq
		\frac{2 C_{+} \widehat{C}_{+}^{N} T^{N/2} \Gamma(1/2)^N }{\Gamma(N/2)}
		g_{c_{+}t} (x,y) \to 0,
	\end{align*}
	as $N \to \infty$.
	Hence, we obtain that $\Delta(0,x;t,y)$ satisfies the expansion\footnote{
	The procedure to obtain this expansion is called the parametrix method, and $\overline{\Delta}(0,x;t,y)$ is called the parametrix (see, page 4, (2.8) in \cite{Fr64} or (4.4) in \cite{LeMe}).
	}
	\begin{align*}
	\Delta(0,x;t,y)
	=\sum_{n=0}^{\infty}
		\overline{\Delta}\otimes \widetilde{H}^{\otimes n}(0,x;t,y).
	\end{align*}
	
	Now we prove $\Lambda(0,x;t,y)=\Delta(0,x;t,y)$.
	Recall that $\partial_{x_i}\widetilde{p}(s,x;t,y)$ satisfies the expansion \eqref{para_expa_01}.
	In order to use Fubini's Theorem, we prove that the following integral is finite
	\begin{align*}
		&|\Lambda|(0,x;t,y)\\
		&:=
		\sum_{n=0}^{\infty}
		\sum_{i=1}^{d}
		\int_0^t \rd s
		\int_s^t \rd r
		\int_{\real^d} \rd w
		\e\left[
		|(\partial_{i} q)(s,X_s^x;r,w)|
			|b^i(s,X^x)-\widetilde{b}^i(s,X^x_s)|
		\right]
		|\widetilde{H}|^{\otimes n}(r,w;t,y).
	\end{align*}
	Since $b$ and $\widetilde{b}$ are bounded, by using \eqref{para_expa_03} and Gaussian upper bound \eqref{unif_GB_0} for $p_s(x,\cdot)$, there exists $C>0$ such that
	\begin{align*}
		&|\Lambda|(0,x;t,y)\\
		&\leq
		(\|b\|_{\infty} \vee \|\widetilde{b}\|_{\infty})
		\sum_{n=0}^{\infty}
		\sum_{i=1}^{d}
		\int_0^t \rd s
		\int_s^t \rd r
		\int_{\real^d} \rd w
			\int_{\real^d} \rd v
		|(\partial_{i} q)(s,v;r,w)| |\widetilde{H}|^{\otimes n}(r,w;t,y)
		p_s(x,v)\\
		&\leq
		\sum_{n=0}^{\infty}
			\frac{C^n}{\Gamma((n+1)/2)}
			\int_0^t \rd s
				\frac{1}{{\sqrt{t-s}}}
					\int_{\real^d} \rd v
						g_{\widehat{c}_{+}(t-s)} (v,y)
						g_{\widehat{c}_{+}s}(x,v)
		<\infty.
	\end{align*}
	Therefore, using Fubini's Theorem, we have
	\begin{align*}
	&\Lambda(0,x;t,y)\\
	&=
	\sum_{n=0}^{\infty}
	\sum_{i=1}^{d}
	\int_0^t \rd s
	\int_s^t \rd r
	\int_{\real^d} \rd w
	\e\left[
	(\partial_{i} q)(s,X_s^x;r,w) \{b^i(s,X^x)-\widetilde{b}^i(s,X^x_s)\}
	\right]
	\widetilde{H}^{\otimes n}(r,w;t,y)\\
	&=
	\sum_{n=0}^{\infty}
	\int_0^t \rd r
	\int_{\real^d} \rd w
	\int_0^r \rd s
	\e\left[
	\langle \nabla_{x}q(s,X_s^x;r,w), b(s,X^x)-\widetilde{b}(s,X^x_s)\rangle
	\right]
	\widetilde{H}^{\otimes n}(r,w;t,y)\\
	&=
	\sum_{n=0}^{\infty}
	\int_0^t \rd r
	\int_{\real^d} \rd w
	\overline{\Delta}(0,x;r,w)
	\widetilde{H}^{\otimes n}(r,w;t,y)
	=
	\sum_{n=0}^{\infty}
	\overline{\Delta}\otimes \widetilde{H}^{\otimes n}(0,x;t,y)
	=
	\Delta(0,x;t,y),
	\end{align*}
	which concludes the proof.
\end{proof}

\subsection{Sharp bounds for a pdf of Brownian motion with bounded drift}\label{Sub_sharp}

Inspired by \cite{QiZh02,QiZh03}, we consider a sharp two--sided bound for a Brownian motion with path--dependent and bounded drift coefficient of the form
\begin{align}\label{BM_drift_0}
	X_t^x
	=x
	+\int_{0}^{t}
		b(s,X^x)
	\rd s
	+W_t,
	~x \in \real^d,
	~t \in [0,T],
\end{align}
by using representation \eqref{density_3} and bang--bang diffusion processes.

We define a $d$-dimensional bang--bang diffusion process $Y^{x,\alpha, \beta}=(Y_t^{x,\alpha, \beta})_{t \in [0,T]}$ with parameter $\alpha=(\alpha_1,\ldots,\alpha_d)^{\top}$, $\beta=(\beta_1,\ldots,\beta_d)^{\top} \in \real^d$, which satisfies the following SDE
\begin{align*}
	Y_t^{x,\alpha,\beta}
	=x
	+\int_{0}^{t}
	\beta \mbox{sgn}(\alpha-Y_s^{x,\alpha,\beta})
	\rd s
	+W_t,
\end{align*}
where $\beta \mbox{sgn}(x):=(\beta_1 \mbox{sgn}(x_1),\ldots,\beta_d \mbox{sgn}(x_d))^{\top}$, for each $x \in \real^d$.
Then it follows from Theorem 2 in \cite{QiZh02} (see also (6.5.14) in \cite{KS}) that for any $t \in (0,T]$, $Y_t^{x,\alpha,\beta}$ admits a pdf, denoted by $q^{\alpha,\beta}_t(x,\cdot)$ which satisfies
\begin{align*}
	q^{\alpha,\beta}_t(x, \alpha)
	=
	\prod_{i=1}^{d}
	\frac{2}{\sqrt{2 \pi t}}
	\int_{|x_i-\alpha_i|/\sqrt{t}}^{\infty}
	z_i \exp\left(
	-\frac{(z_i-\beta_i \sqrt{t})^2}{2}
	\right)
	\rd z_i.
\end{align*}

\begin{Thm}\label{main_BM_0}
	Suppose Assumption \ref{Ass_1} holds and the drift coefficient $b$ is bounded.
	Then a pdf of a solution of \eqref{BM_drift_0}, denoted by $p_{t}(x,\cdot)$ satisfies the following two--sided estimates: for any $(t,x,y) \in [0,T] \times \real^d \times \real^d$,
	\begin{align*}
		q^{y,-\|b\|_{\infty}}_t(x, y)
		\leq
		p_{t}(x,y)
		\leq q^{y,\|b\|_{\infty}}_t(x, y).
	\end{align*}
\end{Thm}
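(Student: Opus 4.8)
The plan is to use the parametrix representation \eqref{density_3} of Theorem \ref{main_2_1} with the bang--bang drift pointing toward, respectively away from, the fixed target $y$, and to determine the sign of the resulting correction term. Fix $y \in \real^d$ and $\varepsilon \in \{+1,-1\}$, and put $\widetilde b(s,z) := \varepsilon\|b\|_{\infty}\,\mathrm{sgn}(y - z)$, i.e.\ $\widetilde b^i(s,z) = \varepsilon\|b\|_{\infty}\,\mathrm{sgn}(y_i - z_i)$. Here $\sigma \equiv I_d$, so Assumption \ref{Ass_1} holds, and $\widetilde b$ is bounded and measurable; hence Theorem \ref{main_2_1} is applicable and yields, for every $(t,x) \in (0,T] \times \real^d$,
\[
	p_t(x,y) = \widetilde p(0,x;t,y) + \int_0^t \e\left[ \langle \nabla_x \widetilde p(s,X_s^x;t,y),\, b(s,X^x) - \widetilde b(s,X_s^x) \rangle \right] \rd s .
\]
The Markovian equation $\rd\widetilde X_t^{0,x} = \widetilde b(t,\widetilde X_t^{0,x})\rd t + \rd W_t$ is exactly the defining SDE of the bang--bang diffusion $Y^{x,y,\varepsilon\|b\|_{\infty}}$ (with center $y$ and speed $\varepsilon\|b\|_{\infty}$ in every coordinate), so $\widetilde p(0,x;t,y) = q^{y,\varepsilon\|b\|_{\infty}}_t(x,y)$. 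It therefore remains only to show that the correction integral is $\le 0$ when $\varepsilon = +1$ and $\ge 0$ when $\varepsilon = -1$.

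The crux is a coordinatewise unimodality of the bang--bang transition density, which I would isolate as a lemma: for all $0 \le s < t \le T$, $z,y \in \real^d$ and $i = 1,\dots,d$,
\[
	(y_i - z_i)\, \partial_{z_i} \widetilde p(s,z;t,y) \ge 0 .
\]
Granting this, fix $\omega$, fix $s \in (0,t)$, and set $z = X_s^x(\omega)$. For each $i$ with $z_i \ne y_i$, the factor $\partial_{z_i}\widetilde p(s,z;t,y)$ has the sign of $y_i - z_i$, while $b^i(s,X^x(\omega)) - \widetilde b^i(s,z) = b^i(s,X^x(\omega)) - \varepsilon\|b\|_{\infty}\,\mathrm{sgn}(y_i - z_i)$ has the sign of $-\varepsilon\,(y_i - z_i)$, since $|b^i(s,X^x(\omega))| \le |b(s,X^x(\omega))| \le \|b\|_{\infty}$; hence each summand of $\langle \nabla_x \widetilde p(s,z;t,y), b(s,X^x) - \widetilde b(s,z)\rangle$ has the sign of $-\varepsilon$. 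Because $X_s^x$ admits a density for $s > 0$ (Theorem \ref{main_1}), the set $\{z_i = y_i\}$ is negligible, so for every $s \in (0,t)$ the integrand of the correction is $\le 0$ a.s.\ when $\varepsilon = +1$ and $\ge 0$ a.s.\ when $\varepsilon = -1$; taking expectations and integrating in $s$ gives the two inequalities. Since, by Theorem \ref{main_2}(i) (applicable because a bounded $b$ satisfies Assumption \ref{Ass_2}), \eqref{density_3} holds as a pointwise identity in $(t,x,y)$ for the continuous version of $p_t(x,\cdot)$, we obtain $q^{y,-\|b\|_{\infty}}_t(x,y) \le p_t(x,y) \le q^{y,\|b\|_{\infty}}_t(x,y)$ for all $(t,x,y) \in (0,T] \times \real^d \times \real^d$, the case $t = 0$ being trivial.

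The main obstacle is the proof of the Key Lemma. Since the drift $\widetilde b(t,z) = \varepsilon\|b\|_{\infty}\,\mathrm{sgn}(y-z)$ acts on each coordinate separately and only through that coordinate, the components of $\widetilde X^{0,x}$ are independent one--dimensional diffusions, so $\widetilde p(s,z;t,y) = \prod_{i=1}^d r_{t-s}(z_i,y_i)$, where $r_u(\xi,\eta)$ is the transition density of $\rd V_t = \varepsilon\|b\|_{\infty}\,\mathrm{sgn}(y_i - V_t)\,\rd t + \rd W_t$; as $r_{t-s} \ge 0$ it suffices to prove $(y_i - \xi)\,\partial_\xi r_u(\xi,y_i) \ge 0$. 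After centering at $y_i$ the coefficients become odd, so $\psi(u,\xi) := r_u(\xi + y_i, y_i)$ is even in $\xi$ and solves the backward equation $\partial_u \psi = \tfrac12 \partial_\xi^2 \psi - \varepsilon\|b\|_{\infty}\,\mathrm{sgn}(\xi)\,\partial_\xi \psi$; on the half--line $\xi > 0$ the function $\phi := \partial_\xi \psi$ then solves the constant--coefficient parabolic equation $\partial_u \phi = \tfrac12 \partial_\xi^2 \phi - \varepsilon\|b\|_{\infty}\,\partial_\xi \phi$, with $\phi(u,0^+) \le 0$ (by evenness of $\psi$), $\phi(u,\infty) = 0$, and $\phi(\varepsilon_0,\cdot) \le 0$ for small $\varepsilon_0 > 0$ (at short times $r_{\varepsilon_0}(\cdot,y_i)$ is unimodal), whence the parabolic maximum principle on $(0,\infty) \times (\varepsilon_0,T]$ and the limit $\varepsilon_0 \downarrow 0$ give $\phi \le 0$ on $(0,\infty)$ --- the asserted monotonicity. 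Alternatively one may simply invoke the explicit formula for $r_u$ (see \cite{QiZh02} and Section~6.5 of \cite{KS}) and read off the sign of its spatial derivative. The delicate points, on which I expect to spend most of the effort, are the regularity of $\psi$ and $\phi$ at the drift discontinuity $\xi = 0$ and the justification of the short--time limit in the maximum--principle argument.
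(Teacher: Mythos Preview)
Your approach is correct and essentially identical to the paper's: both apply Theorem~\ref{main_2_1} with the bang--bang parametrix centred at $y$ and then show that the correction integral has a definite sign via the coordinatewise monotonicity of $z\mapsto q^{y,\beta}_{t-s}(z,y)$. The paper simply asserts the sign inequalities $\partial_{z_i} q^{y,\pm\|b\|_\infty}_{t-s}(z,y)\bigl(b^i(s,w)\mp\|b\|_\infty\,\mathrm{sgn}(y_i-z_i)\bigr)\lessgtr 0$ without further comment; your Key Lemma is precisely the monotonicity ingredient behind this, and it follows in one line from the explicit formula for $q^{\alpha,\beta}_t(x,\alpha)$ (which depends on $x$ only through $|x_i-\alpha_i|$, with a vanishing derivative at $x_i=\alpha_i$), so the PDE/maximum--principle route you sketch is unnecessary.
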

\begin{proof}
	Let $x,y \in \real^d$ be fixed.
	Using Theorem \ref{main_2_1} with $\widetilde{p}=q^{y,\pm\|b\|_{\infty}}$, we have
	\begin{align*}
		p_{t}(x,y)
		-q^{y,\pm\|b\|_{\infty}}_t(x, y)
		=\int_{0}^{t}
			\e\left[
				\langle
					\nabla_x
					q^{y,\pm\|b\|_{\infty}}_{t-s}(X_s^x, y)
					,
					b(s,X^x)
					-(\pm \|b\|_{\infty})
					\mbox{sgn}(y-X_s^x)
				\rangle
			\right]
		\rd s.
	\end{align*}
	On the other hand, it holds that for any $s \in [0,t)$, $z \in \real^d$ and $w \in C([0,\infty);\real^d)$,
	\begin{align*}
		\partial_{z_i} q^{y,\|b\|_{\infty}}_{t-s}(z, y)
		(b^i(s,w)-\|b\|_{\infty}\mbox{sgn}(y_i-z_i))
		&\leq 0,\\
		\partial_{z_i} q^{y,-\|b\|_{\infty}}_{t-s}(z, y)
		(b^i(s,w)+\|b\|_{\infty}\mbox{sgn}(y_i-z_i))
		&\geq 0,
	\end{align*}
	thus we conclude the statement.
\end{proof}

\subsection{Comparison property of pdfs}\label{Sub_comp}

In this subsection, we consider a comparison property of pdfs.
Let $X^x$ and $\widehat{X}^x$ be a solution of path--dependent SDE \eqref{SDE_1} with drift coefficient $b$ and $\widehat{b}$, respectively.
We denote by $p_{t}(x,\cdot)$ and $\widehat{p}_{t}(x,\cdot)$ pdf of $X_t^x$ and $\widehat{X}_t^x$ for $t \in (0,T]$, respectively.
Then we have the following comparison property of $p_{t}(x,\cdot)$ and $\widehat{p}_{t}(x,\cdot)$.

\begin{Thm}\label{main_comp_0}
	
	Suppose Assumption \ref{Ass_1} and \ref{Ass_2} hold for $b$, $\widehat{b}$ and $\sigma$.
	Let $p_1,p_2,p_3>1$ with $p_1 \in (1,\frac{d}{d-1})$ and $1/p_1+1/p_2+1/p_3=1$.
	\begin{itemize}
		\item[(i)]
		There exists $C_{+}\equiv C_{+}(p_1)>0$ such that for any $(t,x) \in (0,T] \times \real^d$ and a.e. $y \in \real^d$, it holds that 
		\begin{align}\label{main_comp_1}
		&|p_t(x,y)-\widehat{p}_{t}(x,y)| \notag\\
		&\leq
		C_{+}
		\sup_{0 \leq s \leq t}
		\left\{
			\e
				\left[
					Z_s(1,Y^{0,x})^{2p_2}
				\right]^{\frac{1}{2p_2}}
			+
			\e
				\left[
					\widehat{Z}_s(1,Y^{0,x})^{2p_2}
				\right]^{\frac{1}{2p_2}}
		\right\}
		\e
		\left[
		\widehat{b}(s,Y^{0,x})^{2p_2}
		\right]^{\frac{1}{2p_2}}
		\notag\\
		&\quad \times
		\e
		\left[
		|b(s,Y^{0,x})
		+
		\widehat{b}(s,Y^{0,x})|^{2p_3}
		\right]^{\frac{1}{2p_3}}
		\e
		\left[
		\left|
		b(s,Y^{0,x})
		-
		\widehat{b}(s,Y^{0,x})
		\right|^{2p_3}
		\right]^{\frac{1}{2p_3}}
		g_{p_1\widehat{c}_{+}t}(x,y).
		\end{align}
		
		\item[(ii)]
		Recall that $t_{p_2}$ is defined by \eqref{def_tr}.
		There exist $C_{+}>0$ and $c_{+}>0$ such that for any $(t,x) \in (0,T] \times \real^d$ and a.e. $y \in \real^d$, it holds that if $t \in (0,t_{2p_2}]$, then 
		\begin{align*}
			&|p_t(x,y)-\widehat{p}_{t}(x,y)| \notag\\
			&\leq
			C_{+} 
			\exp
			\left(
				c_{+}
				(1+|x|^2)t
			\right) 
			(1+|x|)^2
			\sup_{0 \leq s \leq t }
			\e
			\left[
			\left|
			b(s,Y^{0,x})
			-
			\widehat{b}(s,Y^{0,x})
			\right|^{2p_3}
			\right]^{\frac{1}{2p_3}}
			g_{p_1\widehat{c}_{+}t}(x,y),
		\end{align*}
		and if $t \in (t_{2p_2},T]$, then
		\begin{align*}
			&|p_t(x,y)-\widehat{p}_{t}(x,y)| \notag\\
			&\leq
			C_{+} 
			\exp\left(
				\frac{|x|^2}{16\widehat{c}_{+}p_{2}T}
			\right)
			(1+|x|)^2
			\sup_{0 \leq s \leq t }
			\e
				\left[
					\left|
						b(s,Y^{0,x})
						-
						\widehat{b}(s,Y^{0,x})
					\right|^{2p_3}
				\right]^{\frac{1}{2p_3}}
			g_{p_1\widehat{c}_{+}t}(x,y).
		\end{align*}
	\end{itemize}
\end{Thm}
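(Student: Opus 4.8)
The plan is to run the same scheme as in the proof of Theorem~\ref{main_2}: apply the explicit representation \eqref{density_1} to both $p_t$ and $\widehat p_t$, transfer both expectations to the driftless process $Y^{0,x}$ via Maruyama--Girsanov, and estimate the resulting difference by H\"older's inequality. Concretely, using \eqref{density_1} for $b$ and for $\widehat b$ and then \eqref{Girsanov_0} on each of the two expectations (together with the $\mathcal F_s$--measurability of the integrand and the martingale property of $Z(1,Y^{0,x})$, $\widehat Z(1,Y^{0,x})$, exactly as in \eqref{Girsanov_3}, so that $Z_T$ may be replaced by $Z_s$), one obtains
\[
p_t(x,y)-\widehat p_t(x,y)
=\int_0^t\e\Big[\big\langle\nabla_xq(s,Y_s^{0,x};t,y),\,b_sZ_s-\widehat b_s\widehat Z_s\big\rangle\Big]\,\rd s,
\]
where I abbreviate $b_s=b(s,Y^{0,x})$, $Z_s=Z_s(1,Y^{0,x})$, $\widehat b_s=\widehat b(s,Y^{0,x})$, $\widehat Z_s=\widehat Z_s(1,Y^{0,x})$. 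Bounding $\nabla_xq$ by \eqref{bound_drev_2} reduces the problem to controlling $\int_0^t(t-s)^{-1/2}\,\e[g_{\widehat c_+(t-s)}(Y_s^{0,x},y)\,|b_sZ_s-\widehat b_s\widehat Z_s|]\,\rd s$.

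The heart of the argument is a pointwise estimate of $b_sZ_s-\widehat b_s\widehat Z_s$, which I would split by a telescoping identity into a ``difference of drifts'' piece and a ``difference of Girsanov weights'' piece, e.g. $b_sZ_s-\widehat b_s\widehat Z_s=(b_s-\widehat b_s)\widehat Z_s+b_s(Z_s-\widehat Z_s)$. The first piece is immediate; for the second I would use that, with $\mu=\sigma^{-1}b$ and $\widehat\mu=\sigma^{-1}\widehat b$, the difference $R_s:=Z_s-\widehat Z_s$ solves the linear SDE $\rd R_s=R_s\mu_s\,\rd W_s+\widehat Z_s(\mu_s-\widehat\mu_s)\,\rd W_s$ with $R_0=0$; solving it by variation of constants (or estimating directly via Burkholder--Davis--Gundy and a Gronwall argument), invoking the uniform ellipticity of $\sigma$ to turn $|\mu_s-\widehat\mu_s|$ into $|b_s-\widehat b_s|$ and $|\mu_s|$ into $|b_s|\le\tfrac12|b_s+\widehat b_s|+\tfrac12|b_s-\widehat b_s|$, and using the moment bounds of Lemma~\ref{moment_0} and the $L^p$--bounds for $Z$, $\widehat Z$ of Lemma~\ref{bdd_Girsanov_1}, one produces a bound on $|b_sZ_s-\widehat b_s\widehat Z_s|$ that factors as $(Z_s+\widehat Z_s)$ times a product of powers of $|\widehat b_s|$, $|b_s+\widehat b_s|$, $|b_s-\widehat b_s|$. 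I would then apply H\"older with the five exponents $p_1,2p_2,2p_2,2p_3,2p_3$, which are admissible since $1/p_1+1/(2p_2)+1/(2p_2)+1/(2p_3)+1/(2p_3)=1/p_1+1/p_2+1/p_3=1$, sending $g_{\widehat c_+(t-s)}(Y_s^{0,x},y)$ to the $L^{p_1}$ slot, $Z_s+\widehat Z_s$ and $\widehat b_s$ to the two $L^{2p_2}$ slots, and $|b_s\pm\widehat b_s|$ to the two $L^{2p_3}$ slots.

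Finally, $\e[g_{\widehat c_+(t-s)}(Y_s^{0,x},y)^{p_1}]^{1/p_1}$ is handled by \eqref{esti_1} of Lemma~\ref{key_1}, which yields the Gaussian factor $g_{p_1\widehat c_+t}(x,y)$ times a time singularity $\lesssim(t-s)^{-1/2-d(p_1-1)/(2p_1)}$; the exponent is $<1$ precisely because $p_1\in(1,\tfrac{d}{d-1})$, so the $\rd s$--integral converges and the $s$--dependent moment factors come out under $\sup_{0\le s\le t}$, giving (i); the continuity needed to read the bound as ``a.e.\ $y$'' is the one already established in Theorem~\ref{main_2}(i). For (ii) I would simply insert the available moment estimates into (i): by Lemma~\ref{moment_0} (with $F=\widehat b$ and $F=b\pm\widehat b$) each of $\e[|\widehat b_s|^{2p_2}]^{1/(2p_2)}$, $\e[|b_s\pm\widehat b_s|^{2p_3}]^{1/(2p_3)}$ is at most a constant times $1+|x|$, producing the $(1+|x|)^2$ prefactor, and by Lemma~\ref{bdd_Girsanov_1} with $r=2p_2$ one bounds $\sup_{0\le s\le t}\e[Z_s(1,Y^{0,x})^{2p_2}]^{1/(2p_2)}$ and the same for $\widehat Z$; this is where the dichotomy enters, the sup being controlled by $\exp(c_+(1+|x|^2)t)$ for $t\le t_{2p_2}$ and by $\exp(|x|^2/(16\widehat c_+p_2T))$ for $t>t_{2p_2}$ (the power $1/(2p_2)$ applied to the $\exp(|x|^2/(8\widehat c_+T))$ bound in \eqref{bdd_Girsanov_01}).

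The main obstacle is the estimate of $Z_s(1,Y^{0,x})-\widehat Z_s(1,Y^{0,x})$: this is a difference of exponential (local) martingales whose integrands $\sigma^{-1}b$, $\sigma^{-1}\widehat b$ are only of sub--linear growth, so no uniform bound is available, and one must exploit the linear equation satisfied by the difference together with Doob/Burkholder--Davis--Gundy and the moment bounds of Lemmas~\ref{moment_0} and~\ref{bdd_Girsanov_1}; it is this step that forces Assumption~\ref{Ass_2} and generates the thresholds $t_{2p_2}$ appearing in (ii).
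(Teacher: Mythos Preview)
Your overall strategy matches the paper's: represent both densities via \eqref{density_1}, transfer to $Y^{0,x}$ by Girsanov, telescope $bZ-\widehat b\widehat Z$, apply H\"older with the five exponents $p_1,2p_2,2p_2,2p_3,2p_3$, and control the Gaussian factor by Lemma~\ref{key_1}(ii); part~(ii) then follows by inserting Lemmas~\ref{moment_0} and~\ref{bdd_Girsanov_1}, exactly as you say.

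The one substantive difference is how $Z_s-\widehat Z_s$ is handled. The paper does \emph{not} use the linear SDE for the difference; it simply applies the elementary inequality $|e^x-e^y|\le(e^x+e^y)|x-y|$, which gives at once
\[
|Z_s-\widehat Z_s|\le(Z_s+\widehat Z_s)\Bigl|\textstyle\int_0^s(\mu_u-\widehat\mu_u)\,\rd W_u-\tfrac12\int_0^s(|\mu_u|^2-|\widehat\mu_u|^2)\,\rd u\Bigr|.
\]
After H\"older (the $(Z+\widehat Z)$ and $|\widehat b|$ factors in $L^{2p_2}$, the exponent-difference in $L^{p_3}$), the stochastic integral is estimated by Burkholder--Davis--Gundy and the quadratic-variation piece by $|\mu|^2-|\widehat\mu|^2=(\mu-\widehat\mu)(\mu+\widehat\mu)$ together with Cauchy--Schwarz, producing the $|b\pm\widehat b|$ factors in $L^{2p_3}$. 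This sidesteps the variation-of-constants/Gronwall route you outline, which is delicate precisely because the coefficient $\mu_s$ in your SDE for $R_s$ is unbounded; your approach can be pushed through, but the paper's elementary inequality is shorter and avoids that obstacle entirely. (A minor point: the paper telescopes as $(b-\widehat b)Z+\widehat b(Z-\widehat Z)$, so the $|\widehat b|^{2p_2}$ factor in the statement arises directly; your telescoping $(b-\widehat b)\widehat Z+b(Z-\widehat Z)$ would produce $|b|^{2p_2}$ instead, which is of course symmetric.) Note also that Assumption~\ref{Ass_2} is not specific to the $Z-\widehat Z$ estimate: it is what makes $\e[Z_s^{2p_2}]$ and $\e[\widehat Z_s^{2p_2}]$ finite via Lemma~\ref{bdd_Girsanov_1}, and hence is needed for both terms of the telescoping.
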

\begin{proof}
	By using Theorem \ref{main_1} and \eqref{Girsanov_0}, we have
	\begin{align}
	|p_t(x,y)-\widehat{p}_{t}(x,y)|
	&
	\leq
	I_t^{(1)}(x,y)
	+
	I_t^{(2)}(x,y),\label{comp_1}
	\end{align}
	where $I_t^{(1)}(x,y)$ and $I_t^{(2)}(x,y)$ are defined by
	\begin{align*}
	I_t^{(1)}(x,y)
	:=&\int_0^t
	\e\left[
	\left|
	\nabla_x q(s,X_s^{0,x};t,y)
	\right|
	\left|
	b(s,X^{0,x})-\widehat{b}(s,X^{0,x})
	\right|
	\right]
	\rd s,\\
	I_t^{(2)}(x,y)
	:=&
	\int_0^t
	\e\left[
	\left|
	\nabla_x q(s,Y_s^{0,x};t,y)
	\right|
	\left|
	Z_s(1,Y^{0,x})-\widehat{Z}_s(1,Y^{0,x})
	\right|
	\left|
	\widehat{b}(s,Y_s^{0,x})
	\right|
	\right]
	\rd s,
	\end{align*}
	and for $q\in \real$, $\widehat{Z}(q,Y^{0,x})=(\widehat{Z}_t(q,Y^{0,x}))_{t \in [0,T]}$ is a martingale defined by
	\begin{align*}
	\widehat{Z}_t(q,Y^{0,x})
	&:=
	\exp\left(
	\sum_{j=1}^{d}
	\int_{0}^{t}
	q \widehat{\mu}^j(s,Y^{0,x})
	\rd W_s^{j}
	-
	\frac{1}{2}
	\int_{0}^{t}
	|q\widehat{\mu}(s,Y^{0,x})|^2
	\rd s
	\right),\\
	\widehat{\mu}(t,w)
	&:=
	\sigma(t,w_t)^{-1} \widehat{b}(t,w),~(t,w) \in [0,T] \times C([0,T];\real^d).
	\end{align*}
	By using the inequality \eqref{Girsanov_2} and Lemma \ref{key_1} 
	, we have
	\begin{align*}
		&I_t^{(1)}(x,y)\\
		&\leq
		\int_0^t
			\frac{\sqrt{d}\widehat{C}_{+}}{(t-s)^{1/2}}
			\e 
				\left[
					\left|
						g_{\widehat{c}_{+}t} (Y_s^{0,x}, y)
					\right|^{p_1}
				\right]^{1/p_1}
			\e\left[
			\left|
			Z_s(1,Y^{0,x})
			\right|^{p_2}
			\right]^{1/p_2}
			\e\left[
			\left|
			b(s,Y_s^{0,x})-\widehat{b}(s,Y_s^{0,x})
			\right|^{p_3}
			\right]^{1/p_3}
		\rd s\\
		&\leq
		C_{p_1}
		\sup_{0 \leq s \leq t}
		\e
		\left[
			Z_s(1,Y^{0,x})^{p_2}
		\right]^{1/p_2}
		\e
			\left[
				\left|
					b(s,Y^{0,x})-\widehat{b}(s,Y^{0,x})
				\right|^{p_3}
			\right]^{1/p_3}
		g_{\widehat{c}_{+}t}(x,y)
		\int_{0}^{t}
			\frac{1}{(t-s)^{\frac{d(p_1-1)}{2p_1}+\frac{1}{2}}}
		\rd s\\
		&\leq
		C_{p_1}'
		\sup_{0 \leq s \leq t}
		\e
		\left[
			Z_s(1,Y^{0,x})^{p_2}
		\right]^{1/p_2}
		\e
			\left[
				\left|
					b(s,Y^{0,x})-\widehat{b}(s,Y^{0,x})
				\right|^{p_3}
			\right]^{1/p_3}
		g_{\widehat{c}_{+}t}(x,y),
	\end{align*}
	for some $C_{p_1},C_{p_1}'>0$.
	For the second term of \eqref{comp_1}, by using H\"older's inequality, the elementary estimate $|e^x -e^y| \leq (e^x +e^y)|x-y|$, Minkowski's inequality and Lemma \ref{key_1}, 
	we have
	\begin{align*}
		&I_t^{(2)}(x,y)\\
		&\leq
		\int_0^t
			\e\left[
			\left|
			\nabla_x q(s,Y_s^{0,x};t,y)
			\right|^{p_1}
			\right]^{1/p_1}
			\e\left[
			\left|
			Z_s(1,Y^{0,x})+\widehat{Z}_s(1,Y^{0,x})
			\right|^{2p_2}
			\right]^{1/2p_2}
			\e\left[
			\left|
			\widehat{b}(s,Y^{0,x})
			\right|^{2p_2}
			\right]^{1/2p_2}\\
			& \quad \times
			\e\left[
			\left|
			\sum_{j=1}^{d}
			\int_{0}^{s}
			\mu^j(u,Y^{0,x})
			-
			\widehat{\mu}^j(u,Y^{0,x})
			\rd W_u^{j}
			-
			\frac{1}{2}
			\int_{0}^{s}
			|\mu(u,Y^{0,x})|^2
			-
			|\widehat{\mu}(u,Y^{0,x})|^2
			\rd u
			\right|^{p_3}
			\right]^{1/p_3}
		\rd s\\
		&\leq
		C_{p_1}
		g_{\widehat{c}_{+}t}(x,y)
		\sup_{0 \leq s \leq t}
		\left\{
			\e
				\left[
					Z_s(1,Y^{0,x})^{2p_2}
				\right]^{1/2p_2}
			+
			\e
				\left[
					\widehat{Z}_s(1,Y^{0,x})^{2p_2}
				\right]^{1/	2p_2}
		\right\}
		\e
			\left[
				\left|
					\widehat{b}(s,Y_s^{0,x})
				\right|^{2p_2}
			\right]^{1/2p_2}\\
		&\quad \times
		\left\{
			\e
				\left[
					\left|
						\sum_{j=1}^{d}
							\int_{0}^{s}
								\mu^j(u,Y^{0,x})
								-
								\widehat{\mu}^j(u,Y^{0,x})
							\rd W_u^{j}
					\right|^{p_3}
				\right]^{1/p_3}
			+
			\e
				\left[
					\left|
						\int_{0}^{s}
							|\mu(u,Y^{0,x})|^2
							-
							|\widehat{\mu}(u,Y^{0,x})|^2
						\rd u
					\right|^{p_3}
				\right]^{1/p_3}
		\right\}.
	\end{align*}
	From Burkholder-Davis-Gundy's inequality, Schwarz's inequality and Assumption \ref{Ass_1}, we obtain
	\begin{align*}
		&I_t^{(2)}(x,y)\\
		&\leq
		C_{p_1}
		g_{\widehat{c}_{+}t}(x,y)
		\sup_{0 \leq s \leq t}
		\left\{
			\e
				\left[
					Z_s(1,Y^{0,x})^{2p_2}
				\right]^{1/2p_2}
			+
			\e
				\left[
					\widehat{Z}_s(1,Y^{0,x})^{2p_2}
				\right]^{1/2p_2}
		\right\}
		\e
			\left[
				\widehat{b}(s,Y^{0,x})^{2p_2}
			\right]^{1/2p_2}
		\notag\\
		&\quad \times
		\e
			\left[
				|b(s,Y^{0,x})
				+
				\widehat{b}(s,Y^{0,x})|^{2p_3}
			\right]^{{2p_3}}
			\e
				\left[
					\left|
						b(s,Y^{0,x})
						-
						\widehat{b}(s,Y^{0,x})
					\right|^{2p_3}
			\right]^{1/2p_3},
	\end{align*}
	for some $C_{p_1}>0$.
	Hence we conclude \eqref{main_comp_1}.
	Lemma \ref{moment_0} and Lemma \ref{bdd_Girsanov_1} imply the statement (ii).
\end{proof}

\subsection{Application to an Euler--Maruyama type scheme}\label{Sub_EM}

In this subsection, we consider an Euler--Maruyama type scheme for Brownian motion with path--dependent drift coefficient of the form
\begin{align}\label{SDE_APP_0}
	\rd X_t^{x}
	=
	\nu(A_t(X^{x}))
	\rd t
	+
	\sigma
	\rd W_t,~
	t\in [0,T],~X_0=x\in \real^d,
\end{align}
where $\nu:\mathbb{S}:=[0,\infty) \times \real^d \times [0,\infty) \times (\real^{d})^{\n} \times \real^{\ell} \to \real^d$ and $A_t : C([0,\infty);\real^d) \to \mathbb{S}$ is defined by \eqref{Eg_1}, that is,
\begin{align*}
A_t(w)
=
\left(
t,
w_t,
\max_{0 \leq s \leq t} \zeta(s,w_s),
\{w_{\tau_i(t)}\}_{i\in \n},
\int_{0}^{t} c(s,w_s) \rd s,
\right) \in \mathbb{S},
~
w \in C([0,\infty);\real^d),
\end{align*}
for some measurable functions $\zeta :[0,\infty) \times \real^{d} \to [0,\infty)$ and $c:[0,T] \times \real^{d} \to \real^{\ell}$.

Let us define an Euler--Maruyama type scheme $X^{(x,n,m)}=(X^{x,n,m}_t)_{t \in [0,T]}$ for SDE \eqref{SDE_APP_0} for $n,m \in \n$ as follows:
\begin{align}\label{EM_0}
	\rd X_t^{(x,n,m)}
	=
	\nu(A_t^{(n,m)}(X^{(x,n,m)}))
	\rd t
	+
	\sigma
	\rd W_t,~
	t\in [0,T],~X_0^{(x,n,m)}=x\in \real^d,
\end{align}
where $A_t^{(n,m)}(w)$ for $w \in C([0,T];\real^d)$ is defined by
\begin{align*}
	A_t^{(n,m)}(w)
	&:=
	\left(
		\eta_n(t),
		w_{\eta_n(t)},
		\max_{0\leq s \leq t} \zeta(\eta_n(s),w_{\eta_n(s)}),
		\{\widehat{w}^{(m)}_{\eta_n(\tau_i(t))} \}_{i \in \n},
		\int_{0}^{\eta_n(t)}
			c(\eta_n(s),w_{\eta_n(s)})
		\rd s,
	\right)
	\in
	\mathbb{S},
\end{align*}
where $\eta_n(t)=kT/n$, if $t \in [kT/n,(k+1)T/n)$ and $\widehat{w}^{(m)}_{\eta_n(\tau_i(t))}$ is defined by
\begin{align*}
	\widehat{w}^{(m)}_{\eta_n(\tau_i(t))}
	:=\left\{ \begin{array}{ll}
	\displaystyle w_{\eta_n(\tau_i(t))} &\text{ if, } i \in \{1,\ldots,m\}\\
	\displaystyle 0 &\text{ if, } i \in \{m+1,m+2,\ldots \},
	\end{array}\right.
\end{align*}

\begin{Ass}\label{Ass_4}
	\begin{itemize}
		\item[(i)]
		$\nu$ is measurable and satisfies that there exists $\theta:=\{\theta_i\}_{i \in \n} \in [0,\infty)^{\n}$ with $\|\theta\|_{\ell_1}=\sum_{i \in \n} \theta_i<\infty$, and for any $\delta,t>0$, there exists $K_t(\delta)>0$ such that $K_t(\delta)$ is increasing with respect to $t$ and for any $\chi=(t,w,z,\{u_i\}_{i\in \n},v) \in \mathbb{S}$,
		\begin{align*}
			|\nu(\chi)|
			\leq
			\delta\|\chi\|_{\theta}+ K_{t}(\delta),
		\end{align*}
		where $\|\chi\|_{\theta}^2:=t^2+|w|^2+|z|^2+\sum_{i \in \n} \theta_i |u_i|^2+|v|^2$.
		
		\item[(ii)]
		$\nu$ is H\"older continuous on $\mathbb{S}$, that is, there exist $\beta \in (0,1]$ and $\|\nu\|_{\beta}>0$ such that for any $\chi=(t,w,z,\{u_i\}_{i \in \n},v)$, $\chi'=(t',w',z',\{u_i'\}_{i\in \n },v') \in \mathbb{S}$,
		\begin{align*}
			|\nu(\chi)-\nu(\chi')|
			\leq
			\|\nu\|_{\beta}
			\left(
				|t-t'|^{\beta/2}
				+|w-w'|^{\beta}
				+|z-z'|^{\beta}
				+\sum_{i \in \n } \theta_i |u_i-u_i'|^{\beta}
				+|v-v'|^{\beta}
			\right).
		\end{align*}

		\item[(iii)]
		The functions $\zeta$ and $c$ are $\gamma$-H\"older continuous in space and $\gamma/2$-H\"older continuous in time with $\gamma \in (0,1]$, that is,
		\begin{align*}
			\|\zeta\|_{\gamma}
			&:=
			\sup_{t \in [0,\infty), x \neq y}\frac{|\zeta(t,x)-\zeta(t,y)|}{|x-y|^{\gamma}}
			+\sup_{x\in \real^d, t \neq s}\frac{|\zeta(t,x)-\zeta(s,x)|}{|t-s|^{\gamma/2}}
			<\infty,\\
			\|c\|_{\gamma}
			&:=
			\sup_{t \in [0,\infty), x \neq y}\frac{|c(t,x)-c(t,y)|}{|x-y|^{\gamma}}
			+\sup_{x\in \real^d, t \neq s}\frac{|c(t,x)-c(s,x)|}{|t-s|^{\gamma/2}}
			<\infty.
		\end{align*}
		
		\item[(iv)]
		$\sigma$ is uniformly elliptic matrix, that is, there exists $\underline{a}$ such that for any $\xi \in \real^d$,
		\begin{align*}
			\underline{a}|\xi|^2 \leq \langle \sigma \sigma^{\top} \xi,\xi \rangle .
		\end{align*}
		
	\end{itemize}
\end{Ass}

Under Assumption \ref{Ass_4}, the drift coefficients $\nu \circ A_{\cdot}$ and $\nu \circ A_{\cdot}^{(n,m)}$ satisfy Assumption \ref{Ass_1} and \ref{Ass_2}.
Indeed, since $\zeta$ and $c$ are of linear growth, there exists $C>0$ such that $\|A_t(w)\|_{\theta} \vee \|A_t^{(n,m)}(w)\|_{\theta} \leq C(1+(1+\|\theta\|_{\ell_1})w^{*}_t)$.
Therefore, from Theorem \ref{main_1}, there exist pdfs $p_t(x,\cdot)$ and $p_t^{(n,m)}(x,\cdot)$ of $X_t^{x}$ and $X_t^{(x,n,m)}$, respectively such that for all $y \in \real^d$,
\begin{align*}
	p_t(x,y)
	&=
	g_{t\sigma}(x,y)
	+ \int_0^t
		\e\left[
			\langle
				\nabla_x g_{(t-s)\sigma} (X_s^x,y), \nu(A_s(X^{x}))
			\rangle
		\right]
	\rd s,\\
	p_t^{(n,m)}(x,y)
	&=
	g_{t \sigma}(x,y)
	+ \int_0^t
		\e\left[
			\left
				\langle
					\nabla_x g_{(t-s)\sigma} (X_s^{(x,n,m)},y), \nu(A_s^{(n,m)}(X^{(x,n,m)}))
				\right
			\rangle
		\right]
	\rd s.
\end{align*}

Under Assumption \ref{Ass_4}, we have the following error estimate.
\begin{Lem}\label{err_BM}
	Suppose Assumption \ref{Ass_4} holds.
	Then for any $p>1$, there exists $C_p>0$ such that
	\begin{align*}
		\sup_{0 \leq s \leq t}
		\e
			\left[
				\left|
					\nu(A_s(x+\sigma W))
					-\nu(A_s^{(n,m)}(x+\sigma W))
				\right|^p
			\right]^{1/p}
		\leq
		C_{p}
			\left\{
				\left(
					\frac{\log n}{n}
				\right)^{\beta \gamma/2}
				+
				\sum_{i=m+1}^{\infty} \theta_i
			\right\}.
	\end{align*}
\end{Lem}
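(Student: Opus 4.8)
The key feature that makes this estimate tractable is that the \emph{same} Gaussian process $w:=x+\sigma W$ is fed into both $A_s(\cdot)$ and $A_s^{(n,m)}(\cdot)$, so there is no solution process to control and the whole argument reduces to Gaussian moment bookkeeping. The plan is first to invoke the H\"older continuity of $\nu$ (Assumption \ref{Ass_4}(ii)) to bound $|\nu(A_s(w))-\nu(A_s^{(n,m)}(w))|$ by $\|\nu\|_\beta$ times the sum of the $\beta$-th (resp.\ $\beta/2$-th) powers of the discrepancies in the five coordinates of $\mathbb{S}$: the time coordinate ($s$ versus $\eta_n(s)$), the position ($w_s$ versus $w_{\eta_n(s)}$), the running maximum $Z_s:=\max_{0\le u\le s}\zeta(u,w_u)$ versus $Z_s^{(n)}:=\max_{0\le u\le s}\zeta(\eta_n(u),w_{\eta_n(u)})$, the lag vector $(w_{\tau_i(s)})_i$ versus $(\widehat w^{(m)}_{\eta_n(\tau_i(s))})_i$, and the time integral $V_s:=\int_0^s c(u,w_u)\,\rd u$ versus $V_s^{(n,m)}:=\int_0^{\eta_n(s)}c(\eta_n(u),w_{\eta_n(u)})\,\rd u$. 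By Minkowski's inequality (applied also to the $\theta$-weighted infinite sum over the lags), it then suffices to bound the $L^p(\p)$ norm of each coordinate discrepancy uniformly in $s\in[0,t]$ (with $t\le T$ throughout the section); the constant produced will depend on $p$ and on the fixed data $x,T,\sigma,\|\nu\|_\beta,\|\zeta\|_\gamma,\|c\|_\gamma,\|\theta\|_{\ell_1},\beta,\gamma,d$.

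Next I would dispose of the ``easy'' coordinates. The time discrepancy is deterministic and at most $(T/n)^{\beta/2}$. For the position term and for the lag terms with $i\le m$, the relevant increments $\sigma(W_s-W_{\eta_n(s)})$ and $\sigma(W_{\tau_i(s)}-W_{\eta_n(\tau_i(s))})$ are centred Gaussian with covariance of norm at most $|\sigma|^2 T/n$, so their $L^p$ norms are $\le C_p(T/n)^{\beta/2}$, and summing the lag contributions costs only a factor $\|\theta\|_{\ell_1}$. For the lag terms with $i>m$ one has $\widehat w^{(m)}_{\eta_n(\tau_i(s))}=0$ and $w_{\tau_i(s)}=x+\sigma W_{\tau_i(s)}$, so that $\sum_{i=m+1}^\infty\theta_i\,\e[|w_{\tau_i(s)}|^{p\beta}]^{1/p}\le C_p(1+|x|)\sum_{i=m+1}^\infty\theta_i$; this is precisely the origin of the $\sum_{i=m+1}^\infty\theta_i$ term in the statement.

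The heart of the matter is the running-maximum and the time-integral coordinates, where the logarithm enters. Using $|\max_u f-\max_u g|\le\max_u|f-g|$ together with the $\gamma$-H\"older regularity of $\zeta$ (Assumption \ref{Ass_4}(iii)) one obtains
\begin{align*}
	|Z_s-Z_s^{(n)}|\le\max_{0\le u\le s}\big|\zeta(u,w_u)-\zeta(\eta_n(u),w_{\eta_n(u)})\big|\le\|\zeta\|_\gamma\big((T/n)^{\gamma/2}+|\sigma|^\gamma\,\omega_W(T/n)^\gamma\big),
\end{align*}
where $\omega_W(\delta):=\sup\{|W_u-W_v|:u,v\in[0,T],\,|u-v|\le\delta\}$ is the modulus of continuity of $W$ on $[0,T]$; I would then close this off with the quantitative L\'evy estimate $\e[\omega_W(\delta)^q]^{1/q}\le C_q\sqrt{\delta\log(1/\delta)}$ valid for every $q\ge1$ (obtained from the reflection principle applied on each of the $n$ grid cells, a union bound over the cells, and integration of the resulting Gaussian tail), so that after taking $\beta$-th powers and $L^p$ norms the running-maximum contribution is $O\big(((\log n)/n)^{\beta\gamma/2}\big)$. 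The time-integral coordinate I would split as $V_s-V_s^{(n,m)}=\int_0^{\eta_n(s)}\big(c(u,w_u)-c(\eta_n(u),w_{\eta_n(u)})\big)\,\rd u+\int_{\eta_n(s)}^s c(u,w_u)\,\rd u$: the first integral is controlled exactly like $Z_s-Z_s^{(n)}$ by integrating the same pointwise H\"older bound over $[0,T]$, while the second is over an interval of length $\le T/n$ whose integrand has at most $\gamma$-growth in $\sup_{u\le T}|W_u|$, hence has $L^p$ norm (after the $\beta$-th power) of order $n^{-\beta}$; so the time-integral contribution is again $O\big(((\log n)/n)^{\beta\gamma/2}\big)$.

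Collecting everything, each of the five contributions is bounded by $C_p\big(((\log n)/n)^{\beta\gamma/2}+(T/n)^{\beta/2}+\sum_{i=m+1}^\infty\theta_i\big)$, and since $\beta\le1$ and $\gamma\le1$ we have $(T/n)^{\beta/2}\le C\,((\log n)/n)^{\beta\gamma/2}$ for all large $n$ (the finitely many small $n$ being absorbed into the constant), which gives the asserted bound. The only step that is not pure routine is the L\'evy-type modulus-of-continuity estimate forcing the logarithmic correction on the running-maximum and time-integral coordinates; everything else is standard Gaussian moment bookkeeping combined with the H\"older hypotheses on $\nu$, $\zeta$ and $c$.
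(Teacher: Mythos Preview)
Your proposal is correct and follows essentially the same route as the paper: invoke the H\"older continuity of $\nu$ to reduce to coordinate-wise discrepancies, bound each via Gaussian moment estimates, and use L\'evy's modulus of continuity of $W$ (the paper cites Theorem 2.9.25 in \cite{KS}) for the running-maximum and time-integral terms, which is exactly where the $\log n$ appears. The only cosmetic differences are that the paper works with $p$-th powers directly (after reducing to $p>1/\beta$) rather than via Minkowski in $L^p$, and that it bounds the lag increments for $i\le m$ via the same modulus-of-continuity estimate rather than via direct Gaussian moments as you do---your way gives the slightly sharper $n^{-\beta/2}$ on those terms, but since they are absorbed into $((\log n)/n)^{\beta\gamma/2}$ anyway this makes no difference.
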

\begin{proof}
	It suffices to show the statement for $p>1/\beta$.
	Since $\nu$ is $\beta$-H\"older continuous, we have for any $s \in [0,t]$,
	\begin{align*}
		&\frac
			{\left|
				\nu(A_s(x+\sigma W))
				-\nu(A_s^{(n,m)}(x+\sigma W))
			\right|^{p}}
			{6^{p-1}\|\nu\|_{\beta}^{p}}\\
		&\leq
			|s-\eta_n(s)|^{p\beta}
			+|\sigma|^{p\beta} |W_s-W_{\eta_n(s)}|^{p\beta}
			+\left|
				\max_{0 \leq u \leq s}\zeta(u,x+\sigma W_u)-\max_{0 \leq u \leq s}\zeta(\eta_n(u), x+\sigma W_{\eta_n(u)}) 
			\right|^{p\beta}\\
		&\quad
			+
				|\sigma|^{p\beta}
				\left(
					\sum_{i=1}^{m}
					\theta_i
					|W_{\tau_i(s)}-W_{\eta_n(\tau_i(s))}|^{\beta}
					+
					\sum_{i=m+1}^{\infty}
					\theta_i
					|x+W_{\tau_i(s)}|^{\beta}
				\right)^{p}
			+
				\left|
					\int_{\eta_n(s)}^{s}
						c(u,x+\sigma W_u)
					\rd u
				\right|^{p\beta}
			\\
			&\quad
			+
				t^{p\beta-1}
					\int_{0}^{\eta_n(s)}
						\left|
							c(u,x+\sigma W_u)
							-c(\eta_n(u),x+\sigma W_{\eta_n(u)})
						\right|^{p\beta}
					\rd u\\
		&\leq
				\frac{T^{p\beta/2}}{n^{p\beta/2}}
			+
				|\sigma|^{p\beta}
				|W_s-W_{\eta_n(s)}|^{p\beta}
			+
				2^{p\beta-1}\|\zeta\|_{\gamma}^{p\beta}
				\left\{
					\frac{T^{p\beta \gamma/2}}{n^{p\beta \gamma/2}}
					+
					|\sigma|^{p\beta\gamma}
					\max_{0 \leq u \leq s}
						|W_u-W_{\eta_n(u)}|^{p\beta \gamma}
				\right\}
			\\
			&\quad
			+
				|\sigma|^{p\beta}
				\left(
					\|\theta\|_{\ell_1}
					\max_{0 \leq u \leq s}
					|W_{u}-W_{\eta_n(u)}|^{\beta}
					+
					\max_{0 \leq u \leq s}
					|x+W_{u}|^{\beta}
					\sum_{i=m+1}^{\infty}
					\theta_i
				\right)^{p}
			+
				\frac{T^{p\beta}}{n^{p\gamma}}
				\sup_{0 \leq s \leq T} |c(s,x+\sigma W_s)|^{p\beta}
			\\
			&\quad
			+
				t^{p\gamma}\|c\|_{\gamma}^{p\beta \gamma}
				\left\{
					\frac{T^{p \beta \gamma/2}}{n^{p \beta \gamma/2}}
					+
					|\sigma|^{p\beta \gamma}
					\int_{0}^{T}
						\left|W_u-W_{\eta_n(u)}\right|^{p \beta \gamma}
					\rd u
				\right\}.
	\end{align*}
	By using modulus continuity of Brownian motion (see, Theorem 2.9.25 in \cite{KS}), there exists $C_p>0$ such that
	\begin{align*}
		&
		\sup_{0 \leq s \leq t}
		\e
			\left[
				\left|
					\nu(A_s(x+\sigma W))
					-\nu(A_s^{(n,m)}(x+\sigma W))
				\right|^p
			\right]^{1/p}
		\leq
		C_{p}
			\left\{
				\left(
					\frac{\log n}{n}
				\right)^{\beta \gamma/2}
				+\sum_{i=m+1}^{\infty} \theta_i
			\right\},
	\end{align*}
	which concludes the proof.
\end{proof}

As an conclusion of the comparison Theorem \ref{main_comp_0} and Lemma \ref{err_BM}, we obtain the following error estimate.

\begin{Cor}\label{main_4}
	Suppose Assumption \ref{Ass_4} holds.
	Assume that $f:\real^d \to \real$ is a measurable function satisfies the exponentially bounded, that is, there exist $K>0$ such that for all $x\in \real^d$, $|f(x)| \leq K\exp(K|x|)$.
	Then for any $p_1,p_2,p_3>1$ with $1/p_1+1/p_2+1/p_3=1$ and $p_1 \in (1,d/(d-1))$, there exist $C_{+}>0$ and $c_{+}>0$ such that for any $(t,x,y) \in [0,T] \times \real^d \times \real^d$, if $t \in (0,t_{2p_2}]$, then 
	\begin{align*}
		|p_t(x,y)-p_t^{(n,m)}(x,y)|
		&\leq
			C_{+} 
			\exp
				\left(
					c_{+}
					(1+|x|^2)t
				\right) 
			(1+|x|)^2
			g_{p_1c_{+}t}(x,y)
			\left\{
				\left(
					\frac{\log n}{n}
				\right)^{\beta \gamma/2}
				+
				\sum_{i=m+1}^{\infty} \theta_i
			\right\},
			\\
			\left|
				\e[f(X_T^{x})]
				-\e[f(X_T^{(x,n,m)})]
			\right|
		&\leq
			C_{+} 
			\exp
				\left(
					c_{+}
					(1+|x|^2)t
				\right) 
				(1+|x|)^2
			\left\{
				\left(
					\frac{\log n}{n}
				\right)^{\beta \gamma/2}
				+
				\sum_{i=m+1}^{\infty}
					\theta_i
			\right\},
	\end{align*}
	and if $t \in (t_{2p_2},T]$,
	\begin{align*}
		|p_t(x,y)-p_t^{(n,m)}(x,y)|
		&\leq
			C_{+} 
			\exp\left(
				\frac{|x|^2}{16\widehat{c}_{+}p_{2}T}
			\right)
			(1+|x|)^2
			g_{p_1\widehat{c}t}(x,y)
			\left\{
				\left(
					\frac{\log n}{n}
				\right)^{\beta \gamma/2}
				+
				\sum_{i=m+1}^{\infty}
					\theta_i
			\right\},
			\\
			\left|
			\e[f(X_T^{x})]
			-\e[f(X_T^{(x,n,m)})]
			\right|
		&
		\leq
			C_{+} 
			\exp\left(
			\frac{|x|^2}{16\widehat{c}_{+} p_{2}T}
			\right)
			(1+|x|)^2
			\left\{
				\left(
					\frac{\log n}{n}
				\right)^{\beta \gamma/2}
				+
				\sum_{i=m+1}^{\infty}
					\theta_i
			\right\},
	\end{align*}
	where $t_{2p_2}$ is defined by \eqref{def_tr}.
\end{Cor}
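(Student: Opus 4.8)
The plan is to obtain the corollary by specializing the comparison estimate of Theorem~\ref{main_comp_0} to the pair of path--dependent drifts $b:=\nu\circ A_{\cdot}$ and $\widehat{b}:=\nu\circ A_{\cdot}^{(n,m)}$, and then inserting the approximation error of Lemma~\ref{err_BM}. First I would check that $b$ and $\widehat{b}$ both satisfy Assumption~\ref{Ass_1}(i) and Assumption~\ref{Ass_2}, which is the observation recorded before the statement: it follows from the sub--linear growth of $\nu$ (Assumption~\ref{Ass_4}(i)) together with $\|\theta\|_{\ell_1}<\infty$ and the linear growth of $\zeta,c$ (Assumption~\ref{Ass_4}(iii)), which give $\|A_s(w)\|_{\theta}\vee\|A_s^{(n,m)}(w)\|_{\theta}\le C(1+(1+\|\theta\|_{\ell_1})w^{*}_s)$, while the measurability and adaptedness requirements are immediate from the definitions of $A_{\cdot}$ and $A_{\cdot}^{(n,m)}$. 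The constant matrix $\sigma$ satisfies Assumption~\ref{Ass_1}(ii),(iii) because boundedness and H\"older continuity of $a=\sigma\sigma^{\top}$ are automatic and uniform ellipticity is Assumption~\ref{Ass_4}(iv). Hence Theorem~\ref{main_comp_0}(ii) applies to $X^x$ and $\widehat{X}^x=X^{(x,n,m)}$, with $t_{2p_2}$ as in \eqref{def_tr}.

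The next step is to identify the drift--difference factor appearing in Theorem~\ref{main_comp_0}(ii). Since $\sigma$ is constant, the driftless equation \eqref{SDE_2} with $s=0$ has the explicit solution $Y_s^{0,x}=x+\sigma W_s$, so
\[
  \sup_{0\le s\le t}\e\!\left[\bigl|b(s,Y^{0,x})-\widehat{b}(s,Y^{0,x})\bigr|^{2p_3}\right]^{1/(2p_3)}
  =\sup_{0\le s\le t}\e\!\left[\bigl|\nu(A_s(x+\sigma W))-\nu(A_s^{(n,m)}(x+\sigma W))\bigr|^{2p_3}\right]^{1/(2p_3)},
\]
and Lemma~\ref{err_BM} applied with $p=2p_3>1$ bounds the right--hand side by $C_{2p_3}\bigl\{(\log n/n)^{\beta\gamma/2}+\sum_{i=m+1}^{\infty}\theta_i\bigr\}$. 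Substituting this into the two regimes $t\in(0,t_{2p_2}]$ and $t\in(t_{2p_2},T]$ of Theorem~\ref{main_comp_0}(ii) produces the two pointwise bounds for $|p_t(x,y)-p_t^{(n,m)}(x,y)|$, with the prefactors $C_{+}\exp(c_{+}(1+|x|^2)t)(1+|x|)^2$, respectively $C_{+}\exp(|x|^2/(16\widehat{c}_{+}p_2 T))(1+|x|)^2$, carried over from the theorem; any residual polynomial--in--$|x|$ dependence of $C_{2p_3}$ arising from the linear growth of $\zeta,c$ is absorbed into these prefactors.

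For the weak error I would use $\e[f(X_T^x)]-\e[f(X_T^{(x,n,m)})]=\int_{\real^d}f(y)\,(p_T(x,y)-p_T^{(n,m)}(x,y))\,\rd y$, bound the integrand by $|f(y)|$ times the pointwise density estimate just obtained (taken at $t=T$, in the regime $T\le t_{2p_2}$ or $T>t_{2p_2}$), and integrate. The remaining ingredient is the Gaussian integral $\int_{\real^d}|f(y)|\,g_{p_1\widehat{c}_{+}T}(x,y)\,\rd y$: using $|f(y)|\le Ke^{K|y|}$ and $|y|\le|y-x|+|x|$, this is bounded by $C\,e^{K|x|}$ for a constant $C$ depending only on $K,p_1,\widehat{c}_{+},T,d$. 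Finally $e^{K|x|}$ is absorbed into the $|x|$--prefactor via $K|x|\le \tfrac{K^2}{2\varepsilon}+\tfrac{\varepsilon}{2}|x|^2$: in the regime $T\le t_{2p_2}$ one enlarges $c_{+}$ (legitimate since $T$ is a fixed parameter, so $c_{+}T$ stays bounded away from $0$), and in the regime $T>t_{2p_2}$ one enlarges the constant in the exponent $|x|^2/(16\widehat{c}_{+}p_2 T)$; the factor $(1+|x|)^2$ and the $n,m$--bracket are unchanged, which gives the two stated bounds on $|\e[f(X_T^x)]-\e[f(X_T^{(x,n,m)})]|$.

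This corollary is essentially an assembly of earlier results, so there is no single deep obstacle; the points that need care are (i) verifying that the composite drifts $\nu\circ A_{\cdot}$ and $\nu\circ A_{\cdot}^{(n,m)}$ really fall under Assumptions~\ref{Ass_1} and~\ref{Ass_2} --- the sub--linear growth demanded by Assumption~\ref{Ass_2} being the least obvious requirement, relying on $\|\theta\|_{\ell_1}<\infty$ --- and (ii) the bookkeeping of Gaussian constants and the absorption of the $e^{K|x|}$ and polynomial--in--$|x|$ factors so that the final prefactors match those in Theorem~\ref{main_comp_0}(ii); the only genuinely analytic estimate, the Brownian modulus--of--continuity argument responsible for the $(\log n/n)^{\beta\gamma/2}$ rate, is already isolated in Lemma~\ref{err_BM}.
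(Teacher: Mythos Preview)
Your proposal is correct and follows exactly the approach the paper intends: the text preceding the corollary states explicitly that it is ``a conclusion of the comparison Theorem~\ref{main_comp_0} and Lemma~\ref{err_BM}'', and the paper gives no further proof. Your write--up is in fact more detailed than the paper's, correctly identifying $Y^{0,x}_s=x+\sigma W_s$ (since $\sigma$ is constant) so that Lemma~\ref{err_BM} applies verbatim to the factor $\sup_{0\le s\le t}\e[|b(s,Y^{0,x})-\widehat b(s,Y^{0,x})|^{2p_3}]^{1/(2p_3)}$, and correctly noting the minor bookkeeping needed to absorb the $e^{K|x|}$ contribution from $f$ and any residual polynomial $|x|$--dependence into the stated prefactors.
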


\begin{Rem}
	The main idea of the proof for Corollary \ref{main_4} (and Theorem \ref{main_comp_0}) is to use Maruyama--Girsanov transform.
	This idea is inspired by Mackevi\v{c}ius \cite{Ma03} who study the weak rate of convergence of the Euler--Maruyama scheme for the SDE $\rd X_t=b(X_t) \rd t +\sigma \rd W_t$ under Lipschitz condition on $b$.
	However, we would like to point out that the proof in \cite{Ma03} contains several gaps (see, for instance Lemma 2 in \cite{Ma03}, see also Remark 3.3. in \cite{NT2}).
\end{Rem}

\subsection{Markovian SDEs with unbounded drift}\label{Sub_expa}

In this section, we consider the parametrix expansion similar to \eqref{para_expansion_0} on a pdf of a solution of the following Markov type SDE of the form
\begin{align}\label{SDE_markov}
X_t^{x}
=
x
+\int_{0}^{t}
b(X_s^{x})
\rd s
+\int_{0}^{t}
\sigma(X_s^{x})
\rd W_s,
~t \geq 0,~x\in \real^d.
\end{align}

Let us consider a frozen process $X^{s,x,z}=(X_t^{s,x,z})_{t \in [s,T]}$ for $x,z \in \real^d$ defined by
\begin{align} \label{Froz_pro}
\widetilde{X}_t^{s,x,z}
:=
x
+
\sigma(z)
(W_t-W_s).
\end{align}
We denote $p^z(s,x;t,\cdot)$ a pdf of $\widetilde{X}_t^{s,x,z}$ for $t \in (0,T]$ and $x,z\in \real^d$.

If the drift coefficients $b$ is bounded measurable, and the diffusion coefficient $\sigma$ satisfies Assumption \ref{Ass_1} (ii) and (iii), then by the same way as section 4.1 in \cite{LeMe} and approximation arguments (see Remark 4.1 in \cite{LeMe}), it holds that
\begin{align}\label{para_expansion_smooth}
p_t(x,y)
=
\sum_{n=0}^{\infty}
\overline{p} \otimes H^{\otimes n} (0,x;t,y),
\end{align}
where $\overline{p}(s,x;t,y):=p^y(s,x;t,y)$ and
\begin{align*}
H(s,x;t,y)
:=\langle
\nabla_x \overline{p}(s,x;t,y),  b(x)
\rangle
+
\sum_{i,j=1}^{d}
\frac{a_{i,j}(x)-a_{i,j}(y)}{2}
\partial_{x_ix_j}^2
\overline{p}(s,x;t,y).
\end{align*}

\begin{Rem}
	In \cite{LeMe}, the authors define a frozen process by $\widetilde{X}_t^{s,x,z}:=x+b(z)(t-s)+\sigma(z)(W_t-W_s)$. However, even if one define \eqref{Froz_pro}, the parametrix expansion \eqref{para_expansion_smooth} can be shown by the same way as in \cite{LeMe}.
\end{Rem}

\begin{Ass}\label{Ass_3}
	\begin{itemize}
		\item[(i)]
		the drift coefficient $b$ is of linear growth, that is, there exists $K>0$ such that for any $x \in \real^d$,
		\begin{align*}
		|b(x)|
		\leq
		K(1+|x|).
		\end{align*}
		
		\item[(ii)]
		$b$ is of sub-linear growth, that is, for any $\delta>0$, there exists $K(\delta)>0$ such that for all $x \in \real^d$
		\begin{align*}
		|b(x)|
		\leq
		\delta | x|+K(\delta).
		\end{align*}
		
	\end{itemize}
	
\end{Ass}

Under the above conditions, the parametrix expansion \eqref{para_expansion_smooth} still holds.

\begin{Thm}\label{main_expa_0}
	Suppose that Assumption \ref{Ass_1} (ii), (iii) and Assumption \ref{Ass_3} hold.
	Then the series $\sum_{n=0}^{\infty}\overline{p} \otimes H^{\otimes n} (0,x;t,y)$ converges absolutely and uniformly for $(t,y) \in (0,T] \times \real^d$, and a pdf of $X_t^{x}$ denoted by $p_t(x,y)$ satisfies the expansion
	\begin{align}\label{para_expansion_1}
		p_t(x,y)
		=
		\sum_{n=0}^{\infty}
		\overline{p} \otimes H^{\otimes n} (0,x;t,y).
	\end{align}
\end{Thm}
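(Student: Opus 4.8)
The plan is to prove the parametrix expansion \eqref{para_expansion_1} for the Markovian SDE \eqref{SDE_markov} with linear/sub-linear growth drift, by approximating $b$ with truncated (bounded) drifts and passing to the limit in the known expansion \eqref{para_expansion_smooth}. First I would establish the key pointwise bound on the iterated convolution kernels $|H|^{\otimes n}(s,z;t,y)$. The parametrix kernel $H(s,x;t,y)$ has two pieces: the drift term $\langle \nabla_x \overline{p}(s,x;t,y), b(x)\rangle$, which by the Gaussian derivative bound \eqref{bound_drev_2}-type estimate for $\overline{p}$ (the frozen Gaussian) is controlled by $\frac{C(1+|x|)}{\sqrt{t-s}} g_{c(t-s)}(x,y)$; and the H\"older term $\sum_{i,j}\frac{a_{i,j}(x)-a_{i,j}(y)}{2}\partial^2_{x_ix_j}\overline{p}(s,x;t,y)$, which by Assumption \ref{Ass_1}(ii) and the second-derivative Gaussian bound gains a factor $|x-y|^\alpha \cdot (t-s)^{-1} \le C (t-s)^{\alpha/2 - 1} \cdot \frac{|x-y|^\alpha}{(t-s)^{\alpha/2}}$, and absorbing $|x-y|^\alpha$ into the Gaussian yields $\frac{C}{(t-s)^{1-\alpha/2}} g_{c(t-s)}(x,y)$. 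The singularity exponents $1/2$ and $1-\alpha/2$ are both $<1$, so the standard time-convolution (beta-function) iteration — exactly as in Lemma \ref{main_2_lem_1} and using Lemma \ref{pa_App_1} from the appendix — will give a summable series in $n$, provided one can handle the polynomial factor $(1+|x|)$ appearing at each step.

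The main obstacle is precisely this linear-growth factor: unlike the bounded-drift case in Lemma \ref{main_2_lem_1}, the drift part of $H$ carries $(1+|z|)$ at the inner integration variable, so each convolution introduces a new polynomially growing weight. The key step is to show these weights are controlled by the Gaussian decay: when integrating $\int_{\real^d} (1+|z|) g_{c_1(r-s)}(x,z) g_{c_2(t-r)}(z,y)\,\rd z$, the Gaussians force $z$ to stay within $O(\sqrt{t})$ of the segment between $x$ and $y$, so one gains $(1+|z|) \le (1+|x|+|y|+\sqrt{t})$ roughly, at the cost of enlarging the variance slightly (replace $c$ by $(1+\epsilon)c$, absorbing $e^{\epsilon|z-x|^2/\cdot}$ against the Gaussian as in the proof of Lemma \ref{bdd_Girsanov_1}). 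Iterating $n$ times then produces a bound of the shape $|H|^{\otimes n}(s,z;t,y) \le \frac{C^n (t-s)^{\rho n - \text{const}} \Gamma(\cdots)^n}{\Gamma(\rho n)}(1+|x|+|y|)^{n} g_{c(t-s)}(z,y)$ with $\rho = \min(1/2, 1-\alpha/2)>0$ — here the sub-linear growth (Assumption \ref{Ass_3}(ii)) is what actually guarantees summability, since it lets us choose $\delta$ small so that the geometric-type factor $(C\delta)^n$ beats the polynomial growth $(1+|x|+|y|)^n$; more carefully, one writes $|b(z)| \le \delta|z| + K(\delta)$, picks $\delta$ after fixing $T$, and tracks the $\delta$-dependence through the $\Gamma$-function iteration so the tail of the series is dominated by $\sum_n \frac{(C K(\delta))^n (1+|x|+|y|)^{?} T^{\rho n}}{\Gamma(\rho n)}$, which converges.

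Once absolute and uniform convergence of $\sum_n \overline{p}\otimes H^{\otimes n}(0,x;t,y)$ on compact sets (uniformly in $y$) is in hand, the last step is identification of the sum with the true density $p_t(x,\cdot)$. For this I would introduce truncated drifts $b_N(x) := b(x)\mathbf{1}_{\{|x|\le N\}}$ (or a smooth cutoff), which are bounded and still $\alpha$-H\"older-growing in the sense of the Deck--Kruse remark; for each $N$ the expansion \eqref{para_expansion_smooth} holds with kernel $H_N$ and density $p^{(N)}_t(x,\cdot)$. One shows $p^{(N)}_t(x,\cdot) \to p_t(x,\cdot)$ weakly (using weak uniqueness and the Girsanov/martingale machinery of Section \ref{Sec_sde}, or a direct tightness argument), and simultaneously that $\sum_n \overline{p}\otimes H_N^{\otimes n} \to \sum_n \overline{p}\otimes H^{\otimes n}$ uniformly on compacts — the latter because the uniform bounds from the previous paragraph are $N$-independent (they only use the growth constants $K, K(\delta)$, which $b_N$ inherits from $b$) and $H_N \to H$ pointwise, so dominated convergence applies term by term and the uniform tail bound controls the remainder. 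Combining, $p_t(x,y) = \sum_n \overline{p}\otimes H^{\otimes n}(0,x;t,y)$ for a.e.\ $y$, and since the right-hand side is continuous in $y$ this upgrades to all $y$, completing the proof.
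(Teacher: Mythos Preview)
Your overall plan---truncate $b$ to bounded $b_N$, invoke the known expansion \eqref{para_expansion_smooth} for each $N$, then pass to the limit using $N$-uniform bounds and dominated convergence---is exactly the paper's strategy. However, your execution of the two key steps differs from the paper's, and your intuition about where sub-linear growth enters is off.

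For the convergence of $\sum_n \overline{p}\otimes |H|^{\otimes n}$, the paper (Lemma \ref{para_uni_bdd_1}) does \emph{not} try to absorb $(1+|z|)$ into the Gaussian at each step, nor does it invoke sub-linear growth. Instead it writes the convolution $\overline{p}\otimes |H^b|(0,x;t,y)$ as an expectation over the Gaussian started at $x$, then applies H\"older's inequality with exponents $p_1 = d/(d-(1-\alpha))$ and $p_2$ to separate the Gaussian factor from $|b|$; the linear-growth moment bound (Lemma \ref{moment_0}) then yields the factor $(1+|x|)$---always the initial point $x$, never the running integration variable---and an estimate of the type in Lemma \ref{key_1}(ii) handles the $p_1$-th power of the Gaussian. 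Iterating gives the clean bound
\[
\overline{p}\otimes |H|^{\otimes n}(0,x;t,y)\ \le\ \frac{\bigl(C_+(1+|x|)t^{\alpha/2}\Gamma(\alpha/2)\bigr)^n}{\Gamma(1+n\alpha/2)}\, g_{\widehat{c}_+ t}(x,y),
\]
which sums (Mittag--Leffler type) under linear growth alone. Your direct absorption scheme would inflate the Gaussian variance by a factor $(1+\epsilon)$ at each of the $n$ steps and would produce growth factors depending on intermediate points; making that rigorous is significantly more delicate and, as the paper shows, unnecessary.

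Sub-linear growth (Assumption \ref{Ass_3}(ii)) enters only in the \emph{identification} step, and through a different route than you propose: the paper uses its comparison Theorem \ref{main_comp_0} (which rests on Lemma \ref{bdd_Girsanov_1}, hence on sub-linear growth) to obtain pointwise convergence $p_t^N(x,y)\to p_t(x,y)$ of densities. Your weak-convergence-plus-continuity argument could be made to work but is less direct, and you would still need sub-linear growth to control the Girsanov weights. Finally, the paper's truncation $b_N(x):=b(Nx/|x|)$ for $|x|>N$ (radial projection rather than an indicator cutoff) is chosen precisely so that the linear and sub-linear growth constants of $b$ are inherited uniformly in $N$, which is what makes both the $N$-uniform series bound and the dominated-convergence passage immediate.
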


In order to provide the parametrix expansion for a pdf of a solution of SDE \eqref{SDE_markov}, under unbounded drift coefficients we first show that if $b$ satisfies Assumption  \ref{Ass_1} (ii), (iii) and Assumption \ref{Ass_3}, the expansion of right hand side in \eqref{para_expansion_smooth} convergences absolutely and uniformly in $x, y \in \real^d$. Then by taking bounded measurable approximation of $b$, we show the convergences of the parametrix expansion \eqref{para_expansion_1} by using comparison property (see, Theorem \ref{main_comp_0}).  

We denote $|H|(s,z;t,y):=|H(s,z;t,y)|$, and the following classical estimate will be used below.

\begin{Lem}\label{Basic_est_0}
	Let $A$ be a $d \times d$ matrix and suppose that there exists $\underline{A}$ and $\overline{A}$ such that for all $\xi \in \real^d$, $\underline{A} |\xi|^2 \leq \langle a \xi, \xi \rangle \leq \overline{A} |\xi|^2$.
	Then for $\alpha \in (0,1] $, there exists $C$ such that for all $(t,x,y) \in (0,T] \times \real^d \times \real^d$,  
	\begin{align*}
	|y-x|^{\alpha}
	\left|
	H_{tA}^{i,j}(y-x)
	\right|
	g_{tA}(x,y)
	\leq
	\frac{C}{t^{(1-\frac{\alpha}{2})}}
	g_{2t\overline{A}}
	(x ,y),
	\end{align*}
	where $H_{A}^{i,j}(y):=(A^{-1}y)^{i}(A^{-1}y)^{j}-(A^{-1})_{i,j}$.
\end{Lem}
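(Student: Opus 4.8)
The plan is to reduce the asserted inequality to two elementary facts: a pointwise comparison between the anisotropic Gaussian kernel $g_{tA}$ and the isotropic kernel $g_{2t\overline{A}}$, and the classical bound $\sup_{r\ge 0} r^{\beta} e^{-\lambda r^{2}} \le C_{\beta}\lambda^{-\beta/2}$ valid for $\beta\ge 0$, $\lambda>0$. Throughout, write $\xi:=y-x$. First I would expand the second-order Hermite polynomial: since $H_{tA}^{i,j}(\xi)=((tA)^{-1}\xi)^{i}((tA)^{-1}\xi)^{j}-((tA)^{-1})_{i,j}=t^{-2}(A^{-1}\xi)^{i}(A^{-1}\xi)^{j}-t^{-1}(A^{-1})_{i,j}$, and since the lower ellipticity bound $\underline{A}|\xi|^{2}\le\langle A\xi,\xi\rangle$ forces the eigenvalues of $A$ to lie in $[\underline A,\overline A]$, hence $\|A^{-1}\|_{\mathrm{op}}\le \underline A^{-1}$ (so that $|(A^{-1})_{i,j}|\le \underline A^{-1}$ and $|A^{-1}\xi|\le \underline A^{-1}|\xi|$), one obtains the crude bound $|H_{tA}^{i,j}(\xi)|\le |\xi|^{2}/(t^{2}\underline A^{2})+1/(t\underline A)$.

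Next I would establish the Gaussian comparison. From the upper ellipticity bound $\langle A\xi,\xi\rangle\le\overline A|\xi|^{2}$ one gets $\langle A^{-1}\xi,\xi\rangle\ge |\xi|^{2}/\overline A$, so that $g_{tA}(x,y)\le (2\pi t)^{-d/2}(\det A)^{-1/2}\exp(-|\xi|^{2}/(2t\overline A))$; dividing by $g_{2t\overline A}(x,y)=(4\pi t\overline A)^{-d/2}\exp(-|\xi|^{2}/(4t\overline A))$ then gives $g_{tA}(x,y)\le \frac{(2\overline A)^{d/2}}{\sqrt{\det A}}\,\exp\bigl(-|\xi|^{2}/(4t\overline A)\bigr)\,g_{2t\overline A}(x,y)$. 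Multiplying this with the bound on $|H_{tA}^{i,j}(\xi)|$ from the first step, the problem is reduced to showing that $\bigl(|\xi|^{2+\alpha}/(t^{2}\underline A^{2})+|\xi|^{\alpha}/(t\underline A)\bigr)\exp(-|\xi|^{2}/(4t\overline A))\le C t^{-(1-\alpha/2)}$.

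Finally, I would apply $r^{\beta}e^{-\lambda r^{2}}\le C_{\beta}\lambda^{-\beta/2}$ with $r=|\xi|$ and $\lambda=1/(4t\overline A)$, once with $\beta=2+\alpha$ and once with $\beta=\alpha$: this replaces $|\xi|^{2+\alpha}\exp(-|\xi|^{2}/(4t\overline A))$ by $C(t\overline A)^{1+\alpha/2}$ and $|\xi|^{\alpha}\exp(-|\xi|^{2}/(4t\overline A))$ by $C(t\overline A)^{\alpha/2}$, so that after dividing by $t^{2}\underline A^{2}$ and by $t\underline A$ respectively, both resulting terms are bounded by a constant times $t^{-(1-\alpha/2)}$, which yields the claim. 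There is no real obstacle here — it is a textbook Gaussian estimate — the only things that need attention are the bookkeeping of the powers of $t$ (the $t^{-2}$ and $t^{-1}$ coming from the Hermite polynomial cancel against the $t^{1+\alpha/2}$ and $t^{\alpha/2}$ gained by absorbing the polynomial into the Gaussian, leaving exactly $t^{-(1-\alpha/2)}$) and checking that the final constant depends only on $d$, $\alpha$, $\underline A$, $\overline A$ (since $\det A\in[\underline A^{d},\overline A^{d}]$), so that it is uniform when the lemma is later applied with $A=a(z)$ for varying $z$.
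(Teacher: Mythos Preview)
Your proof is correct and follows exactly the approach the paper has in mind: the paper merely remarks that the lemma ``follows by using the classical estimation $\sup_{x \in \real^d} |x|^q e^{-|x|} < \infty$ for any $q \geq 0$,'' and you have carefully filled in the bookkeeping (the scaling of $H_{tA}^{i,j}$, the Gaussian comparison via ellipticity, and the absorption of polynomial factors into the Gaussian) that turns this one-line hint into a complete argument. Your observation that the resulting constant depends only on $d,\alpha,\underline A,\overline A$ is also exactly what is needed for the subsequent application with $A=a(z)$.
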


The proof of Lemma \ref{Basic_est_0} follows by using  the classical estimation $\sup_{x \in \real^d} |x|^q e^{-|x|} < \infty$ for any $q \geq 0$.

\begin{Lem}\label{para_uni_bdd_1}
	Suppose that Assumption \ref{Ass_1} (ii), (iii) and Assumption \ref{Ass_3} (i) hold. Then there exists $C_{+}$ and $c_{+}$ such that for any $n \in \n \cup \{0\}$ and $(t,x,y) \in (0,T] \times \real^d \times \real^d$,
	\begin{align}
		\overline{p} \otimes |H|^{\otimes n} (0,x;t,y)
		&\leq
		\frac
			{\left( C_{+}(1+|x|)t^{\alpha/2} \Gamma(\alpha/2)\right)^n}
			{\Gamma(1+n\alpha/2)}
		g_{\widehat{c}_{+}t}(x,y)
		\label{para_expansion_2},\\
		\sum_{n=0}^{\infty}
		\overline{p} \otimes |H|^{\otimes n} (0,x;t,y)
		&< \infty. \label{para_expansion_3}
	\end{align}
\end{Lem}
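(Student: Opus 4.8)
The plan is to establish \eqref{para_expansion_2} by induction on $n$ and then to deduce \eqref{para_expansion_3} by summing the resulting Mittag--Leffler--type series. The first ingredient is a one--step bound on the kernel $H$. Since $\overline{p}(s,z;t,y)=p^{y}(s,z;t,y)=g_{(t-s)a(y)}(z,y)$ is an explicit Gaussian, Assumption \ref{Ass_1} (iii) gives $\overline{p}(s,z;t,y)\le \widehat{C}g_{\widehat{c}(t-s)}(z,y)$ for a fixed $\widehat{c}\ge\overline{a}$, and, expressing $\nabla_{z}\overline{p}$ and $\partial^{2}_{z_{i}z_{j}}\overline{p}$ through the Hermite factors $H^{i}_{(t-s)a(y)}$, $H^{i,j}_{(t-s)a(y)}$, one obtains: for the drift part, using $|b(z)|\le K(1+|z|)$ (Assumption \ref{Ass_3} (i)),
\[
\bigl|\langle\nabla_{z}\overline{p}(s,z;t,y),b(z)\rangle\bigr|\le \frac{C(1+|z|)\,|z-y|}{t-s}\,g_{\widehat{c}(t-s)}(z,y),
\]
and, for the second--order part, using Assumption \ref{Ass_1} (ii) and Lemma \ref{Basic_est_0} with $A=(t-s)a(y)$, $\alpha$ as given,
\[
\Bigl|\sum_{i,j=1}^{d}\tfrac{a_{i,j}(z)-a_{i,j}(y)}{2}\partial^{2}_{z_{i}z_{j}}\overline{p}(s,z;t,y)\Bigr|\le \frac{C}{(t-s)^{1-\alpha/2}}\,g_{\widehat{c}(t-s)}(z,y).
\]
Combining these (and using $\alpha\le1$ to dominate $(t-s)^{-1/2}$ by $T^{(1-\alpha)/2}(t-s)^{-(1-\alpha/2)}$, together with a single enlargement of $\widehat{c}$ absorbing the $|z-y|$ factor into the Gaussian), I get the clean one--step estimate $|H|(s,z;t,y)\le C(1+|z|)(t-s)^{-(1-\alpha/2)}g_{\widehat{c}_{+}(t-s)}(z,y)$, with $\widehat{c}_{+}$ fixed once and for all; this is the unbounded--drift analogue of the estimate used in Lemma \ref{main_2_lem_1}, the new feature being the factor $(1+|z|)$.

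For the induction I would write $\overline{p}\otimes|H|^{\otimes n}=\bigl(\overline{p}\otimes|H|^{\otimes(n-1)}\bigr)\otimes|H|$, insert the inductive hypothesis $\overline{p}\otimes|H|^{\otimes(n-1)}(0,x;r,z)\le \frac{(C_{+}(1+|x|)\Gamma(\alpha/2))^{n-1}r^{(n-1)\alpha/2}}{\Gamma(1+(n-1)\alpha/2)}g_{\widehat{c}_{+}r}(x,z)$ together with the one--step bound for $|H|(r,z;t,y)$, and carry out the $z$--integration by the Chapman--Kolmogorov identity $\int_{\real^{d}}g_{\widehat{c}_{+}r}(x,z)g_{\widehat{c}_{+}(t-r)}(z,y)\,\rd z=g_{\widehat{c}_{+}t}(x,y)$; keeping a single Gaussian constant $\widehat{c}_{+}$ throughout is exactly what makes the iteration stable. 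The factor $(1+|z|)$ is split as $(1+|z|)\le(1+|x|)(1+|z-x|)$: the $(1+|x|)$ passes through and furnishes the new $(1+|x|)$, while $|z-x|$ is handled by viewing $g_{\widehat{c}_{+}r}(x,z)g_{\widehat{c}_{+}(t-r)}(z,y)$ as $g_{\widehat{c}_{+}t}(x,y)$ times a Gaussian bridge density in $z$ and estimating $\int_{\real^{d}}|z-x|\,(\text{bridge density})\,\rd z\le \tfrac{r}{t}|y-x|+C\sqrt{\widehat{c}_{+}t}$, the bounded part being absorbed into the $(1+|x|)$ factor and the $\tfrac{r}{t}|y-x|$ part being controlled against the accompanying Gaussian. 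The surviving $r$--integral is a Beta integral, $\int_{0}^{t}r^{(n-1)\alpha/2}(t-r)^{\alpha/2-1}\,\rd r=t^{n\alpha/2}\Gamma(1+(n-1)\alpha/2)\Gamma(\alpha/2)/\Gamma(1+n\alpha/2)$ (this is Lemma \ref{pa_App_1} with $a=\alpha/2$, $b=(n-1)\alpha/2$), and the cancellation of $\Gamma(1+(n-1)\alpha/2)$ turns the hypothesis into \eqref{para_expansion_2}; equivalently one may unfold the whole $n$--fold convolution at once, in which case the simplex integral of $\prod_{i}(s_{i+1}-s_{i})^{\alpha/2-1}$ directly produces $t^{n\alpha/2}\Gamma(\alpha/2)^{n}/\Gamma(1+n\alpha/2)$.

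Finally, \eqref{para_expansion_3} follows by summing \eqref{para_expansion_2} over $n$: using $g_{\widehat{c}_{+}t}(x,y)\le(2\pi\widehat{c}_{+}t)^{-d/2}$, the series is dominated by $\sum_{n\ge0}\frac{(C_{+}(1+|x|)t^{\alpha/2}\Gamma(\alpha/2))^{n}}{\Gamma(1+n\alpha/2)}(2\pi\widehat{c}_{+}t)^{-d/2}$, which converges by the ratio test since $\Gamma(1+n\alpha/2)/\Gamma(1+(n+1)\alpha/2)\to0$, and the bound is uniform in $y\in\real^{d}$ (and in $t$ on compact subsets of $(0,T]$). The step I expect to be the main obstacle is precisely the one forced by the unbounded drift: arranging the bookkeeping so that each convolution costs only a single factor $C_{+}(1+|x|)t^{\alpha/2}$, i.e. neither the base constant $C_{+}$ nor the Gaussian constant $\widehat{c}_{+}$ degrades with $n$, despite the factor $(1+|z|)$ appearing at every intermediate node. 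This is what obliges us to exploit the explicit frozen--Gaussian form of $\overline{p}$ (so that the Gaussian is anchored at the point where $b$ is evaluated) and to absorb the intermediate polynomial growth through the bridge/Chapman--Kolmogorov mechanism above, rather than through cruder Hölder estimates on products of polynomials, which would introduce uncontrolled $n$--dependent constants.
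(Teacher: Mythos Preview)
Your overall architecture (induction on $n$, Beta integral via Lemma~\ref{pa_App_1}, then summing a Mittag--Leffler series) is the same as the paper's, and your treatment of the second--order part $H^{a}$ via Lemma~\ref{Basic_est_0} coincides with the paper's. The genuine difference is in how the unbounded drift factor $(1+|z|)$ coming from $|b(z)|$ is handled in the inductive step.

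The paper does \emph{not} use your pointwise one--step bound $|H|(s,z;t,y)\le C(1+|z|)(t-s)^{-(1-\alpha/2)}g_{\widehat c_{+}(t-s)}(z,y)$ followed by a bridge decomposition. Instead it writes $\int g_{c_{+}s}(x,z)\,|\langle\nabla_{x}\overline p(s,z;t,y),b(z)\rangle|\,\rd z$ as an expectation over $Z=x+\sqrt{\widehat c_{+}}\,W_{s}$ and applies H\"older's inequality with exponents $p_{1},p_{2}$ to separate $|\nabla_{x}\overline p|$ from $|b(Z)|$. The moment $\e[|b(Z)|^{p_{2}}]^{1/p_{2}}$ is then bounded by $C(1+|x|)$ directly via Lemma~\ref{moment_0} (applied with $b\equiv0$), and the remaining factor $\e[|\nabla_{x}\overline p(s,Z;t,y)|^{p_{1}}]^{1/p_{1}}$ is controlled by an analogue of Lemma~\ref{key_1}~(ii). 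The specific choice $p_{1}=d/(d-(1-\alpha))$ is made so that the resulting singularity $(t-s)^{-1/2-d(p_{1}-1)/(2p_{1})}$ equals $(t-s)^{-(1-\alpha/2)}$, matching the $H^{a}$ part. The point is that H\"older pulls the growth $(1+|z|)$ out as a pure moment of a Gaussian centred at $x$, producing $(1+|x|)$ with \emph{no} residual $|y-x|$ term.

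This is exactly where your bridge route runs into trouble. After writing $(1+|z|)\le(1+|x|)(1+|z-x|)$ and integrating $|z-x|$ against the bridge, you obtain a contribution $\tfrac{r}{t}|y-x|\,g_{\widehat c_{+}t}(x,y)$. Controlling $|y-x|$ ``against the accompanying Gaussian'' forces $g_{\widehat c_{+}t}\to g_{c'\widehat c_{+}t}$ with $c'>1$; done once per inductive step, the Gaussian constant becomes $(c')^{n}\widehat c_{+}$, contradicting your stated goal that $\widehat c_{+}$ not degrade with $n$. So the mechanism you describe does not deliver \eqref{para_expansion_2} with a fixed Gaussian variance. (It would still give \eqref{para_expansion_3}, since a geometrically growing variance only helps the sum.) The paper's H\"older step avoids this because the unbounded factor is absorbed entirely into $\e[|b(Z)|^{p_{2}}]^{1/p_{2}}$, which depends on $x$ but not on $y$, leaving the Gaussian factor free of extra polynomials in $|y-x|$.
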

\begin{proof}
	We first show \eqref{para_expansion_2}.
	For $n=0$, it is obvious from the definition of $\overline{p}$ and Assumption \ref{Ass_1} (ii).
	For $n=1$, we have
	\begin{align*}
		\overline{p} \otimes |H| (0,x;t,y)
		&=\int_{0}^{t} \int_{\real^d}
		\overline{p} (0,x;t_1,y_1) |H (t_1,y_1;t,y)| 
		\rd y_1 \rd t_1\\
		&\leq
		\overline{p} \otimes |H^b| (0,x;t,y)
		+
		\overline{p} \otimes |H^a| (0,x;t,y),
	\end{align*}
	where $|H^b|$ and $|H^a|$ are defined by
	\begin{align*}  
		|H^b| (s,z;t,y)
		&=
		\left|
		\langle
		\nabla_x \overline{p}(s,z;t,y),  b(z)
		\rangle
		\right|, 
		|H^a| (s,z;t,y)
		=
		\left|
		\sum_{i,j=1}^{d}
		\frac{a_{i,j}(z)-a_{i,j}(y)}{2}
		\partial_{z_{i}z_{j}}^2
		\overline{p}(s,z;t,y)
		\right|.
	\end{align*}	
	From the Gaussian upper bound \eqref{bound_qt} for $\overline{p}$, we obtain
	\begin{align*}  
		\overline{p} \otimes |H^b| (0,x;t,y)
		&\leq
		\widehat{C}_{+}
		\int_{0}^{t} \int_{\real^d} 
			g_{\widehat{c}_{+} s } (x,z)
			\left|
				\langle
					\nabla_x \overline{p}(s,z;t,y),  b(s,z)
				\rangle
			\right| 
		\rd z \rd s\\
		&=
		\widehat{C}_{+}
		\int_{0}^{t} 
			\e
				\left[
					\left|
						\langle
							\nabla_x \overline{p}(s,\sqrt{\widehat{c}_+}W_{s}+x;t,y),
							b(s,\sqrt{\widehat{c}_+}W_{s}+x)
						\rangle
					\right|
				\right]
		\rd s.
	\end{align*}
	It follows from H\"older's inequality, the Gaussian upper bound \eqref{bound_drev_2} for $\nabla_x \overline{p}$, Lemma \ref{moment_0} and analogy of Lemma \ref{key_1} (ii) that for $p_1, p_2 >1$ with $p_1 \in (1, \frac{d}{d-1})$ and $1/p_1 + 1/p_2 =1$, 
	\begin{align*}  
		\overline{p} \otimes |H^b| (0,x;t,y)
		&\leq 
		\widehat{C}_{+}
			\int_{0}^{t} 
				\e
					\left[
						\left|
							\nabla_{x} \overline{p}(s,\sqrt{\widehat{c}_+}W_{s}+x;t,y)
						\right|
						\left|
							b(t_1,\sqrt{\widehat{c}_+}W_{s}+x)
						\right|
					\right]
			\rd s\\
		&\leq
		\widehat{C}_{+}
			\int_{0}^{t} 
				\e
				\left[
					\left|
						\nabla_{x} \overline{p}(s,\sqrt{\widehat{c}_+}W_{s}+x;t,y)
					\right|^{p_1}
				\right]^{1/p_1}
				\e
					\left[
						\left|
							b(s,\sqrt{\widehat{c}_+}W_{s}+x)
						\right|^{p_2}
					\right]^{1/p_2}
			\rd s\\
		&\leq
		C
		(1+|x|)	
		\int_{0}^{t}
			\frac{1}{(t-s)^{1/2}}
			\e
				\left[
					\left|
						g_{\widehat{c}_{+}(t-s)}(\sqrt{\widehat{c}_+}W_{s}+x,y)
					\right|^{p_1}
			\right]^{1/p_1} 
		\rd s\\
		&\leq
		C'(1+|x|)	
		g_{\widehat{c}_{+}(t-s)}(x,y)
		\int_{0}^{t}
			\frac{1}{(t-s)^{1/2+d(p_1-1)/2p_1}} 
		\rd s,
	\end{align*}
	for some $C,C'>0$.
	By using Assumption \ref{Ass_1} (ii), Lemma \ref{Basic_est_0} and Chapman--Kolmogorov equation, we have
	\begin{align*}
		\overline{p} \otimes |H^a| (0,x;t,y)
		&\leq
		\frac{\|a\|_\alpha}{2}
		\int_{0}^{t} \int_{\real^d} 
			\overline{p} (0,x;s,z)
			|y-z|^{\alpha}
			\left|
			\partial_{z_{i}z_{j}}^2
			\overline{p}(s,x;t,y)
			\right|
		\rd z \rd s\\
		&=
		\frac{\|a\|_\alpha}{2}
		\int_{0}^{t} \int_{\real^d} 
			\overline{p} (0,x;s,z)
			|y-z|^{\alpha}
			\left|
			H_{(t-s)a(y)}^{i,j}(y-z)
			\right|
			\overline{p}(s,z;t,y)
		\rd z \rd s\\
		&\leq
		C
		\int_{0}^{t}\int_{\real^d}
			\frac{1}{(t-s)^{1-\alpha/2}}
			g_{\widehat{c}_{+}s} (x,z)
			g_{2\widehat{c}_{+}(t-s)}(z,y) 
		\rd z \rd s\\
		&\leq
		C'
		g_{c_{+}t}(x,y)
		\int_{0}^{t}
			\frac{1}{(t-s)^{1-\alpha/2}} 
		\rd s,
	\end{align*}
	for some $C,C'>0$ and $c_{+}>0$.
	By choosing $p_1=\frac{d}{d-(1-\alpha)}$,  \eqref{para_expansion_2} holds for $n=1$.
	We assume that \eqref{para_expansion_2} holds for $n-1$.
	Then we have 
	\begin{align*}
		\overline{p} \otimes |H|^{\otimes n} (0,x;t,y)
		&=
		\int_{0}^{t} \int_{\real^d}
			\overline{p} \otimes |H|^{\otimes n-1} (0,x;s,z)
			|H (s,z;t,y)| 
		\rd z \rd s\\
		&\leq 
		\frac
			{\left(C_{+}(1+|x|)\Gamma(\alpha/2)\right)^{n-1}}
			{\Gamma(1+(n-1)\alpha/2)}
		\int_{0}^{t} \int_{\real^d}
			s^{(n-1)\alpha/2}
			g_{c_{+}s}(x,z)
			|H (s,z;t,y)| 
		\rd z \rd s.
	\end{align*}
	Hence from the same arguments for $n=1$, it holds that
	\begin{align*}
		\overline{p} \otimes |H|^{\otimes n} (0,x;t,y)
		&\leq 
		\frac
			{ \left(C_{+}(1+|x|))^n \Gamma(\alpha/2 \right)^{n-1}}
			{\Gamma(1+(n-1)\alpha/2)}
			g_{\widehat{c}_+t}(x,y)
		\int_{0}^{t}
			\frac{s^{(n-1)\alpha/2}}{(t-s)^{1-\alpha/2}} 
		\rd s\\
		&=
		\frac
			{ \left(C_{+}(1+|x|)t^{\alpha/2} \Gamma(\alpha/2) \right)^n}
			{\Gamma(1+n\alpha/2)}
		g_{\widehat{c}_{+}t}(x,y).
	\end{align*}
	Hence \eqref{para_expansion_2} holds for every $n \in \n \cup \{0\}$. By using \eqref{para_expansion_2}, we obtain \eqref{para_expansion_3}.
\end{proof}

Now we define an approximation $b_N$ of $b$ by 
\begin{align*}
	b_N(x)
	=
	\left\{ \begin{array}{ll}
		\displaystyle
		b(x)
		&\text{ if } |x| \leq N,  \\
		\displaystyle
		b(Nx/|x|)
		&\text{ if } |x|  > N.
	\end{array}\right.
\end{align*}
Then  from Assumption \ref{Ass_1} (ii), (iii) and Assumption \ref{Ass_3}, $b_N$ satisfies the following conditions 
\begin{itemize}
	\item[(i)]
	$b_N$ is bounded, that is, $|b_N(x)| \leq K(1+N)$ for any 
	$x \in \real^d$.
	\item[(ii)]
	For any $x \in  \real^d$,  $b_N(x) \rightarrow b(x)$ as $N \rightarrow +\infty$.
	\item[(iii)]
	$b_N$ is of linear growth uniformly in $N \in \n$, that is, there exists $K>0$ such that for any $x \in \real^d$,
	\begin{align*}
		\sup_{N\in \n}
		|b_N(x)|
		\leq
		K(1+|x|).
	\end{align*}
	\item[(iv)]
	$b_N$ is of sub-linear growth uniformly in $N\in \n$, that is, for any $\delta>0$, there exists $K(\delta) >0$ such that  for any $x \in \real^d$
	\begin{align*}
		\sup_{N \in \n}
		|b_N(x)|
		\leq
		\delta | x|+K(\delta).
	\end{align*}
\end{itemize}

\begin{proof}[Proof of Theorem \ref{main_expa_0}]
We consider the following SDE with drift coefficient $b_N$
\begin{align*}
	X_t^{N,x}
	=
	x
	+\int_{0}^{t}
	b_N(X_s^{N,x})
	\rd s
	+\int_{0}^{t}
	\sigma(X_s^{N,x})
	\rd W_s,
	~t \geq 0,~x\in \real^d.
\end{align*}
Then $X_t^{N,x}$ admits a pdf, denoted by $p_t^N(x,\cdot)$, and from \eqref{para_expansion_smooth} the following parametrix expansion holds 
\begin{align*}
	p_t^N(x,y)
	=
	\sum_{n=0}^{\infty}
	\overline{p} \otimes H_N^{\otimes n} (s,x;t,y),
\end{align*}
where $H_N$ is defined by
\begin{align*}
	H_N(s,x;t,y)
	:=\langle
	\nabla_x \overline{p}(s,x;t,y),  b_N(x)
	\rangle
	+
	\sum_{i,j=1}^{d}
	\frac{a_{i,j}(x)-a_{i,j}(y)}{2}
	\partial_{x_ix_j}^2
	\overline{p}(s,x;t,y).
\end{align*}
Moreover, since $b_N$ is of linear growth uniformly in $N \in \n$, and from Lemma \ref{para_uni_bdd_1}, we have the following estimation uniformly in $N \in \n$
\begin{align*}
	\sum_{n=0}^{\infty}
	\overline{p} \otimes  |\sup_{N \in \n} H_{N}|^{\otimes n} (s,x;t,y)
	<
	\infty,
\end{align*}
for all $0 \leq s < t \leq T$ and $x, y \in \real^d$.
By using comparison Theorem \ref{main_comp_0} and Lemma \ref{para_uni_bdd_1},  \eqref{para_expansion_3} and dominated convergence theorem, we have
\begin{align*}
p_t(x,y)
=\lim_{N \rightarrow +\infty}p_t^N(x,y)
=\lim_{N \rightarrow +\infty}
\sum_{n=0}^{\infty}
\overline{p} \otimes H_N^{\otimes n} (s,x;t,y)
=\sum_{n=0}^{\infty}
\overline{p} \otimes H^{\otimes n} (s,x;t,y).
\end{align*}
for all $x \in \real$ and a.e. $y \in \real^d$.
Hence we conclude the statement \eqref{para_expansion_1}. 
\end{proof}

\subsection{Application to unbiased simulation scheme}\label{Sub_unbi}

In this subsection, we introduce a probabilistic representation of a pdf of a solution of  Markovian SDE \eqref{SDE_markov},  in order to provide an unbiased simulation scheme.

We introduce the define of counting process.
\begin{Def}
	Let $R_t:=\sum_{n=1}^{\infty} \1(\tau_n \leq t)$ where $(\tau_n-\tau_{n-1})_{n \in \n}$ with $\tau_0=0$ are independent and identically distributed random variables with pdf $\zeta$.
	Then we call $R=(R_t)_{t \geq 0}$ the counting process with $\pi:=(\tau_n)_{n \in \n}$ and $\zeta$.
\end{Def}

\begin{Eg}
	Let $\zeta(t):=\lambda e^{-\lambda t} \1_{[0,\infty)}(t)$.
	Then $R=(R_t)_{t \geq 0}$ is a Poisson process with parameter $\lambda >0$.
	Another choice of $\zeta$ is $\zeta(t):=\frac{A}{t^{\beta} }\1_{[0,2T]}(t)$ where $A:=(1-\beta)/(2T)^{1-\beta}$ and $\beta \in (0,1)$.
	For more on this, see \cite{AnKo}.
\end{Eg}

The following lemma plays a crucial role in our argument.

\begin{Lem}[Lemma 7.3 in \cite{KoTaZh}]\label{prob_rep_3}
	Let $R=(R_t)_{t \geq 0}$ be a counting process with jumps times $\pi:=(\tau_n)_{n \in \n}$ and $\zeta$.
	Then for any $t>0$, $n \in \n$ and any bounded measurable function $V_n:\real^n \to \real$,
	\begin{align*}
	&\e[\1_{\{R_{t}=n\}}V_n(\tau_1,\ldots,\tau_n)] \\
	&=
	\int_0^{t} \rd s_n \int_0^{s_n} \rd s_{n-1} \cdots \int_0^{s_2} \rd s_1
		V_n(s_1, \ldots, s_n) (1-F_{\zeta}(t-s_n)) \prod_{i=0}^{n-1}\zeta(s_{i+1}-s_{i}),
	\end{align*}
	where $F_{\zeta}(x):=\int_{-\infty}^x \zeta(y) dy$ and $s_0=0$.
\end{Lem}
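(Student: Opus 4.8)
The plan is to compute $\e[\1_{\{R_t=n\}}V_n(\tau_1,\ldots,\tau_n)]$ by first conditioning on the first $n$ inter-arrival times and then performing a triangular change of variables. Write $\xi_i := \tau_i - \tau_{i-1}$ for $i \in \n$ (with $\tau_0 = 0$), so that $(\xi_i)_{i\in\n}$ are i.i.d.\ with density $\zeta$ and $\tau_i = \xi_1 + \cdots + \xi_i$. Since $\zeta$ is the density of a (nonnegative) inter-arrival time, $F_\zeta(x) = 0$ for $x<0$, and because $R_t = \sum_{k\geq 1}\1(\tau_k \leq t)$ we have the identity of events $\{R_t = n\} = \{\tau_n \leq t < \tau_{n+1}\} = \{\tau_n \leq t\} \cap \{\xi_{n+1} > t-\tau_n\}$. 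The key observation is that the factor $1-F_\zeta(t-s_n)$ in the claimed formula is nothing but the probability that the $(n+1)$-th increment $\xi_{n+1}$ exceeds $t-\tau_n$, i.e.\ that no jump occurs in $(\tau_n,t]$; extracting this factor is the heart of the argument.

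First I would condition on $\mathcal{G} := \sigma(\xi_1,\ldots,\xi_n)$. Then $\tau_1,\ldots,\tau_n$, and hence $V_n(\tau_1,\ldots,\tau_n)$, are $\mathcal{G}$-measurable, while $\xi_{n+1}$ is independent of $\mathcal{G}$; therefore on $\{\tau_n \leq t\}$ one has $\e[\1_{\{\xi_{n+1} > t-\tau_n\}}\mid \mathcal{G}] = 1-F_\zeta(t-\tau_n)$, which gives
\begin{align*}
	\e[\1_{\{R_t=n\}}V_n(\tau_1,\ldots,\tau_n)]
	= \e\left[\1_{\{\tau_n \leq t\}}V_n(\tau_1,\ldots,\tau_n)\bigl(1-F_\zeta(t-\tau_n)\bigr)\right].
\end{align*}

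Next I would use that the joint law of $(\xi_1,\ldots,\xi_n)$ has density $\prod_{i=1}^n \zeta(x_i)$ on $[0,\infty)^n$ and that $\tau_i = x_1+\cdots+x_i$ under this representation, to rewrite the right-hand side as an integral over $[0,\infty)^n$ of $\1_{\{x_1+\cdots+x_n\leq t\}}\,V_n(x_1,\ldots,x_1+\cdots+x_n)\,(1-F_\zeta(t-x_1-\cdots-x_n))\prod_{i=1}^n\zeta(x_i)$; Fubini's theorem applies since $V_n$ is bounded and the remaining integrand is nonnegative with total integral at most $1$. Finally I would apply the change of variables $s_i = x_1+\cdots+x_i$ (equivalently $x_i = s_i - s_{i-1}$, $s_0=0$), which is lower triangular with unit Jacobian and maps $[0,\infty)^n$ onto $\{0=s_0\leq s_1\leq\cdots\leq s_n\}$, turning the constraint $x_1+\cdots+x_n\leq t$ into $s_n\leq t$ and $\prod_{i=1}^n\zeta(x_i)$ into $\prod_{i=0}^{n-1}\zeta(s_{i+1}-s_i)$. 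Writing the resulting integral over the ordered simplex $\{0\leq s_1\leq\cdots\leq s_n\leq t\}$ as the iterated integral $\int_0^t \rd s_n\int_0^{s_n}\rd s_{n-1}\cdots\int_0^{s_2}\rd s_1$ yields exactly the asserted formula. The only points requiring care are the support/convention on $\zeta$ (so that $1-F_\zeta(t-\tau_n)=1$ off $\{\tau_n\leq t\}$, consistent with $R$ being a genuine counting process) and the routine measurability and integrability bookkeeping for the conditioning and Fubini steps; I do not expect any serious obstacle beyond this.
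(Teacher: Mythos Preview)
Your argument is correct and is the standard route to this identity: condition on the first $n$ increments to extract the survival factor $1-F_\zeta(t-\tau_n)$, then perform the triangular change of variables $(\xi_1,\ldots,\xi_n)\mapsto(s_1,\ldots,s_n)$. The paper does not give its own proof of this lemma; it is quoted verbatim from \cite{KoTaZh} (Lemma~7.3 there), so there is nothing to compare against beyond noting that your approach is exactly the natural one.
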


We define $\phi_t^y(x):=\overline{p}(0,x;t,y)$ and
\begin{align*}
	\widehat{\theta}_{t}(x,y)
	&:=
	-\sum_{i=1}^d b_{i}(x) H_{ta(y)}^{i}(y-x)
	+\sum_{i,j=1}^{d} \frac{a_{i,j}(x)-a_{i,j}(y)}{2} H_{ta(y)}^{i,j} (y-x).
\end{align*}
Then it holds that $H(s,x;t,y)=\widehat{\theta}_{t-s}(x,y) \phi_{t-s}^y(x)$.
By using this fact, we obtain the following representation for a pdf, which provide an unbiased simulation scheme for an expectation $\e[f(X_t^x)]$.

\begin{Cor}\label{prob_rep}
	Suppose that Assumption \ref{Ass_1} (ii), (iii) and Assumption \ref{Ass_3} hold.
	Let $R=(R_t)_{t \geq 0}$ be a counting process with $\pi:=(\tau_n)_{n \in \n}$ and $\zeta$, which is independent from $W$.
	For any $t \in (0,T]$, a pdf $p_t(x,\cdot)$ of $X_t^x$ satisfies the following probabilistic representation
	\begin{align*}
		p_t(x,y)
		=
		\e
		\left[
			\frac{\overline{p}(t-\tau_t,x,X_{\tau_t}^{*, \pi}(y))} {1-F_{\zeta}(t-\tau_t)} \Gamma_t(y)
		\right],
	\end{align*}
	where $\tau_t:=\tau_{R_t}$, $F_{\zeta}(t):=\int_{0}^{t} \zeta(s) \rd s$ and
	\begin{align*}
		\Gamma_t(y)
		:=
		\1_{\{R_t=0\}}
		+\prod_{j=0}^{R_t-1}
			\frac
				{\widehat{\theta}_{\tau_{j+1}-\tau_j}(X_{\tau_{j+1}}^{*, \pi}(y), X_{\tau_{j}}^{*, \pi}(y))}
				{\zeta(\tau_{j+1}-\tau_{j})}
			\1_{\{R_t \geq 1\}},
	\end{align*}
	and $X^{*, \pi}(y)$ is the Euler-Maruyama scheme with $X_0^{*, \pi}(y) = y$ and a random partition $\pi$ which drift coefficient is zero and diffusion coefficient is $\sigma$, that is, $X_0^{*, \pi}(y):=y$ and for $j \geq 1$,
	\begin{align*}
	X_{\tau_j}^{*, \pi}(y)
	:=X_{\tau_{j-1}}^{*, \pi}(y)
	+\sigma(X_{\tau_{j-1}}^{*, \pi}(y)) (W_{\tau_j}-W_{\tau_{j-1}}).
	\end{align*}
	Moreover, for a random variable $Z$ with a pdf $g$, which is independent from $R$ and $W$, and for any measurable function $f:\real^d \to \real$ with $\e[|f(X_t^x)|]<\infty$, it holds that
	\begin{align*}
		\e[f(X_t^x)]
		&=
		\e
			\left[
				\frac{f(Z)}{g(Z)}\frac{\overline{p}(t-\tau_t,x,X_{\tau_t}^{*, \pi}(Z))}{1-F_{\zeta}(t-\tau_t)} \Gamma_t(Z)
			\right].
\end{align*}
\end{Cor}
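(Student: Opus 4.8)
The plan is to derive both formulas from the parametrix expansion of Theorem~\ref{main_expa_0}, $p_t(x,y)=\sum_{n=0}^{\infty}\overline{p}\otimes H^{\otimes n}(0,x;t,y)$, by turning each summand into an expectation over the random partition $\pi=(\tau_n)_{n\in\n}$ carried by $R$. The device is the factorization $H(s,x;t,y)=\widehat{\theta}_{t-s}(x,y)\,\phi_{t-s}^{y}(x)$ recalled above, together with $\phi_{\delta}^{w}(\cdot)=\overline{p}(0,\cdot;\delta,w)$. Unfolding the $n$-fold convolution gives, for $n\geq1$,
\[
\overline{p}\otimes H^{\otimes n}(0,x;t,y)=\int_{0<s_1<\cdots<s_n<t}\int_{(\real^d)^n}\phi_{s_1}^{z_1}(x)\Big(\prod_{i=1}^{n-1}\widehat{\theta}_{s_{i+1}-s_i}(z_i,z_{i+1})\,\phi_{s_{i+1}-s_i}^{z_{i+1}}(z_i)\Big)\widehat{\theta}_{t-s_n}(z_n,y)\,\phi_{t-s_n}^{y}(z_n)\,\rd z\,\rd s .
\]
Every $\phi$ occurring here is a Gaussian kernel frozen at the \emph{terminal} endpoint of its subinterval, so after the change of variables $u_i=t-s_{n+1-i}$ and the spatial relabelling $v_i=z_{n+1-i}$ (each with Jacobian of modulus one) and the symmetry $g_A(u,v)=g_A(v,u)$, the chain of $\phi$'s becomes exactly the product of the one-step transition densities of the Euler--Maruyama scheme $X^{*,\pi}(y)$ on the grid $0=u_0<u_1<\cdots<u_n$ started at $X_0^{*,\pi}(y)=y$, times the leftover factor $\phi_{t-u_n}^{v_n}(x)=\overline{p}(t-u_n,x,v_n)$, while the $\widehat{\theta}$'s become $\prod_{j=1}^{n}\widehat{\theta}_{u_j-u_{j-1}}(X_{u_j}^{*,\pi}(y),X_{u_{j-1}}^{*,\pi}(y))$; integrating out the $v$'s against those Gaussian densities replaces the spatial integral by an expectation over $W$.

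Next I would insert the weight $(1-F_{\zeta}(t-u_n))\prod_{j=1}^{n}\zeta(u_j-u_{j-1})$: multiplying and dividing the integrand by it and applying Lemma~\ref{prob_rep_3} — using that $R$ is independent of $W$, so that the $W$-expectation commutes with the counting-process expectation by Fubini — turns the time-simplex integral $\int_{0<u_1<\cdots<u_n<t}\rd u$ into $\e[\1_{\{R_t=n\}}\,\cdot\,]$, with the jump times $\tau_j$ replacing the $u_j$. The $\zeta$-factors are now in the denominator and combine, on $\{R_t=n\}$, into precisely the product defining $\Gamma_t(y)$, while $(1-F_{\zeta}(t-\tau_n))^{-1}=(1-F_{\zeta}(t-\tau_t))^{-1}$ and $\phi_{t-u_n}^{v_n}(x)=\overline{p}(t-\tau_t,x,X_{\tau_t}^{*,\pi}(y))$; the term $n=0$ is $\overline{p}(0,x;t,y)=\e[\1_{\{R_t=0\}}\,\overline{p}(t,x,y)/(1-F_{\zeta}(t))]$ since $\p(R_t=0)=1-F_{\zeta}(t)$. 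Summing over $n\geq0$ and interchanging $\sum_n$ with $\e$ — legitimate by dominated convergence, because taking absolute values throughout yields $\sum_n\e[\1_{\{R_t=n\}}|\,\cdot\,|]=\sum_n\overline{p}\otimes|H|^{\otimes n}(0,x;t,y)$, which is finite and bounded by a constant times $g_{\widehat{c}_{+}t}(x,y)$ by Lemma~\ref{para_uni_bdd_1} — gives the first representation, the events $\{R_t=n\}_{n\geq0}$ being a partition of $\Omega$.

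For the second formula I would write $\e[f(X_t^x)]=\int_{\real^d}f(y)\,p_t(x,y)\,\rd y$, substitute the representation just proved, and factor $f(y)=(f(y)/g(y))\,g(y)$; since $g$ is the density of $Z$ and $Z$ is independent of $(R,W)$, Tonelli/Fubini (applicable because the $y$-integral of $|f(y)|$ against the absolute-value version of the integrand is bounded by a constant times $\int|f(y)|\,g_{\widehat{c}_{+}t}(x,y)\,\rd y$, finite under the assumed integrability) absorbs the $y$-integral into the expectation and produces $\e\big[\tfrac{f(Z)}{g(Z)}\,\tfrac{\overline{p}(t-\tau_t,x,X_{\tau_t}^{*,\pi}(Z))}{1-F_{\zeta}(t-\tau_t)}\,\Gamma_t(Z)\big]$. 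I expect the main difficulty to be the bookkeeping of the first two steps — matching the time-reversed parametrix chain of frozen Gaussians with the \emph{forward} Euler scheme $X^{*,\pi}(\cdot)$ and verifying that the denominators line up with $\Gamma_t$ and with the $(1-F_\zeta)^{-1}$ prefactor — whereas the remaining work is the Fubini/DCT justification, which rests entirely on the absolute-convergence bound of Lemma~\ref{para_uni_bdd_1}.
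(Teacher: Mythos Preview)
Your proposal is correct and follows essentially the same route as the paper: start from the parametrix expansion of Theorem~\ref{main_expa_0}, use the factorization $H(s,x;t,y)=\widehat{\theta}_{t-s}(x,y)\,\phi_{t-s}^{y}(x)$, perform the time-reversal change of variables to recast the convolution chain as the forward Euler--Maruyama Markov chain $X^{*,\pi}(y)$, insert the $\zeta$-weights and invoke Lemma~\ref{prob_rep_3} to turn the simplex integral into $\e[\1_{\{R_t=n\}}\,\cdot\,]$, and justify the interchange of sum and expectation via the absolute-convergence bound of Lemma~\ref{para_uni_bdd_1}. The paper's own proof is terser (it cites \cite{BaKo15,KoTaZh,KoYu} for the mechanics) but the skeleton is identical; your write-up in fact makes the bookkeeping more explicit than the paper does.
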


Since we have the expansion \eqref{para_expansion_1} for unbounded drift coefficient $b$, we can prove Corollary \ref{prob_rep} by a similar argument to the proof of Theorem 5.7, Proposition 7.2 in \cite{BaKo15}, Theorem 7.4 in \cite{KoTaZh} and Theorem 3.2 in \cite{KoYu}.
For the convenience of the reader, we will give a proof below.

\begin{proof}[Proof of Corollary \ref{prob_rep}]
	It follows from \eqref{para_expansion_3} that the series $\sum_{n=0}^{\infty}\overline{p} \otimes H^{\otimes n} (0,x;t,y)$ converges absolutely and uniformly for $(t,y) \in (0,T] \times \real^d$, thus using Fubini's theorem and the equation  $H(s,x;t,y)=\widehat{\theta}_{t-s}(x,y) \phi_{t-s}^y(x)$, we have for each $n \in \n \cup \{0\}$,
	\begin{align*}
		&\overline{p} \otimes H^{\otimes n} (0,x;t,y)
		=
		\int_0^{t_0} \rd t_1 \cdots \int_0^{t_{n-1}} \rd t_n
			\int_{\real^n} \rd y_1 \cdots \rd y_n
				\prod_{i=0}^{n-1}
					\widehat{\theta}_{t_{i}-t_{i+1}}(y_{i+1}, y_{i})
					\phi_{t_{i}-t_{i+1}}^{y_{i}}(y_{i+1})
				\phi_{t_{n}}^{y_n}(x),\\
		&=
		\int_0^{t} \rd s_n \int_0^{s_n} \rd s_{n-1} \cdots \int_0^{s_2} \rd s_1
			\int_{\real^n} \rd y_1 \cdots \rd y_n
				\prod_{i=0}^{n-1}
					\widehat{\theta}_{s_{i+1}-s_{i}}(y_{i+1}, y_{i})
					\phi_{s_{i+1}-t_{i}}^{y_{i}}(y_{i+1})
				\phi_{t-s_{n}}^{y_n}(x),
	\end{align*}
	where $(t_0,y_{n+1}):=(t,y)$ and in the last equality we use the change of variables $s_n=t_0-t_n$.	
	For any partition $\pi_0=(s_i)_{i \in \n}$ with $0=:s_0 \leq s_1 < \cdots < s_n< \cdots <\infty$, we define a Markov chain $X^{*, \pi_0}(y)$ as follows:
	$X_0^{*, s_0}(y):=y$ and $\p(X_{s_i}^{*, \pi_0}(y) \in \rd y_{i+1} | X_{s_{i-1}}^{*, \pi_0}(y)=y_{i})=\varphi_{s_{i}-s_{i-1}}^{y_i} (y_{i+1}) \rd y_{i+1}$.
	Then, by using the Markov property of stochastic process $y+\sigma(y)W_t$ whose density is $\phi_t^y$, the parametrix expansion \eqref{para_expansion_1} and Lemma \ref{prob_rep_3} we conclude the statement.
\end{proof}

\subsection{H\"older continuity of pdf}\label{Sub_Hol}

In this subsection, by using a regularity of $q(s,x;t,y)$, we prove that if the diffusion matrix is smooth, then a pdf $p_t(x,\cdot)$ of a solution of SDE \eqref{SDE_1} is H\"older continuous.

\begin{Thm}\label{main_3}
	Suppose Assumption \ref{Ass_1} and \ref{Ass_2} hold, and $\sigma(t,\cdot) \in C_b^2(\real^d; \real^{d \times d})$ for all $t \in [0,T]$.
	Let $p_t(x,\cdot)$ be a continuous version of a pdf of a solution $X_t^{x}$ to SDE \eqref{SDE_1}, for $t \in (0,T]$ and $x \in \real^d$.
	Assume $\gamma \in (0,1)$ and $p_1,p_2,p_3>1$ with $p_1 \in (1,\frac{d}{d-(1-\gamma)})$, $1/p_1+1/p_2+1/p_3=1$.
	\begin{itemize}
		\item[(i)]
		There exists $C_{\gamma,p_1}>0$ and $c_{p_1}>0$ such that, for all $(t,x,y) \in (0,T] \times \real^d \times \real^d$
		\begin{align*}
			&|p_t(x,y)-p_t(x,y')|\notag\\
			&\leq
			C_{\gamma,p_1}
			\sup_{0 \leq s \leq t}
			\e\left[
			Z_t(1,Y^{0,x})^{p_2}
			\right]^{1/p_2}
			\e\left[
			|b(s,Y^{0,x})|^{p_3}
			\right]^{1/p_3}
			\frac{|y-y'|^{\gamma}}{t^{\gamma/2}} \left\{ g_{c_{p_1}t}(x,y) + g_{c_{p_1}t}(x,y') \right\}.
		\end{align*}
		
		\item[(ii)]
		Recall that $t_r$ is defined in \eqref{def_tr}.
		There exists $C_{\gamma,p_1}>0$ and $c_{p_1}>0$ such that, for all $(t,x,y) \in (0,T] \times \real^d \times \real^d$, it holds that if $t \in (0,t_{p_1}]$,
		\begin{align*}
		&|p_t(x,y)-p_t(x,y')|\\
		&\leq
		C_{\gamma,p_1}
		(1+|x|)
		\exp
		\left(
		c_{p_1}
		(1+|x|^2)t
		\right)
		\frac{|y-y'|^{\gamma}}{t^{\gamma/2}}
		\left
		\{ g_{c_{p_1}t}(x,y) + g_{c_{p_1}t}(x,y')
		\right\},
		\end{align*}
		and if $t \in (t_{p_1},T]$, then
		\begin{align*}
		&|p_t(x,y)-p_t(x,y')|\\
		&\leq
		C_{\gamma,p_1}
		(1+|x|)
		\exp
		\left(
		\frac{|x|^2}{8p_2 \widehat{c}_{+} T}
		\right)
		\frac{|y-y'|^{\gamma}}{t^{\gamma/2}}
		\left\{
		g_{c_{p_1}t}(x,y) + g_{c_{p_1}t}(x,y')
		\right\}.
		\end{align*}
	\end{itemize}
\end{Thm}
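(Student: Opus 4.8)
The plan is to start from the first representation \eqref{density_1}, namely
\[
p_t(x,y) = q(0,x;t,y) + \int_0^t \e\!\left[\langle \nabla_x q(s,X_s^x;t,y), b(s,X^x)\rangle\right]\rd s,
\]
and to estimate the difference $p_t(x,y)-p_t(x,y')$ by splitting it into the difference of the ``free'' fundamental solutions $q(0,x;t,y)-q(0,x;t,y')$ and the difference of the two integral terms. For the first piece one uses the classical $\gamma$-H\"older (in fact Lipschitz, away from $t\to 0$) bound for $q(0,x;t,\cdot)$ together with the Gaussian upper bound from Lemma~\ref{bound_drev_1}: since $\sigma(t,\cdot)\in C_b^2$, the fundamental solution enjoys the estimate $|q(0,x;t,y)-q(0,x;t,y')| \leq C|y-y'|^{\gamma}t^{-\gamma/2}\{g_{ct}(x,y)+g_{ct}(x,y')\}$ for $\gamma\in(0,1)$, which follows by interpolating the bound on $\nabla_y q$ against the trivial bound on $q$ itself. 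I would invoke the smoothness of $\sigma$ precisely here, to guarantee the existence and Gaussian control of $\nabla_y q(0,x;t,y)$ (this is where the hypothesis $\sigma(t,\cdot)\in C_b^2$, rather than just H\"older, is used).

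The main work is the second piece. Using Theorem~\ref{main_0} (Girsanov) exactly as in the proof of Lemma~\ref{key_1}(i), I rewrite
\[
\e\!\left[\langle \nabla_x q(s,X_s^x;t,y)-\nabla_x q(s,X_s^x;t,y'), b(s,X^x)\rangle\right]
= \e\!\left[\langle \nabla_x q(s,Y_s^{0,x};t,y)-\nabla_x q(s,Y_s^{0,x};t,y'), b(s,Y^{0,x})\rangle Z_t(1,Y^{0,x})\right],
\]
and then bound $|\nabla_x q(s,z;t,y)-\nabla_x q(s,z;t,y')|$ by a H\"older-in-$y$ estimate: $|\nabla_x q(s,z;t,y)-\nabla_x q(s,z;t,y')| \leq C|y-y'|^{\gamma}(t-s)^{-(1+\gamma)/2}\{g_{\widehat c_+(t-s)}(z,y)+g_{\widehat c_+(t-s)}(z,y')\}$, again obtained by interpolating the bound \eqref{bound_drev_2} on $\nabla_x q$ with the bound on the mixed derivative $\nabla_x\nabla_y q$ (available since $\sigma\in C_b^2$). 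After this I apply H\"older's inequality with exponents $p_1,p_2,p_3$, Lemma~\ref{moment_0} to control $\e[|b(s,Y^{0,x})|^{p_3}]$, the moment bound on $Z$ to control $\e[Z_t(1,Y^{0,x})^{p_2}]$, and estimate \eqref{esti_1} of Lemma~\ref{key_1}(ii) to pass $\e[|g_{\widehat c_+(t-s)}(Y_s^{0,x},\cdot)|^{p_1}]^{1/p_1}$ into a constant times $(t/(t-s))^{d(p_1-1)/2p_1}g_{p_1\widehat c_+ t}(x,\cdot)$. The resulting $s$-integral is $\int_0^t (t-s)^{-[d(p_1-1)/2p_1 + (1+\gamma)/2]}\rd s$, which converges precisely when $d(p_1-1)/2p_1 + (1+\gamma)/2 < 1$, i.e. when $p_1 < d/(d-(1-\gamma))$ — exactly the hypothesis imposed in the theorem. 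This yields part (i), with the factor $|y-y'|^{\gamma}t^{-\gamma/2}$ appearing after absorbing the leftover power of $t$ from the Beta integral (using Lemma~\ref{pa_App_1} in the Appendix).

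For part (ii) I simply feed part (i) the explicit moment bounds: Lemma~\ref{moment_0} gives $\e[|b(s,Y^{0,x})|^{p_3}]^{1/p_3} \leq C(1+|x|)$, and Lemma~\ref{bdd_Girsanov_1} with $r=p_2$ gives the stated trichotomy for $\sup_{0\le s\le t}\e[Z_s(1,Y^{0,x})^{p_2}]^{1/p_2}$ according to whether $t\le t_{p_2}$ or $t>t_{p_2}$; combining these with the threshold $t_{p_1}$ (note $2p_1^2-p_1$ and $2p_2^2-p_2$ are both positive, so the relevant threshold in the statement is as written) produces the two regimes, with the factor $(1+|x|)\exp(c_{p_1}(1+|x|^2)t)$ in the small-time regime and $(1+|x|)\exp(|x|^2/(8p_2\widehat c_+ T))$ in the large-time regime. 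The one genuine obstacle is establishing the H\"older-in-$y$ Gaussian bound on $\nabla_x q(s,z;t,\cdot)$ and on $q(0,x;t,\cdot)$ with the correct power $(t-s)^{-(1+\gamma)/2}$; this is standard parametrix theory (cf.\ \cite{Fr64}) but needs the $C_b^2$ assumption on $\sigma$ to control the extra $y$-derivative, and one must be careful that the constant $c_{p_1}$ produced by repeatedly enlarging the Gaussian variance through \eqref{esti_1} stays uniform. Everything else is a routine assembly of the lemmas already proved.
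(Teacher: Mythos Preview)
Your proposal is correct and follows essentially the same route as the paper: start from the representation \eqref{density_1}, use the H\"older-in-$y$ bounds on $q$ and $\nabla_x q$ (which the paper isolates as a separate Lemma~\ref{holder_0}, proved exactly by the interpolation you describe, splitting on $|y-y'|^2\lessgtr t-s$), then Girsanov plus H\"older's inequality with exponents $p_1,p_2,p_3$, and finally Lemma~\ref{key_1}(ii)/Chapman--Kolmogorov to close the $s$-integral under the constraint $p_1<d/(d-(1-\gamma))$. Part~(ii) is then obtained from Lemma~\ref{moment_0} and Lemma~\ref{bdd_Girsanov_1} with $r=p_2$, just as you say.
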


\begin{Rem}
	Theorem \ref{main_3} implies that the regularity of a pdf of a solution to SDE \eqref{SDE_1} does not depend on the regularity of drift coefficient.
\end{Rem}

From Theorem \ref{main_2} (i), the representation \eqref{density_1} is continuous.
In order to prove Theorem \ref{main_3}, we need to consider the H\"older continuity of the pdf $q(0,x,t;\cdot)$ and its derivative.

\begin{Lem}\label{holder_0}
	Suppose Assumption \ref{Ass_1} and \ref{Ass_2} hold, and $\sigma(t,\cdot) \in C_b^2(\real^d; \real^{d \times d})$ for all $t \in [0,T]$.
	Then there exist $\widetilde{C}_{+}>0$ and $\widetilde{c}_{+}>\widehat{c}_{+}>0$ such that for any $\gamma \in (0,1)$, $y,y'\in\real^d$, $0 \leq s < t \leq T$ and $i=1,\ldots,d$,
	\begin{align}
		|q(s,x;t,y)-q(s,x;t,y')|
		&\leq
		\frac{\widetilde{C}_{+}|y-y'|^{\gamma}}{t^{\gamma/2}} \left\{g_{\widetilde{c}_{+}(t-s)}(x,y)+g_{\widetilde{c}_{+}(t-s)}(x,y')\right\} \label{Holder_1},\\
		|\partial_{x_i} q(s,x;t,y)- \partial_{x_i} q(s,x;t,y')|
		&\leq
		\frac{\widetilde{C}_{+}\left|y-y'\right|^{\gamma}}{(t-s)^{1-(1-\gamma)/2}} \left\{g_{\widetilde{c}_{+}(t-s)}(x,y)+g_{\widetilde{c}_{+}(t-s)}(x,y')\right\} \label{Holder_2}.
	\end{align}
\end{Lem}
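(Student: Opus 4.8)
The plan is to derive both estimates from two inputs: the Gaussian bounds already recorded in Lemma~\ref{bound_drev_1} for $q$ and $\nabla_x q$, and analogous Gaussian bounds for the \emph{$y$-derivatives} $\partial_{y_i}q(s,x;t,y)$ and for the mixed derivatives $\partial_{x_i}\partial_{y_j}q(s,x;t,y)$. Once one has, for some $\widetilde{c}_{+}>\widehat{c}_{+}$ and $\widetilde{C}_{+}>0$,
\[
|\partial_{y_i}q(s,x;t,y)|\le \frac{\widetilde{C}_{+}}{(t-s)^{1/2}}\,g_{\widetilde{c}_{+}(t-s)}(x,y),
\qquad
|\partial_{x_i}\partial_{y_j}q(s,x;t,y)|\le \frac{\widetilde{C}_{+}}{t-s}\,g_{\widetilde{c}_{+}(t-s)}(x,y),
\]
the two Hölder estimates follow by a routine interpolation between these gradient bounds and the ``zeroth/first order'' bounds \eqref{bound_qt}, \eqref{bound_drev_2}, as explained in the last paragraph.

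The only genuinely new ingredient, and the main obstacle, is establishing the $y$- and mixed derivative bounds; this is exactly where the extra hypothesis $\sigma(t,\cdot)\in C_b^2(\real^d;\real^{d\times d})$ is used. For fixed $(s,x)$, the map $(t,y)\mapsto q(s,x;t,y)$ is the fundamental solution of the forward Kolmogorov (Fokker--Planck) equation $\partial_t q=\tfrac12\sum_{i,j}\partial_{y_i}\partial_{y_j}\bigl(a_{i,j}(t,\cdot)\,q\bigr)$; equivalently $q(s,x;t,y)=q^{*}(t,y;s,x)$ for the fundamental solution $q^{*}$ of the time-reversed adjoint operator. Under Assumption~\ref{Ass_1}(ii),(iii) together with $\sigma(t,\cdot)\in C_b^2$, the matrix $a=\sigma\sigma^{\top}$ is bounded, uniformly elliptic, $\alpha$-Hölder in space and twice continuously differentiable in space with bounded derivatives; writing the forward operator in non-divergence form produces a uniformly parabolic operator whose second-order coefficients are $\alpha$-Hölder and whose first- and zeroth-order coefficients ($\partial_j a_{i,j}$ and $\tfrac12\partial_i\partial_j a_{i,j}$) are bounded and continuous ($\alpha/2$-Hölder in time). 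Hence Friedman's parametrix construction (\cite{Fr64}, Chapter~1 and Theorem~9.4.2) applies to it and yields the Gaussian bounds for $\partial_{y_i}q$ and $\partial^2_{y_iy_j}q$; the bounds for the pole derivatives $\partial_{x_i}q$ (already \eqref{bound_drev_2}) and $\partial_{x_i}\partial_{y_j}q$ follow in the same way, the point being to check that $C_b^2$-regularity of $a$ is enough to differentiate the parametrix series once more—this bookkeeping is standard and I will not reproduce it.

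With these bounds in hand, I would prove \eqref{Holder_1} by splitting into the cases $|y-y'|\ge (t-s)^{1/2}$ and $|y-y'|<(t-s)^{1/2}$. In the first case one uses the trivial estimate $|q(s,x;t,y)-q(s,x;t,y')|\le q(s,x;t,y)+q(s,x;t,y')\le \widehat{C}_{+}\bigl(g_{\widehat{c}_{+}(t-s)}(x,y)+g_{\widehat{c}_{+}(t-s)}(x,y')\bigr)$, absorbs the loss via $g_{\widehat{c}_{+}(t-s)}(x,\cdot)\le (\widetilde{c}_{+}/\widehat{c}_{+})^{d/2}g_{\widetilde{c}_{+}(t-s)}(x,\cdot)$, and notes that $1\le |y-y'|^{\gamma}/(t-s)^{\gamma/2}$ in this regime. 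In the second case, the mean value theorem gives $|q(s,x;t,y)-q(s,x;t,y')|\le |y-y'|\sup_{z\in[y,y']}|\nabla_y q(s,x;t,z)|$, and for every $z$ on the segment $[y,y']$ a short computation (distinguishing $|y-x|\le 2(t-s)^{1/2}$ from $|y-x|>2(t-s)^{1/2}\ge 2|y-y'|$) shows $g_{\widehat{c}_{+}(t-s)}(x,z)\le C\,g_{\widetilde{c}_{+}(t-s)}(x,y)$ with $\widetilde{c}_{+}=4\widehat{c}_{+}$; combining this with the $\partial_y q$ bound and with $|y-y'|^{1-\gamma}\le (t-s)^{(1-\gamma)/2}$ turns the prefactor $|y-y'|/(t-s)^{1/2}$ into $|y-y'|^{\gamma}/(t-s)^{\gamma/2}$, which yields \eqref{Holder_1}. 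Estimate \eqref{Holder_2} is obtained by exactly the same two-case argument applied to $\partial_{x_i}q$ in place of $q$ and $\partial_{x_i}\partial_{y_j}q$ in place of $\partial_{y_j}q$, using \eqref{bound_drev_2} for the large-$|y-y'|$ case; now the balance $|y-y'|^{1-\gamma}\le (t-s)^{(1-\gamma)/2}$ converts $|y-y'|/(t-s)$ into $|y-y'|^{\gamma}/(t-s)^{1-(1-\gamma)/2}$, giving the stated exponent. Replacing the single Gaussian $g_{\widetilde{c}_{+}(t-s)}(x,y)$ by the symmetric sum $g_{\widetilde{c}_{+}(t-s)}(x,y)+g_{\widetilde{c}_{+}(t-s)}(x,y')$ throughout is free and matches the form of the statement.
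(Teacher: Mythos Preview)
Your proposal is correct and follows essentially the same approach as the paper: cite the Gaussian bounds for $\partial_{x_i}q$ and $\partial_{x_i}\partial_{y_j}q$ (the paper simply quotes \cite{Fr64}, Chapter~9, Section~6, Theorem~7 for these, which is precisely what your $C_b^2$ discussion justifies), then split according to whether $|y-y'|^2<t-s$ or $|y-y'|^2\ge t-s$, using the mean value theorem in the first case and the triangle inequality in the second. The only cosmetic difference is in how the Gaussian at the intermediate point $z\in[y,y']$ is controlled: the paper uses the elementary inequality $|x_1-x_2|^2\ge\tfrac12|x_1|^2-|x_2|^2$ applied to $\zeta(\theta)-x=(y-x)-(1-\theta)(y-y')$, which directly yields the bound by $g_{4\widehat{c}_{+}(t-s)}(x,y)+g_{4\widehat{c}_{+}(t-s)}(x,y')$, whereas you split further on the size of $|y-x|$; both routes give the same constant $\widetilde{c}_{+}=4\widehat{c}_{+}$.
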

\begin{proof}
	We only prove \eqref{Holder_2}.
	The proof of \eqref{Holder_1} is similar.
	Note that if $\sigma(t,\cdot) \in C_b^2$, then it holds that there exist $\widehat{C}_{+}>0$ and $\widehat{c}_{+}>0$ such that for all $t \in (0,T]$, $x,y \in \real^d$ and $i,j=1,\ldots,d$,
	\begin{align}\label{upper_deriv_0}
		\left|\partial_{x_i} q(s,x;t,y) \right|
		\leq \frac{\widehat{C}_{+}}{(t-s)^{1/2}} g_{\widehat{c}_{+}(t-s)}(x,y)
		\text{ and }
		\left|\partial_{x_i}\partial_{y_j} q(s,x;t,y) \right|
		\leq \frac{\widehat{C}_{+}}{t-s} g_{\widehat{c}_{+}(t-s)}(x,y),
	\end{align}
	(see, Chapter 9, section 6, Theorem 7 in \cite{Fr64}).
	
	We first assume $|y-y'|^2 < t-s$.
	Then by the mean-value theorem and \eqref{upper_deriv_0}, we have
	\begin{align*}
	|\partial_{x_i} q(s,x;t,y)- \partial_{x_i} q(s,x;t,y')|
	&=\left| \int_{0}^{1} \langle y-y' , \nabla_{y} \partial_{x_i} q(s,x;t,\zeta(\theta)) \rangle \rd \theta \right|\\
	&\leq
	\frac{\sqrt{d}\widehat{C}_{+}\left|y-y'\right|}{t}
	\int_{0}^{1} g_{\widehat{c}_{+}(t-s)}(x,\zeta(\theta)) \rd \theta,
	\end{align*}
	where $\zeta(\theta) = \theta y+(1-\theta)y'$.
	Since $|y-y'|^2 < t-s$, by using $|x_1-x_2|^2 \geq \frac{1}{2}|x_1|^2-|x_2|^2$ for any $x_1,x_2 \in \real^d$, we have
	\begin{align*}
	-\frac{|\zeta(\theta)-x|^2}{\widehat{c}_{+}(t-s)}
	\leq-\frac{|y-x|^2}{2\widehat{c}_{+}(t-s)}+\frac{(1-\theta)|y-y'|^2}{\widehat{c}_{+}(t-s)}
	\leq-\frac{|y-x|^2}{2\widehat{c}_{+}(t-s)}+\frac{1}{\widehat{c}_{+}},~
	-\frac{|\zeta(\theta)-x|^2}{\widehat{c}_{+}(t-s)}\leq-\frac{|y'-x|^2}{2\widehat{c}_{+}(t-s)}+\frac{1}{\widehat{c}_{+}}.
	\end{align*}	
	Hence we have
	\begin{align*}
	&|\partial_{x_i} q(s,x;t,y)- \partial_{x_i} q(s,x;t,y')|\\
	&\leq
		\frac
			{C'\left|y-y'\right|^{\gamma}}
			{(t-s)^{1-(1-\gamma)/2}}
		\frac
			{\left( |y-x|^{1-\gamma}+|y'-x|^{1-\gamma}\right)}
			{(t-s)^{(1-\gamma)/2}}
		\exp
			\left(
				-\frac{|y-x|^2}{16\widehat{c}_{+}(t-s)}
			\right)
		\exp
			\left(
				-\frac{|y'-x|^2}{16\widehat{c}_{+}(t-s)}
			\right)\\
		&\quad\times
		\left\{
			g_{4\widehat{c}_{+}(t-s)}(x,y)
			+g_{4\widehat{c}_{+}(t-s)}(x,y')
		\right\} \\
	&\leq
		\frac
			{C''\left|y-y'\right|^{\gamma}}
			{(t-s)^{1-(1-\gamma)/2}}
		\left\{
			g_{4\widehat{c}_{+}(t-s)}(x,y)
			+g_{4\widehat{c}_{+}(t-s)}(x,y')
		\right\},
	\end{align*}
	for some $C',C''$.
	This concludes \eqref{Holder_2} for $|y-y'|^2<t-s$.
	
	If $|y-y'|^2\geq t$, by using \eqref{upper_deriv_0}, we have
	\begin{align*}
		&
			\left|
				\partial_{x_i} q(s,x;t,y)
				-
				\partial_{x_i} q(s,x;t,y')
			\right|
		\leq
			|\partial_{x_i} q(s,x;t,y)|
			+|\partial_{x_i} q(s,x;t,y')|\\
		&\leq
			\frac
				{2\widehat{C}_{+}(t-s)^{\gamma/2}}
				{(t-s)^{1-(1-\gamma)/2}}
			\left\{
				g_{\widehat{c}_{+}(t-s)}(x,y)
				+g_{\widehat{c}_{+}(t-s)}(x,y')
			\right\}
		\leq \frac{2\widehat{C}_{+}|y-y'|^{\gamma}}{(t-s)^{1-(1-\gamma)/2}} \left\{g_{\widehat{c}_{+}(t-s)}(x,y)+g_{\widehat{c}_{+}(t-s)}(x,y')\right\},
	\end{align*}
	which concludes \eqref{Holder_2} for $|y-y'|^2\geq t-s$.
\end{proof}

\begin{proof}[Proof of Theorem \ref{main_3}]
	From Theorem \ref{main_1} and Lemma \ref{holder_0}, it is sufficient to estimate
	\begin{align}\label{main_3_1}
		\left|
		\int_0^t
		\e\left[
		\langle
		\nabla_x q(x,X_s^{x};t,y)
		-\nabla_x q(s,X_s^{x};t,y')
		, b(s,X^{x})
		\rangle
		\right]
		\rd s
		\right|.
	\end{align}
	By using Theorem \ref{main_0}, H\"older's inequality, Jensen's inequality, Lemma \ref{moment_0}, Lemma \ref{holder_0} and \eqref{bound_qt}, for any $p_1,p_2,p_3>1$ with $1/p_1+1/p_2+1/p_3=1$, \eqref{main_3_1} is bounded by
	\begin{align}\label{main_3_2}
		&\int_0^t
		\e\left[
		\left|
		\nabla_x q(s,Y_s^{0,x};t,y)-\nabla_x q(s,Y_s^{0,x};t,y')
		\right|^{p_1}
		\right]^{1/p_1}
		\e\left[
		Z_s(1,Y^{0,x})^{p_2}
		\right]^{1/p_2}
		\e\left[
		|b(s,Y^{0,x})|^{p_3}
		\right]^{1/p_3}
		\rd s \notag\\
		&\leq
		\sup_{0 \leq s \leq t}
		\e\left[
		Z_t(1,Y^{0,x})^{p_2}
		\right]^{1/p_2}
		\e\left[
		|b(s,Y^{0,x})|^{p_3}
		\right]^{1/p_3}
		\notag\\
		&\quad \times
		\int_0^t
		\left(
		\int_{\real^d}
		\left|
		\nabla_x q(s,z;t,y)-\nabla_x q(s,z;t,y')
		\right|^{p_1}
		q(0,x;s,z)
		\rd z
		\right)^{1/p_1}
		\rd s \notag \notag\\
		&\leq
		C
		\sup_{0 \leq s \leq t}
		\e\left[
		Z_t(1,Y^{0,x})^{p_2}
		\right]^{1/p_2}
		\e\left[
		|b(s,Y^{0,x})|^{p_3}
		\right]^{1/p_3}
		\notag\\
		&\quad \times
		\int_0^t
		\left(
		\int_{\real^d}
		\frac
		{
			|y-y'|^{\gamma p_1}
			\left\{
			g_{\widetilde{c}_{+}(t-s)/p_1}(z,y)
			+g_{\widetilde{c}_{+}(t-s)/p_1}(z,y')
			\right\}
		}
		{(t-s)^{p_1(1-(1-\gamma)/2)+(p_1-1)d/2}}
		g_{\widetilde{c}_{+}s}(x,z)
		\rd z
		\right)^{1/p_1}
		\rd s,
	\end{align}
	for some $C>0$.
	Since $1/p_1<1$, $g_{c(t-s)/p_1}(z,y) \leq C_{p_1} g_{c(t-s)}(z,y)$ for some $C_{p_1}>0$ and since $\gamma \in (0,1)$, it holds that
	\begin{align*}
	\rho
	:=
	(1-(1-\gamma)/2)+\frac{(p_1-1)d}{2p_1}
	<1
	\quad \Leftrightarrow \quad
	1<p_1<\frac{d}{d-(1-\gamma)}.
	\end{align*}
	Therefore, by using Chapman--Kolmogorov equation, \eqref{main_3_2} is estimated by
	\begin{align*}
	&
	C'\sup_{0 \leq s \leq t}
	\e\left[
	Z_t(1,Y^{0,x})^{p_2}
	\right]^{1/p_2}
	\e\left[
	|b(s,Y^{0,x})|^{p_3}
	\right]^{1/p_3}
	\frac{C_{p_1}^{1/_{p_1}} t^{1-\rho}}{1-\rho}
	|y-y'|^{\gamma}
	\left\{
	g_{\widetilde{c}_{+}t}(x,y)
	+g_{\widetilde{c}_{+}t}(x,y')
	\right\}^{1/p_1}\\
	&\leq 
	C''
	\sup_{0 \leq s \leq t}
	\e\left[
	Z_t(1,Y^{0,x})^{p_2}
	\right]^{1/p_2}
	\e\left[
	|b(s,Y^{0,x})|^{p_3}
	\right]^{1/p_3}
	\frac{t^{1-\rho+\frac{d}{2} (1-\frac{1}{p_1})}}{1-\rho}
	|y-y'|^{\gamma}
	\left\{
	g_{\widetilde{c}_{+}p_1t}(x,y)+g_{\widetilde{c}_{+}p_1t}(x,y')
	\right\},
	\end{align*}
	for some $C',C''>0$.
	Since $t^{1-\rho+\frac{d}{2} (1-\frac{1}{p_1})}=t^{(1-\gamma)/2}\leq T t^{-\gamma/2}$, we conclude the statement (i).
	Lemma \ref{moment_0} and Lemma \ref{bdd_Girsanov_1} with $r=p_2$ imply the statement (ii).
\end{proof}

\section{One-dimensional SDEs with super--linear growth coefficients}\label{Sec_super}

In this section, inspired by \cite{HuJeKl12,Sa13,Sa16}, we use a ``one-step" tamed Euler--Maruyama approximation, in order to prove the existence of a pdf for a solution of one--dimensional Markovian SDEs
\begin{align}\label{super_SDE_0}
	X_t
	=x
	+\int_{0}^{t}
		b(s,X_s)
	\rd s
	+\int_{0}^{t}
		\sigma(X_s)
	\rd W_s,~(t,x) \in [0,T] \times \real,
\end{align}
under the assumption that the coefficients $b:[0,T]\times \real \to \real$ and $\sigma:\real \to \real$ satisfies the following conditions.

\begin{Ass}\label{Ass_super_0}
	We suppose that the coefficients $b:[0,T] \times \real \to \real$ and $\sigma:\real \to \real$ are measurable and satisfy the following conditions:
	\begin{itemize}
		\item[(i)]
		(Khasminskii and one-sided Lipschitz condition)
		There exist $K>0$, $p_0 >2$ and $p_1 >2$ such that for each $x,y \in \real$ and $s \in [0,T]$,
		\begin{align*}
		2 x b(s,x)
		+(p_0-1)|\sigma(x)|^2
		&\leq
		K(1+|x|^{2}),\\
		2(x-y) (b(s,x)-b(s,y))
		+(p_1-1)|\sigma(x)-\sigma(y)|^2
		&\leq
		K|x-y|^2.
		\end{align*}
		
		\item[(ii)]
		(locally Lipschitz continuous)
		There exist $K>0$ and $\ell \in (0,\frac{p_0-2}{4}]$ such that for each $x,y \in \real$ and $s \in [0,T]$,
		\begin{align*}
		|b(s,x)-b(s,y)|
		&\leq
		K(1+|x|^{\ell}+|y|^{\ell}) |x-y|,\\
		|b(s,x)|
		&\leq
		K(1+|x|^{\ell+1}).
		\end{align*}

	\end{itemize}
\end{Ass}

\begin{Eg}
		Let $b(s,x) \equiv b(x):=\lambda x(\mu-|x|)$ and $\sigma(x):=\xi |x|^{3/2}$ for some $\lambda, \mu, \xi>0$, then the SDE \eqref{super_SDE_0} is called Heston-$3/2$ volatility model (see, e.g. \cite{GoMa13}) and for any $p_0 \leq \frac{2\lambda+|\xi|^2}{|\xi|^2}$ and $p_1 \leq \frac{\lambda+|\xi|^2}{|\xi|^2}$, and $x,y \in \real$, it holds that
		\begin{align*}
		2 x b(x)
		+(p_0-1)|\sigma(x)|^2
		&\leq
		2\lambda \mu(1+|x|^{2}),\\
		2(x-y) (b(x)-b(y))
		+(p_1-1)|\sigma(x)-\sigma(y)|^2
		&\leq
		2\lambda \mu|x-y|^2,\\
		|b(x)-b(y)|
		&\leq
		\lambda (\mu \wedge 1)(1+|x|+|y|) |x-y|,
		\end{align*}
		(see, Appendix in \cite{Sa16}).
\end{Eg}

\begin{Rem}
	\begin{itemize}
		\item[(i)]
		From Assumption \ref{Ass_super_0}, we have there exist $K_0,K_1,K_2>0$ such that for any $x,y \in \real$ and $s \in [0,T]$,
		\begin{align}\label{local_sigma_1}
		(p_0-1)|\sigma(x)|^2
		&\leq
		K(1+|x|^2)
		-2xb(s,x)
		\leq
		K
		+2K|x|
		+K|x|^2
		+2K|x|^{\ell+2} \notag\\
		&\leq
		(p_0-1)K_0(1+|x|^{\ell+2}),
		\end{align}
		and
		\begin{align}
		(p_1-1)|\sigma(x)-\sigma(y)|^2
		&\leq
		K|x-y|^2
		-2(x-y) (b(s,x)-b(s,y)) \notag\\
		&\leq
		K(3+2|x|^{\ell}+2|y|^{\ell}) |x-y|^2 \notag\\
		&\leq
		(p_1-1)K_1(1+|x|^{\ell}+|y|^{\ell}) |x-y|^2 \notag\\
		&\leq
		(p_1-1)K_2(1+|x|^{2\ell}+|y|^{2\ell}) |x-y|^2\label{local_sigma_01}.
		\end{align}
		
		\item[(ii)]
		If the coefficients $b$ and $\sigma$ are local Lipschitz continuous in space,
		then there exists a unique strong solution to the equation \eqref{super_SDE_0}, (e.g., Theorem 5.2.5 in \cite{KS}).
	\end{itemize}
\end{Rem}

Under the above assumptions, we prove the existence of a pdf.

\begin{Thm}\label{super_density_0}
	Suppose that Assumption \ref{Ass_super_0} holds.
	Then for any $t \in (0,T]$, $X_t$ admits a pdf on the set $\{x \in \real ~;~\sigma(x) \neq 0\}$.
\end{Thm}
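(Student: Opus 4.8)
The plan is to adapt the ``one-step'' approximation argument of Fournier and Printems \cite{FoPr10} to a \emph{tamed} one-step Euler scheme, in the spirit of \cite{HuJeKl12,Sa13,Sa16}. Fix $t\in(0,T]$. Since $\p(X_t\in\cdot)$ is inner regular and $\{x\in\real~;~\sigma(x)\neq0\}$ is open ($\sigma$ is continuous, being locally Lipschitz by Assumption \ref{Ass_super_0}(ii) and \eqref{local_sigma_01}), it suffices to fix $\delta>0$ and show that for every nonnegative $\chi\in C_c^{\infty}(\real)$ with $|\sigma|^{2}\geq\delta$ on $\operatorname{supp}\chi$, the finite measure $\nu(\rd y):=\chi(y)\,\p(X_t\in \rd y)$ is absolutely continuous with respect to Lebesgue measure; indeed, choosing such a $\chi$ with $\chi\equiv1$ on a given compact $K\subset\{|\sigma|^{2}\geq 2\delta\}$ then gives $\p(X_t\in K)=\nu(K)=0$ whenever $K$ is Lebesgue--null, and letting $K\uparrow$ and $\delta\downarrow0$ yields the claim. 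To prove $\nu\ll\mathrm{Leb}$ I would show that the Fourier transform $\widehat\nu(\xi)=\e[\chi(X_t)e^{i\xi X_t}]$ lies in $L^{2}(\real)$ and invoke Plancherel's theorem.

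Two ingredients are needed first. (a) \emph{Moment bounds.} From the Khasminskii condition (Assumption \ref{Ass_super_0}(i); this also rules out explosion, so $X$ is well defined on $[0,T]$), It\^o's formula applied to $|X_s|^{p_0}$, and Gronwall's inequality, one gets $\sup_{s\le T}\e[|X_s|^{p_0}]<\infty$; combined with the polynomial bounds $|b(s,x)|\le K(1+|x|^{\ell+1})$, $|\sigma(x)|^{2}\le(p_0-1)K_0(1+|x|^{\ell+2})$ (see \eqref{local_sigma_1}) and the budget $\ell\le(p_0-2)/4$, $p_0>2$, this gives uniform-in-$s$ bounds for $\e[|b(s,X_s)|^{q}]$ and $\e[|\sigma(X_s)|^{q}]$ for some $q>2$, as well as $\e[|X_s-X_r|^{q}]\le C\,|s-r|^{q/2}$ for $0\le r\le s\le T$. (b) \emph{One-step tamed Euler scheme.} For $0<h<t$ set
\begin{align*}
	Z_h:=X_{t-h}+\frac{h\,b(t-h,X_{t-h})}{1+\sqrt h\,|b(t-h,X_{t-h})|}+\sigma(X_{t-h})(W_t-W_{t-h}).
\end{align*}
Conditionally on $\mf_{t-h}$, $Z_h$ is Gaussian with mean $m_h:=X_{t-h}+\tfrac{h\,b(t-h,X_{t-h})}{1+\sqrt h\,|b(t-h,X_{t-h})|}$, obeying the \emph{deterministic} bound $|m_h-X_{t-h}|\le\sqrt h$, and with variance $h\,\sigma(X_{t-h})^{2}$. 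Using the taming identity $|h\beta-\tfrac{h\beta}{1+\sqrt h|\beta|}|\le h^{3/2}|\beta|^{2}$, the Burkholder--Davis--Gundy inequality, the local Lipschitz bound \eqref{local_sigma_01} for $\sigma$, and the moment bounds of (a), one shows $\e[|X_t-Z_h|]\le C\,h$, with $C$ depending on $x,T$ but not on $h$.

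With these in hand I would estimate, for $h\in(0,h_0]$ with $h_0$ small (depending on $\chi,\delta$),
\begin{align*}
	\widehat\nu(\xi)=\e\big[\chi(Z_h)e^{i\xi Z_h}\big]+\e\big[\chi(X_t)e^{i\xi X_t}-\chi(Z_h)e^{i\xi Z_h}\big].
\end{align*}
The second term is controlled, using that $\chi$ is Lipschitz and $|e^{i\xi a}-e^{i\xi b}|\le|\xi|\,|a-b|$, by $C_\chi(1+|\xi|)\,\e[|X_t-Z_h|]\le C_\chi(1+|\xi|)h$. For the first term I condition on $\mf_{t-h}$ and split according to whether $X_{t-h}$ lies in a fixed neighbourhood of $\operatorname{supp}\chi$ or not: by compactness and continuity of $\sigma$ there is $\eta>0$ with $|\sigma|^{2}\ge\delta/2$ on the $\eta$-neighbourhood of $\operatorname{supp}\chi$, and since $|m_h-X_{t-h}|\le\sqrt h$, for $h$ small the contribution with $X_{t-h}$ outside this neighbourhood is nonzero only on $\{|\sigma(X_{t-h})(W_t-W_{t-h})|\ge\eta/4\}$, whose conditional probability is $\le C\,\sigma(X_{t-h})^{2}h/\eta^{2}$ by Markov's inequality, hence $O(h)$ after taking expectations and using the moment bound for $\sigma$. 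On the complementary event $\sigma(X_{t-h})^{2}\ge\delta/2>0$, so the conditional law of $Z_h$ given $\mf_{t-h}$ has a Gaussian density $q_h$ of variance $\ge h\delta/2$, whence $\e[\chi(Z_h)e^{i\xi Z_h}\mid\mf_{t-h}]=\int_\real\chi(u)e^{i\xi u}q_h(u)\,\rd u$; performing $k$ integrations by parts in $u$ (legitimate since $\chi\in C_c^{\infty}$) and using $\int_\real|q_h^{(k)}(u)|\,\rd u\le C_k\,(h\,\sigma(X_{t-h})^{2})^{-k/2}$ bounds this by $C_{\chi,k}\,(|\xi|^{2}h)^{-k/2}$. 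Altogether $|\widehat\nu(\xi)|\le C_\chi(1+|\xi|)h+C_{\chi,k}(|\xi|^{2}h)^{-k/2}$ for all small $h$; optimizing over $h$ (the best choice is $h\sim|\xi|^{-2(k+1)/(k+2)}$, i.e.\ $|\xi|^{-8/5}$ for $k=3$) yields $|\widehat\nu(\xi)|\le C_{\chi,k}|\xi|^{-k/(k+2)}$ for $|\xi|$ large, while $|\widehat\nu(\xi)|\le\|\chi\|_\infty$ always, so for $k=3$ we get $\widehat\nu\in L^{2}(\real)$, as desired.

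The step I expect to be the main obstacle is the local error bound $\e[|X_t-Z_h|]\le Ch$: it must be derived under only polynomial growth and one-sided/Khasminskii hypotheses, with $b$ merely measurable in time, and obtaining the right power of $h$ (rather than $h^{1/2}$) rests both on the taming of the drift increment and on a careful matching of H\"older exponents against the moment budget $\ell\le(p_0-2)/4$, $p_0>2$. A second delicate point is controlling the degenerate event $\{\sigma(X_{t-h})=0\}$ (and, more generally, $\sigma(X_{t-h})^{2}$ small) when testing against $\chi$; this is precisely where the deterministic estimate $|m_h-X_{t-h}|\le\sqrt h$ provided by the taming, together with the compactness of $\operatorname{supp}\chi$, is used. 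Once these are settled, the Fourier/smoothing trade-off and the passage from $\widehat\nu\in L^{2}$ to absolute continuity of $\nu$ are routine.
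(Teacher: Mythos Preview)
Your argument is correct and follows the same Fournier--Printems strategy as the paper, but the implementation differs in several respects. The paper tames \emph{both} coefficients, setting $b_\varepsilon(s,x)=b(s,x)/(1+\sqrt\varepsilon|x|^\ell)$ and $\sigma_\varepsilon(x)=\sigma(x)/(1+\sqrt\varepsilon|x|^{\ell/2})$, and uses the Lipschitz cutoff $f_\delta(|\sigma_\varepsilon(X_{t-\varepsilon})|)$, which acts directly on the conditional variance; the conditional Fourier transform is then the explicit Gaussian factor $\exp(-\varepsilon\sigma_\varepsilon(X_{t-\varepsilon})^2\xi^2/2)$, giving exponential (rather than polynomial) decay on $\{|\sigma_\varepsilon|\ge\delta\}$. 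The key error estimate $\e[|X_t-X_t^{(\varepsilon)}|^2]\le C\varepsilon^2$ (Lemma~\ref{key_super_0}) is obtained via It\^o's formula and the \emph{one-sided Lipschitz} part of Assumption~\ref{Ass_super_0}(i) plus Gronwall, and the final choice is $\varepsilon=(\log|\xi|/|\xi|)^2$. Your route tames only the drift, replaces the $f_\delta$--device by a smooth $\chi\in C_c^\infty$ supported in $\{|\sigma|^2\ge\delta\}$, obtains polynomial Fourier decay via integration by parts, and proves $\e[|X_t-Z_h|]\le Ch$ by a direct H\"older/BDG estimate using only the derived local Lipschitz bound \eqref{local_sigma_01} for~$\sigma$ together with the moment budget $\ell\le(p_0-2)/4$ (which is exactly what makes the H\"older exponents match). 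What you gain is that you avoid taming $\sigma$ and the associated extra error terms $\e[|\sigma(X_{t-\varepsilon})-\sigma_\varepsilon(X_{t-\varepsilon})|]$, $\e[|\sigma(X_t)-\sigma(X_{t-\varepsilon})|]$; what the paper gains is a cleaner localisation (no separate ``outside'' argument) and exponential smoothing that makes the optimisation less delicate. One minor point: the $O(h)$ order of the drift contribution in $X_t-Z_h$ is actually automatic (both $\int_{t-h}^{t}b(s,X_s)\,\rd s$ and the tamed frozen drift are $O(h)$ in $L^1$); the real role of the taming in your scheme is the deterministic bound $|m_h-X_{t-h}|\le\sqrt h$ used in the localisation, not the error rate.
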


We derive a several lemmas for proving the above theorem.
The following lemma shows that the Assumption \ref{Ass_super_0} (ii) implies that the solution of SDE \eqref{super_SDE_0} has a moment and satisfies a Kolmogorov type condition.

\begin{Lem}\label{super_moment_0}
	Suppose that Assumption \ref{Ass_super_0} holds.
	Then, for any $p \in [2,p_0]$, there exists $C_p>0$ such that
	\begin{align}\label{super_moment_01}
	\sup_{t \in [0,T]}
	\e[|X_t|^{p}]
	\leq C_{p}.
	\end{align}
	Moreover, there exists $C>0$ such that for any $t,s \in [0,T]$ with $s<t$, we have
	\begin{align}\label{super_moment_02}
	\e[|X_t-X_s|^2]
	\leq C (t-s).
	\end{align}
\end{Lem}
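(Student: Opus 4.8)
The plan is to apply It\^o's formula to $|X_t|^p$ together with a localization, using the Khasminskii condition of Assumption \ref{Ass_super_0} (i) to control the resulting drift, and then to estimate the increment $X_t-X_s$ directly from the growth bounds on $b$ and $\sigma$ combined with the moment bound of part (i).

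\emph{Part (i).} Fix $p\in[2,p_0]$ and note that $x\mapsto|x|^p$ is $C^2$ on $\real$ with second derivative $p(p-1)|x|^{p-2}$. Set $\tau_n:=T\wedge\inf\{t\ge 0:|X_t|\ge n\}$; by Assumption \ref{Ass_super_0} (ii) the equation has a (local) solution, and the estimate below will also show $\tau_n\uparrow T$ almost surely. It\^o's formula on $[0,t\wedge\tau_n]$ gives
\[
|X_{t\wedge\tau_n}|^p=|x|^p+\int_0^{t\wedge\tau_n}p|X_s|^{p-2}\Bigl(X_sb(s,X_s)+\tfrac{p-1}{2}|\sigma(X_s)|^2\Bigr)\rd s+\int_0^{t\wedge\tau_n}p|X_s|^{p-2}X_s\sigma(X_s)\rd W_s.
\]
Since $p\le p_0$ and $|\sigma|^2\ge 0$, the Khasminskii condition yields $2X_sb(s,X_s)+(p-1)|\sigma(X_s)|^2\le 2X_sb(s,X_s)+(p_0-1)|\sigma(X_s)|^2\le K(1+|X_s|^2)$, so the drift integrand is at most $\tfrac{pK}{2}|X_s|^{p-2}(1+|X_s|^2)\le\tfrac{pK}{2}(1+2|X_s|^p)$. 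The stochastic integral is a true martingale up to $\tau_n$ (bounded integrand), so taking expectations,
\[
\e[|X_{t\wedge\tau_n}|^p]\le|x|^p+\tfrac{pK}{2}\int_0^t\bigl(1+2\,\e[|X_{s\wedge\tau_n}|^p]\bigr)\rd s,
\]
and Gronwall's inequality bounds the left-hand side uniformly in $n$ and in $t\in[0,T]$. In particular $\p(\tau_n<T)\to 0$, so $X$ does not explode on $[0,T]$, and Fatou's lemma as $n\to\infty$ gives \eqref{super_moment_01}.

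\emph{Part (ii).} Writing $X_t-X_s=\int_s^t b(r,X_r)\rd r+\int_s^t\sigma(X_r)\rd W_r$, the inequality $(a+b)^2\le 2a^2+2b^2$, the Cauchy--Schwarz inequality and the It\^o isometry give
\[
\e[|X_t-X_s|^2]\le 2(t-s)\int_s^t\e[|b(r,X_r)|^2]\rd r+2\int_s^t\e[|\sigma(X_r)|^2]\rd r.
\]
By Assumption \ref{Ass_super_0} (ii), $|b(r,X_r)|^2\le 2K^2(1+|X_r|^{2(\ell+1)})$, and by \eqref{local_sigma_1}, $|\sigma(X_r)|^2\le K_0(1+|X_r|^{\ell+2})$. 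The key point is that $\ell\le\tfrac{p_0-2}{4}$ forces both exponents into $[2,p_0]$: indeed $2(\ell+1)\le\tfrac{p_0+2}{2}\le p_0$ and $\ell+2\le\tfrac{p_0+6}{4}\le p_0$ because $p_0>2$, while both are clearly $\ge 2$. Hence part (i) bounds $\e[|X_r|^{2(\ell+1)}]$ and $\e[|X_r|^{\ell+2}]$ by constants, so the displayed estimate becomes $\e[|X_t-X_s|^2]\le C(t-s)^2+C(t-s)\le C'(t-s)$ since $t-s\le T$, which is \eqref{super_moment_02}.

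\emph{Main obstacle.} There is no substantial obstacle: the only points requiring care are the localization (needed simultaneously to justify the vanishing of the martingale term and, via the uniform Gronwall estimate, to rule out explosion on $[0,T]$) and the elementary arithmetic verifying that the hypothesis $\ell\le(p_0-2)/4$ is exactly what keeps the exponents $2(\ell+1)$ and $\ell+2$ in the range $[2,p_0]$ where part (i) is available.
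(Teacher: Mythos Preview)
Your proof is correct and follows essentially the same route as the paper: It\^o's formula with localization and the Khasminskii condition for part (i), then the growth bounds on $b$ and $\sigma$ together with the moment estimate and It\^o isometry for part (ii). The only cosmetic difference is that the paper argues just for $p=p_0$ (smaller $p$ following by H\"older), whereas you treat all $p\in[2,p_0]$ directly; your explicit verification that $2(\ell+1)$ and $\ell+2$ lie in $[2,p_0]$ is a nice addition.
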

\begin{proof}
	It suffices to prove the statement for $p=p_0$.
	Let $t \in [0,T]$ and $\tau_N:=\inf\{t \geq 0~;~|X_t| \geq N\}$ for $N \in \n$.
	By using It\^o's formula for $|x|^{p_0}$ and Assumption \ref{Ass_super_0} (i), we have
	\begin{align*}
	|X_{t \wedge \tau_N}|^{p_0}
	&=
	|x|^{p_0}
	+\int_{0}^{t \wedge \tau_N}
	\left\{
		p_0X_s^{p_0-1} b(s,X_s)
		+\frac{p_0(p_0-1)}{2} X_s^{p_0-2} \sigma(X_s)^2
	\right\}
	\rd s
	+\int_{0}^{t \wedge \tau_N}
	p_0X_s^{p_0-1} \sigma(X_s)
	\rd W_s\\
	&\leq
	|x|^{p_0}
	+
	\frac{p_0}{2}
	\int_{0}^{t \wedge \tau_N}
	X_s^{p_0-2}
	\{
	2X_sb(s,X_s)
	+(p_0-1) \sigma(X_s)^2
	\}
	\rd s
	+\int_{0}^{t \wedge \tau_N}
	p_0X_s^{p_0-1} \sigma(X_s)
	\rd W_s\\
	&\leq
	|x|^{p_0}
	+
	\frac{p_0K}{2}
	\int_{0}^{t \wedge \tau_N}
	X_s^{p_0-2}
	(1+X_s^2)
	\rd s
	+\int_{0}^{t \wedge \tau_N}
	p_0X_s^{p_0-1} \sigma(X_s)
	\rd W_s.
	\end{align*}
	By taking expectation, there exists $C>0$ such that we have
	\begin{align*}
	\e\left[
	|X_{t \wedge \tau_N}|^{p_0}
	\right]
	\leq
	C
	+C
	\int_{0}^{t}
	\e\left[
	|X_{s \wedge \tau_N}|^{p_0}
	\right]
	\rd s,
	\end{align*}
	and thus Gronwall's inequality implies
	\begin{align*}
	\e\left[
	|X_{t \wedge \tau_N}|^{p_0}
	\right]
	\leq
	Ce^{CT}.
	\end{align*}
	By letting $N \to \infty$, we concludes \eqref{super_moment_01}.
	
	From Assumption \ref{Ass_super_0} (ii), \eqref{local_sigma_1} and \eqref{super_moment_01}, we have
	\begin{align*}
	\int_{s}^{t} \e[|b(u,X_u)|^2] \rd u
	+\int_{s}^{t} \e[|\sigma(X_u)|^2] \rd u
	\leq C(t-s),
	\end{align*}
	for some $C>0$.
	Therefore, by using Jensen's inequality and It\^o's isometry, we conclude \eqref{super_moment_02}.
\end{proof}

Now inspired by \cite{FoPr10} and \cite{HuJeKl12,Sa13,Sa16}, we consider a one-step tamed Euler--Maruyama scheme.
For a fixed $t \in (0,T]$ and $\varepsilon \in (0,1)$, we define a one-step tamed Euler--Maruyama scheme $X^{(\varepsilon)}=(X_u^{(\varepsilon)})_{u \in [0,t]}$ by
\begin{align*}
X_u^{(\varepsilon)}
:=&
\left\{ \begin{array}{ll}
\displaystyle 
X_u,
&\text{ if } u \in [0,t-\varepsilon],  \\
\displaystyle
X_{t-\varepsilon}
+\int_{t-\varepsilon}^{u}
b_{\varepsilon}(s,X_{t-\varepsilon})
\rd s
+\int_{t-\varepsilon}^{u}
\sigma_{\varepsilon}(X_{t-\varepsilon})
\rd W_s,
&\text{ if } u \in (t-\varepsilon,t]
\end{array}\right.\\
=&
x
+\int_{0}^{u}
\overline{b}^{(\varepsilon)}_s
\rd s
+\int_{0}^{u}
\overline{\sigma}^{(\varepsilon)}_s
\rd W_s,
\end{align*}
where
\begin{align*}
b_{\varepsilon}(s,x)
:=\frac{b(s,x)}{1+\varepsilon^{1/2} |x|^{\ell}},
\quad
\sigma_{\varepsilon}(x)
:=\frac{\sigma(x)}{1+\varepsilon^{1/2} |x|^{\ell/2}},
\end{align*}
and
\begin{align*}
\overline{b}^{(\varepsilon)}_s
:=
\left\{ \begin{array}{ll}
\displaystyle
b(s,X_s)
&\text{ if } s \in [0,t-\varepsilon],  \\
\displaystyle
b_{\varepsilon}(s,X_{t-\varepsilon})
&\text{ if } s \in (t-\varepsilon,t],
\end{array}\right.
\quad
\overline{\sigma}^{(\varepsilon)}_s
:=
\left\{ \begin{array}{ll}
\displaystyle
\sigma(X_s)
&\text{ if } s \in [0,t-\varepsilon],  \\
\displaystyle
\sigma_{\varepsilon}(X_{t-\varepsilon})
&\text{ if } s \in (t-\varepsilon,t].
\end{array}\right.
\end{align*}
Then it holds that for any $x \in \real$, $s \in [0,T]$ and $\varepsilon \in (0,1)$,
\begin{align}
|b_{\varepsilon}(s,x)|
&\leq
\{K\varepsilon^{-1/2}(1+|x|)\} \wedge |b(s,x)|,
\label{tamed_coef_0}\\
|\sigma_{\varepsilon}(x)|^2
&\leq
\{K_0\varepsilon^{-1}(1+|x|^2)\} \wedge |\sigma(x)|^2,
\label{tamed_coef_01}
\end{align}
and
\begin{align}
	|b(s,x)-b_{\varepsilon}(s,x)|
	&=\frac{|b(s,x)| |x|^{\ell} \varepsilon^{1/2}}{1+\varepsilon^{1/2} |x|^{\ell}}
	\leq
	K(1+|x|^{\ell+1})|x|^{\ell}\varepsilon^{1/2},
	\label{tamed_coef_1}\\
	|\sigma(x)-\sigma_{\varepsilon}(x)|^2
	&=\frac{|\sigma(x)|^2 |x|^{\ell} \varepsilon}{(1+\varepsilon^{1/2} |x|^{\ell/2})^2}
	\leq 
	K_0(1+|x|^{\ell+2})|x|^{\ell} \varepsilon.
	\label{tamed_coef_2}
\end{align}
Indeed, it holds from Assumption 4.1 (ii) and the inequality $(1+|x|)(1+|x|^{\ell}) \geq 1+|x|^{\ell+1}$ that
\begin{align*}
	|b_{\varepsilon}(s,x)|
	&\leq
	\frac
		{K\varepsilon^{-1/2}(1+|x|^{\ell+1})}
		{1+ |x|^{\ell}}
	\leq
	K\varepsilon^{-1/2}
	(1+|x|),
\end{align*}
which implies \eqref{tamed_coef_0}.
By the similar way, , it holds from \eqref{local_sigma_1} and the inequality $(1+|x|^{l/2})^2 (1+|x|^2) \ge 1+|x|^{l+2}$ that
\begin{align*}
	|\sigma_{\varepsilon}(s,x)|^2
	\leq
	\frac{K_{0}\varepsilon^{-1} (1+|x|^{l+2})}{(1+|x|^{l/2})^2}
	\leq K_0 \varepsilon^{-1} (1+|x|^2),
\end{align*}
which implies \eqref{tamed_coef_01}.
\eqref{tamed_coef_1} and \eqref{tamed_coef_2} are followed from Assumption 4.1 (ii) and \eqref{local_sigma_1}, respectively.

\begin{Rem}
	As mentioned in introduction, Romito \cite{Ro18} studied the existence and Besov regularity of the probability density function of a solution of SDEs with locally bounded drift, and locally H\"older continuous, elliptic diffusion coefficient.
	Therefore, the statement of Theorem \ref{super_density_0} is　included in their results (see, Theorem 4.1 in \cite{Ro18}).
	In \cite{Ro18}, Romito use a localization argument and one-step Euler--Maruyama scheme.
	On the other hand, in our paper, we use directly one-step tamed Euler scheme, so the approach of proof is different.
\end{Rem}

By using Lemma \ref{super_moment_0}, $X^{(\varepsilon)}$ has a moment and some error estimate.

\begin{Lem}\label{super_moment_1}
	Suppose that Assumption \ref{Ass_super_0} holds.
	Then, for any $p \in [2,p_0]$, there exists $C_p>0$ such that
	\begin{align}\label{super_moment_2}
	\sup_{\varepsilon \in (0,1)} \sup_{u \in [0,t]}
	\e[|X_u^{(\varepsilon)}|^{p}]
	\leq C_{p}.
	\end{align}
	Moreover, for any $p \in [1,\frac{2p_0}{\ell+2}]$, there exists $C_p>0$ such that
	\begin{align}\label{super_moment_3}
	\sup_{u \in [t-\varepsilon,t]}
	\e\left[
	|X_u^{(\varepsilon)}-X_{t-\varepsilon}^{(\varepsilon)}|^p
	\right]
	\leq C_p \varepsilon^{p/2}.
	\end{align}
\end{Lem}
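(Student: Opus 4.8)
The plan is to handle the two bounds separately, in each case cutting the interval $[0,t]$ at the point $t-\varepsilon$ and using the fact that on $(t-\varepsilon,t]$ the scheme is given by the \emph{explicit} formula $X_u^{(\varepsilon)}=X_{t-\varepsilon}+\int_{t-\varepsilon}^u b_\varepsilon(s,X_{t-\varepsilon})\,\rd s+\sigma_\varepsilon(X_{t-\varepsilon})(W_u-W_{t-\varepsilon})$, so that no It\^o formula for power functions is needed: every moment reduces to a moment of $X_{t-\varepsilon}$ (controlled by Lemma \ref{super_moment_0}) together with the independence of the increment $W_u-W_{t-\varepsilon}$ from $\mathcal{F}_{t-\varepsilon}$. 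Throughout, I would use Jensen's inequality to pass from $p\in[1,2)$ to $p=2$ whenever a moment of order below $2$ appears, so that Lemma \ref{super_moment_0} is applicable.

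For \eqref{super_moment_2}: on $[0,t-\varepsilon]$ one has $X_u^{(\varepsilon)}=X_u$, so the bound is exactly \eqref{super_moment_01}. For $u\in(t-\varepsilon,t]$ I would bound $\e[|X_u^{(\varepsilon)}|^p]$ by a constant times $\e[|X_{t-\varepsilon}|^p]+\e[\,|\int_{t-\varepsilon}^u b_\varepsilon(s,X_{t-\varepsilon})\,\rd s|^p\,]+\e[|\sigma_\varepsilon(X_{t-\varepsilon})|^p]\,\e[|W_u-W_{t-\varepsilon}|^p]$. The tamed estimates \eqref{tamed_coef_0} and \eqref{tamed_coef_01} give $|\int_{t-\varepsilon}^u b_\varepsilon(s,X_{t-\varepsilon})\,\rd s|\le K\varepsilon^{1/2}(1+|X_{t-\varepsilon}|)$ and $\e[|\sigma_\varepsilon(X_{t-\varepsilon})|^p]\le C_p\varepsilon^{-p/2}(1+\e[|X_{t-\varepsilon}|^p])$, while $\e[|W_u-W_{t-\varepsilon}|^p]\le C_p\varepsilon^{p/2}$; the powers of $\varepsilon$ cancel, leaving $\e[|X_u^{(\varepsilon)}|^p]\le C_p(1+\e[|X_{t-\varepsilon}|^p])$, which by Lemma \ref{super_moment_0} is bounded uniformly in $\varepsilon\in(0,1)$ and $u\in[0,t]$ as long as $p\le p_0$. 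This gives \eqref{super_moment_2}.

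For \eqref{super_moment_3}: since $X_{t-\varepsilon}^{(\varepsilon)}=X_{t-\varepsilon}$, write $X_u^{(\varepsilon)}-X_{t-\varepsilon}^{(\varepsilon)}=\int_{t-\varepsilon}^u b_\varepsilon(s,X_{t-\varepsilon})\,\rd s+\sigma_\varepsilon(X_{t-\varepsilon})(W_u-W_{t-\varepsilon})$ and estimate the two summands. For the drift part I would use again the tamed bound $|b_\varepsilon(s,x)|\le K\varepsilon^{-1/2}(1+|x|)$ from \eqref{tamed_coef_0}, yielding a contribution $\le C_p\varepsilon^{p/2}(1+\e[|X_{t-\varepsilon}|^p])$, which is finite for $p\le p_0$ (hence in particular for $p\le\tfrac{2p_0}{\ell+2}$). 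For the diffusion part, independence gives $\e[|\sigma_\varepsilon(X_{t-\varepsilon})|^p|W_u-W_{t-\varepsilon}|^p]=\e[|\sigma_\varepsilon(X_{t-\varepsilon})|^p]\,\e[|W_u-W_{t-\varepsilon}|^p]$; here the crucial point is to use the \emph{untamed} polynomial growth bound $|\sigma_\varepsilon(x)|^2\le|\sigma(x)|^2\le(p_0-1)K_0(1+|x|^{\ell+2})$ coming from \eqref{tamed_coef_01} and \eqref{local_sigma_1}, so that $\e[|\sigma_\varepsilon(X_{t-\varepsilon})|^p]\le C_p(1+\e[|X_{t-\varepsilon}|^{(\ell+2)p/2}])$, and multiplying by $\e[|W_u-W_{t-\varepsilon}|^p]\le C_p\varepsilon^{p/2}$ produces the desired $C_p\varepsilon^{p/2}$ — provided the moment $\e[|X_{t-\varepsilon}|^{(\ell+2)p/2}]$ is controlled by Lemma \ref{super_moment_0}, i.e. provided $(\ell+2)p/2\le p_0$, which is exactly the restriction $p\le\tfrac{2p_0}{\ell+2}$ in the statement. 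The main obstacle is precisely this bookkeeping of bounds: on the pieces where a factor $\varepsilon$ (from the Lebesgue integral) or $\varepsilon^{p/2}$ (from the Brownian increment) is available to absorb it, one should use the tamed estimates with their negative powers of $\varepsilon$, but for the diffusion term in \eqref{super_moment_3} one must instead revert to the untamed growth bound on $\sigma$, since it is the resulting moment requirement $(\ell+2)p/2\le p_0$ that pins down the admissible range $p\in[1,\tfrac{2p_0}{\ell+2}]$; getting this trade-off wrong (e.g. using the untamed bound on $b$, which forces $(\ell+1)p\le p_0$) would give a strictly worse range.
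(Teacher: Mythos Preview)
Your proof is correct and matches the paper's own argument essentially line by line: the same split at $t-\varepsilon$, the same use of independence of $W_u-W_{t-\varepsilon}$ from $\mathcal{F}_{t-\varepsilon}$, the tamed estimates \eqref{tamed_coef_0}--\eqref{tamed_coef_01} for the moment bound \eqref{super_moment_2}, and the switch to the untamed growth $|\sigma_\varepsilon(x)|^2\le|\sigma(x)|^2\le K_0(1+|x|^{\ell+2})$ for the diffusion part of \eqref{super_moment_3}, leading to the moment requirement $(\ell+2)p/2\le p_0$. Your closing remark about the trade-off (using the untamed bound on $b$ would force $(\ell+1)p\le p_0$, a worse range) is exactly the observation the paper records in the Remark following the proof.
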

\begin{proof}
	If $u \in [0,t-\varepsilon]$, then $X_u^{(\varepsilon)}=X_u$ thus, we have the \eqref{super_moment_2} from Lemma \ref{super_moment_0}.
	
	Now we assume that $u \in [t-\varepsilon,t]$, then by using \eqref{tamed_coef_0} and \eqref{tamed_coef_01} with $|\sigma_{\varepsilon}(x)| \leq |\sigma(x)|$, since $\sigma(X_{t-\varepsilon})$ and $W_u-W_{t-\varepsilon}$ are independent,
	\begin{align*}
	\e\left[
	|X_u^{(\varepsilon)}|^{p}
	\right]
	&\leq
	3^{p-1}
	\e\left[
	|X_{t-\varepsilon}|^{p}
	+
	\varepsilon^{p-1}
	\int_{t-\varepsilon}^{t}
	|b_{\varepsilon}(s,X_{t-\varepsilon})|^{p}
	\rd s
	+
	|\sigma_{\varepsilon}(X_{t-\varepsilon})|^{p}
	|W_u-W_{t-\varepsilon}|^{p}
	\right]\\
	&\leq
	3^{p-1}
	\e\left[
	|X_{t-\varepsilon}|^{p}
	\right]
	+
	3^{p-1}K^{p}
	\varepsilon^{p/2}
	\e[(1+|X_{t-\varepsilon}|)^{p}]\\
	&\quad
	+
	3^{p-1}K_{0}^{p/2}
	\varepsilon^{-p/2}
	\e\left[
	(1+|X_{t-\varepsilon}|^2)^{p/2}
	\right]
	\e\left[
	|W_u-W_{t-\varepsilon}|^{p}
	\right].
	\end{align*}
	Since $\e\left[|W_u-W_{t-\varepsilon}|^{p}\right] \leq C \varepsilon^{p/2}$ for some $C>0$, thus we obtain \eqref{super_moment_2}.
	
	Now we prove \eqref{super_moment_3}.
	By using \eqref{local_sigma_1}, \eqref{tamed_coef_0} and \eqref{tamed_coef_01} since $\sigma(X_{t-\varepsilon})$ and $W_u-W_{t-\varepsilon}$ are independent, we have
	\begin{align}\label{Rem_drift_super}
	\e\left[
	|X_u^{(\varepsilon)}-X_{t-\varepsilon}^{(\varepsilon)}|^{p}
	\right]
	&\leq
	2^{p-1}
	\varepsilon^{p-1}
	\int_{t-\varepsilon}^{t}
	\e[|b_{\varepsilon}(s,X_{t-\varepsilon})|^{p}]
	\rd s
	+
	2^{p-1}
	\e\left[
	|\sigma(X_{t-\varepsilon})|^{p}
	\right]
	\e\left[
	|W_u-W_{t-\varepsilon}|^{p}
	\right]\\
	&\leq
	2^{p-1}K^{p}
	\varepsilon^{p/2}
	\e[(1+|X_{t-\varepsilon}|)^{p}]
	+
	2^{3p/2-2}K_0^{p/2}
	\e\left[
	1+|X_{t-\varepsilon}|^{\frac{p(\ell+2)}{2}}
	\right]
	\e\left[
	|W_u-W_{t-\varepsilon}|^{p}
	\right] \notag.
	\end{align}
	The assumption $p \in [1,\frac{2p_0}{\ell+2}]$ implies $\frac{p(\ell+2)}{2} \leq p_0$, thus from Lemma \ref{super_moment_0}, we conclude \eqref{super_moment_3}.
\end{proof}
\begin{Rem}
	Note that in the inequality \eqref{Rem_drift_super}, for the drift coefficient $b_{\varepsilon}$, if we use the estimate $|b_{\varepsilon}(x)| \leq |b(x)|$, we need a stronger assumption $p \in [1,\frac{p_0}{\ell+1}]$.
	
\end{Rem}

The following lemma plays a crucial role in our argument.
\begin{Lem}\label{key_super_0}
	Suppose that Assumption \ref{Ass_super_0} holds.
	Then there exists $C>0$ such that
	\begin{align*}
	\e[|X_t-X_t^{(\varepsilon)}|^2]
	\leq
	C \varepsilon^2.
	\end{align*}
\end{Lem}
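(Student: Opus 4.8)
The plan is to use that $X^{(\varepsilon)}$ coincides with $X$ on $[0,t-\varepsilon]$, so that
\begin{align*}
	X_t-X_t^{(\varepsilon)}
	=\int_{t-\varepsilon}^{t}
		\left\{ b(s,X_s)-b_{\varepsilon}(s,X_{t-\varepsilon}) \right\}
	\rd s
	+\int_{t-\varepsilon}^{t}
		\left\{ \sigma(X_s)-\sigma_{\varepsilon}(X_{t-\varepsilon}) \right\}
	\rd W_s,
\end{align*}
and then, inserting and subtracting $b(s,X_{t-\varepsilon})$ and $\sigma(X_{t-\varepsilon})$, to split each integrand into a \emph{freezing} part, $b(s,X_s)-b(s,X_{t-\varepsilon})$ resp.\ $\sigma(X_s)-\sigma(X_{t-\varepsilon})$, and a \emph{taming} part, $b(s,X_{t-\varepsilon})-b_{\varepsilon}(s,X_{t-\varepsilon})$ resp.\ $\sigma(X_{t-\varepsilon})-\sigma_{\varepsilon}(X_{t-\varepsilon})$. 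Since $\sigma(X_{t-\varepsilon})$ and $\sigma_{\varepsilon}(X_{t-\varepsilon})$ are $\mathcal{F}_{t-\varepsilon}$--measurable and constant on $(t-\varepsilon,t]$, It\^o's isometry applies to the stochastic integrals and Jensen's inequality to the Lebesgue integrals; and since every random variable occurring is a value of the true solution $X$, only the a priori moment bound \eqref{super_moment_01} of Lemma \ref{super_moment_0} (up to and including order $p_0$) is needed, not moments of $X^{(\varepsilon)}$.

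For the taming parts I would use the explicit pointwise estimates \eqref{tamed_coef_1} and \eqref{tamed_coef_2}. After Jensen's inequality, the drift taming term is at most $K^{2}\varepsilon^{3}\,\sup_{s}\e[(1+|X_{t-\varepsilon}|^{\ell+1})^{2}|X_{t-\varepsilon}|^{2\ell}]$, which is $O(\varepsilon^{3})$ because $\e[|X_{t-\varepsilon}|^{4\ell+2}]<\infty$ by $\ell\le(p_0-2)/4$; after It\^o's isometry, the diffusion taming term equals $\varepsilon\,\e[|\sigma(X_{t-\varepsilon})-\sigma_{\varepsilon}(X_{t-\varepsilon})|^{2}]\le K_0\varepsilon^{2}\,\e[(1+|X_{t-\varepsilon}|^{\ell+2})|X_{t-\varepsilon}|^{\ell}]=O(\varepsilon^{2})$. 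For the freezing parts I would use the local Lipschitz bounds of Assumption \ref{Ass_super_0}~(ii) and \eqref{local_sigma_01}, both of the form $|f(x)-f(y)|^{2}\le C(1+|x|^{2\ell}+|y|^{2\ell})|x-y|^{2}$, and apply H\"older's inequality with exponents $p>1$ and $q=p/(p-1)$ to separate a polynomial factor $\e[(1+|X_s|^{2\ell}+|X_{t-\varepsilon}|^{2\ell})^{q}]^{1/q}$, finite by \eqref{super_moment_01} once $2\ell q\le p_0$, from the increment factor $\e[|X_s-X_{t-\varepsilon}|^{2p}]^{1/p}$. The latter I would bound by $C\varepsilon$ via the Burkholder--Davis--Gundy and Jensen inequalities together with the growth bounds $|b(s,x)|\le K(1+|x|^{\ell+1})$ (Assumption \ref{Ass_super_0}~(ii)) and $|\sigma(x)|^{2}\le K_0(1+|x|^{\ell+2})$ (from \eqref{local_sigma_1}), which requires $2p(\ell+1)\le p_0$ and $p(\ell+2)\le p_0$. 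Integrating over $(t-\varepsilon,t]$ then gives $O(\varepsilon^{3})$ for the drift freezing term and $O(\varepsilon^{2})$ for the diffusion freezing term, and adding the four contributions yields $\e[|X_t-X_t^{(\varepsilon)}|^{2}]\le C(\varepsilon^{3}+\varepsilon^{2})\le C\varepsilon^{2}$.

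The main obstacle is the simultaneous choice of the H\"older pair $(p,q)$ in the freezing estimates: $p$ must be large enough that $q=p/(p-1)$ still satisfies $2\ell q\le p_0$, i.e.\ $p\ge p_0/(p_0-2\ell)$, yet small enough that $\e[|X_s-X_{t-\varepsilon}|^{2p}]\le C\varepsilon^{p}$ survives, i.e.\ $p\le p_0/(2(\ell+1))$ and $p\le p_0/(\ell+2)$. The hypothesis $\ell\le(p_0-2)/4$ in Assumption \ref{Ass_super_0}~(ii) is exactly what makes $p_0/(p_0-2\ell)\le p_0/(2(\ell+1))$, and hence the admissible interval for $p$ nonempty (it degenerates to a single point precisely when $\ell=(p_0-2)/4$); with such a $p$ fixed, the remaining steps are routine applications of It\^o's isometry, the Burkholder--Davis--Gundy inequality, Jensen's inequality and the moment estimates already recorded in Lemmas \ref{super_moment_0} and \ref{super_moment_1}.
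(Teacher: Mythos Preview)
Your argument is correct, and it takes a genuinely different route from the paper's. The paper applies It\^o's formula to $|X_u-X_u^{(\varepsilon)}|^2$, exploits the one--sided Lipschitz condition from Assumption~\ref{Ass_super_0}~(i) to control the cross term $2(X_s-X_s^{(\varepsilon)})(b(s,X_s)-b(s,X_s^{(\varepsilon)}))+(1+\rho)|\sigma(X_s)-\sigma(X_s^{(\varepsilon)})|^2$, and then closes by Gronwall, reducing matters to the remainders $J_t^{(\varepsilon)}$ (involving the increment $X_s^{(\varepsilon)}-X_{t-\varepsilon}^{(\varepsilon)}$ of the \emph{tamed} process, estimated via Lemma~\ref{super_moment_1}) and $K_t^{(\varepsilon)}$ (the taming error). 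You instead freeze at the true solution value $X_{t-\varepsilon}$ and never invoke It\^o's formula, Gronwall, the one--sided Lipschitz condition, or the increment estimate~\eqref{super_moment_3}; only the two--sided local Lipschitz bounds of Assumption~\ref{Ass_super_0}~(ii), the consequence~\eqref{local_sigma_01}, and moments of $X$ up to order $p_0$ are used. The price is that your increment bound $\e[|X_s-X_{t-\varepsilon}|^{2p}]\le C\varepsilon^{p}$ requires the slightly stronger constraint $2p(\ell+1)\le p_0$ on the drift side (compare the Remark after Lemma~\ref{super_moment_1}), whereas the paper, working with tamed drift $|b_\varepsilon|\le K\varepsilon^{-1/2}(1+|x|)$, avoids it; but as you observe, the hypothesis $\ell\le(p_0-2)/4$ is precisely what makes your admissible range $[p_0/(p_0-2\ell),\,p_0/(2(\ell+1))]$ nonempty, so no generality is lost. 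In short, your proof is more elementary and uses strictly fewer structural hypotheses (the second inequality in Assumption~\ref{Ass_super_0}~(i) is not needed here), while the paper's approach is the standard template for super--linear SDEs and makes the role of the taming more transparent.
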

\begin{proof}
	By It\^o's formula, we have for each $u \in [0,t]$
	\begin{align*}
	&|X_u-X_u^{(\varepsilon)}|^2\\
	&=
	\int_{0}^{u}
	\left\{
	2(X_s-X_s^{(\varepsilon)})
	(b(s,X_s)-\overline{b}^{(\varepsilon)}_s)
	+
	|\sigma(X_s)-\overline{\sigma}^{(\varepsilon)}_s|^2
	\right\}
	\rd s
	+\int_{0}^{u}
	(X_s-X_s^{(\varepsilon)})
	(\sigma(X_s)-\overline{\sigma}^{(\varepsilon)}_s)
	\rd W_s\\
	&=:
	I_u^{(\varepsilon)}
	+M_u^{(\varepsilon)}.
	\end{align*}
	Let $\tau_{N,\varepsilon}:=\inf\{s \geq 0~;~|X_s| \geq N\} \wedge \inf\{s \geq 0~;~|X_s^{(\varepsilon)}| \geq N\}$ for $N \in \n$.
	If $u \in [0,t-\varepsilon]$, then $I_u^{(\varepsilon)}=0$.
	For $u \in [t-\varepsilon,t]$, by using an elementally inequality $|a_1+a_2+a_3|^2 \leq (1+\rho) a_1^2+2(1+\rho^{-1})(a_2^2+a_3^2)$, for all $\rho>0, a_1,a_2,a_3  \geq 0$, we have
	\begin{align*}
	I_u^{(\varepsilon)}
	&=
	\int_{t-\varepsilon}^{u}
	\left\{
	2(X_s-X_s^{(\varepsilon)})
	(b(s,X_s)-b_{\varepsilon}(s,X_{t-\varepsilon}))
	+
	|\sigma(X_s)-\sigma_{\varepsilon}(X_{t-\varepsilon})|^2
	\right\}
	\rd s\\
	&\leq
	\int_{t-\varepsilon}^{u}
	\left\{
	2(X_s-X_s^{(\varepsilon)})
	(b(s,X_s)-b(s,X_s^{(\varepsilon)}))
	+
	(1+\rho)
	|\sigma(X_s)-\sigma(X_s^{(\varepsilon)})|^2
	\right\}
	\rd s\\
	&
	\quad+
	\int_{t-\varepsilon}^{u}
	\left\{
	2(X_s-X_s^{(\varepsilon)})
	(b(s,X_s^{(\varepsilon)})-b(s,X_{t-\varepsilon}^{(\varepsilon)})
	+
	2(1+\rho^{-1})
	|\sigma(X_s^{(\varepsilon)})-\sigma(X_{t-\varepsilon}^{(\varepsilon)})|^2
	\right\}
	\rd s\\
	&
	\quad+
	\int_{t-\varepsilon}^{u}
	\left\{
	2(X_s-X_s^{(\varepsilon)})
	(b(s,X_{t-\varepsilon}^{(\varepsilon)})-b_{\varepsilon}(s,X_{t-\varepsilon}^{(\varepsilon)}))
	+
	2(1+\rho^{-1})
	|\sigma(X_{t-\varepsilon}^{(\varepsilon)})-\sigma_{\varepsilon}(X_{t-\varepsilon}^{(\varepsilon)})|^2
	\right\}
	\rd s.
	\end{align*}
	We define $\rho:=(p_1-2)/2>0$, then since $p_1>2$, $1+\rho < p_1-1$.
	Therefore, from Assumption \ref{Ass_super_0} (i) and Young's inequality, there exists $C>0$ such that
	\begin{align*}
	I_u^{(\varepsilon)}
	&\leq
	(K+2)\int_{0}^{u}
	|X_s-X_s^{(\varepsilon)}|^2
	\rd s
	+CJ_u^{(\varepsilon)}
	+CK_u^{(\varepsilon)},
	\end{align*}
	where for $u \in [t-\varepsilon,t]$,
	\begin{align*}
	J_u^{(\varepsilon)}
	&:=
	\int_{t-\varepsilon}^{u}
	\left\{
	|
	b(s,X_s^{(\varepsilon)})
	-b(s,X_{t-\varepsilon}^{(\varepsilon)})
	|^2
	+
	|
	\sigma(X_s^{(\varepsilon)})
	-\sigma(X_{t-\varepsilon}^{(\varepsilon)})
	|^2
	\right\}
	\rd s\\
	K_u^{(\varepsilon)}
	&:=
	\int_{t-\varepsilon}^{u}
	\left\{
	|
	b(s,X_{t-\varepsilon}^{(\varepsilon)})
	-b_{\varepsilon}(s,X_{t-\varepsilon}^{(\varepsilon)})
	|^2
	+
	|
	\sigma(X_{t-\varepsilon}^{(\varepsilon)})
	-\sigma_{\varepsilon}(X_{t-\varepsilon}^{(\varepsilon)})
	|^2
	\right\}
	\rd s.
	\end{align*}
	
	Therefore, we have for any $u \in [0,t]$
	\begin{align*}
	\e[
	|
	X_{u\wedge \tau_{N,\varepsilon}}
	-X_{u\wedge \tau_{N,\varepsilon}}^{(\varepsilon)}
	|^2
	]
	\leq (K+2)\int_{0}^{u}
	\e[
	|
	X_{s\wedge \tau_{N,\varepsilon}}
	-X_{s\wedge \tau_{N,\varepsilon}}^{(\varepsilon)}
	|^2
	]
	\rd s
	+C\e[J_t^{(\varepsilon)}+K_t^{(\varepsilon)}].
	\end{align*}
	Hence, by Gronwall's inequality, and then by taking $N \to \infty$, we have
	\begin{align*}
	\e[|X_t-X_t^{(\varepsilon)}|^2]
	\leq
	e^{(K+2)T}
	\e[J_t^{(\varepsilon)}+K_t^{(\varepsilon)}].
	\end{align*}
	By Assumption \ref{Ass_super_0} (ii), \eqref{local_sigma_01} and H\"older's inequality, we have for any $p,q>1$ with $1/p+1/q=1$,
	\begin{align*}
	\e[J_t^{(\varepsilon)}]
	&\leq
	(3K^2+K_2)
	\int_{t-\varepsilon}^{t}
	\e\left[
	(
	1
	+|X_{s}^{(\varepsilon)}|^{2\ell}
	+|X_{t-\varepsilon}^{(\varepsilon)}|^{2\ell}
	)
	|
	X_{s}^{(\varepsilon)}
	-X_{t-\varepsilon}^{(\varepsilon)}
	|^2
	\right]
	\rd s\\
	&\leq
	(3K^2+K_2)
	\int_{t-\varepsilon}^{t}
	\e\left[
	(
	1
	+|X_{s}^{(\varepsilon)}|^{2\ell}
	+|X_{t-\varepsilon}^{(\varepsilon)}|^{2\ell}
	)^{p}
	\right]^{1/p}
	\e\left[
	|
	X_{s}^{(\varepsilon)}
	-X_{t-\varepsilon}^{(\varepsilon)}
	|^{2q}
	\right]^{1/q}
	\rd s.
	\end{align*}
	Since $\ell \leq \frac{p_0-2}{4}$, by choosing $p=\frac{4\ell +2 }{3\ell}$, then $q=\frac{p}{p-1}=\frac{4\ell+2}{\ell+2}$, $2\ell p \leq \frac{2p_0}{3}<p_0$ and $2q \leq \frac{2p_0}{\ell+2}$.
	Therefore, from Lemma \ref{super_moment_1}, there exists $C>0$ such that
	\begin{align*}
	\e[J_t^{(\varepsilon)}]
	\leq C\varepsilon^2.
	\end{align*}
	By Assumption \ref{Ass_super_0} (ii), \eqref{tamed_coef_1} and \eqref{tamed_coef_2}, we have
	\begin{align*}
	\e[K_t^{(\varepsilon)}]
	&\leq
	\varepsilon
	\int_{t-\varepsilon}^{t}
	\e\left[
	K^2
	(
	1+|X_{t-\varepsilon}^{(\varepsilon)}|^{\ell+1}
	)^2
	|X_{t-\varepsilon}^{(\varepsilon)}|^{2\ell}
	+K_0(1+|X_{t-\varepsilon}^{(\varepsilon)}|^{\ell+2})|X_{t-\varepsilon}^{(\varepsilon)}|^{\ell}
	\right]
	\rd s\\
	&=\varepsilon^2
	\e\left[
	K^2
	(
	1+|X_{t-\varepsilon}^{(\varepsilon)}|^{\ell+1}
	)^2
	|X_{t-\varepsilon}^{(\varepsilon)}|^{2\ell}
	+K_0(1+|X_{t-\varepsilon}^{(\varepsilon)}|^{\ell+2})|X_{t-\varepsilon}^{(\varepsilon)}|^{\ell}
	\right]
	\end{align*}
	Since $\ell \leq \frac{p_0-2}{4}$, $2\ell+2<4\ell+2 \leq p_0$.
	Therefore, from Lemma \ref{super_moment_1}, there exists $C>0$ such that
	\begin{align*}
	\e[K_t^{(\varepsilon)}]
	\leq C\varepsilon^2.
	\end{align*}
	This concludes the statement.
\end{proof}

\begin{proof}[Proof of Theorem \ref{super_density_0}]
	The proof is based on Theorem 2.1 in \cite{FoPr10}.
	Let $\delta>0$.
	We define a function $f_{\delta}:[0,\infty) \to [0,1]$ such that
	\begin{align*}
	f_{\delta}=0, \text{ on } [0,\delta], \text{ and }
	\sup_{x \neq y} \frac{|f_{\delta}(x)-f_{\delta}(y)|}{|x-y|} \leq 1.
	\end{align*}
	We denote
	\begin{align*}
	\mu_{X_t}(\rd x)
	&:=\p(X_t \in \rd x),~
	\mu_{\delta,X_t}(\rd x)
	:=f_{\delta} (|\sigma(x)|) \mu_{X_t}(\rd x),~
	\widehat{\mu}_{\delta,X_t}(\xi)
	:=\int_{\real} e^{\sqrt{-1}\xi x} \mu_{\delta,X_t}(\rd x),
	\end{align*}
	for $t \in (0,T]$ and $\xi \in \real$.
	Then it follows from Lemma 1.1 and Lemma 1.2 in \cite{FoPr10} that it is suffices to prove that $\widehat{\mu}_{\delta,X_t}$ satisfies
	\begin{align*}
	\int_{\real} |\widehat{\mu}_{\delta,X_t}(\xi)|^2 \rd \xi
	<\infty.
	\end{align*}
	
	For any $\xi \in \real$, it holds that
	\begin{align*}
	\left|
	\widehat{\mu}_{\delta,X_t}(\xi)
	\right|
	&=
	\left|
	\int_{\real}
	e^{\sqrt{-1} \xi x} f_{\delta}(|\sigma(x)|)
	\mu_{X_t}(\rd x)
	\right|
	=
	\left|
	\e\left[
	e^{\sqrt{-1} \xi X_t} f_{\delta}(|\sigma(X_t)|)
	\right]
	\right| \notag\\
	&\leq
	\left|
	\e\left[
	e^{\sqrt{-1} \xi X_t} f_{\delta}(|\sigma_{\varepsilon}(X_{t-\varepsilon})|)
	\right]
	\right|
	+
	\e\left[
	\left|
	f_{\delta}(|\sigma(X_{t})|)-f_{\delta}(|\sigma_{\varepsilon}(X_{t-\varepsilon})|)
	\right|
	\right].
	\end{align*}
	Since $X_t=X_t^{(\varepsilon)}+(X_t-X_t^{(\varepsilon)})$, by using the inequality $|e^{ \sqrt{-1}\xi x}-e^{\sqrt{-1} \xi y}| \leq |\xi| |x-y|$, $\|f_{\delta}\|_{\infty}\leq 1$ and Lipschitz continuity of $f_{\delta}$, we have
	\begin{align}\label{super_pr_1}
	\left|
	\widehat{\mu}_{\delta,X_t}(\xi)
	\right|
	&\leq
	\left|
	\e\left[
	e^{\sqrt{-1} \xi X_{t}^{(\varepsilon)}} f_{\delta}(|\sigma_{\varepsilon}(X_{t-\varepsilon})|)
	\right]
	\right|
	+|\xi|
	\e\left[
	\left|
	X_{t}
	-X^{(\varepsilon)}_{t}
	\right|
	\right] \notag\\
	&\quad
	+
	\e\left[
	\left|
	\sigma(X_{t})
	-\sigma(X_{t-\varepsilon})
	\right|
	\right]
	+
	\e\left[
	\left|
	\sigma(X_{t-\varepsilon})
	-\sigma_{\varepsilon}(X_{t-\varepsilon})
	\right|
	\right].
	\end{align}
	Note that
	\begin{align*}
	&\e[e^{\sqrt{-1} \xi X_t^{(\varepsilon)}} | \mathcal{F}_{t-\varepsilon}]\\
	&=
	\exp
	\left(
	\sqrt{-1} \xi
	\left\{
	X_{t-\varepsilon}
	+\int_{t-\varepsilon}^{t}
	b_{\varepsilon}(s,X_{t-\varepsilon})
	\rd s
	\right\}
	\right)
	\e
	\left[
	\exp
	\left(
	\sqrt{-1}
	\xi
	\sigma_{\varepsilon}(X_{t-\varepsilon})
	(W_t-W_{t-\varepsilon})
	\right)
	~|~
	\mathcal{F}_{t-\varepsilon}
	\right] \\
	&=
	\exp
	\left(
	\sqrt{-1} \xi
	\left\{
	X_{t-\varepsilon}
	+\int_{t-\varepsilon}^{t}
	b_{\varepsilon}(s,X_{t-\varepsilon})
	\rd s
	\right\}
	\right)
	\exp
	\left(
	-\frac{\varepsilon \sigma_{\varepsilon}(X_{t-\varepsilon})^2 \xi^2}{2}
	\right),
	\end{align*}
	thus, we obtain
	\begin{align*}
	\left|
	\e[e^{\sqrt{-1} \xi X_t^{(\varepsilon)}} | \mathcal{F}_{t-\varepsilon}]
	\right|
	&=
	\exp
	\left(
	-\frac{\varepsilon \sigma_{\varepsilon}(X_{t-\varepsilon})^2 \xi^2}{2}
	\right).
	\end{align*}
	Therefore, it holds that
	\begin{align}\label{super_pr_2}
		&
		\left|
			\e\left[
				e^{\sqrt{-1} \xi X_{t}^{(\varepsilon)}} f_{\delta}(|\sigma_{\varepsilon}(X_{t-\varepsilon})|)
			\right]
		\right|
		\leq
			\e\left[
				\left|
					\e\left[
						e^{\sqrt{-1} \xi X_{t}^{(\varepsilon)}}
						f_{\delta}(|\sigma_{\varepsilon}(X_{t-\varepsilon})|)
						~\middle|~\mathcal{F}_{t-\varepsilon}
					\right]
				\right|
			\right] \notag\\
		&=
		\e
			\left[
				f_{\delta}(|\sigma_{\varepsilon}(X_{t-\varepsilon})|)
				\left|
					\e
					\left[
						e^{\sqrt{-1} \xi X_{t}^{(\varepsilon)}}
						~\middle|~
						\mathcal{F}_{t-\varepsilon}
					\right]
				\right|
			\right]
		=
		\e
			\left[
				f_{\delta}(|\sigma_{\varepsilon}(X_{t-\varepsilon})|)
				\exp
					\left(
						-\frac{\varepsilon \sigma_{\varepsilon}(X_{t-\varepsilon})^2 \xi^2}{2}
					\right)
			\right] \notag\\
		&
		=
		\e
			\left[
				f_{\delta}(|\sigma_{\varepsilon}(X_{t-\varepsilon})|)
				\exp
					\left(
						-\frac{\varepsilon \sigma_{\varepsilon}(X_{t-\varepsilon})^2 \xi^2}{2}
					\right)
				\1_{\{|\sigma_{\varepsilon}(X_{t-\varepsilon})|\geq \delta\} }
			\right]
		\leq
		\exp
			\left(
				-\frac{\varepsilon \delta^2 \xi^2}{2}
			\right).
	\end{align}
	From Lemma \ref{key_super_0}, there exists $\overline{C}_1>0$ such that
	\begin{align}\label{super_pr_3}
	|\xi| \e[|X_{t}-X^{(\varepsilon)}_{t}|]
	\leq |\xi| \e[|X_{t}-X^{(\varepsilon)}_{t}|^2]^{1/2}
	\leq \overline{C}_1 |\xi| \varepsilon.
	\end{align}
	By using Schwarz's inequality, \eqref{local_sigma_01} and Lemma \ref{super_density_0}, there exists $\overline{C}_2>0$ such that
	\begin{align}\label{super_pr_4}
	\e\left[
	\left|
	\sigma(X_{t})
	-\sigma(X_{t-\varepsilon})
	\right|
	\right]
	&\leq
	K_0
	\e[(1+|X_{t}|^{2\ell}+|X_{t-\varepsilon}|^{2\ell})]^{1/2}
	\e[|X_{t}-X_{t-\varepsilon}|^2]^{1/2}
	\leq
	\overline{C}_2 \varepsilon^{1/2}.
	\end{align}
	By using Jensen's inequality, \eqref{tamed_coef_2} and Lemma \ref{super_moment_0}, there exists $\overline{C}_3>0$ such that
	\begin{align}\label{super_pr_5}
	\e\left[
	\left|
	\sigma(X_{t-\varepsilon})
	-\sigma_{\varepsilon}(X_{t-\varepsilon})
	\right|
	\right]
	&
	\leq
	K_{0}\varepsilon^{1/2}
	\e\left[
	(1+|X_{t-\varepsilon}|^{\ell+2})|X_{t-\varepsilon}|^{\ell}
	\right]^{1/2}
	\leq
	\overline{C}_3 \varepsilon^{1/2}.
	\end{align}
	
	From \eqref{super_pr_1}, \eqref{super_pr_2}, \eqref{super_pr_3}, \eqref{super_pr_4} and \eqref{super_pr_5} we obtain
	\begin{align*}
	\left|
	\widehat{\mu}_{\delta,X_t}(\xi)
	\right|
	\leq
	\exp
	\left(
	-\frac{\varepsilon \delta^2 \xi^2}{2}
	\right)
	+ \overline{C}_1|\xi| \varepsilon
	+(\overline{C}_2+\overline{C}_3) \varepsilon^{1/2}.
	\end{align*}
	For $|\xi| \geq 2$, we choose $\varepsilon=(\log |\xi|/|\xi|)^2 \in (0,1)$, then
	\begin{align*}
	\left|
	\widehat{\mu}_{\delta,X_t}(\xi)
	\right|
	\leq
	\exp\left(-\frac{\delta^2 (\log |\xi|)^2 }{2}\right)
	+\overline{C}_1 \frac{(\log |\xi|)^{2}}{|\xi|}
	+ (\overline{C}_2+\overline{C}_3) \frac{\log |\xi|}{|\xi|},
	\end{align*}
	and thus
	\begin{align*}
	\int_{\real} |\widehat{\mu}_{\delta,X_t}(\xi)|^2 \rd \xi
	&\leq
	4
	+3
	\int_{|\xi| \geq 2}
	\left\{
	\exp\left(-\delta^2 (\log |\xi|)^2\right)
	+
	\frac
	{\overline{C}_1^2(\log |\xi|)^{4}}
	{|\xi|^2}
	+
	\frac
	{(\overline{C}_2+\overline{C}_3)^2(\log |\xi|)^{2}}
	{|\xi|^2}
	\right\}
	\rd \xi
	<\infty,
	\end{align*}
	which concludes the proof.
\end{proof}

\section{Appendix}\label{Sec_app}
\subsection{On some Beta type integral}

\begin{Lem}\label{pa_App_1}
	Let $b>-1$ and $a \in [0,1)$.
	Then for any $t_0>0$,
	\begin{align*}
	\int_0^{t_0} \mathrm{d}t_1 \cdots \int_0^{t_{m-1}} \mathrm{d}t_m t_m^{b} \prod_{j=0}^{m-1}(t_j-t_{j+1})^{-a}
	= \frac{t_0^{b+m(1-a)} \Gamma^m(1-a) \Gamma(1+b)}{\Gamma(1+b+m(1-a))}.
	\end{align*}
\end{Lem}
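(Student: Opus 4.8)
The plan is to prove the identity by induction on $m$, using the standard Beta integral as the base case and reducing the $m$-fold integral to an $(m-1)$-fold one via a change of variables. First I would recall the Beta function identity
\begin{align*}
\int_0^{s} u^{b} (s-u)^{-a} \, \mathrm{d}u
= s^{b+1-a} \int_0^1 v^{b}(1-v)^{-a} \, \mathrm{d}v
= s^{b+1-a} B(b+1, 1-a)
= \frac{s^{b+1-a}\Gamma(1+b)\Gamma(1-a)}{\Gamma(2+b-a)},
\end{align*}
which is valid precisely because $b>-1$ and $a<1$ make both Gamma arguments positive; the substitution $u = sv$ gives the middle equality. This handles $m=1$ directly.

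For the inductive step, suppose the formula holds for $m-1$ (with the same $a$ but arbitrary exponent $>-1$ in place of $b$). In the $m$-fold integral I would perform the innermost integration over $t_m$ first: holding $t_{m-1}$ fixed,
\begin{align*}
\int_0^{t_{m-1}} t_m^{b}(t_{m-1}-t_m)^{-a}\,\mathrm{d}t_m
= \frac{t_{m-1}^{b+1-a}\Gamma(1+b)\Gamma(1-a)}{\Gamma(2+b-a)}
\end{align*}
by the base-case computation. Substituting this back, the remaining integral over $t_1,\dots,t_{m-1}$ is exactly of the same shape as the original with $m$ replaced by $m-1$ and $b$ replaced by $b' := b+1-a$, which is still $>-1$ since $a<1$. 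Applying the induction hypothesis to this reduced integral yields
\begin{align*}
\frac{\Gamma(1+b)\Gamma(1-a)}{\Gamma(2+b-a)}
\cdot
\frac{t_0^{b'+(m-1)(1-a)}\Gamma^{m-1}(1-a)\Gamma(1+b')}{\Gamma(1+b'+(m-1)(1-a))}.
\end{align*}

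Finally I would simplify: since $b' = b+1-a$ we have $1+b' = 2+b-a$, so $\Gamma(1+b')/\Gamma(2+b-a) = 1$, collapsing the two Gamma factors; the powers of $t_0$ combine as $b'+(m-1)(1-a) = b + m(1-a)$; and the $\Gamma(1-a)$ factors multiply to $\Gamma^m(1-a)$. The denominator becomes $\Gamma(1+b'+(m-1)(1-a)) = \Gamma(1+b+m(1-a))$, giving exactly the claimed expression. I do not anticipate a genuine obstacle here — the only point requiring a little care is bookkeeping the exponent shift $b \mapsto b+1-a$ through the induction and checking it stays in the admissible range $(-1,\infty)$, together with the telescoping cancellation $\Gamma(1+b') = \Gamma(2+b-a)$ that makes the constants collapse cleanly. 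One should also note the integrals converge (so Fubini/Tonelli applies and the iterated integration order is legitimate) because the integrand is nonnegative and each one-dimensional slice is integrable by the Beta estimate.
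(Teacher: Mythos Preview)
Your proof is correct and takes essentially the same approach as the paper's: the paper computes the single Beta integral $\int_0^t s^b(t-s)^{-a}\,\mathrm{d}s = t^{b+1-a}B(1+b,1-a)$ via the substitution $s=ut$ and then simply says ``using this repeatedly, we obtain the statement.'' Your induction on $m$ with the exponent shift $b\mapsto b+1-a$ is exactly a careful spelling-out of that repetition, including the telescoping $\Gamma(1+b')=\Gamma(2+b-a)$ that the paper leaves implicit.
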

\begin{proof}
	Let $b>-1$ and $a \in [0,1)$.
	Using the change of variables $s=ut$, we have
	\begin{align*}
	\int_0^ts^b(t-s)^{-a}\mathrm{d}s
	=t^{b+1-a}\int_0^1 u^b(1-u)^{-a} \mathrm{d}u
	=t^{b+1-a}B(1+b,1-a),
	\end{align*}
	where $B(x,y)=\int_0^1 t^{x-1}(1-t)^{y-1}$ is the standard Beta function.
	Using this repeatedly, we obtain the statement.
\end{proof}

\section*{Acknowledgements}
The authors would like to thank an anonymous referee for his/her careful readings and comments.
The first author was supported by JSPS KAKENHI Grant Number 17H06833 and 19K14552.
The second author was supported by Sumitomo Mitsui Banking Corporation.


\begin{thebibliography}{99}
	
	
	\bibitem{AnKo}
	{Andersson, P.} and {Kohatsu-Higa, A.}
	{\it Unbiased simulation of stochastic differential equations using parametrix expansions.}
	Bernoulli
	{\bf 23}(3),
	2028--2057,
	(2017).
	
	\bibitem{Ar67}
	{Aronson, D.~G.}
	{\it Bounds for the fundamental solution of a parabolic equation.}
	Bull. Amer. Math. Soc.
	{\bf 73},
	890--896,
	(1967).
	
	\bibitem{BaCa}
	{Bally, V.} and {Caramellino, L.}
	{\it Convergence and regularity of probability laws by using an interpolation method.}
	Ann. Probab.
	{\bf 45}(2),
	1110--1159,
	(2017).
	
	\bibitem{BaKo15}
	{Bally, V.} and {Kohatsu-Higa, A.}
	{\it A probabilistic interpretation of the parametrix method.}
	Ann. Appl. Probab.
	{\bf 25}(6),
	3095--3138,
	(2015).
	
	\bibitem{BT}
	{Bally, V.} and {Talay, D.}
	{\it The law of the Euler scheme for stochastic differential equations: I. Convergence rate of the distribution function.}
	Probab. Theory Relat. Fields.
	104,
	43--60,
	(1996).
	
	\bibitem{BaKr16}
	{Ba\~{n}os, D.} and {Kr\"uhner, P.}
	{\it Optimal density bounds for marginals of It\^o processes.}
	Commun. Stoch. Anal.
	{\bf 10}(2),
	131--150,
	(2016).
	
	\bibitem{BaKr17}
	{Ba\~{n}os, D.} and {Kr\"uhner, P.}
	{\it H\"older continuous densities of solutions of SDEs with measurable and path dependent drift coefficients.}
	Stochastic Process. Appl. 
	{\bf 127},
	1785–-1799,
	(2017).
	
	\bibitem{BaSh18}
	{Bao, J.} and {Shao, J.}
	{\it Weak convergence of path--dependent SDEs with irregular coefficients.}
	arXiv:1809.03088.
	
	\bibitem{Bass}
	{Bass, R.~F.}
	{\it Diffusions and elliptic operators.}
	Springer Science and Business Media
	(1998).
	
	
	
	
	
	
	\bibitem{DM11}
	{De~Marco, S.}
	{ \it Smoothness and asymptotic estimates of densities for {SDE}s with locally smooth coefficients and applications to square root-type diffusions.}
	Ann. Appl. Probab.,
	{\bf 21}(4),
	1282--1321,
	(2011).
	
	\bibitem{DeKr02}
	{Deck, T.} and {Kruse, S.}
	{\it Parabolic differential equations with unbounded coefficients -- a generalization of the parametrix method.}
	Acta Appl. Math.
	{\bf 74},
	71--91,
	(2002).
	
	\bibitem{DoOuWa17}
	{Doumbia, M.}, {Oudjane, N.} and {Warin, X.}
	{\it Unbiased Monte Carlo estimate of stochastic differential equations expectations.} ESAIM Probab. Stat.
	{\bf 21},
	56--87,
	(2017).
	
	
	\bibitem{FaKe81}
	{Fabes, E.~B.} and {Kenig, C.}
	{\it Examples of singular parabolic measures and singular transition probability densities.}
	Duke Math. J.
	{\bf 48}(4),
	845--855,
	(1981).
	
	\bibitem{FoPr10}
	{Fournier, N.} and {Printems, J.}
	{\it Absolute continuity of some one-dimensional processes.}
	Bernoulli
	{\bf 16},
	343-–360,
	(2010).
	
	\bibitem{Fr64}
	{Friedman, A.}
	{\it Partial Differential Equations of Parabolic Type.}
	Dover Publications Inc.,
	(1964).
	
	\bibitem{Fr75}
	{Friedman, A.}
	{\it Stochastic differential equations and applications. Courier Corporation.}
	Academic Press,
	New York,
	(1975).
	
	\bibitem{FrLi}
	{Frikha, N.} and  {Li, L.}
	{\it Weak uniqueness and density estimates for sdes with coefficients depending on some path-functionals.}
	arXiv:1707.01295.
	
	
	\bibitem{GoMa13}
	{Goard, J. and Mazur, M.}
	{\it Stochastic volatility models and the pricing of VIX options.}
	Math. Finance 
	{\bf 23}(3),
	439--458,
	(2013).
	
	\bibitem{G08}
	{Gobet, E.} and {Labart, C.}
	{\it Sharp estimates for the convergence of the density of the Euler scheme in small time.}
	Elect. Comm. in Probab.,
	{\bf 13},
	352--363,
	(2008).
	
	\bibitem{Gu}
	{Guyon, J.}
	{\it Euler schemes and tempered distributions.}
	Stochastic. Process. Appl. 
	{\bf 116},
	887--904,
	(2006).
	
	\bibitem{HaKoYu13}
	{Hayashi, M.}, {Kohatsu-Higa, A.} and {Y\^uki, G.}
	{\it Local H\"older continuity property of the densities of solutions of SDEs with singular coefficients.}
	J. Theor. Probab.
	{\bf 26},
	1117--1134,
	(2013).
	
	\bibitem{HaKoYu14}
	{Hayashi, M.}, {Kohatsu-Higa, A.} and {Y\^uki, G.}
	{\it H\"older continuity property of the densities of SDEs with singular drift coefficients.}
	Electorn. J. Probab.
	{\bf 19}(77),
	1--22,
	(2014).
	
	
	\bibitem{HeTaTo17}
	{Henry-Labordere, P.}, {Tan, X.} and {Touzi, N.}
	{\it Unbiased simulation of stochastic differential equations.}
	Ann. Appl. Probab.,
	{\bf 27}(6),
	3305--3341,
	(2017)
	
	\bibitem{HuJeKl11}
	{Hutzenthalerm M.}, {Jentzen, A.} and {Kloeden, P.~E.}
	{\it Strong and weak divergence in finite time of Euler's method for stochastic differential equations with non-globally Lipschitz continuous coefficients.}
	Proc. R. Soc. A
	{\bf 467},
	1563--1576,
	(2011).
	
	\bibitem{HuJeKl12}
	{Hutzenthalerm M.}, {Jentzen, A.} and {Kloeden, P.~E.}
	{\it Strong convergence of an explicit numerical method for SDEs with non-globally Lipschitz continuous coefficients.}
	Ann. Appl. Probab.,
	{\bf 22}(4),
	1611--1641,
	(2012).
	
	\bibitem{IdIm}
	{Ida, Y.} and {Imamura, Y.}
	{\it Towards the exact simulation using hyperbolic Brownian motion.}
	Jpn. J. Ind. Appl. Math.
	{\bf 34}(3),
	833--843,
	(2017).
	
	\bibitem{IkWa}
	{Ikeda, N.} and {Watanabe, S.}
	{\it Stochastic differential equations and diffusion processes, second ed.}
	volume~24 of North-Holland Mathematical Library,
	North-Holland Publishing Co., Amsterdam-New York; Kodansha, Ltd.,
	Tokyo,
	(1981).
	
	\bibitem{KS}
	{Karatzas, I.}  and {Shreve, S.~E.}
	{\it Brownian motion and stochastic calculus. Second edition.}
	Springer (1991).
	
	\bibitem{KiSo}
	{Kim, P.} and {Song, S.}
	{\it Two-sided estimates on the density of Brownian motion with singular drift.}
	Illinois J. Math.
	{\bf 50}(3),
	635--688,
	(2006).
	
	\bibitem{KP}
	{Kloeden, P.} and  {Platen, E.}
	{\it Numerical Solution of Stochastic Differential Equations.}
	Springer
	(1995).
	
	
	
	\bibitem{KoTaZh}
	{Kohatsu-Higa, A.}, {Taguchi, T.} and {Zhong, J.}
	{\it The Parametrix Method for Skew Diffusions.}
	Potential Anal.
	{\bf 45},
	299–-329,
	(2016).
	
	\bibitem{KoTa12}
	{Kohatsu-Higa, A.} and {Tanaka, A.}
	{\it A Malliavin calculus method to study densities of additive functionals of SDE's with irregular drifts.}
	Ann. Inst. Henri Poincar\'e
	{\bf 48}(3),
	871--883,
	(2012).
	
	\bibitem{KLY}
	{Kohatsu-Higa, A.}, {Lejay, A.} and {Yasuda, K.}
	{\it On Weak Approximation of Stochastic Differential Equations with Discontinuous Drift Coefficient.}
	J. Comput. Appl. Math.,
	{\bf 326},
	138--158,
	(2017).
	
	\bibitem{KoYu}
	{Kohatsu-Higa, A.} and {Y\^uki, G.}
	{\it Stochastic formulations of the parametrix method.}
	to appear in ESAIM Probab. Stat.
	
	\bibitem{KoMa}
	{Konakov, V.} and {Mammen, E.}
	{\it Edgeworth type expansions for Euler schemes for stochastic differential equations.}
	Monte Carlo Methods Appl.
	{\bf 8}(3),
	271--285,
	(2002).
	
	\bibitem{KoKoMe}
	{Konakov, V.}, {Kozhina, A.} and {Menozzi, S.}
	{\it Stability of densities for perturbed Diffusions and Markov Chains.}
	ESAIM Probab. Stat.
	{\bf 21}, 
	88--112,
	(2017).
	
	\bibitem{KuSt85}
	{Kusuoka, S.} and {Stroock, D.~W.}
	{\it Applications of the Malliavin calculus, I.}
	Taniguchi Symposium on Stochastic Analysis, Katata 1982 (K. It\^o editor)
	271--306,
	North--Holland, Amsterdam
	(1983).
	
	\bibitem{Ku17}
	{Kusuoka, S.}
	{\it Continuity and Gaussian two--sided bounds of the density functions of the solutions to path--dependent stochastic differential equations via perturbation.}
	Stochastic Process. Appl.,
	{\bf 127},
	359--384,
	(2017).
	
	
	\bibitem{LeMe}
	{Lemaire, V.} and {Menozzi, S.}
	{\it On some Non-Asymptotic Bounds for the Euler Scheme.}
	Electron J. Probab.
	{\bf 15},
	1645--1681,
	(2010).
	
	\bibitem{LeSz17b}
	{Leobacher, G.} and  {Sz\"olgyenyi, M.}
	{\it Convergence of the Euler-Maruyama method for multidimensional SDEs with discontinuous drift and degenerate diffusion coefficient.}
	Numer. Math.
	{\bf 138}(1),
	219--239,
	(2018).
	
	\bibitem{Ma03}
	{Mackevi\v{c}ius, V.}
	{\it On the convergence rate of Euler scheme for SDE with Lipschitz drift and constant diffusion.}
	Acta Appl. Math.
	{\bf 78},
	301--310,
	(2003).
	
	\bibitem{Ma15}
	{Makhlouf, A.}
	{\it Existence and Gaussian bounds for the density of Brownian motion with random drift.}
	Commun. Stoch. Anal.
	{\bf 10}(2),
	151--162,
	(2016).
	
	\bibitem{Mar54}
	{Maruyama, G.}
	{\it On the transition probability functions of the Markov process.}
	Nat. Sci. Rep. Ochanomizu Univ.
	{\bf 5},
	10--20,
	(1954).
	
	\bibitem{Ma55}
	{Maruyama, G.}
	{\it Continuous Markov processes and stochastic equations.}
	Rend. Circ. Matem. Palermo
	{\bf 10},
	48--90,
	(1955).
	
	\bibitem{McSi67}
	{McKean, H.~P.} and {Singer, I.~M.}
	{\it Curvature and the eigenvalues of the Laplacian.}
	J. Differential Geom.,
	{\bf 1},
	43--69,
	(1967).
	
	\bibitem{MeTa}
	{Menoukeu Pamen, O.} and {Taguchi, D.}
	{\it Strong rate of convergence for the Euler-Maruyama approximation of SDEs with H\"older continuous drift coefficient.}
	hastic Process. Appl. 
	{\bf 127},
	2542--2559,
	(2017).
	
	\bibitem{MP}
	{Mikulevicius, R.} and {Platen, E.}
	{\it Rate of convergence of the Euler approximation for diffusion processes.}
	Math. Nachr.,
	{\bf 151},
	233--239,
	(1991).
	
	
	
	\bibitem{Na58}
	{Nash, J.}
	{\it Continuity of solutions of parabolic and elliptec equations.}
	Amer. J. Math.
	{\bf 80},
	931--954,
	(1958).
	
	\bibitem{NT1}
	{Ngo, H-L.} and {Taguchi, D.}
	{\it Strong rate of convergence for the Euler--Maruyama approximation of stochastic differential equations with irregular coefficients.}
	Math. Comp.
	{\bf 85}(300),
	1793--1819,
	(2016).
	
	\bibitem{NT2}
	{Ngo, H-L.} and {Taguchi, D.}
	{\it Approximation for non-smooth functionals of stochastic differential equations  with irregular drift.}
	J. Math. Anal. Appl.
	{\bf 457}(1),
	361--388,
	(2018).
	
	\bibitem{Nu06}
	{Nualart, D.}
	{\it The Mallivain Calculus and Related Topics.}
	Springer,
	(2006).
	
	\bibitem{OlTu}
	{Olivera, C.} and {Tudor, C.~A.}
	{\it Density for solutions to stochastic differential equations with unbounded drift.}
	arXiv:1805.0671.
	
	\bibitem{Po}
	{Portenko, N.~I.}
	{\it Generalized diffusion processes.}
	Naukova Dumka, Kiev, 1982,
	English transl.: American Mathematical Society, Providence, R.I.,
	(1990).
	
	\bibitem{QiZh02}
	{Qian, Z.} and {Zheng, W.}
	{\it Sharp bounds for transition probability densities of a class of diffusions.}
	C. R. Acad. Sci. Paris, Ser
	{\bf 335}(11),
	953--957,
	(2002).
	
	\bibitem{QiZh03}
	{Qian, Z.} and {Zheng, W.}
	{\it Comparison theorem and estimates for transition probability densities of diffusion processes.}
	Probab. Theory Relat. Fields.
	{\bf 127},
	388--406,
	(2003).
	
	\bibitem{QiZh04}
	{Qian, Z.} and {Zheng, W.}
	{\it A representation formula for transition probability densities of diffusions and applications.}
	Stoch. Anal. Appl.
	{\bf 111},
	57-76,
	(2004).
	
	\bibitem{Ro18}
	{Romito, M.}
	{\it A simple method for the existence of a density for stochastic evolutions with rough coefficients.}
	Electorn. J. Probab.
	{\bf 23}(113),
	1--43,
	(2018).
	
	
	\bibitem{Sa13}
	{Sabanis, S.}
	{\it A note on tamed Euler approximations.}
	Electron. Commun. Probab.
	{\bf 18}(47),
	1--10,
	(2013).
	
	\bibitem{Sa16}
	{Sabanis, S.}
	{\it Euler approximations with varying coefficients: the case of superlinearly growing diffusion coefficients.}
	Ann. Appl. Probab.,
	{\bf 25}(4),
	2083--2105,
	(2016).
	
	\bibitem{Sh91}
	{Sheu, S.~J.}
	{\it Some estimates of the transition density of a nondegenerate diffusion Markov process.}
	Ann. Probab.,
	{\bf 19}(2),
	538--561,
	(1991).
	
	\bibitem{Shi04}
	{Shigekawa, I.}
	{\it Stochastic analysis},
	volume 224 of {\em Translations of Mathematical Monographs}.
	\newblock American Mathematical Society, Providence, RI, 2004.
	\newblock Translated from the 1998 Japanese original by the author, Iwanami
	Series in Modern Mathematics.
	
	
	\bibitem{StVa}
	{Stroock, D.~W.} and {Varadhan, S.~R.~S.}
	{\it Diffusion processes with continuous coefficients, I,II.}
	Communications in Pure and Applied Mathematics,
	{\bf 22},
	345--400, 479--530,
	(1969).
	
	
	\bibitem{Take16}
	{Takeuchi, A.}
	{\it Joint distributions for stochastic functional differential equations.}
	Stochastics
	{\bf 88}(5),
	711--736,
	(2016).
	
	\bibitem{TT}
	{Talay, D.} and {Tubaro, L.}
	{\it Expansion of the global error for numerical schemes solving stochastic differential equations.}
	Stochastic Anal. Appl.,
	{\bf 8},
	94--120,
	(1990).
	
	\bibitem{Ve}
	{Veretennikov, A.~YU.}
	{\it On strong solution and explicit formulas for solutions of stochastic integral equations.}
	Math. USSR Sb. 
	{\bf 39},
	387-- 403,
	(1981).
	
	
	
	\bibitem{Yan}
	{Yan, B.~L.}
	{\it The Euler scheme with irregular coefficients.}
	Ann. Probab.
	{\bf 30}(3),
	1172--1194,
	(2002).
	
	\bibitem{Zv}
	{Zvonkin, A.~K.}
	{\it A transformation of the phase space of a diffusion process that removes the drift.}
	Math. USSR Sbornik,
	{\bf 22},
	129--148,
	(1974).
	
\end{thebibliography}
\end{document}